\newtheorem{theorem}{Theorem}[section]
\newtheorem{proposition}[theorem]{Proposition}
\newtheorem{corollary}[theorem]{Corollary}
\newtheorem{lemma}[theorem]{Lemma}
\theoremstyle{definition}
\newtheorem{definition}[theorem]{Definition}
\theoremstyle{remark}
\newtheoremstyle{citing}
  {}
  {}
  {\itshape}
  {}
  {\bfseries}
  {.}
  {.5em}
  {\thmnote{#3}}
\theoremstyle{citing}
\newtheorem*{varthm}{}
\numberwithin{equation}{section}
\newcommand\shf\mathcal
\newcommand\projective\mathbb
\newcommand\PP{{\projective P}}
\newcommand\OO{{\shf O}}
\newcommand\ZZ{{\projective Z}}
\newcommand\RR{{\projective R}}
\newcommand\CC{{\projective C}}
\newcommand\NN{{\projective N}}
\DeclareMathOperator{\HP}{HP}
\DeclareMathOperator{\HGr}{H Gr}
\DeclareMathOperator{\HFlag}{H Flag}
\newcommand\GG{{\projective G}}
\newcommand{\Aff}{{\projective A}}
\newcommand{\GW}{GW}
\DeclareMathOperator{\Gr}{Gr}
\DeclareMathOperator{\GrSp}{GrSp}
\DeclareMathOperator{\Flag}{Flag}
\DeclareMathOperator{\Sp}{Sp}
\DeclareMathOperator{\GL}{GL}
\DeclareMathOperator{\SL}{SL}
\DeclareMathOperator{\UU}{U}
\DeclareMathOperator{\Pic}{Pic}
\DeclareMathOperator{\Spec}{Spec}
\DeclareMathOperator{\Hom}{Hom}
\DeclareMathOperator{\rk}{rk}
\DeclareMathOperator{\Th}{th}
\DeclareMathOperator{\pr}{pr}
\DeclareMathOperator{\Hyp}{h}
\DeclareMathOperator{\coeff}{coeff}
\newcommand{\parens}[1]{\textup{(}#1\textup{)}}
\DeclareMathOperator{\bl}{bl}
\newcommand\dual{^{\vee}}
\newcommand{\op}{{\rm op}}
\newcommand{\Sm}{\mathbf{Sm}}
\newcommand{\Set}{\mathbf{Set}}
\newcommand{\arrowiso}{\stackrel{\smash{\lower3pt\hbox{$\scriptstyle\cong$}}}\lra}
\newcommand{\mono}{\rightarrowtail}
\newcommand{\rest}[1]{{\mid}_{#1}}
\newcommand{\into}{\hookrightarrow}
\newcommand{\onto}{\twoheadrightarrow}
\newcommand{\lra}{\longrightarrow}
\newcounter{subtheorem}[theorem]
\renewcommand{\thesubtheorem}{\alph{subtheorem}}
\newcommand{\subthm}{\refstepcounter{subtheorem}{\rm(\thesubtheorem)\ }}
\DeclareMathOperator{\Spin}{Spin}
\newcommand{\bigperp}{\mathop{\text{\LARGE$\perp$}}\limits}
\newcommand{\tbigperp}{\mathop{\text{\large$\perp$}}\nolimits}
\newcommand{\sm}[1]{\left(\begin{smallmatrix}#1\end{smallmatrix}\right)}
\begin{document}

\title
[Quaternionic Grassmannians and Borel classes]
{Quaternionic Grassmannians and Borel classes in algebraic geometry}
\author{Ivan Panin}
\address{Steklov Institute of Mathematics\\Saint Petersburg}
\author{Charles Walter}
 \address{Laboratoire J.-A.\ Dieudonn\'e (UMR 6621 du CNRS)\\
 D\'epartement de math\'ematiques\\
 Universit\'e de Nice -- Sophia Antipolis\\ 06108 Nice Cedex 02\\
 France}

\thanks{The first author gratefully acknowledge excellent working conditions and support provided by 
Laboratoire J.-A. Dieudonn\'{e}, UMR 6621 du CNRS, Universit\'{e} de Nice  Sophia Antipolis, 
and by the RCN Frontier Research Group Project no. 250399 “Motivic Hopf equations" at University of Oslo.
}

\begin{abstract}
The quaternionic Grassmannian $\HGr(r,n)$ is the affine open subscheme
of the ordinary Grassmannian
parametrizing
those $2r$-dimensional subspaces of a $2n$-dimensional symplectic vector space on which the
symplectic form is nondegenerate.  In particular there is $\HP^{n} = \HGr(1,n+1)$.
For a symplectically oriented cohomology theory $A$, including oriented theories
but also hermitian $K$-theory, Witt groups and algebraic symplectic cobordism,
we have $A(\HP^{n}) = A(\operatorname{pt})[p]/(p^{n+1})$.
We define Borel classes for symplectic bundles.  They satisfy the
splitting principle and the Cartan sum formula, and we
use them to calculate the cohomology
of quaternionic Grassmannians.  In a symplectically oriented theory the
Thom classes of rank $2$ symplectic bundles determine Thom and Borel classes for all
symplectic bundles, and the symplectic Thom classes can be recovered from the Borel classes.

The cell structure of the $\HGr(r,n)$ exists in the cohomology, but it is difficult to see more than part of it
geometrically.
The exception is $\HP^{n}$ where the cell of codimension $2i$ is
a quasi-affine quotient of $\Aff^{4n-2i+1}$ by a
nonlinear action of $\GG_{a}$.
\end{abstract}

\maketitle


\section{Introduction}

The quaternionic projective spaces $\mathbb{HP}^{n}$
and Grassmannians
$\mathbb{HG}\mathrm{r}(r,n)$
are important spaces in topology.
They have cell structures like real and complex projective spaces and Grassmannians, but
the dimensions of the cells are multiples of $4$.
In symplectically oriented cohomology
theories $E^{*}$, including oriented theories but also
$KO^{*}$,
$MSp^{*}$ and $MSU^{*}$,
there is a quaternionic projective bundle theorem
$E^{*}(\mathbb{HP}^{n}) = E^{*}(\operatorname{pt})[b]/(b^{n+1})$
which leads to a theory of Borel classes of quaternionic bundles satisfying the Cartan sum formula
and the Splitting Principle.
These classes are used to
prove a number of well-known theorems, including Conner and Floyd's description of $KO^{*}$ as
a quotient of $MSp^{*}$ \cite[Theorem 10.2]{Conner:1966uk}.
Infinite-dimensional quaternionic Grassmannians provide models for the classifying spaces
$\operatorname{BSp}_{2r}$ and $\operatorname{BSp}$ and for symplectic
$K$-theory.  For symplectically oriented cohomology
theories we have $E^{*}(\operatorname{BSp}_{2r}) = E^{*}(\operatorname{pt})[[b_{1},\dots,b_{r}]]$
and $E^{*}(\operatorname{BSp}) = E^{*}(\operatorname{pt})[[b_{1},b_{2},\dots]]$
where the $b_{i}$ are the Borel classes of the universal bundle.

In this paper we lay the foundations for a similar theory in motivic algebraic geometry.
Since quaternionic Grassmannians are quotients of compact Lie groups
$\operatorname{\mathbb{HG}r}(r,n) =
\UU_{n}(\mathbb{H})/(\UU_{r}(\mathbb{H}) \times \UU_{n-r}(\mathbb{H}))$,
we take as our models the corresponding quotients of algebraic groups
$\HGr(r,n) = \Sp_{2n}/(\Sp_{2r} \times \Sp_{2n-2r})$.  Then
$\HGr(r,n)(\CC)$ has the same homotopy type as
$\operatorname{\mathbb{HG}r}(r,n)$ but
twice the dimension and a significantly more complicated geometry.

An alternative description of $\HGr(r,n)$ is that
if we equip the trivial vector bundle on the base $\OO^{2n}$
with the standard symplectic form $\psi_{2n}$, then
$\HGr(r,n)$ is the open subscheme of $\Gr(2r,2n)$ parametrizing the subspaces $U \subset \OO^{2n}$
of dimension $2r$
on which $\psi_{2n}$ is nonsingular.  The $\HGr(r,n)$ are smooth and affine of dimension
$4r(n-r)$ over the base scheme with the same global units and the same Picard group
as the base.

Our first results concern $\HP^{n} = \HGr(1,n+1)$.
In Theorems \ref{th.basic}, \ref{th.subgrass} and \ref{A.X2i} we prove the following facts.

\begin{theorem}
The scheme $\HP^{n}$ is smooth of dimension $4n$ over the base scheme.
It has a decomposition into locally closed strata
\begin{equation*}
\tag{\ref{E:strat}}
\HP^n
= \bigsqcup_{i=0}^{n} X_{2i} = X_{0} \sqcup X_{2} \sqcup
\cdots \sqcup X_{2n},
\end{equation*}
such that each $X_{2i}$ is of codimension $2i$, smooth and quasi-affine,
but $X_{0}, \dots, X_{2n-2}$ are not affine.
The closure $\overline X _{2i} = X_{2i} \sqcup X_{2i+2} \sqcup \cdots \sqcup X_{2n}$
is a vector bundle of rank $2i$ over $\HP^{n-i}$.
Each $X_{2i}$ is the quotient of a free action of $\GG_{a}$ on $\Aff^{4n-2i+1}$.
\end{theorem}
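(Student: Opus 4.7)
Smoothness of $\HP^n = \HGr(1,n+1)$ is immediate, since it is by construction the open subscheme of $\Gr(2,2n+2)$ on which the Pfaffian of the restriction of $\psi_{2n+2}$ to the tautological rank-$2$ subbundle does not vanish, so it inherits smoothness and relative dimension $4n$. To produce the stratification I would fix a symplectic basis $e_1,f_1,\ldots,e_{n+1},f_{n+1}$ of $\OO^{2n+2}$, set $V_k = \langle e_1,f_1,\ldots,e_k,f_k\rangle$ (rank-$2k$ symplectic) and $E_i = \langle e_{n-i+2},\ldots,e_{n+1}\rangle$ (rank-$i$ isotropic, with $E_0=0$), and form the coisotropic subbundles $F_i := V_{n-i+1}+E_i$. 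These fit into a descending chain from $F_0 = \OO^{2n+2}$ down to the Lagrangian $F_{n+1} = E_{n+1}$, with $F_i$ of rank $2n-i+2$ and radical $F_i^\perp = E_i$. Setting
$$\overline{X}_{2i} = \{U\in\HP^n : U\subset F_i\}, \qquad X_{2i} = \overline{X}_{2i}\setminus\overline{X}_{2(i+1)},$$
yields a disjoint locally closed decomposition $\HP^n = \bigsqcup_{i=0}^n X_{2i}$, because the Lagrangian $F_{n+1}$ contains no symplectic $2$-plane, so $\overline{X}_{2(n+1)} = \varnothing$.

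For the vector-bundle structure, any symplectic $U\subset F_i$ must meet the radical $E_i$ trivially (else $\psi|_U=0$), so the projection $F_i \twoheadrightarrow F_i/E_i \cong V_{n-i+1}$ restricts to an isomorphism on $U$ with image $\bar U \in \HGr(1,n-i+1) = \HP^{n-i}$. Writing $U$ as the graph of a unique linear map $\phi\colon\bar U\to E_i$, and using both the $\psi$-orthogonality of $V_{n-i+1}$ and $E_i$ and the isotropy of $E_i$, one computes $\psi|_U = \psi|_{\bar U}$; hence the pair $(\bar U,\phi)$ is unconstrained whenever $\bar U$ is symplectic. This identifies $\overline{X}_{2i}$ with the total space of the rank-$2i$ vector bundle $\HHom(\overline{\mathcal U},E_i)$ over $\HP^{n-i}$, whence $\overline{X}_{2i}$ is smooth of dimension $4n-2i$ and $X_{2i}$ is smooth quasi-affine of codim $2i$.

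Finally, for the presentation $X_{2i}\cong\Aff^{4n-2i+1}/\GG_a$ I would lift to the principal $\Sp_2$-bundle of symplectic frames $\{(u_1,u_2) \in (\OO^{2n+2})^{\oplus 2} : \psi_{2n+2}(u_1,u_2) = 1\}\to\HP^n$, restrict to the preimage of $X_{2i}$, quotient by a maximal torus $T\subset\Sp_2$, and choose a slice adapted to the coisotropic flag $F_\bullet$; the resulting principal $\GG_a$-bundle over $X_{2i}$ should have total space identifiable with $\Aff^{4n-2i+1}$ in explicit coordinates, with the residual unipotent $\GG_a\subset\Sp_2$ acting freely with quotient $X_{2i}$. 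Non-affineness for $i<n$, which forces the $\GG_a$-action to be nonlinear (since a linear free $\GG_a$-action on $\Aff^N$ has affine quotient $\Aff^{N-1}$), I would deduce from the fact that the codim-$2$ inclusion $\overline{X}_{2(i+1)}\subset\overline{X}_{2i}$ is not a set-theoretic complete intersection: the obstruction comes from the first Pontryagin class of the tautological bundle on $\HP^{n-i}$, which is a nontrivial nilpotent in the cohomology by the quaternionic projective bundle theorem announced in the abstract. This last step is the main obstacle, connecting the cell decomposition to the Pontryagin class theory developed in the rest of the paper.
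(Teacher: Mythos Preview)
Your stratification and vector-bundle identification coincide with the paper's once you notice that your coisotropic $F_i = V_{n-i+1} + E_i$ is exactly $E_i^\perp$; the paper defines $\overline{X}_{2i} = \Gr(2, E_i^\perp) \cap \HP^n$ and produces the bundle map $\overline{X}_{2i} \to \HP(E_i^\perp/E_i)$ by the same graph argument you give.

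Two points need correction. First, you have the difficulty backwards: non-affineness of $X_{2i}$ for $i < n$ is a one-liner and requires no characteristic classes. Since $\overline{X}_{2i}$ is the complement of an ample divisor in $\Gr(2, E_i^\perp)$, it is smooth and affine; removing the nonempty codimension-$2$ closed subset $\overline{X}_{2i+2}$ then gives a non-affine scheme by algebraic Hartogs (global sections do not change, so if both were affine the open inclusion would be an isomorphism). Your proposed route via ``not a set-theoretic complete intersection'' is neither necessary nor clearly sufficient---$\{0\} \subset \Aff^2$ \emph{is} a complete intersection yet $\Aff^2 \smallsetminus 0$ is still not affine---and invoking the quaternionic projective bundle theorem here would invert the paper's logical order.

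Second, your $\GG_a$-construction is in the right spirit but the recipe ``quotient the frame bundle by $T$, then slice'' does not parse: $\Sp_2/T$ is two-dimensional and $T$ is not normal in $\Sp_2$, so no principal $\GG_a$-bundle arises that way. What works, and what the paper does, is to slice the frame bundle directly. Over $X_{2i}$ the tautological $\shf U$ carries a trivial quotient $\lambda \colon \shf U \twoheadrightarrow \mathcal{O}$, and the locus
\[
\{(e,f) \in \shf U \oplus \shf U : \lambda(f)=1,\ \lambda(e)=0,\ \psi(e,f)=1\}
\]
is a $\GG_a$-torsor over $X_{2i}$ under $t\cdot(e,f) = (e, f+te)$, with total space explicitly $\Aff^{4n-2i+1}$ in the coordinates of \eqref{E:action.1}. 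The $\GG_a$ here is the unipotent radical of the Borel of $\Sp(\shf U)$ stabilizing $\ker\lambda$, so your intuition about its origin is correct even if the quotient-then-slice order is not.
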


One can study ordinary Grassmannians inductively by considering the
closed embedding
$\Gr(r,n-1) \into \Gr(r,n)$.  The complement of the image is a vector bundle over $\Gr(r-1,n-1)$.
The normal bundle of the embedding
is isomorphic to the dual $\shf U_{r,n-1}^{\vee}$ of the tautological
subbundle on $\Gr(r,n-1)$, and it
embeds as an open subvariety of $\Gr(r,n)$.  This gives a long exact
sequence
\[
\cdots \to A_{\Gr(r,n-1)}(\shf U_{r,n-1}^{\vee}) \to A(\Gr(r,n)) \to A(\Gr(r-1,n-1)) \to \cdots
\]
in any cohomology theory.

There is an analogous long exact sequence for quaternionic Grassmannians, but
one has to wade through a more complicated geometry to reach it.
We take a symplectic bundle $(E,\phi)$ of rank $2n$ over a scheme $S$.   We then
let $(F,\psi)$
be the symplectic bundle of rank $2n$ with
\begin{align*}
\tag{\ref{E:basic.setup}}
F & = \OO_{S} \oplus E \oplus \OO_{S}, &
\psi & =
\begin{pmatrix}
0 & 0 & 1 \\ 0 & \phi & 0 \\ -1 & 0 & 0
\end{pmatrix}.
\end{align*}
We have the quaternionic Grassmannian bundles
$\HGr(E) = \HGr_{S}(r;E,\phi)$ and $\HGr(F) = \HGr(r;F,\psi)$.
Earlier, the complement of
the open stratum $X_{0} \subset \HP^{n}$ was a vector bundle $\overline X_{2}$ over $\HP^{n-1}$
rather than $\HP^{n-1}$ itself.  The same thing happens now.  Let
\begin{align*}
N^{+} & = \HGr(F) \cap \Gr_{S}(2r;\OO_{S}\oplus E), &
N^{-} & = \HGr(F) \cap \Gr_{S}(2r;E\oplus \OO_{S}).
\end{align*}
For any cohomology theory $A$ we have a long exact sequence
\begin{equation*}
\tag{\ref{E:long.exact}}
\cdots \to A_{N^{+}}(\HGr(F)) \to A(\HGr(F)) \to A(\HGr(F) \smallsetminus N^{+}) \to \cdots.
\end{equation*}

\begin{varthm}
[Theorem]

\parens{a}
The loci $N^{+}$ and $N^{-}$ are vector bundles over
$\HGr(E)$ isomorphic to the tautological rank $2r$ symplectic subbundle $\shf U_{r,E}$.

\parens{b}
The vector bundle
$N^{+}\oplus N^{-}$ is the normal bundle of
$\HGr(E) \subset \HGr(F)$, and it naturally
embeds as an open subscheme of
the Grassmannian bundle $\Gr_{S}(2r;F)$.

\parens{c}
$N^{+}$ and $N^{-}$ are the loci in $\HGr(F)$ where certain sections $s_{+}$ and $s_{-}$ of the
tautological bundle $\shf U_{r,F}$ intersect the zero section transversally.
\end{varthm}

This is part of Theorem \ref{T:normal.geom}.
Since $\HGr(F) \subset \Gr_{S}(2r;F)$ is also an open subscheme, we deduce natural isomorphisms
\[
A_{N^{+}}(\HGr(F)) \cong A_{N^{+}}(N^{+} \oplus N^{-}) \cong A_{\HGr(r;E,\phi)}(\shf U_{r,E}).
\]

The study of the open subscheme $Y = \HGr(F) \smallsetminus N^{+}$ in \S \ref{S:open.stratum}
is crucial to this paper.  The main result is:

\begin{varthm}
[Theorem]

We have morphisms
\[
Y \xleftarrow{g_{1}} Y_{1} \xleftarrow{g_{2}} Y_{2} \xrightarrow{q} \HGr_{S}(r-1,E,\phi)
\]
over $S$ with $g_{1}$ an $\Aff^{2r-1}$-bundle, $g_{2}$ an $\Aff^{2r-2}$-bundle and
$q$ an $\Aff^{4n-3}$-bundle.  Moreover,
there is an explicit symplectic automorphism
of the pullback of $(F,\psi)$ to $Y_{2}$ which induces an isometry
\[
\xymatrix @M=5pt @C=30pt {
g_{2}^{*}g_{1}^{*}(\shf U_{r},\phi_{r})
\ar[r]^-{\cong} &
\left( \OO \oplus q^{*}\shf U_{r-1} \oplus \OO, \sm{0&0&1\\0&\phi_{r-1}&0\\-1&0&0} \right),
}
\]
where $(\shf U_{r},\phi_{r})$ and $(\shf U_{r-1},\phi_{r-1})$ are the tautological symplectic
subbundles on $\HGr(r,F,\psi)$ and $\HGr(r-1,E,\phi)$, respectively.
\end{varthm}

This is a simplified version of Theorem \ref{th.Ga}.  From it we deduce that the
natural closed embedding $\HGr_{S}(r-1,E,\phi) \into Y$ induces
isomorphisms $A(Y) \cong A(\HGr_{S}(r-1,E,\phi))$ in any cohomology theory.
For $r=1$ this is an isomorphism $A(Y) \cong A(S)$.
We thus get our long exact sequence of cohomology groups
\begin{equation*}
\tag{\ref{E:the.long.exact}}
\cdots \to A_{\HGr(r,E,\phi)}(\shf U_{r,E}) \to A(\HGr(r,F,\psi))
\to A(\HGr(r-1,E,\phi)) \to \cdots.
\end{equation*}

The geometry of Theorems \ref{T:normal.geom} and \ref{th.Ga}
and the decomposition of $\HGr_{S}(r,F,\psi)$ into the vector bundle
$N^{+}$
and the complementary open locus $Y$
is recurrent throughout the paper.   This seems to be a fundamental geometry of quaternionic
Grasssmannian bundles.

After the initial description of the geometry, we
%
begin to introduce symplectically oriented cohomology theories.
We follow the point of view used for oriented cohomology theories in \cite{Panin:2003rz}.  In the end
(Definition \ref{D:symp.orient})
a symplectic orientation on a ring
cohomology theory is a family of
\emph{Thom isomorphisms} $\Th^{E,\phi}_{X} \colon A(X) \cong A_{X}(E)$ for every
scheme $X$ and every symplectic bundle $(E,\phi)$ over $X$.  There are also isomorphisms
$\Th^{E,\phi}_{Z} \colon A_{Z}(X) \cong A_{Z}(E)$ for closed subsets $Z \subset X$.
These isomorphisms satisfy several axioms of functoriality, of compatibility with the ring structure,
and of compatibility with orthogonal direct sums $(E_{1},\phi_{1}) \perp (E_{2},\phi_{2})$.

There are five ways of presenting a symplectic orientation.
The Thom class $\Th(E,\phi) \in A_{X}(E)$
is the image of $1_{X} \in A(X)$.  One can present a symplectic orientation by giving either the Thom
isomorphisms, the Thom classes or the Borel classes of all symplectic bundles or
by giving
the Thom classes or the Borel classes of rank $2$ symplectic bundles only.  In each case
the classes are supposed to obey a certain list of axioms.  In the end (Theorem \ref{T:bijections})
the five ways of presenting a symplectic orientation on a ring cohomology theory are equivalent.

We start in \S \ref{S:symp.thom} by presenting the version where one gives the Thom classes
for rank $2$ symplectic bundles only.  This is a \emph{symplectic Thom structure}.
With the Thom classes one can define Thom isomorphisms $A(X) \cong A_{X}(E)$ for rank $2$
symplectic bundles and direct image maps $i_{A,\flat} \colon A(X) \to A_{X}(Y)$
and $i_{A,\natural} \colon A(X) \to A(Y)$
for regular embeddings $i \colon X \into Y$ of codimension $2$ whose normal bundle is
equipped with a symplectic form $(N_{X/Y},\phi)$.
The \emph{Borel class} of a rank $2$ symplectic bundle $(E,\phi)$ on $X$
is $b(E,\phi) = -z^{A}e^{A}\Th(E,\phi)$ where $e^{A} \colon A_{X}(E) \to A(E)$ is extension
of supports and $z^{A} \colon A(E) \cong A(X)$ is the restriction to the zero section.
The main formula is that if a section of $E$ intersects the zero section transversally in a
subscheme $Z$, then for the inclusion $i \colon Z \into X$ and
for all $b \in A(X)$ we have
\begin{equation*}
\tag{\ref{E:i_*i^*}}
i_{A,\natural}i^A b  = - b \cup b(E,\phi).
\end{equation*}

With the geometry of Theorems \ref{T:normal.geom} and \ref{th.Ga}, the long exact sequence
of cohomology \eqref{E:long.exact} and formula \eqref{E:i_*i^*} we prove one of the
main results of the paper.

\begin{varthm}[Theorem \protect\ref{th.HPn.twist}]
\textup{(Quaternionic projective bundle theorem).}
Let $A$ be a ring cohomology theory with a symplectic Thom structure.
Let $(E, \phi)$ be a rank $2n$ symplectic bundle over a scheme
$S$, let $(\shf U,\phi \rest {\shf U})$ be the tautological
rank $2$ symplectic subbundle over the quaternionic projective bundle
$\HP_S(E, \phi)$, and let $\zeta = b(\shf U,\phi \rest{\shf U})$ be its Borel class.
Write $\pi \colon \HP_{S}(E,\phi) \to S$ for the projection.
Then for any closed subset $Z \subset X$ we have an
isomorphism of two-sided $A(S)$-modules
$(1, \zeta, \dots, \zeta^{n-1}) \colon A_{Z}(S)^{\oplus n} \arrowiso
A_{\pi^{-1}(Z)}(\HP_S(E,\phi))$,
and we have
unique classes $b_i(E,\phi) \in A(S)$ for $1 \leq i \leq n$
such that there is a relation
\[
\zeta^{n} - b_1(E,\phi) \cup \zeta^{n-1} + b_2(
E, \phi) \cup \zeta^{n-2} - \cdots +
(-1)^{n} b_{n}(E,\phi) = 0.
\]
If $(E,\phi)$ is trivial, then $b_i(E,\phi) = 0$
for $1 \leq i \leq n$.
\end{varthm}

The classes $b_{i}(E,\phi)$ are the \emph{Borel classes} of $(E,\phi)$ with respect to
the symplectic Thom structure on $A$.  One also sets
$p_{0}(E,\phi) = 1$ and $b_{i}(E,\phi) = 0$ for $i > n$.
Borel classes are $\Aff^{1}$-deformation invariant
and nilpotent.

The Borel classes of an orthogonal direct sum of symplectic bundles
$(F, \psi) \cong (E_{1},\phi_{1}) \perp (E_{2},\phi_{2})$ satisfy the
Cartan sum formula
\begin{equation*}
\tag{\ref{E:sum.2nd}}
b_{i}(F,\psi)  = b_{i}(E_{1},\phi_{1}) + \sum_{j=1}^{i-1} b_{i-j}(E_{1},\phi_{1}) b_{j}(E_{2},\phi_{2}) +
b_{i}(E_{2},\phi_{2}).
\end{equation*}
The $b_{1}$ is additive, and the top Borel classes are multiplicative.
The corresponding formula for Chern classes is usually proven with a geometric argument
shortly after a proof of the projective bundle theorem.  Unfortunately we
have not found a geometric proof of the Cartan sum formula for Borel classes.
The geometry of quaternionic projective bundles (as we have defined them)
is just not as nice as the geometry of projective bundles.
The best we have done
geometrically is to show that if $(E,\phi)$ is an orthogonal direct summand of $(F,\psi)$
then the Borel polynomial $P_{E,\phi}(t)$ divides the Borel polynomial
$P_{F,\psi}(t)$
(Lemma \ref{L:divides.2}).

So we give a roundabout cohomological proof of the Cartan sum formula.  The first step is to consider
the scheme $\HFlag(1^{r};n) = \Sp_{2n}/(\Sp_{2}^{\times r} \times \Sp_{2n-2r})$.
It classifies decompositions
\begin{equation*}
\tag{\ref{E:univ.bdl}}
(\OO^{2n}, \psi_{2n}) \cong
(\shf U^{(1)}_{n},\phi^{(1)}_{n}) \perp \cdots \perp (\shf U^{(r)}_{n}, \phi^{(r)}_{n}) \perp (\shf V_{r,n}, \psi_{r,n})
\end{equation*}
of the trivial symplectic bundle of rank $2n$
into the orthogonal direct sum of $r$ symplectic subbundles of rank $2$ plus a symplectic
subbundle of rank $2n-2r$.  It is an iterated quaternionic projective bundle over both the base scheme
and $\HGr(r,n)$.
The weak substitute for the Cartan sum formula (Lemma \ref{L:divides.2}) is good enough
for us to show that we have
\begin{equation*}
\tag{\ref{E:HFlag.lim}}
\varprojlim_{n \to \infty} A(\HFlag(1^{r};n)) \cong A(k) [[y_{1},\dots, y_{r}]]
\end{equation*}
with the indeterminate $y_{i}$ corresponding to the element
given by the system $( b(\shf U^{(i)}_{n},\phi^{(i)}_{n}))_{n \geq r}$.
Using the universality of the families of schemes $\HFlag(1^{r};n)$ and $\HGr(r,n)$ and the
fact that the $y_{i}-y_{j}$ are not zero divisors in $A(k)[[y_{1},\dots,y_{r}]]$, we are able to
deduce the Symplectic Splitting Principle (Theorem \ref{T:splitting}) from
Lemma \ref{L:divides.2}.  The Cartan sum formula then follows (Theorem \ref{T:Cartan}).

We then calculate the cohomology of quaternionic Grassmannians for any ring cohomology theory
with a symplectic Thom structure.  We get
\begin{gather*}
\tag{\ref{E:hn.2}}
A(\HFlag(1^{r},n))  = A(k)[y_{1},\dots,y_{r}]/(h_{n-r+1},\dots,h_{n}), \\
\tag{\ref{E:HGr.cohom.1}}
A(\HGr(r,n))  = A(k)[b_{1},\dots,b_{r}]/(h_{n-r+1},\dots,h_{n}),
\end{gather*}
where the $y_{i}$ are the Borel classes of the $r$ tautological rank $2$ symplectic
subbundles on $\HFlag(1^{r},n)$, the $b_{i}$ are the Borel classes of the tautological
rank $2r$ symplectic subbundle on $\HGr(r,n)$ and are the elementary symmetric polynomials
in the $y_{i}$, and the $h_{i}$ are the complete symmetric polynomials.  It follows that for the
standard embeddings the restrictions $A(\HGr(r,n+1)) \to A(\HGr(r,n))$ and
$A(\HGr(r+1,n+1)) \to A(\HGr(r,n))$ are surjective, and we have isomorphisms
\begin{gather}
\tag{\ref{E:HGr(r,infty)}}
\varprojlim\limits_{n \to \infty} A(\HGr(r,n)) = A(k)[[b_{1},b_{2},\dots,b_{r}]], \\
\tag{\ref{E:HGr(infty,2.infty)}}
\varprojlim\limits_{n \to \infty} A(\HGr(n,2n)) = A(k)[[b_{1},b_{2},b_{3},\dots]].
\end{gather}

In \S\S \ref{S:Pont.thom}--\ref{S:orientations} we prove the equivalence of the five
ways of presenting a symplectic orientation on a ring cohomology theory.
Above all, this involves using the exact sequence \eqref{E:the.long.exact} and the
calculations of the cohomology of Grassmannian bundles to recover the Thom classes
from the Borel classes.

The isomorphism $A(Y) \cong A(S)$ when $Y = X_{0} \times S \subset \HP^{n} \times S$
is the open stratum
is absolutely critical to the entire paper.  Without it there is no proof of the quaternionic
projective bundle theorem and no definition of Borel classes.
Therefore in \S \ref{S:original} we give a second proof of the isomorphism using
a completely different geometric argument.  However, the geometry of Theorem
\ref{th.Ga} seems to be more generally useful than that of Theorem \ref{2nd}.

We stop here.  This is already a long paper because the theory of Thom and Borel
classes of symplectic bundles and the calculation of the cohomology of quaternionic
Grassmannian bundles are intertwined, and we do not see how to separate one from the other.
We leave for other papers the discussion of symplectic orientations on specific nonoriented
cohomology theories like
hermitian $K$-theory, derived Witt groups and symplectic algebraic cobordism,
as well as discussion of
the $2$ and $4$-valued formal group laws
obeyed by the symplectic Borel classes.

\subsection*{Acknowledgements}
The authors are grateful for the hospitality of Paul Balmer, Max-Albert Knus and
ETH Z\"urich where the project began.
They warmly thank Alexander Nenashev for his interest: \S\ref{S:Pont.thom} is joint work.

\section{Cohomology theories}
\label{S:cohomology}

We review the notions of a cohomology theory and a ring cohomology theory as used in
\cite{Panin:2003rz}.

We fix a base scheme $k$.  We will study ring cohomology theories on some
category $\mathcal V$ of $k$-schemes.  The category could be nonsingular
quasi-projective varieties over $k = \Spec F$ with $F$ a field, it could be
nonsingular quasi-affine varieties over $k = \Spec F$,
it could be quasi-compact semi-separated
schemes with an ample set of line bundles over $k$, it could be
regular noetherian separated schemes of finite Krull dimension over $k$,
or it could be something else.
Which it is is unimportant
 as long as the definitions make sense and the constructions work.
There are only some minor complications concerning deformation to the
normal bundle when we go beyond smooth varieties over a field.
Our theorems all hold if $\mathcal V$ has the following properties (partly borrowed from
Levine-Morel \cite[(1.1.1)]{Levine:2007ys}):

\begin{enumerate}
\item
All schemes in $\mathcal V$ are quasi-compact and quasi-separated and have an ample family of line
bundles.
\item
The schemes $k$ and $\varnothing$ are in $\mathcal V$.

\item
If $X$ and $Y$ are in $\mathcal V$, then so are $X \sqcup Y$ and $X \times_{k} Y$.

\item
If $X$ is in $\mathcal V$, and $U \subset X$ is a quasi-compact open subscheme, then $U$ is in $\mathcal V$.
\item
If $X$ is in $\mathcal V$, and $Y$ is a quasi-compact open subscheme of a Grassmannian bundle over
$X$ for which the projection map $Y \to X$ is an affine morphism, then $Y$
is in $\mathcal V$.
\end{enumerate}

Some of our discussions of deformation to the normal bundle and of direct images make more
sense if a sixth property is also true.
Recall that a \emph{regular embedding} $Z \to X$ is a closed
embedding such that locally $Z$ is cut out by a regular sequence.

\begin{enumerate}
\addtocounter{enumi}{5}
\item
For a regular embedding $Z \to X$ with $X$ and $Z$ in $\mathcal V$ the deformation to the
normal bundle space
$D(Z,X)$ of \S\ref{S:normal.bundle} is in $\mathcal V$.
\end{enumerate}

We will use our language imprecisely as if all schemes were noetherian.  Thus when we write
``open subscheme'' we really mean a quasi-compact open subscheme,
and ``closed subset'' means the complement of a quasi-compact open subscheme.

The condition that every scheme in $\mathcal V$
have an ample family of line bundles is used in the
proof of the symplectic splitting principle (Theorem \ref{T:splitting}).


\begin{definition}
[\protect{\cite[Definition 2.1]{Panin:2003rz}}]
\label{D:cohom}
A \emph{cohomology theory} on a category $\mathcal V$ of $k$-schemes is a pair
$(A,\partial)$ with $A$ a
functor assigning an abelian group $A_Z(X)$ to
every scheme $X$ and closed subset $Z \subset X$ with
$X$ and $X \smallsetminus Z$ in $\mathcal V$
and assigning a morphism of abelian groups $f^A \colon A_Z(X) \to A_{Z'}(X')$ to every map
$f \colon X' \to X$ such that $Z' \supset f^{-1}(Z)$.
One writes $A(X) = A_{X}(X)$.
In addition one has a morphism of functors $\partial \colon A(X \smallsetminus Z) \to A_{Z}(X)$.
Together they have the following properties.
\begin{enumerate}
\item
\emph{Localization}:  The functorial sequences
\[
A(X \smallsetminus Z) \xrightarrow{\partial} A_Z(X) \xrightarrow{e^{A}}
A(X) \xrightarrow{j^{A}} A(X \smallsetminus Z) \xrightarrow{\partial} A_Z(X),
\]
with $e^{A}$ and $j^{A}$ the appropriate maps of $A$, are exact.

\item
\emph{Etale excision}: $f^A \colon A_Z(X) \to A_{Z'}(X')$
is an isomorphism if $f \colon X' \to X$ is \'etale, $Z' = f^{-1}(Z)$, and
$f \rest {Z'}\colon Z' \to Z$ is an isomorphism,

\item
\emph{Homotopy invariance}: the maps $\pr_1^A \colon A(X) \to A(X \times \Aff^1)$ are
isomorphisms.
\end{enumerate}
\end{definition}

Cohomology theories have Mayer-Vietoris sequences and satisfy $A_{\varnothing}(X) = 0$.
They have
homotopy invariance for $\Aff^{n}$-bundles (torsors for vector bundles).
Deformation to the normal bundle isomorphisms will be discussed in \S\ref{S:normal.bundle}.
See \cite[\S 2.2]{Panin:2003rz} for these and other properties.

Zariski excision suffices for the main results of the paper.
Excision is used mainly for Mayer-Vietoris and for direct images, but we use direct images only
for one
Grassmannian embedded in another, and then there are global Zariski tubular neighborhoods.

\begin{definition}
[\protect{\cite[Definition 2.13]{Panin:2003rz}}]
\label{D:ring}
A \emph{ring cohomology theory} is a cohomology theory in the sense of
Definition \ref{D:cohom}
with cup products
\[
\cup \colon A_{Z}(X) \times A_{W}(X) \to A_{Z \cap W}(X)
\]
which are functorial, bilinear and associative and have two other properties:
\begin{enumerate}
\item
There exists an element $1 \in A(k)$ such that for every scheme
$\pi_{X} \colon X \to k$ in $\mathcal{V}$ and every closed subset $Z \subset X$,
the pullback $1_{X} = \pi_{X}^{A}(1) \in A(X)$
satisfies $1_{X} \cup a = a \cup 1_{X} = a$ for all $a \in A_{Z}(X)$.

\item
For the maps $\partial \colon A(X \smallsetminus Z) \to A_{Z}(X)$ one has
$\partial(a \cup b) = \partial a \cup b$ for all $a \in A(X \smallsetminus Z)$ and all
$b \in A(X)$.
\end{enumerate}
\end{definition}

\section{Basic geometry of $\HP^n$ }
\label{S:basic.geom}

We define $\HP^{n}$ and discuss a stratification resembling the cell decomposition
of the topological $\HP^{n}$.  We also present quaternionic projective bundles,
Grassmannians and flag varieties.

Let $(V,\phi)$ be a trivial symplectic bundle of rank $2n+2$ over
the base scheme $k$.  The symplectic group $\Sp_{2n+2} = \Sp(V,\phi)$ acts on the
Grassmannian $\Gr(2,V)$ with (i) a closed orbit $\GrSp(2,V,\phi)$
parametrizing $2$-dimensional subspaces $U \subset V$ with $\phi\rest
U \equiv 0$, and (ii) a complementary open orbit parametrizing
$2$-dimensional subspaces $U \subset V$ with $\phi \rest U$
nondegenerate
which we will call the
{\em quaternionic projective space}\/ $\HP^n$.
We will use this object as a motivic analogue of the topological $\mathbb{HP}^{n}$.
Determining the stabilizer of a point of $\HP^n$
yields an identification $\HP^n = \Sp_{2n+2} / ( \Sp_2 \times
\Sp_{2n} )$, which compares well with
$\mathbb{HP}^{n} = \UU_{n+1}(\mathbb{H})/\UU_{1}(\mathbb{H}) \times \UU_{n}(\mathbb{H})$.
So the manifold $\HP^{n}(\CC)$ of complex points is the complexification of the quotient
of compact Lie groups $\mathbb{HP}^{n}$ and has the same homotopy type.
It does not have the homotopy type
of a complex projective manifold.

The topological $\mathbb{HP}^{n}$ is the union of cells of dimensions $0,4,8,\dots,4n$.
A related decomposition of the space $\HP^n$ may be defined.  Fix a
flag
\begin{equation}
\label{iso.flag}
0  = E_0  \subset E_1 \subset \cdots \subset E_{n+1} =
E_{n+1}^\perp \subset \cdots \subset E_1^\perp \subset E_0^\perp = V
\end{equation}
of subbundles of $(V, \phi)$ with the $E_i \cong \OO_{k}^{\oplus i}$ totally isotropic and
satisfying $\dim E_i = i$ and $\dim E_i^\perp = 2n+2-i$.  Set
\begin{align}
\label{X2i}
  \overline X_{2i} & = \Gr(2, E_i^\perp) \cap \HP^n, & X_{2i} & =
  \overline X_{2i} \smallsetminus \overline X_{2i+2}.
\end{align}
Choose for convenience a lagrangian supplementary to the lagrangian
$E_{n+1}$, and let $\GL_{n+1} \subset \Sp_{2n+2}$ be the subgroup fixing
the two lagrangians.

\begin{theorem}
\label{th.basic}

The scheme $\HP^n$ is the disjoint union of the locally closed
strata
\begin{equation}
\label{E:strat}
\HP^n
= \bigsqcup_{i=0}^{n} X_{2i} = X_{0} \sqcup X_{2} \sqcup
\cdots \sqcup X_{2n},
\end{equation}
which have the following properties{\rm :}

\parens{a}
The scheme $X_{2i}$ and its closure
$\overline X _{2i} = X_{2i} \sqcup X_{2i+2} \sqcup \cdots \sqcup X_{2n}$
are smooth of relative dimension $4n-2i$ over the base scheme $k$.  The
$\overline X_{2i}$ and $X_{2n} = \overline X_{2n}$ are affine over the base,
but $X_{0}, \dots, X_{2n-2}$ are not.  We have $
\Pic(\overline X_{2i}) = \Pic(k)$ and $\OO(\overline X_{2i})^\times =
\OO(k)^\times$.

\parens{b}
Each $\overline X_{2i}$ is the transversal intersection
of $i$ translates of $\overline X_{2}$ under the action of the
subgroup $\GL_{n+1} \subset \Sp_{2n+2}$.

\parens{c}
The intersection of $n+1$ general translates of $\overline X_{2}$
under the action of $\GL_{n+1}$ is empty.

\end{theorem}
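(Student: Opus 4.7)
The plan is to identify $\overline{X}_{2i}$ with a rank-$2i$ vector bundle over the smaller quaternionic projective space attached to the symplectic quotient $E_i^\perp/E_i$, and to read off (a) from this description; parts (b) and (c) will then follow from a tangent-space calculation using the $\GL_{n+1}$-action on the flag. The key observation is that the radical of $\phi\rest{E_i^\perp}$ is exactly $E_i$, so a $2$-plane $U \subset E_i^\perp$ on which $\phi$ is nondegenerate must meet $E_i$ in $0$ and thus maps isomorphically onto a $2$-plane $U'$ in the rank $2n+2-2i$ symplectic space $E_i^\perp/E_i$; conversely every such $U'$ lifts to a unique $U \subset E_i^\perp$ after prescribing a $k$-linear map $U' \to E_i$. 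This gives a morphism $\overline{X}_{2i} \to \HP^{n-i} = \HP(E_i^\perp/E_i)$ whose fibre over $U'$ is identified with $\Hom(U', E_i)$, globally exhibiting $\overline{X}_{2i}$ as the total space of the rank-$2i$ vector bundle $\HHom(\shf U', E_i)$ (the zero section coming from any lagrangian complement of $E_{n+1}$). The set-theoretic stratification $\HP^n = \bigsqcup_i X_{2i}$ is then immediate: for $U \in \HP^n$ one assigns the largest $i$ with $U \subset E_i^\perp$, and $i \leq n$ because $U \subset E_{n+1}^\perp = E_{n+1}$ would force $\phi\rest U \equiv 0$.

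From the vector bundle description most of (a) is formal. Smoothness and relative dimension $4n-2i$ follow from $\dim \HP^{n-i} = 4(n-i)$ plus the fibre dimension $2i$. For affinity of $\overline{X}_{2i}$ and of $X_{2n} = \overline{X}_{2n}$, I would observe that $\HP^{n-i}$ is the nonvanishing locus in $\Gr(2, E_i^\perp/E_i)$ of the Pl\"ucker section of the very ample line bundle $\det \shf U^\vee$ induced by the symplectic form, hence affine; a vector bundle over an affine base is affine. The Picard and unit statements reduce by $\Aff^1$-invariance to the corresponding computations on $\HP^{n-i}$, which follow from $\Pic(\Gr(2,-)) = \Pic(k) \oplus \ZZ \cdot c_1(\det \shf U^\vee)$, properness of the Grassmannian over $k$, and the fact that the divisor removed represents the generator of the $\ZZ$-summand (for units one argues that any invertible function on $\HP^{n-i}$ has a trivial divisor on $\Gr(2,-)$). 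For the non-affinity of $X_{2i}$ with $i < n$: the closed subscheme $\overline{X}_{2i+2}$ has codimension $2$ in the smooth scheme $\overline{X}_{2i}$, so Hartogs gives $\OO(X_{2i}) = \OO(\overline{X}_{2i})$; if $X_{2i}$ were affine the open immersion $X_{2i} \into \overline{X}_{2i}$ would then be an isomorphism, contradicting $\overline{X}_{2i+2} \neq \varnothing$.

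For (b), I would choose $g_1, \dots, g_i \in \GL_{n+1}$ such that the lines $g_j E_1$ span $E_i$ inside the lagrangian $E_{n+1}$ (possible since $\GL_{n+1}$ acts transitively on nonzero vectors of $E_{n+1}$). Because $g_j \overline{X}_2 = \Gr(2, (g_j E_1)^\perp) \cap \HP^n$ and $\bigcap_j (g_j E_1)^\perp = (\sum_j g_j E_1)^\perp = E_i^\perp$, the set-theoretic intersection is $\overline{X}_{2i}$. For transversality at $U \in \overline{X}_{2i}$ the tangent space of $g_j \overline{X}_2$ is $\Hom(U, (g_j E_1)^\perp/U) \subset \Hom(U, V/U) = T_U \Gr(2,V)$, each of codimension $2$; intersecting these $i$ subspaces gives $\Hom(U, E_i^\perp/U) = T_U \overline{X}_{2i}$ of codimension $2i$, so the codimensions add and the intersection is transversal at every point. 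Part (c) is the limiting case: $n+1$ translates in general position satisfy $\sum_j g_j E_1 = E_{n+1}$, hence $\bigcap_j g_j \overline{X}_2 = \Gr(2, E_{n+1}) \cap \HP^n = \varnothing$ since $2$-planes in a lagrangian are totally isotropic. The main obstacle is the first paragraph: one must check in local coordinates that the projection $\overline{X}_{2i} \to \HP^{n-i}$ is genuinely a vector bundle (and not merely an affine bundle) and that the rank-$2i$ identification is natural --- everything thereafter is a formal consequence.
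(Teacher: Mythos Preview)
Your argument is correct, and parts (b) and (c) match the paper's proof almost verbatim: the paper also picks $g_1,\dots,g_i\in\GL_{n+1}$ with $\bigoplus_j g_j(E_1)=E_i$, identifies $\bigcap_j g_j(\overline X_2)$ with $\overline X_{2i}$, and checks transversality via the (co)normal spaces, with (c) the case $i=n+1$.

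For part (a), however, you take a genuinely different and longer route. You first establish the fibration $\overline X_{2i}\to\HP^{n-i}$ as a rank-$2i$ vector bundle (essentially proving the paper's Theorem~\ref{th.subgrass} in advance), then pull smoothness, dimension, affineness, $\Pic$, and units back from $\HP^{n-i}$ through $\Aff^1$-invariance. The paper instead observes directly that $\overline X_{2i}=\Gr(2,E_i^\perp)\cap\HP^n$ is the complement in the Grassmannian $\Gr(2,E_i^\perp)$ of the ample divisor $\GrSp(2,E_i^\perp,\phi)$ cut out by the section of $\Lambda^2\shf U^\vee\cong\OO(1)$ induced by $\phi$; this single observation simultaneously yields smoothness, dimension $4n-2i$, affineness, $\Pic(\overline X_{2i})=\Pic(k)$, and $\OO(\overline X_{2i})^\times=\OO(k)^\times$, with no need to invoke the bundle structure or $\Aff^1$-invariance. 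Your approach buys you Theorem~\ref{th.subgrass} for free, at the cost of the local-coordinate verification you yourself flag as the main obstacle; the paper's approach is shorter and self-contained for (a), deferring the bundle structure to a separate and more general statement (Theorem~\ref{T:normal.geom}). Your non-affineness argument via Hartogs is a correct spelling-out of what the paper leaves as the one-line remark that removing a nonempty codimension-$2$ closed set from a smooth affine scheme never gives an affine scheme.
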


In particular, since $\overline X_0 = \HP^n$ we have $\Pic(\HP^n)
= \Pic(k)$ and $\OO(\HP^n)^\times = \OO(k)^\times$.

\begin{proof}

(a)
Let $\shf U_{\Gr}$ be the tautological rank $2$ subbundle on $\Gr
= \Gr(2,V)$.  Then $\GrSp(2,V,\phi)$ is the zero locus of a section of
$\Lambda^2 \shf U_{\Gr}\dual \cong \shf O_{\Gr}(1)$ induced by $\phi$.
So $\HP^n$ is the complement in the smooth projective scheme $\Gr$
of an irreducible ample divisor whose class generates $\Pic(\Gr)/\Pic(k) \cong
\ZZ$.  It follows that $\HP^n$ is smooth and affine of the same
dimension $4n$ as the Grassmannian, and that it satisfies
$\Pic(\HP^n)/\Pic(k) = 0$ and $\OO(\HP^n)^\times = \OO(k)^\times$.

The same argument applied to $\Gr(2,E_i^\perp)$ shows that the
nonempty $\overline X_{2i}$ are smooth and affine of the same
dimension $4n-2i$ as $\Gr(2,E_i^\perp)$ and satisfy $\Pic(\overline
X_{2i})/\Pic(k) = 0$ and $\OO(\overline X_{2i})^\times = \OO(k)^\times$.  Moreover,
$\overline X_{2i}$ is empty if and only if $E_i^\perp$ is totally
isotropic, and this is true only for $i = n+1$.  This implies
$\overline X_{2n+2} = \varnothing$ and $\HP^n = \bigsqcup_{i=0}^n
X_{2i}$ and that $X_{2n} = \overline X_{2n}$ is affine.  But the other
$X_{2i}$ are obtained by removing a nonempty closed subscheme of
codimension $2$ from a smooth affine scheme, and such schemes are
never affine.

(b)
If one chooses $g_1, \dots, g_i \in \GL_{n+1} \subset
\Sp_{2n+2}$ satisfying $\bigoplus_{j=1}^i g_j(E_1) = E_i$ then one has
$\overline X_{2i} = \bigcap_{j=1}^i g_j(\overline X_2)$ because both
parametrize subspaces $U \subset \bigcap g_j(E_1)^\perp = E_i^\perp$
with $\phi \rest U \not\equiv 0$.
Moreover, the intersection is transversal because the map
$\bigoplus_{j=1}^i N\dual_{g_j(\overline X_1)/\HP^n,[U]} \to
T\dual_{\HP^n,[U]}$ which needs to be injective can be identified with
the natural map $\bigoplus_{j=1}^i \bigl( U \otimes g_j(E_1) \bigr)
\to U \otimes U^\perp$.

(c)
{\em Idem} with $i = n+1$.
\end{proof}

We have two other results which
make the stratification \eqref{E:strat} of $\HP^{n}$ resemble
the cell decomposition of $\mathbb{HP}^{n}$.
We will prove them later, but we state them now so they don't get lost amid
more general results about quaternionic Grassmannian bundles.

\begin{theorem}
\label{th.subgrass}
There is a natural map $q \colon \overline X_{2i} \to \HP^{n-i} =
\HP(E_i^\perp/E_i, \overline{\phi})$ which is an $\Aff^{2i}$-bundle.  The
stratification $\overline X_{2i} = X_{2i} \sqcup X_{2i+2} \sqcup
\cdots \sqcup X_{2n}$ is the inverse image under $q$ of the
stratification $\HP^{n-i} = X_0' \sqcup X_2' \sqcup \cdots \sqcup
X_{2n-2i}'$ associated to the flag of subspaces of $E_i^\perp/E_i$
induced by \eqref{iso.flag}.  The pullbacks to $\overline X_{2i}$ of
the two tautological symplectic subbundles are naturally
isometric $q^* (\shf U_{\HP^{n-i}}, \overline \phi \rest {\shf
  U_{\HP^{n-i}}}) \cong (\shf U_{\HP^n}, \phi \rest {\shf U_{\HP^n}} )
\rest {\overline X_{2i}}$.
\end{theorem}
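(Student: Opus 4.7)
Write $\pi \colon E_{i}^{\perp} \to E_{i}^{\perp}/E_{i}$ for the quotient, which carries the symplectic form $\phi\rest{E_i^\perp}$ down to the induced nondegenerate form $\overline\phi$ on the rank-$(2n+2-2i)$ bundle $E_i^\perp/E_i$. First I would construct the morphism $q$ pointwise: for $[U] \in \overline X_{2i}$, any $v \in U \cap E_i$ satisfies $\phi(v,w)=0$ for every $w \in E_i^\perp \supset U$, so nondegeneracy of $\phi\rest U$ forces $v=0$. Hence $\pi\rest U \colon U \to \pi(U)$ is an isomorphism onto a $2$-plane, and the identity $\overline\phi(\pi v, \pi w) = \phi(v,w)$ shows that $\overline\phi\rest{\pi(U)}$ is nondegenerate, i.e., $\pi(U) \in \HP^{n-i}$. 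Functoriality of this construction in the base scheme upgrades it to the desired morphism $q \colon \overline X_{2i} \to \HP^{n-i}$.

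Next I would identify $q$ as an affine bundle. Over $\HP^{n-i}$, let $\shf U'$ be the tautological rank $2$ subbundle and set $\widetilde{\shf U}' = \pi^{-1}(\shf U') \subset (E_i^\perp)_{\HP^{n-i}}$, a rank-$(i+2)$ subbundle fitting in a short exact sequence
\[
0 \to (E_i)_{\HP^{n-i}} \to \widetilde{\shf U}' \to \shf U' \to 0.
\]
The discussion in the previous paragraph identifies $\overline X_{2i}$, as a scheme over $\HP^{n-i}$, with the scheme of splittings of this sequence: a splitting is a rank-$2$ subbundle $U \subset \widetilde{\shf U}'$ mapping isomorphically to $\shf U'$, and such a $U$ sits inside $E_i^\perp$ with $\phi\rest U$ nondegenerate. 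The set of splittings is a torsor under the vector bundle $\HHom(\shf U', (E_i)_{\HP^{n-i}}) \cong (\shf U')^{\vee} \otimes (E_i)_{\HP^{n-i}}$, whose rank is $2 \cdot i = 2i$. Hence $q$ is an $\Aff^{2i}$-bundle.

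The compatibility with the stratifications then follows from a direct dimension count. For $j \geq i$ one has $E_i \subset E_j \subset E_j^\perp$, so $\pi^{-1}(E_j^\perp/E_i) = E_j^\perp$; under the identification of the flag on $E_i^\perp/E_i$ induced by \eqref{iso.flag}, the subspace $E_j^\perp/E_i$ is exactly $(E_{j-i}')^\perp$. Therefore $U \subset E_j^\perp$ if and only if $\pi(U) \subset (E_{j-i}')^\perp$, i.e.\ $q^{-1}(\overline X'_{2(j-i)}) = \overline X_{2j}$, and the open strata match by taking differences. Finally, the isomorphism $\pi\rest U \colon U \xrightarrow{\cong} \pi(U)$ globalizes to an $\OO$-linear isomorphism $\shf U_{\HP^n}\rest{\overline X_{2i}} \xrightarrow{\cong} q^{*}\shf U_{\HP^{n-i}}$, and the equality $\overline\phi(\pi v, \pi w)=\phi(v,w)$ shows it is an isometry with respect to the restricted symplectic forms.

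The only non-routine step is the second paragraph, namely making the identification of $\overline X_{2i}$ with the space of splittings functorial in the base scheme (not merely a bijection on points) so that the torsor structure genuinely exhibits $q$ as an $\Aff^{2i}$-bundle; everything else is a straightforward unwinding of the definitions.
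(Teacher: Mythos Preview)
Your argument is correct and its core idea---that $U \cap E_i = 0$ forces $U$ to be the graph of a map $\shf U' \to E_i$, whence the fibre of $q$ is a $\Hom(\shf U', E_i)$-torsor---is precisely the mechanism behind the paper's Theorem~\ref{T:normal.geom}(b)(c).  The only difference is organizational: the paper proves the case $i=1$ in the general relative setting of Theorem~\ref{T:normal.geom} (where $N^{+}$ plays the role of your $\overline X_{2}$ and the map $\pi_+ \colon N^+ \to \HGr(E)$ is your $q$) and then obtains Theorem~\ref{th.subgrass} by iterating that $\Aff^{2}$-bundle $i$ times along the flag, whereas you quotient by the full isotropic $E_i$ in a single step.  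Your direct approach even yields slightly more transparently that $q$ is a torsor under the rank-$2i$ vector bundle $(\shf U')^{\vee} \otimes E_i$ rather than merely an $\Aff^{2i}$-bundle; the paper's iterative route has the advantage that it recycles a result already needed elsewhere.
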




\begin{corollary}
\label{C:small.cell}
There is a natural isomorphism $\overline X_{2n} = X_{2n} \cong
\Aff^{2n}$.
\end{corollary}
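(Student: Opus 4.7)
The plan is to read the isomorphism $\overline X_{2n} \cong \Aff^{2n}$ directly off the definition. One could instead invoke Theorem \ref{th.subgrass} with $i = n$, noting that $\HP^0 = \HGr(1,1) = k$ since the only nondegenerate $2$-plane in a $2$-dimensional symplectic space is the whole space, so that $q \colon \overline X_{2n} \to k$ becomes an $\Aff^{2n}$-bundle; but one would still have to trivialise that torsor, and the direct approach does this at no extra cost.

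The first step is to understand $(E_n^\perp, \phi|_{E_n^\perp})$. Since $E_n$ is totally isotropic of dimension $n$ inside the $(n+2)$-dimensional $E_n^\perp$, the radical of $\phi|_{E_n^\perp}$ is exactly $E_n$ and the induced form on the quotient $E_n^\perp/E_n$ is symplectic of rank $2$. Consequently every $U \in \overline X_{2n}$ meets $E_n$ trivially and projects isomorphically to $E_n^\perp/E_n$.

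Next I would pick a convenient symplectic complement. Choose a Lagrangian complement $F_{n+1}$ to $E_{n+1}$ and a symplectic basis $(e_1,\dots,e_{n+1}; f_1,\dots,f_{n+1})$ of $V$ adapted to the flag; set $W = \langle e_{n+1}, f_{n+1}\rangle$, so that $W \subset E_n^\perp$ is symplectic of rank $2$ and $E_n^\perp = E_n \oplus W$ with $E_n$ in the radical. For any $T \to k$, a $T$-point of $\overline X_{2n}$ is a rank $2$ subbundle $U \subset E_n^\perp \otimes \OO_T$ with $\phi|_U$ nondegenerate; the projection $U \to W \otimes \OO_T$ is an isomorphism, fibrewise by the previous paragraph and globally by an equal-rank argument, so $U$ is the graph of a unique $\OO_T$-linear $\sigma \colon W \otimes \OO_T \to E_n \otimes \OO_T$. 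Conversely, since $E_n$ lies in the radical of $\phi|_{E_n^\perp}$ one computes $\phi(w + \sigma(w), w' + \sigma(w')) = \phi(w,w')$ for every such $\sigma$, so nondegeneracy on the graph is automatic.

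This yields a natural bijection of $T$-valued points between $\overline X_{2n}$ and $\Hom_k(W, E_n)$, identifying $\overline X_{2n}$ with the affine space $\Aff^{2n}$. There is no real obstacle; the only delicate point is passing from the fibrewise radical calculation to the scheme-theoretic statement, which is straightforward by rank counting once the vector-bundle map $U \to W \otimes \OO_T$ has been written down.
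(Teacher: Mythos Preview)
Your argument is correct and is essentially the paper's intended proof: the corollary is placed immediately after Theorem \ref{th.subgrass} with no further argument, so the paper's proof is simply ``apply Theorem \ref{th.subgrass} with $i=n$, noting $\HP^{0}=k$''. Your direct graph-of-a-linear-map computation is exactly the content of that theorem in this special case, and your worry about trivialising the torsor is unfounded: Theorem \ref{th.subgrass} (via Theorem \ref{T:normal.geom}) identifies $\overline X_{2i}$ as an honest vector bundle over $\HP^{n-i}$, namely $\Hom(\shf U_{\HP^{n-i}},E_{i})$, which for $i=n$ is $\Hom_{k}(W,E_{n})$ with both factors trivial---precisely what you wrote down.
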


\begin{theorem}
\label{A.X2i}
\parens{a}
The stratum $X_{2i}$ is the quotient of the
free action of $\GG_{a}$ on
$\Aff^{4n-2i+1} = \Aff^{i} \times \Aff^{2n-2i} \times \Aff^{i} \times \Aff^{2n-2i} \times \Aff^{1}$
given by
\begin{equation}
\label{E:action.1}
t \cdot (\alpha,a,\beta,b, r) = (\alpha,a,\beta+t\alpha, b+ ta,
r + t(1-\phi({a},{b})))
\end{equation}
where
$\phi
\colon \Aff^{2n-2i} \times \Aff^{2n-2i} \to \Aff^{1}$
is the standard symplectic form.

\parens{b}
For any scheme $S$, pullback along the structural
map $t \colon X_{2i} \to k$ induces isomorphisms $(t \times 1_S)^A \colon
A(S) \arrowiso A(X_{2i} \times S)$ for any cohomology theory $A$.

\end{theorem}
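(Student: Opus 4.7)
The strategy is to exhibit an explicit $\GG_a$-equivariant morphism $\mu\colon\Aff^{4n-2i+1}\to X_{2i}$, verify it realizes $X_{2i}$ as the $\GG_a$-quotient, and deduce \parens{b} from homotopy invariance for affine-space bundles.

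Fix a symplectic basis $e_1,\dots,e_{n+1},f_1,\dots,f_{n+1}$ of $V$ with $\phi(e_j,f_k)=\delta_{jk}$ and $E_j=\mathrm{span}(e_1,\dots,e_j)$, so that $E_i^\perp$ contains every basis vector except $f_1,\dots,f_i$. Set $W'=\mathrm{span}(e_{i+2},\dots,e_{n+1},f_{i+2},\dots,f_{n+1})\cong\Aff^{2n-2i}$ with its standard symplectic form, still denoted $\phi$. Define $\mu$ by sending $(\alpha,a,\beta,b,r)\in\Aff^i\times\Aff^{2n-2i}\times\Aff^i\times\Aff^{2n-2i}\times\Aff^1$ to $U=\mathrm{span}(v_1,v_2)$, where
\[
v_1=\alpha\cdot e+\bigl(1-\phi(a,b)\bigr)e_{i+1}+a\cdot w',\qquad v_2=f_{i+1}+\beta\cdot e+r\,e_{i+1}+b\cdot w'.
\]
A short calculation yields $\phi(v_1,v_2)=\bigl(1-\phi(a,b)\bigr)+\phi(a,b)=1$, so $U\in\HP^n$; both vectors lie in $E_i^\perp$, so $U\subset E_i^\perp$; and the $f_{i+1}$-coefficient of $v_2$ equals $1$, so $U\not\subset E_{i+1}^\perp$. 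Hence $U\in X_{2i}$.

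Under the prescribed $\GG_a$-action one has $\phi(a,b+ta)=\phi(a,b)$ by alternativity, so $v_1$ is fixed and $v_2\mapsto v_2+tv_1$; the span $U$ is unchanged, so $\mu$ is $\GG_a$-invariant. If $t$ fixes a point, then $t\alpha=ta=0$ and $t(1-\phi(a,b))=0$; when $\alpha=a=0$ the last equation reads $t=0$, otherwise one of the first two does, so the action is free. To show $\mu$ is a Zariski-locally trivial $\GG_a$-torsor, consider a $T$-point $U\in X_{2i}(T)$. The linear functional $\phi(e_{i+1},-)\colon V_T\to\OO_T$ restricts on $U$ to the map picking out the $f_{i+1}$-coefficient; it is surjective because $U\not\subset E_{i+1}^\perp$, and its kernel $K=U\cap E_{i+1}^\perp$ is a line subbundle of $U$. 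Zariski-locally on $T$ one picks a generator $v_1^0$ of $K$ and a lift $v_2\in U$ of the generator $1$ of $U/K\cong\OO_T$; nondegeneracy of $\phi\rest{U}$ makes $\phi(v_1^0,v_2)$ a unit, so $v_1:=v_1^0/\phi(v_1^0,v_2)$ is a distinguished generator of $K$ with $\phi(v_1,v_2)=1$. Reading off the coefficients of $v_1,v_2$ in $\{e_j,f_j\}$ yields a Zariski-local section of $\mu$, and the remaining ambiguity $v_2\mapsto v_2+tv_1$ is exactly the $\GG_a$-action on $\Aff^{4n-2i+1}$. This proves \parens{a}.

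For \parens{b}, the base-changed morphism $\Aff^{4n-2i+1}\times S\to X_{2i}\times S$ is an $\Aff^1$-bundle, so its pullback on $A$ is an isomorphism by homotopy invariance, while the projection $\Aff^{4n-2i+1}\times S\to S$ is a trivial $\Aff^{4n-2i+1}$-bundle, whose pullback is an isomorphism by iterated $\Aff^1$-invariance. Two-out-of-three forces $(t\times 1_S)^A\colon A(S)\to A(X_{2i}\times S)$ to be an isomorphism. The main difficulty is guessing the parametrization itself: the nonlinear term $1-\phi(a,b)$ in the $e_{i+1}$-coefficient of $v_1$ is exactly what makes $\phi(v_1,v_2)=1$ hold identically, so that the residual shear symmetry $v_2\mapsto v_2+tv_1$ of the normalized basis reproduces the specific nonlinear $\GG_a$-action prescribed in the statement.
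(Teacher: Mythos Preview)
Your proof is correct and follows essentially the same approach as the paper: both realize $X_{2i}$ as the quotient of the space of normalized frames $(v_1,v_2)$ of the tautological subbundle (with $v_1\in E_{i+1}^\perp$, $f_{i+1}$-coefficient of $v_2$ equal to $1$, and $\phi(v_1,v_2)=1$) by the shear action $v_2\mapsto v_2+tv_1$. The paper derives this by specializing the general machinery of Lemmas~\ref{L:bdl.surj.2} and~\ref{L:torsor} to the tautological bundle over $X_{2i}$, while you write the parametrization and verify the torsor property directly in coordinates; the content is the same.
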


Theorem \ref{th.subgrass} is
essentially a special case of
Theorem \ref{T:normal.geom}.
We will prove Theorem \ref{A.X2i} in \S\ref{S:open.stratum}.

Some curious things
are beginning to happen.  The strata in our decomposition are cohomological cells but
not affine spaces or even affine schemes.  They are of the right codimension but the
wrong dimension, for $\dim_{\RR}\HP^{n}(\CC) = 8n$ but $\dim_{\RR}\mathbb{HP}^{n} = 4n$.
The $\overline{X}_{2i}$ are not copies of the subspaces $\HP^{n-i}$ but vector bundles
over copies of the $\HP^{n-i}$, and the difference in dimensions is worse:  $\dim_{\RR}\HP^{n}(\CC) = 8n-4i$ but $\dim_{\RR}\mathbb{HP}^{n-i} = 4n-4i$.  There is no problem cohomologically, but it is
curious geometrically.


For a rank $2n+2$ symplectic bundle $(E,\phi)$ over a scheme $S$ there is a
\emph{quaternionic projective bundle} $\HP_{S}(E,\phi)$.  It is the open subscheme of the Grassmannian
bundle $\Gr_{S}(2,E)$ whose points over $s \in S$ correspond to those $U \subset E_{s}$ on which $\phi$
is nondegenerate.  Let $\pi \colon \HP_{S}(E,\phi) \to S$ be its structure map,
i.e.\ the natural projection
to $S$.
Then the symplectic bundle $\pi^{*}(E,\phi)$ splits as the orthogonal direct sum
\begin{equation}
\label{E:univ.split}
\pi^{*}(E,\phi) \cong (\shf U, \phi\rest{\shf U}) \perp (\shf U^{\perp}, \phi\rest{\shf U^{\perp}})
\end{equation}
of the tautological rank $2$ symplectic subbundle and its orthogonal complement.
This decomposition is \emph{universal} in the sense that for any
a morphism $g \colon X \to S$ any orthogonal direct sum decomposition
$g^{*}(E,\phi) \cong (F_1,\psi_{1}) \perp (F_{2},\psi_{2})$ with $\rk F_{1} = 2$
is the pullback along a unique morphism $f \colon X \to \HP_{S}(E,\phi)$  of the universal decomposition
\eqref{E:univ.split}.  The map $f$ is said to \emph{classify} either the decomposition or the
rank $2$ symplectic subbundle $(F_{1},\psi_{1}) \subset g^{*}(E,\phi)$.

We define the quaternionic  Grassmannians and partial flag varieties as the quotient varieties
\begin{gather*}
\HGr(r,n) = \Sp_{2n} / ( \Sp_{2r} \times \Sp_{2n-2r}),
\\
\HFlag(a_{1},\dots,a_{r};n) = \Sp_{2n} / ( \Sp_{2a_{1}} \times  \cdots \times\Sp_{2a_{r}} \times \Sp_{2n-\sum 2a_{i}}).
\end{gather*}
The second family of schemes includes the first, so we discuss it.
Over  $\HFlag(a_{1},\dots,a_{r};n)$ there
are $r+1$ universal symplectic subbundles $(\shf U_{1},\phi_{1}),\dots,(\shf U_{r},\phi_{r}),(\shf V,\psi)$
with $\rk \shf U_{i} = 2a_{i}$ and $\rk \shf V = 2n-\sum 2a_{i}$ and a canonical decomposition of the
trivial symplectic bundle of rank $2n$ into their orthogonal direct sum
\[
(V,\phi)\otimes \OO \cong (\shf U_{1},\phi_{1}) \perp \cdots \perp
(\shf U_{r},\phi_{r}) \perp (\shf V,\psi)
\]
Moreover, any decomposition over a scheme $S$ of the trivial symplectic bundle
$(V,\phi) \otimes \OO_{S}$ into an orthogonal direct sum of symplectic subbundles of the appropriate
ranks is the pullback of this universal decomposition
along a unique morphism $S \to \HFlag(a_{1},\dots,a_{r};n)$.

Now write $\shf F_{i} =  \bigoplus_{j=1}^{i} \shf U_{i}$.  We then have a filtration
\begin{equation}
\label{E:flag}
0 \subset \shf F_{1} \subset \shf F_{2} \subset \cdots \subset \shf F_{r} \subset V \otimes \OO
\end{equation}
Clearly $\HFlag(a_{1},\dots,a_{r};n)$ parametrizes flags of subspaces
$0 \subset F_{1} \subset \cdots \subset F_{r} \subset V$ of the appropriate dimensions such that
$\phi \rest{F_{i}}$ is nondegenerate for all $i$.  So it is an open subscheme of a flag variety.

\begin{theorem}
The quaternionic flag varieties are dense open subschemes of the flag varieties
\[
\HFlag(a_{1},\dots,a_{r};n) \subset \Flag(2m_{1},2m_{2},\dots,2m_{r};2n).
\]
with $m_{i} = \sum_{j=1}^{i} a_{j}$.
They are smooth and affine of relative dimension $4n \sum_{i} a_{i} - 4 \sum_{i\leq j} a_{i}a_{j}$
over the base $k$,
and they satisfy $H^{0}(\HFlag,\OO^{\times}) = \OO(k)^{\times}$ and $\Pic(\HFlag) = \Pic(k)$.
\end{theorem}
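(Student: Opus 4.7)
The approach is induction on $r$, identifying $\HFlag_r := \HFlag(a_1,\dots,a_r;n)$ as a relative quaternionic Grassmannian bundle over $\HFlag_{r-1}$ via the forgetful morphism, with the base case $r=1$ (the quaternionic Grassmannian $\HGr(a_1,n)$) treated directly by the Pfaffian/ample-divisor argument already used for $\HP^n$ in the proof of Theorem \ref{th.basic}.

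I would begin by making the open embedding explicit. On $\Flag(2m_1,\dots,2m_r;2n)$ the restriction of $\phi$ to each tautological subbundle $\shf F_i$ is a section of $\Lambda^2 \shf F_i\dual$ whose Pfaffian $\Pf(\phi\rest{\shf F_i}) \in \Gamma(\det \shf F_i\dual)$ vanishes exactly where $\phi\rest{F_i}$ degenerates, and the common nonvanishing locus of these $r$ Pfaffians is precisely $\HFlag_r$. This locus is nonempty (take a flag built from a standard symplectic basis), hence dense in the irreducible $\Flag$, and smooth because $\Flag$ is smooth over $k$. For the base case $r=1$, $\HGr(a_1,n) \subset \Gr(2a_1,2n)$ is the complement of the zero locus of $\Pf(\phi\rest{\shf U}) \in \Gamma(\det \shf U\dual)$; since $\det \shf U\dual$ is the Pl\"ucker $\OO(1)$, which is ample and generates $\Pic(\Gr)/\Pic(k) \cong \ZZ$, the complement is smooth affine of dimension $4a_1(n-a_1)$, with $\Pic = \Pic(k)$ and units $\OO(k)^\times$, exactly as for $\HP^n$.

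For the inductive step I would study the forgetful map $\pi\colon \HFlag_r \to \HFlag_{r-1}$. Over $\HFlag_{r-1}$ the orthogonal complement $\shf F_{r-1}^\perp$ is a well-defined symplectic subbundle of rank $2(n-m_{r-1})$, since $\phi\rest{\shf F_{r-1}}$ is nondegenerate; and the universal property of partial flag quotients identifies $\pi$ with the relative quaternionic Grassmannian bundle $\HGr_{\HFlag_{r-1}}(a_r; \shf F_{r-1}^\perp)$, namely the complement of the relative Pfaffian divisor in the relative ordinary Grassmannian $\Gr_{\HFlag_{r-1}}(2a_r, \shf F_{r-1}^\perp)$. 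The latter Grassmannian is projective and smooth over $\HFlag_{r-1}$, and the Pfaffian divisor is relatively ample, so $\pi$ is a smooth affine morphism. By induction $\HFlag_{r-1}$ is smooth and affine over $k$, hence so is $\HFlag_r$. The relative dimension formula $\dim \HFlag_r = \dim \HFlag_{r-1} + 4a_r(n-m_r)$ telescopes to $4n\sum_i a_i - 4\sum_{i\leq j} a_i a_j$.

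For the Picard group and global units I would combine the Grassmannian bundle formula with localization. On $\Gr_{\HFlag_{r-1}}(2a_r, \shf F_{r-1}^\perp)$ one has $\Pic = \pi^*\Pic(\HFlag_{r-1}) \oplus \ZZ\cdot[\det \shf U_r\dual]$ and units $\OO(\HFlag_{r-1})^\times$ (proper flat morphism with geometrically connected fibres). Removing the Pfaffian divisor, whose class generates the extra $\ZZ$ summand, kills that summand in Pic and, via the exact sequence
\[
0 \to \OO^\times(\Gr) \to \OO^\times(\HGr) \to \mathrm{Div}_Z \to \Pic(\Gr) \to \Pic(\HGr) \to 0,
\]
contributes no new units. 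Combined with the inductive hypothesis this yields $\Pic(\HFlag_r) = \Pic(k)$ and $\OO(\HFlag_r)^\times = \OO(k)^\times$. The main obstacle I anticipate is rigorously setting up the universal-property identification of $\pi$ with the relative $\HGr$-bundle of $\shf F_{r-1}^\perp$, since it requires unwinding the definition of $\HFlag$ as a symplectic quotient and observing that $\shf F_{r-1}^\perp$ carries a canonical universal symplectic structure on $\HFlag_{r-1}$; once that identification is in place, the remaining arguments are standard bookkeeping.
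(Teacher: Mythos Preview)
Your argument is correct. The paper's own proof is a single sentence (``essentially the same as that of Theorem \ref{th.basic}''), which most naturally suggests the \emph{direct} variant: view $\HFlag(a_1,\dots,a_r;n)$ as the complement in the full flag variety $\Flag(2m_1,\dots,2m_r;2n)$ of the union of the $r$ Pfaffian divisors $Z_i = \{\Pf(\phi\rest{\shf F_i}) = 0\}$, observe that their classes $[\det \shf F_i\dual]$ freely generate $\Pic(\Flag)/\Pic(k) \cong \ZZ^r$ and that their sum is ample, and conclude affineness, $\Pic = \Pic(k)$, and triviality of units in one stroke. Your approach instead inducts on $r$ via the forgetful map $\HFlag_r \to \HFlag_{r-1}$, identifying it with a relative quaternionic Grassmannian in the orthogonal complement $\shf F_{r-1}^\perp$ and reapplying the $r=1$ argument fibrewise.

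Both routes are standard and equally valid. The direct approach is shorter once one knows the Picard group of the flag variety, and it more literally parallels the $\HP^n$ case. Your inductive approach has the advantage of being self-contained (it only uses the Grassmannian case, not the full $\Pic(\Flag)$), and the identification of the forgetful map with a relative $\HGr$-bundle is in any case exactly what the paper uses later (e.g.\ in \S\ref{S:flag.asymp}) to compute cohomology. One small point you should make explicit in either approach is the irreducibility of the Pfaffian divisor (over a connected base it is the closure of a single $\Sp$-orbit), since the units computation via the divisor exact sequence needs the map $\ZZ \to \Pic$ to be injective.
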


The proof is essentially the same as that of Theorem \ref{th.basic}.

%

\section{Normal bundles of sub-Grassmannians}
\label{S:normal.subgrass}

For ordinary Grassmannians, there are closed embeddings
$\Gr(r,n-1) \into \Gr(r,n)$, and the complement of the image is isomorphic
to a vector bundle over $\Gr(r-1,n-1)$.  For the cohomology there is then a long exact sequence
\begin{equation}
\label{E:Grass.sequence}
\cdots \to A_{\Gr(r,n-1)}(\Gr(r,n)) \to
A(\Gr(r,n)) \to A(\Gr(r-1,n-1)) \to \cdots.
\end{equation}
Moreover,
the normal bundle $N$ of $\Gr(r,n-1)$ in $\Gr(r,n)$ embeds as an open subvariety of $\Gr(r,n)$,
and so excision gives an isomorphism
$A_{\Gr(r,n-1)}(\Gr(r,n)) \cong A_{\Gr(r,n-1)}(N)$.

We will show that something similar happens for the cohomology of quaternionic Grassmannians
but with many differences in the details.

We work in a relative situation.  Our basic setup is as follows.
Suppose $(E,\phi)$ is a symplectic bundle of rank $2n-2$ over $S$, and let $(F,\psi)$
be the symplectic bundle of rank $2n$ with
\begin{align}
\label{E:basic.setup}
F & = \OO_{S} \oplus E \oplus \OO_{S}, &
\psi & =
\begin{pmatrix}
0 & 0 & 1 \\ 0 & \phi & 0 \\ -1 & 0 & 0
\end{pmatrix}.
\end{align}
We will consider the natural embedding of
$\HGr_{S}(r;E,\phi)$ in $\HGr_{S}(r;F,\psi)$.
Let $(\shf U_{E},\phi_{\shf U_{E}})$ and $(\shf U_{F},\psi_{\shf U_{F}})$ be the
tautological symplectic subbundles of rank $2r$.
We will abbreviate
\begin{align}
\label{E:HGr.abbrv}
\HGr(E) & = \HGr_{S}(r;E,\phi)
&
\HGr(F) & = \HGr_{S}(r;F,\psi).
\end{align}
%
We now study the embedding of $\HGr(E)$ in $\HGr(F)$.

\begin{theorem}
\label{T:normal.geom}
\parens{a} The normal bundle $N$ of $\HGr(E)$ in $\HGr(F)$ has a
canonical embedding
as an open subscheme of $\Gr_{S}(2r;F)$ overlapping
the open subscheme $\HGr_{S}(r;F,\psi)$.

\parens{b} The subschemes $N^{+} = \HGr(F) \cap \Gr_{S}(2r;\OO_{S}\oplus E)$ and
$N^{-} = \HGr(F) \cap \Gr_{S}(2r;E\oplus \OO_{S})$ are closed in $\HGr(F)$
and are subbundles of $N$ with $N = N^{+} \oplus N^{-}$.
We have $N^{+}\cap N^{-} = \HGr(E)$.

\parens{c} There are natural
vector bundle isomorphisms $N^{+} \cong \shf U_{E}$ and
$N^{-} \cong \shf U_{E}$.

\parens{d} There is a natural section $s_{+}$ of $\shf U_{F}$
intersecting the zero section transversally in $N^{+}$ and similarly for $N^{-}$.

\parens{e} Let $\pi_{+} \colon N^{+} \to \HGr(E)$ be the structural map.  Then
$\pi_{+}^{*}(\shf U_{E},\phi\rest{\shf U_{E}})$ is isometric to
$(\shf U_{F},\psi\rest{\shf U_{F}})\rest{N^{+}}$ and similarly for $N^{-}$.
\end{theorem}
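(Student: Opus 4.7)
The plan is to realise the embedding $\HGr(E) \hookrightarrow \HGr(F)$ as the intersection inside the ambient Grassmannian $\Gr_S(2r;F)$ of the open subscheme $\HGr(F)$ with the closed sub-Grassmannian $\Gr_S(2r;E)$, and then to deduce everything from standard facts about Grassmannians together with one explicit local formula for $\psi$. Write $F = L^+ \oplus E \oplus L^-$ for the decomposition \eqref{E:basic.setup}, with $L^\pm \cong \OO_S$ and canonical generators $e_\pm$. Since $\psi|_E = \phi$, one immediately checks that $\HGr(E) = \Gr_S(2r;E) \cap \HGr(F)$; because $\HGr(F)$ is open in $\Gr_S(2r;F)$, this exhibits $\HGr(E)$ as a closed subscheme of $\HGr(F)$ whose normal bundle agrees with the restriction of the normal bundle of $\Gr_S(2r;E) \hookrightarrow \Gr_S(2r;F)$. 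The latter is the familiar $\HHom(\shf U_E, F/E)$, and the splitting $F/E \cong L^+ \oplus L^-$ combined with the duality $\shf U_E \cong \shf U_E^\vee$ induced by $\phi|_{\shf U_E}$ gives the canonical $N \cong \shf U_E \oplus \shf U_E$. For (a) I would pass to the standard affine chart $V = \{U \in \Gr_S(2r;F) : U \cap (L^+ \oplus L^-) = 0\}$: on $V$ each $U$ is the graph of a unique pair $(\alpha^+,\alpha^-) \colon \pi_E(U) \to L^+ \oplus L^-$, realising $V$ as the total space of $\HHom(\shf U_{\Gr_S(2r;E)}, F/E)$. Restricting along $\HGr(E) \subset \Gr_S(2r;E)$ gives the claimed open embedding $N \hookrightarrow \Gr_S(2r;F)$, which overlaps $\HGr(F)$ because it already contains $\HGr(E)$.

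For (b), (c) and (e) the essential input is the direct computation of $\psi$ in this chart: writing $u_i = w_i + \alpha^+(w_i) e_+ + \alpha^-(w_i) e_-$, one finds
\[
\psi|_U(u_1, u_2) = \phi(w_1,w_2) + \alpha^+(w_1)\alpha^-(w_2) - \alpha^+(w_2)\alpha^-(w_1).
\]
Combined with the closedness of $\Gr_S(2r; L^+ \oplus E)$ and $\Gr_S(2r; E \oplus L^-)$ in $\Gr_S(2r;F)$, this shows that $N^\pm$ are closed in $\HGr(F)$ and correspond under $V \cong N$ to the subbundles cut out by $\alpha^\mp = 0$, yielding the identifications $N^\pm \cong \HHom(\shf U_E, \OO_S) \cong \shf U_E^\vee \cong \shf U_E$ (the last via $\phi$) of (b) and (c) as well as $N = N^+ \oplus N^-$ and $N^+ \cap N^- = \HGr(E)$. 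Specialising the same formula to $\alpha^- = 0$ shows that on $N^+$ the projection $\pi_E$ realises $(\shf U_F, \psi|_{\shf U_F})|_{N^+}$ isometrically as the pullback $\pi_+^*(\shf U_E, \phi|_{\shf U_E})$, which is (e).

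For (d), the linear functional $\psi(e_+, -) \colon F \to \OO_S$ has kernel $L^+ \oplus E$, so its restriction to $\shf U_F$ defines a global section $t_+$ of $\shf U_F^\vee$ on all of $\Gr_S(2r;F)$ whose zero scheme is exactly $\Gr_S(2r; L^+ \oplus E)$; since this sub-Grassmannian is smooth of codimension $2r = \rk \shf U_F^\vee$, the section $t_+$ is already transversal on the ambient Grassmannian. On the open subscheme $\HGr(F)$ the form $\psi|_{\shf U_F}$ induces an isomorphism $\shf U_F^\vee \cong \shf U_F$, converting $t_+$ into the desired section $s_+$ of $\shf U_F$ with zero scheme $N^+$, still transversal; the analogous construction with $e_-$ produces $s_-$. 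The main obstacle is purely notational bookkeeping: keeping track of the various canonical dualities $\shf U \cong \shf U^\vee$ induced by $\phi$ and by $\psi$ and verifying their compatibility with the block structure of $F$, but no deep geometric input beyond standard properties of sub-Grassmannians is required.
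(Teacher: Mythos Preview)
Your proof is correct and follows essentially the same approach as the paper's: both identify $N$ with the graph locus over $\HGr(E)$ inside $\Gr_S(2r;F)$, split it via $F/E \cong L^+ \oplus L^-$, and read off (b)--(e) from the fact that the graph of $(\alpha^+,0)$ carries exactly the form $\phi$ pulled back from the base. The paper phrases this via functors of points (writing a point of $N^+$ as $(\alpha_1,i,0)\colon U \hookrightarrow F$ and computing $(\alpha_1,i,0)^\vee\psi(\alpha_1,i,0)=i^\vee\phi i$), while you work in the explicit affine chart $V$ and write out the local formula for $\psi|_U$; the content is the same, and your explicit verification of transversality in (d) is in fact slightly more detailed than the paper's.
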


Actually in (b) and (c) the truly natural isomorphisms are $N^{+} \cong \shf U_{E}^{\vee}$ and
$N^{-} \cong \shf U_{E}^{\vee}$, while $s_{+}$ is naturally a section of $\shf U_{F}^{\vee}$.
But since $\shf U_{E}$ and $\shf U_{F}$ are symplectic, this does not matter.

When $S$ is $k$, and $(E,\phi)$ is the trivial hyperbolic symplectic bundle $\Hyp(\OO_{k}^{\oplus n-1})$,
then we are in the situation of Theorem \ref{th.subgrass} with
$\HGr(F) = \HP^{n-1}$ and $N^{+} = \overline{X}_{2}$ and $\HGr(E) = X'_{0}$.

\begin{proof}
(a)
%
The normal bundle $N$ of $\HGr(E)$ in $\HGr(F)$
is isomorphic to $\shf U_{E}^{\vee} \otimes F/E \cong \shf U_{E}^{\vee} \otimes \OO^{\oplus 2}$.
Therefore $N$ has a universal property: to give a map $T \to N$ one gives a map
$t \colon T \to S$ plus a symplectic subbundle $i \colon U \subset t^{*}E$ of rank $2r$ plus a morphism
$(\alpha_{1},\alpha_{2}) \colon U \to \OO_{T}^{\oplus 2}$.
Giving such data is the same as giving $t \colon T \to S$ plus a rank $2r$ subbundle
$(\alpha_{1},i,\alpha_{2}) \colon U \into \OO_{T} \oplus t^{*}E \oplus \OO_{T} = t^{*}F$
such $i^{\vee}\phi i$ is everywhere of maximal rank.  So $N$ is naturally isomorphic to an open subscheme
of $\Gr_{S}(2r;F)$.

(b)(e)
To give a morphism $T \to N^{+}$ one gives a map $t \colon T \to S$ plus a subbundle
of rank $2r$ of the form
$(\alpha_{1},i,0) \colon U \into \OO_{T} \oplus t^{*}E \oplus \OO_{T} = t^{*}F$ such that
$(\alpha_{1},i,0)^{\vee}\psi(\alpha_{1},i,0)$ is nonsingular.  That is equivalent to giving
$t \colon T \to S$ and $i \colon U \into t^{*}E$ such that $i^{\vee}\phi i$ is nonsingular plus
$\alpha_{1} \colon U \to \OO_{T} \oplus 0$.  So $N^{+}$ is the subbundle
$\shf U_{E}^{\vee} \otimes (\OO_{S} \oplus 0)$ of $N = \shf U_{E}^{\vee} \otimes \OO_{S}^{\oplus 2}$.
Similarly $N^{-} = \shf U_{E}^{\vee} \otimes (0 \oplus \OO_{S})$.  The other assertions of (b) are clear.
The two presentations have the same bundle $U$ and the same form
$(\alpha_{1},i,0)^{\vee}\psi(\alpha_{1},i,0) = i^{\vee}\phi i$.  Hence (e).

(c) By construction we have $N^{+} \cong \shf U_{E}^{\vee}$.  But $\shf U_{E}$ is symplectic,
so $\shf U_{E}^{\vee} \cong \shf U_{E}$.

(d) For $\shf U_{F}$ we have the inclusion
$(\alpha_{1},i,\alpha_{2}) \colon \shf U_{F} \into \OO_{S} \oplus E \oplus \OO_{S} = F$.
The scheme $N^{+}$ is the zero locus of $\alpha_{2}$ or equivalently of
$\alpha_{2}^{\vee} \colon \OO_{S} \to \shf U_{F}^{\vee} \cong \shf U_{F}$.
\end{proof}

In the proof of Theorem \ref{T:unique.2} we will use the following subtle point about
the geometry of Theorem \ref{T:normal.geom}.
The \emph{tautological section} of
$\pi_{-}^{*}N_{-} = N_{-} \times_{\HGr(E)} N_{-} \xrightarrow{b_{2}} N_{-}$
is the diagonal $\Delta$.

\begin{lemma}
\label{L:subtle}
The isomorphism
$\shf U_{F} \rest{N^{-}} \cong \pi_{-}^{*} \shf U_{E}$ of \parens{d}
and the pullback $\pi_{-}^{*} \shf U_{E} \cong \pi_{-}^{*} N^{-}$ of the isomorphism of \parens{b}
identifies the restriction $s_{+} \rest{N^{-}}$ of the section of \parens{c} with the tautological section
of $\pi_{-}^{*} N^{-}$.
\end{lemma}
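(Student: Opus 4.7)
My plan is to unwind the universal properties underlying Theorem \ref{T:normal.geom} so that both $s_+\rest{N^-}$ and the diagonal section $\Delta$ of $\pi_-^* N^-$ get identified with the same piece of tautological data on $N^-$.

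First, from the proof of \parens{b} and \parens{e} I would recall that a morphism $T \to N^-$ is the same as a triple $(t \colon T \to S,\; i \colon U \into t^*E,\; \alpha_2 \colon U \to \OO_T)$ with $i$ a rank $2r$ symplectic subbundle, and the corresponding inclusion into $t^*F = \OO \oplus t^*E \oplus \OO$ is $(0, i, \alpha_2)$. Taking $T = N^-$ and the identity map produces universal data: $U$ is identified with $\pi_-^*\shf U_E$ via the isometry of \parens{e}, and $\alpha_2$ becomes a universal section $\alpha_2^{\mathrm{univ}} \in \Gamma(N^-,\pi_-^*\shf U_E^\vee)$.

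Next, I would observe that by construction the isomorphism $N^- \cong \shf U_E^\vee$ of \parens{c} sends a point $(U,\alpha_2)$ above $[U] \in \HGr(E)$ to the functional $\alpha_2 \in \shf U_E^\vee\rest{[U]}$. Under the induced isomorphism $\pi_-^* N^- \cong \pi_-^*\shf U_E^\vee$, the diagonal $\Delta$ corresponds precisely to $\alpha_2^{\mathrm{univ}}$, by the general principle that the universal element of a representable functor pulls back along the identity to itself.

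Finally, I would check that $s_+\rest{N^-}$ equals the same $\alpha_2^{\mathrm{univ}}$. By the proof of \parens{d}, $s_+$ is the third coordinate of the tautological inclusion $(\alpha_1, i, \alpha_2) \colon \shf U_F \into F$, viewed as a section of $\shf U_F^\vee$ and transported to $\shf U_F$ by symplectic self-duality. Restricting to $N^-$, where the tautological inclusion takes the form $(0, \pi_-^* i, \alpha_2^{\mathrm{univ}})$ inside $\OO \oplus \pi_-^* E \oplus \OO$, this third coordinate is tautologically $\alpha_2^{\mathrm{univ}}$; pushing it through the isometry $\shf U_F\rest{N^-} \cong \pi_-^*\shf U_E$ of \parens{e} yields the claimed identification with $\Delta$. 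The main obstacle is the careful bookkeeping of the $\shf U \cong \shf U^\vee$ identifications coming from the symplectic forms: in the \emph{natural} picture $s_+$ lives in $\shf U_F^\vee$ and $N^-$ is $\shf U_E^\vee$, and one has to confirm that these two self-dualities are compatible with the embedding $(0, i, \alpha_2) \colon U \into F$ so that no sign or twist is introduced when both descriptions are transported back to $\shf U_F$ and $N^-$.
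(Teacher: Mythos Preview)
Your proposal is correct and is precisely the approach the paper takes: the paper's entire proof is the sentence ``Both are $\alpha_{2}^{\vee}$. We leave the details to the reader,'' and what you have written is a careful unpacking of exactly that claim. Your caution about tracking the symplectic self-dualities is well placed but creates no difficulty, since the isometry of \parens{e} is the statement that the inclusion $(0,i,\alpha_2)$ intertwines the forms $\phi$ and $\psi$, so the two identifications $\shf U \cong \shf U^\vee$ are compatible on the nose.
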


Both are $\alpha_{2}^{\vee}$.  We leave the details to the reader.

\begin{proposition}
\label{P:iso}
In the situation of Theorem \ref{T:normal.geom} let
$f \colon N^{-} \into \HGr(F)$ be the inclusion.
Then for any cohomology theory $A$ the map
$f^{A} \colon A_{N^{+}}(\HGr(F)) \to A_{\HGr(E)}(N^{-})$ is an isomorphism.
\end{proposition}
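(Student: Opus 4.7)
The plan is to factor $f^A$ as a composition of isomorphisms, one arising from excision and one from homotopy invariance.

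First, Theorem~\ref{T:normal.geom}(a) realizes the normal bundle $N$ of $\HGr(E)$ in $\HGr(F)$ as an open subscheme of $\Gr_S(2r;F)$ overlapping the open subscheme $\HGr(F)$, and the proof of part~(b) identifies $N^+$ and $N^-$ with the subbundles $\shf U_E^{\vee} \otimes (\OO_S \oplus 0)$ and $\shf U_E^{\vee} \otimes (0 \oplus \OO_S)$ of $N = \shf U_E^{\vee} \otimes \OO_S^{\oplus 2}$. In particular, both $N^+$ and $N^-$ lie in $N \cap \HGr(F)$, which is open in each of $N$ and $\HGr(F)$. Two applications of \'etale excision---to the open embeddings $N \cap \HGr(F) \hookrightarrow \HGr(F)$ and $N \cap \HGr(F) \hookrightarrow N$, each restricting to the identity on $N^+$---yield
\[
A_{N^+}(\HGr(F)) \xrightarrow{\cong} A_{N^+}(N \cap \HGr(F)) \xleftarrow{\cong} A_{N^+}(N).
\]

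Second, I view $N = N^+ \oplus N^-$ as a vector bundle over $N^-$ via the projection $p \colon N \to N^-$ onto the second summand; this is just the pullback of $N^+ \to \HGr(E)$ along the bundle map $\pi^- \colon N^- \to \HGr(E)$. Its zero section is the closed embedding $s \colon N^- \hookrightarrow N$, and the closed subscheme $N^+ \subset N$ equals $p^{-1}(\HGr(E))$, where $\HGr(E) \subset N^-$ is the zero section of $\pi^-$. A standard consequence of homotopy invariance---apply the five-lemma to the localization sequences for $(\HGr(E), N^-)$ and $(N^+, N)$, using that both $p$ and its restriction $N \smallsetminus N^+ \to N^- \smallsetminus \HGr(E)$ are vector bundles---produces an isomorphism
\[
s^A \colon A_{N^+}(N) \xrightarrow{\cong} A_{\HGr(E)}(N^-).
\]

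Since $N^- \subset N \cap \HGr(F)$, the zero section $s$ factors as $N^- \xrightarrow{\tilde s} N \cap \HGr(F) \hookrightarrow N$, and the closed embedding $f$ is the composite $N^- \xrightarrow{\tilde s} N \cap \HGr(F) \hookrightarrow \HGr(F)$. Functoriality of $A$ then identifies the composition of the three isomorphisms displayed above with $f^A$, proving the proposition. The main subtlety is the bookkeeping needed to verify that $N^+$ and $N^-$ sit inside $N$ as subbundles (so that the excisions apply and $s$ factors through $N \cap \HGr(F)$); this is immediate from the universal-property description of $N$ in the proof of Theorem~\ref{T:normal.geom}.
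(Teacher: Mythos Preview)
Your argument is correct and follows the same route as the paper: two Zariski excisions through the common open $N\cap\HGr(F)$ identify $A_{N^{+}}(\HGr(F))\cong A_{N^{+}}(N)$, and homotopy invariance for the vector bundle $N=N^{+}\oplus N^{-}\to N^{-}$ gives $A_{N^{+}}(N)\cong A_{\HGr(E)}(N^{-})$, with the composite equal to $f^{A}$. The paper's proof is organized via a single commutative square but is otherwise identical; note that your ``\'etale excision'' is really Zariski excision here since the relevant maps are open immersions.
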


\begin{proof}
We have a direct sum $N = N^{+} \oplus N^{-}$ of vector bundles over
$\HGr(E)$, so the pullback and its left inverse the
restriction map $A_{N^{+}}(N) \to A_{\HGr(E)}(N^{-})$ are isomorphisms by homotopy invariance.
Moreover, $N^{+}$ is a closed subscheme of both $N$ and of $\HGr(F)$, which are both
open subschemes of $\Gr_{S}(2r;F)$.  So the diagram
\begin{equation}
\label{E:Zar.excision}
\vcenter{
\xymatrix @M=5pt @C=30pt {
A_{N^{+}}\bigl( N \cap \HGr(F) \bigr) \ar@{<-}[d]_-{\text{excision}}^-{\cong}
\ar@{<-}[r]^-{\text{excision}}_-{\cong} \ar[rd]
& A_{N^{+}}(\HGr(F)) \ar[d]^-{f^{A}}
\\
A_{N^{+}}(N) \ar[r]_-{\text{v.b.}}^-{\cong}
& A_{\HGr(E)}(N^{-})
}}
\end{equation}
commutes, and the top and left arrows are isomorphisms by Zariski excision.  Hence $f^{A}$ is also
an isomorphism.
\end{proof}

\section{Geometry of the open stratum}
\label{S:open.stratum}

In this section we study the geometry and cohomology of the open stratum in the localization
sequence for quaternionic sub-Grassmannians.  Our main results are Theorems \ref{th.Ga}
and \ref{T:Ga.cohom}.  We include a proof of Theorem \ref{A.X2i} as a special case of the latter.
A second proof is Theorem \ref{2nd}.

Recall the basic setup of \eqref{E:basic.setup}.  We have
a symplectic bundle $(E,\phi)$ of rank $2n-2$ over $S$, and let $(F,\psi)$
is of rank $2n$ with $F = \OO_{S} \oplus E \oplus \OO_{S}$ and $\psi$ the orthogonal
direct sum of $\phi$ and the hyperbolic symplectic form.
We will consider the natural embedding of
$\HGr_{S}(r;E,\phi)$ in $\HGr_{S}(r;F,\psi)$.
Let $\lambda \colon F \onto \OO_{S}$ be the projection onto the third factor with kernel
$\OO_{S} \oplus E$.  For $r \leq n-1$ let
\begin{align}
\label{E:XY}
N^{+} & = \HGr_{S}(r,F,\psi) \cap \Gr_{S}(2r,\OO_{S} \oplus E), &
Y & = \HGr_{S}(r,F,\psi) \smallsetminus N^{+}.
\end{align}
with $N^{+} \subset \HGr_{S}(r,F,\psi)$ closed of codimension $2r$ and $Y$ open.
Let $(\shf U_{r},\psi_{r})$ be the tautological
symplectic subbundle of rank $2r$ on $\HGr(r,F,\psi)$, and let $(\shf U_{r-1},\phi_{r-1})$ be the
tautological symplectic subbundle of rank $2r-2$ on $\HGr(r-1,E,\phi)$.
In this section we will investigate the open stratum $Y$.

\begin{theorem}
\label{th.Ga}
In the situation of \eqref{E:basic.setup} and \eqref{E:XY} we have morphisms over $S$
\[
Y \xleftarrow{g_{1}} Y_{1} \xleftarrow{g_{2}} Y_{2} \xrightarrow{q} \HGr_{S}(r-1,E,\phi)
\]
with $g_{1}$ an $\Aff^{2r-1}$-bundle, $g_{2}$ an $\Aff^{2r-2}$-bundle, and
$q$ an $\Aff^{4n-3}$-bundle.  Moreover,
$g_{2}^{*}g_{1}^{*}\shf U_{r}$ has two tautological sections $e,f$ over $Y_{2}$ satisfying the properties
\begin{align}
\label{E:threecond}
\lambda(f) & = 1, &
\lambda(e) & = 0, &
\psi(e,f) & = 1,
\end{align}
and writing $\pi \colon Y_{2} \to S$ for the projection,
there is a symplectic automorphism
$\rho$ of  $\pi^{*}(F,\psi) = (\OO_{Y_{2}} \oplus \pi^{*}E \oplus \OO_{Y_{2}},\psi)$
with $\rho(1,0,0) = e$ and $\rho(0,0,1) = f$ and an orthogonal direct sum of
symplectic subbundles of $\pi^{*}(F,\psi)$
\begin{equation}
\label{E:breakoff}
g_{2}^{*}g_{1}^{*}
\shf U_{r}
=
\langle e,f \rangle
\perp
\rho
( q^{*}
\shf U_{r-1}
)  \subset \pi^{*}
F.
\end{equation}
\end{theorem}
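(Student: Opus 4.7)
My plan is to build $Y_1, Y_2$ by successively adjoining tautological sections of $\shf U_r$, to produce $\rho$ as a composition of three elementary symplectic automorphisms of $F$, and to exhibit $Y_2$ as the total space of an explicit rank-$(4n-3)$ vector bundle over $\HGr_S(r-1, E, \phi)$.  First I would define $Y_1$ as the scheme parameterizing pairs $(U, f)$ with $U \in Y$ and $f$ a section of $U$ satisfying $\lambda(f) = 1$, and $Y_2$ as the scheme parameterizing triples $(U, f, e)$ with $e$ additionally a section of $U$, $\lambda(e) = 0$, and $\psi(e, f) = 1$.  Over $U \in Y$ the fiber of $g_1$ is a torsor under the rank-$(2r-1)$ bundle $U \cap \ker \lambda$, nonempty because $\lambda\rest U$ is surjective by the very definition of $Y$; so $g_1$ is an $\Aff^{2r-1}$-bundle.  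The fiber of $g_2$ over $(U, f)$ is cut out in $U$ by the two linear conditions $\lambda(e) = 0$ and $\psi(e, f) = 1$, which are linearly independent since a proportionality $\lambda\rest U = c \cdot \psi(-, f)\rest U$ evaluated at $f$ would force $\lambda(f) = 0$; so $g_2$ is an $\Aff^{2r-2}$-bundle.

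On $Y_2$ write $e = (1 - \phi(v, w), v, 0)$ and $f = (b, w, 1)$, so that $v, w$ are sections of $\pi^* E$ and $b$ a section of $\OO_{Y_2}$; the identity $\psi(e, f) = 1$ automatically forces the first coordinate of $e$.  I would then set $\rho := \eta_b \circ \sigma_w \circ \tau_v$, where
\begin{align*}
\tau_v(c_1, u, c_3) &:= (c_1,\ u + c_1 v,\ c_3 + \phi(u, v)), \\
\sigma_w(c_1, u, c_3) &:= (c_1 - \phi(u, w),\ u + c_3 w,\ c_3), \\
\eta_b(c_1, u, c_3) &:= (c_1 + c_3 b,\ u,\ c_3)
\end{align*}
are verified to be symplectic automorphisms of $\pi^* F$ by a short direct computation using $\phi(v, v) = \phi(w, w) = 0$.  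One then checks $\rho(1, 0, 0) = e$ and $\rho(0, 0, 1) = f$, so $\rho$ carries the standard symplectic decomposition $\pi^* F = \langle (1, 0, 0), (0, 0, 1) \rangle \perp (0 \oplus \pi^* E \oplus 0)$ to $\langle e, f \rangle \perp \langle e, f \rangle^\perp$.

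Next I would define $q \colon Y_2 \to \HGr_S(r-1, E, \phi)$ by sending $(U, f, e)$ to $\rho^{-1}(\langle e, f \rangle^\perp \cap U)$, a symplectic subbundle of $\pi^* E$ of rank $2r-2$ since $U \cap \langle e, f \rangle^\perp$ is symplectic of rank $2r-2$.  Writing $\pi_{r-1} \colon \HGr_S(r-1, E, \phi) \to S$ for the structural map, the inverse map from the total space of $\pi_{r-1}^*(E \oplus E \oplus \OO)$ to $Y_2$ sends $(W, v, w, b)$ to the triple $(U, f, e)$ with $e, f$ defined as above and $U := \langle e, f \rangle \oplus \rho(0 \oplus W \oplus 0)$, automatically a rank-$2r$ symplectic subbundle of $\pi^* F$ lying over $Y$ since $\lambda(f) = 1$.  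The two constructions are mutually inverse by a straightforward unpacking, so $q$ is naturally the projection from the total space of the rank-$(4n-3)$ vector bundle $\pi_{r-1}^*(E \oplus E \oplus \OO)$, and the decomposition \eqref{E:breakoff} is built into the definition of $U$.

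The main obstacle is guessing the correct $\rho$: the naive projection of $F$ onto $\langle e, f \rangle^\perp$ along $\langle e, f \rangle$ does not restrict to a symplectic isomorphism from $0 \oplus \pi^* E \oplus 0$, so it cannot serve as $\rho$ on the middle factor.  Instead one must compose shear-type symplectic automorphisms in the specific order above, after which all remaining verifications are mechanical.
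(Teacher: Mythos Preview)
Your proposal is correct and follows essentially the same approach as the paper.  Your $\tau_v$ coincides with the paper's $\rho_-(v,0)$ and your $\eta_b\circ\sigma_w$ coincides with the paper's $\rho_+(w,b)$, so your three-fold factorization of $\rho$ is just a regrouping of the paper's two-fold one; and your identification of $Y_2$ with the total space of $\pi_{r-1}^*(E\oplus E\oplus\OO_S)$ is exactly the paper's isomorphism $Y_2\cong Z_2\times_S\HGr_S(r-1,E,\phi)$ made explicit (the paper computes $Z_2\cong E\oplus E\oplus\OO_S$ in the proof of its preparatory lemma), only you write down the inverse directly rather than comparing functors of points.
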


In \eqref{E:threecond} the bilinear form is $\psi(f,e) = f^{\vee}\psi e$ for $\psi \colon F \to F^{\vee}$.

\begin{theorem}
\label{T:Ga.cohom}
In the situation of Theorem \ref{th.Ga} with $Y = \HGr_{S}(r,F,\psi) \smallsetminus N^{+}$
the open locus of \eqref{E:XY}, the following hold for any cohomology theory $A$.

\subthm
For $r=1$ let $t \colon Y \to S$ be the projection map.
Then $t^{A} \colon A(S) \to A(Y)$ is an isomorphism.

\subthm
For $1 \leq r \leq n$ let
\[
\HGr_{S}(r-1,E,\phi) \xrightarrow{\sigma} Y \subset \HGr_{S}(r,F,\psi)
\]
be the map classifying the rank $2r$ symplectic subbundle $\OO \oplus \shf U_{r-1} \oplus \OO$
of the pullback of $F$.  Then the map
$\sigma^{A} \colon A(Y) \to A(\HGr_{S}(r-1,E,\phi))$ is an isomorphism.
\end{theorem}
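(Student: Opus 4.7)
Part (a) is a special case of (b). When $r = 1$ one has $\HGr_{S}(0, E, \phi) = S$, and the map $\sigma \colon S \to Y$ is then a section of the structural map $t \colon Y \to S$ (both are morphisms over $S$, so $t\sigma = \id_{S}$). Hence once $\sigma^{A}$ is established to be an isomorphism by (b), the map $t^{A} = (\sigma^{A})^{-1}$ is too.

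For (b), the plan is to run the zigzag of Theorem \ref{th.Ga} through the cohomology theory and identify the resulting isomorphism with $\sigma^{A}$. Since $g_{1}$, $g_{2}$, and $q$ are $\Aff^{m}$-bundles, homotopy invariance for torsors under vector bundles (as recorded after Definition \ref{D:cohom}) yields that $g_{1}^{A}$, $g_{2}^{A}$, and $q^{A}$ are all isomorphisms. The key geometric step is to construct a morphism $\tilde{\sigma} \colon \HGr_{S}(r-1, E, \phi) \to Y_{2}$ satisfying
\[
g_{1} g_{2} \tilde{\sigma} = \sigma \qquad \text{and} \qquad q \tilde{\sigma} = \id.
\]
Given such a $\tilde{\sigma}$, functoriality gives $\sigma^{A} = \tilde{\sigma}^{A} (g_{1} g_{2})^{A}$ and $\tilde{\sigma}^{A} q^{A} = \id$, so $\tilde{\sigma}^{A} = (q^{A})^{-1}$ and therefore $\sigma^{A} = (q^{A})^{-1} (g_{1} g_{2})^{A}$ is a composition of isomorphisms.

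To produce $\tilde{\sigma}$ I will use the moduli description of $Y_{2}$ implicit in the proof of Theorem \ref{th.Ga}: a morphism $T \to Y_{2}$ should correspond to a morphism $T \to Y$ together with sections $e, f$ of the pullback of $\shf U_{r}$ and a symplectic automorphism $\rho$ of $F_{T}$ satisfying \eqref{E:threecond} and \eqref{E:breakoff}. The tautological choice over $\HGr_{S}(r-1, E, \phi)$ is $e = (1, 0, 0)$, $f = (0, 0, 1)$, $\rho = \id$: these visibly satisfy \eqref{E:threecond}, and \eqref{E:breakoff} reduces to the canonical orthogonal decomposition
\[
\OO \oplus \shf U_{r-1} \oplus \OO = \langle (1,0,0),(0,0,1) \rangle \perp (0 \oplus \shf U_{r-1} \oplus 0)
\]
inside $\OO \oplus E \oplus \OO = F$, which is precisely the rank $2r$ subbundle classified by $\sigma$. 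The resulting $\tilde{\sigma}$ satisfies $g_{1}g_{2}\tilde{\sigma} = \sigma$ by construction, and $q \tilde{\sigma} = \id$ because ``breaking off'' the trivial summand $\langle e, f \rangle$ returns the rank $2r-2$ symplectic subbundle $\shf U_{r-1}$ of $E$.

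The main obstacle is extracting the precise universal property of $Y_{2}$ from the proof of Theorem \ref{th.Ga} so that the ``tautological'' data above actually defines a morphism into $Y_{2}$ (and not merely into some larger scheme of auxiliary data). Once the moduli interpretations of $Y_{1}$ and $Y_{2}$ are available, the verification of both $g_{1}g_{2}\tilde{\sigma} = \sigma$ and $q \tilde{\sigma} = \id$ is a direct inspection, and the cohomology manipulation of the second paragraph completes the argument.
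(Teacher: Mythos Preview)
Your proposal is correct and follows essentially the same route as the paper. The paper uses the isomorphism $Y_{2}\cong Z_{2}\times_{S}\HGr_{S}(r-1,E,\phi)$ established in the proof of Theorem~\ref{th.Ga} to define your $\tilde{\sigma}$ as the section $s$ corresponding to the data $(\pi_{r-1},(1,0,0),(0,0,1),\shf U_{r-1})$; this is exactly your tautological choice (the automorphism $\rho$ is not part of the moduli data since it is determined by $e,f$ via Lemma~\ref{L:Sp}), and the identifications $g_{1}g_{2}\tilde{\sigma}=\sigma$ and $q\tilde{\sigma}=\id$ together with homotopy invariance for affine bundles give the conclusion just as you describe.
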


The localization exact sequence
\begin{equation}
\label{E:long.exact}
\cdots \to A_{N^{+}}(\HGr(F)) \to A(\HGr(F)) \to A(\HGr(F) \smallsetminus N^{+}) \to \cdots
\end{equation}
combines with
Proposition \ref{P:iso} and Theorem \ref{T:Ga.cohom} to give the following result.

\begin{corollary}
\label{C:localization}
In the situation of \eqref{E:basic.setup}--\eqref{E:XY} for any cohomology theory $A$
the localization sequence for the closed embedding
$N^{+} \into \HGr(r,F,\psi)$ and the complementary open embedding
$j \colon Y \into \HGr(r,F,\psi)$
is isomorphic to
\begin{equation}
\label{E:the.long.exact}
\cdots \to A_{\HGr(r,E,\phi)}(\shf U_{E}) \xrightarrow{e^{A}(f^{A})^{-1}} A(\HGr(r,F,\psi))
\xrightarrow{\sigma^{A}j^{A}} A(\HGr(r-1,E,\phi)) \xrightarrow{f^{A}\partial(\sigma^{A})^{-1}} \cdots
\end{equation}
where $\shf U_{E}$ is the tautological rank $2r$ symplectic subbundle on $\HGr(r,E,\phi)$, and
$f$ is as in Proposition \ref{P:iso}, $\sigma$ as in Theorem \ref{T:Ga.cohom}, and $e^{A}$ is the
extension of supports operator.
\end{corollary}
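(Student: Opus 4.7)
The plan is essentially to paste the two previously stated isomorphisms into the standard localization sequence. I would begin by writing down the localization sequence from Definition \ref{D:cohom}\,(1) applied to the closed subset $N^{+} \subset \HGr(r,F,\psi)$ with complementary open embedding $j \colon Y \into \HGr(r,F,\psi)$:
\[
\cdots \to A(Y) \xrightarrow{\partial} A_{N^{+}}(\HGr(F)) \xrightarrow{e^{A}} A(\HGr(F)) \xrightarrow{j^{A}} A(Y) \xrightarrow{\partial} \cdots.
\]

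Next I would invoke Proposition \ref{P:iso} to replace the term $A_{N^{+}}(\HGr(F))$: the inclusion $f \colon N^{-} \into \HGr(F)$ induces an isomorphism $f^{A} \colon A_{N^{+}}(\HGr(F)) \xrightarrow{\cong} A_{\HGr(E)}(N^{-})$. Combined with the vector bundle isomorphism $N^{-} \cong \shf U_{E}$ of Theorem \ref{T:normal.geom}\,(c), this identifies $A_{N^{+}}(\HGr(F))$ with $A_{\HGr(r,E,\phi)}(\shf U_{E})$. Simultaneously I would invoke Theorem \ref{T:Ga.cohom} to replace the term $A(Y)$: for $r\geq 2$, part (b) gives the isomorphism $\sigma^{A} \colon A(Y) \xrightarrow{\cong} A(\HGr_{S}(r-1,E,\phi))$ induced by the classifying map $\sigma$, while for $r=1$ part (a) gives $t^{A}\colon A(S)\xrightarrow{\cong} A(Y)$, which is the same statement since $\HGr_{S}(0,E,\phi) = S$ and $\sigma = t^{-1}$ in this degenerate case.

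Substituting these two isomorphisms into the localization sequence is mechanical: the map $e^{A}$ becomes $e^{A}\circ (f^{A})^{-1}$ after pre-composing with the inverse of the first isomorphism, the map $j^{A}$ becomes $\sigma^{A}\circ j^{A}$ after post-composing with $\sigma^{A}$, and the connecting map $\partial$ becomes $f^{A}\circ \partial \circ (\sigma^{A})^{-1}$. This is exactly the sequence \eqref{E:the.long.exact}.

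There is no real obstacle here, since the difficulty was entirely absorbed into Proposition \ref{P:iso} (built on Zariski excision and homotopy invariance for the two summands of $N = N^{+}\oplus N^{-}$) and Theorem \ref{T:Ga.cohom} (built on the chain of affine bundles in Theorem \ref{th.Ga}). The only point worth a sentence of verification is that when $r=1$ the definitions of $\sigma$ and the structural projection $t \colon Y \to S$ coincide under the convention $\HGr_{S}(0,E,\phi) = S$, so that the two cases of Theorem \ref{T:Ga.cohom} give a uniform replacement of $A(Y)$ in the localization sequence.
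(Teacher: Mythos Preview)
Your proof is correct and matches the paper's approach exactly: the paper simply states that the localization sequence \eqref{E:long.exact} ``combines with Proposition \ref{P:iso} and Theorem \ref{T:Ga.cohom}'' to yield the corollary, and you have spelled out precisely that combination. One minor point: the case split at $r=1$ is unnecessary since Theorem \ref{T:Ga.cohom}(b) already covers all $1 \leq r \leq n$, and the equation ``$\sigma = t^{-1}$'' should read $\sigma^{A} = (t^{A})^{-1}$ on cohomology, as $\sigma$ is only a section of $t$ at the level of schemes.
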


We prove a series of lemmas before proving the theorems.

\begin{lemma}
\label{L:bdl.surj.1}
Let $U$ be a vector bundle of rank $m$ over $S$ with a quotient line bundle
$\lambda \colon U \onto \OO_{S}$ with a fixed trivialization.  Then the locus
\[
Y = \{ f \in U \mid \lambda(f) = 1 \} \subset U
\]
is an $\Aff^{m-1}$-bundle over $S$ which has a section if and only if $\lambda$ is split.
Moreover,
giving a morphism $T \to Y$ is equivalent to giving a pair $(g,f_{T})$ with
$g \colon T \to S$ a morphism, and $f_{T}$ a section of $g^{*}U$ with $\lambda(f_{T}) = 1$.
\end{lemma}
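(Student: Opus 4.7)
The plan is to realize $Y$ as a torsor under the kernel of $\lambda$ and read off all three assertions from that description.

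First I would dispose of the universal property. By the usual description of the total space of a vector bundle, morphisms $T \to U$ over $S$ correspond bijectively to pairs $(g, f_{T})$ with $g \colon T \to S$ and $f_{T}$ a section of $g^{*}U$; and under this bijection the composition $T \to U \xrightarrow{\lambda} \OO_{S} = \Aff^{1}_{S}$ corresponds to $\lambda(f_{T}) \in \OO_{T}$. Since $Y \subset U$ is cut out by the equation $\lambda = 1$ (using the fixed trivialization of $\OO_{S}$), a morphism $T \to Y$ is the same as a pair $(g, f_{T})$ with $\lambda(f_{T}) = 1$. This gives the last sentence of the lemma.

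Next, set $K = \ker(\lambda)$. Because $\lambda$ is an epimorphism of locally free sheaves of ranks $m$ and $1$, $K$ is locally free of rank $m-1$ and fits into a short exact sequence
\[
0 \to K \to U \xrightarrow{\lambda} \OO_{S} \to 0.
\]
Addition of sections of $K$ preserves the equation $\lambda(f) = 1$, so the total space of $K$ acts on $Y$ over $S$. Working Zariski-locally on $S$ (using the fact that $\lambda$ splits locally), one trivializes $U \cong \OO_{S}^{m-1} \oplus \OO_{S}$ in such a way that $\lambda$ becomes the projection on the second factor; then $Y$ becomes $\Aff^{m-1}_{S} \times \{1\}$ and the $K$-action becomes the standard simply transitive translation action. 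Hence $Y \to S$ is a $K$-torsor, i.e.\ an $\Aff^{m-1}$-bundle in the sense of Section \ref{S:cohomology}.

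Finally, a section $s \colon S \to Y$ is by definition a section $s \colon S \to U$ with $\lambda \circ s = 1_{\OO_{S}}$, which is precisely a splitting of the short exact sequence above; conversely any splitting produces such a section. Thus $Y \to S$ has a section if and only if $\lambda$ is split, which is equivalent to the triviality of the $K$-torsor. There is no real obstacle here; the whole content is the universal property of the total space together with the standard fact that the fiber of a surjection of vector bundles over a fixed section is a torsor under the kernel.
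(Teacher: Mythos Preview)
Your argument is correct and essentially identical to the paper's own proof: both identify $Y$ as a torsor under $K = \ker\lambda$ via local splittings of $\lambda$, deduce the $\Aff^{m-1}$-bundle structure, equate global sections of $Y$ with splittings of $\lambda$, and obtain the universal property from that of the total space of $U$. The only difference is the order of presentation (you do the universal property first, the paper last).
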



\begin{proof}
The subbundle $\ker \lambda$ acts on $U$ by translation in the fibers, and this action
preserves $Y$.
Over any open subscheme $S_{\alpha} \subset S$ where $\lambda$ has a local splitting
$\sigma_{\alpha}$, one has $Y \rest{S_{\alpha}} = (\ker \lambda) \rest{S_{\alpha}} + \sigma_{\alpha}(1)$,
so $Y \to S$ is a torsor for $\ker \lambda$.
A global section $S \to Y$ is by construction a section $s$ of $U$ with $\lambda(s) = 1$.
It exists if and only if $\lambda$ is split.
Finally, giving a morphism $T \to U$ is equivalent to giving a morphism $g \colon T \to S$
plus a section $f_{T}$ of $g^{*}U$ because of the cartesianness of
\[
\xymatrix @M=5pt @C=30pt {
g^{*}U \ar[r] \ar[d] \ar@{}[dr]|-{\square}
& U \ar[d]
\\
T \ar[r]_-{g} \ar@/^1.5pc/[u]^-{f_{T}}
& S
}
\]
The image of $T \to U$ lies in $Y \subset U$ if and only if $\lambda(f_{T}) = 1$.
\end{proof}

\begin{lemma}
\label{L:bdl.surj.2}
Suppose that $(U,\psi)$ is a rank $2s$ symplectic bundle over $S$ with a quotient line bundle
$\lambda \colon U \onto \OO_{S}$ with a fixed trivialization.  Let
\begin{align*}
Y_{1} & = \{ f \in U \mid \lambda(f) = 1 \} \subset U, \\
Y_{2} & =\{ (e,f) \in U \oplus U \mid
\text{$\lambda(f) = 1$ and $\lambda(e) = 0$ and $\psi(e,f) = 1$} \} \subset U \oplus U.
\end{align*}
Then the projection map $Y_{1} \to S$ is an $\Aff^{2s-1}$-bundle which has a section
if and only if $\lambda$ is split.
The projection map $Y_{2} \to Y_{1}$ is an $\Aff^{2s-2}$-bundle which has a section, and when
$\lambda$ is split the composition $Y_{2} \to Y_{1} \to S$ is an $\Aff^{4s-3}$-bundle with a
section.
Moreover, giving a morphism $T \to Y$ is equivalent to giving a triple $(g,e_{T},f_{T})$ with
$g \colon T \to S$ a morphism and $e_{T},f_{T}$ sections of $g^{*}U$ satisfying the three
conditions \eqref{E:threecond}.
\end{lemma}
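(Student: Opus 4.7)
The plan is to reduce the first statement to the previous lemma, then exploit the symplectic form to construct the required sections canonically. Applying Lemma \ref{L:bdl.surj.1} directly to $U$ viewed as a rank $2s$ vector bundle with quotient $\lambda$ shows that $Y_1 \to S$ is an $\Aff^{2s-1}$-bundle with a section iff $\lambda$ is split. Passing to $Y_1$, let $\pi \colon Y_1 \to S$ be the projection and $f \in \Gamma(Y_1, \pi^{*}U)$ the tautological section (with $\lambda(f)=1$) provided by the universal property from Lemma \ref{L:bdl.surj.1}. Then $Y_2$ sits inside $\pi^{*}U$ as the fiber over $(0,1)$ of the map
\[
\Phi = (\lambda,\psi(-,f))\colon \pi^{*}U \longrightarrow \OO_{Y_1}^{\oplus 2}.
\]

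The key observation — and the step I expect to be the main obstacle — is constructing a global section of $Y_2 \to Y_1$ without assuming $\lambda$ is split. For this I would use the nondegeneracy of $\psi$: the induced isomorphism $\psi \colon U \xrightarrow{\cong} U^{\vee}$ lets me define a canonical global section $v \in \Gamma(S,U)$ by the formula $\psi(v,u) = \lambda(u)$ for every $u$. Skew-symmetry of $\psi$ forces $\lambda(v) = \psi(v,v) = 0$, and for any $f$ with $\lambda(f)=1$ one automatically has $\psi(v,f) = \lambda(f) = 1$. Pulled back to $Y_1$, the pair $(\pi^{*}v, f)$ is a global section of $Y_2 \to Y_1$.

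With $v$ in hand, the affine-bundle structure is immediate. Because $\pi^{*}v$ and $f$ map under $\Phi$ to $(0,1)$ and $(1,0)$ respectively, $\Phi$ is split surjective, so its kernel $K' := \ker\lambda \cap f^{\perp}$ is a vector subbundle of $\pi^{*}U$ of rank $2s-2$, and $Y_2 \to Y_1$ is a $K'$-torsor trivialized by the section $\pi^{*}v$. Hence $Y_2 \to Y_1$ is an $\Aff^{2s-2}$-bundle with a section. When $\lambda$ splits, $Y_1 \to S$ is a trivial $\Aff^{2s-1}$-bundle with a section $s_0$, and combining this with the section of $Y_2 \to Y_1$ yields a section $S \to Y_2$; local triviality of both bundles over a common trivializing cover of $S$ then exhibits $Y_2 \to S$ as an $\Aff^{4s-3}$-bundle.

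For the final universal-property claim, I would argue exactly as in the cartesian diagram in the proof of Lemma \ref{L:bdl.surj.1}: giving a morphism $T \to U \oplus U$ is the same as giving $g \colon T \to S$ together with two sections $e_T, f_T$ of $g^{*}U$, and such a morphism factors through $Y_2 \subset U \oplus U$ precisely when the three conditions \eqref{E:threecond} hold. Most of this will be routine once the geometry above is in place; the only creative ingredient in the proof is the construction of the canonical $v$ via $\psi^{-1}(\lambda)$, which is what globalizes the section of $Y_2 \to Y_1$.
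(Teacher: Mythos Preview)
Your proof is correct and follows essentially the same route as the paper: both reduce the claim about $Y_1$ to Lemma~\ref{L:bdl.surj.1}, and both produce the section of $Y_2 \to Y_1$ from the canonical element $v = e_0 \in \Gamma(S,U)$ defined by $\psi(e_0,\,\cdot\,) = \lambda$ (the paper phrases this as ``$-e_0$ gives a splitting of $f^\vee\psi i$'', which is your observation $\psi(v,f)=\lambda(f)=1$ rewritten). The only substantive difference is in the split case, where the paper writes down an explicit global isomorphism $E \oplus E \oplus \OO_S \cong Y_2$ via $(u,v,b) \mapsto \bigl((1-\phi(u,v),u,0),(b,v,1)\bigr)$; this is more direct than your common-trivializing-cover argument, which to be made precise needs the extra remark that $K' \cong \pi^*(\ker\lambda/\OO v)$ is pulled back from $S$.
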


Writing $\pi \colon Y_{2} \to S$ for the projection, the universal property implies the existence of
\emph{tautological sections} $e,f$ of $\pi^{*}U$ satisfying the three conditions and such that the
triples $(g,e_{T},f_{T})$ of the lemma are the pullbacks of the tautological $(\pi,e,f)$ along
the classifying map $T \to Y_{2}$.

\begin{proof}
The statements about $Y_{1}$ follow from immediately from the previous lemma.
Let $i \colon \ker \lambda \to U$ be the inclusion, and $h \colon Y_{1} \to S$ the projection map.
Over $Y_{1}$ we have morphisms
\[
\xymatrix @M=5pt @C=20pt {
\OO_{Y_{1}} \oplus h^{*}(\ker \lambda)
\xrightarrow[\cong]{(f,i)} h^{*}U \ar[r]_-{\cong}^-{\psi} &
h^{*}U^{\vee}
\ar@{->>}[r]^-{f^{\vee}} &
\OO_{Y_{1}}.
}
\]
The map $f^{\vee}$ is surjective because $f \colon \OO_{Y_{1}} \to h^{*}U$ is nowhere vanishing.  We have
$f^{\vee}\psi f =0$ since $\psi$ is alternating.
So $f^{\vee}\psi i \colon h^{*}(\ker \lambda) \onto \OO_{Y_{1}}$ is
surjective.  Applying the previous lemma to $f^{\vee}\psi i$ gives $Y_{2}$.
The projection $Y_{2} \to Y_{1}$ is always split because by the nondegeneracy of $\psi$
there exists a nowhere vanishing section $e_{0}$ of $U$ such that
$\lambda(?) = \psi(e_{0},?)$,  and $-e_{0}$ gives a splitting of $f^{\vee}\psi i$.

When $\lambda$ is split by a section $f_{0} \colon \OO_{S} \to U$, then the two sections $e_{0},f_{0}$ form
a hyperbolic plane, and setting $E = \langle e_{0},f_{0} \rangle^{\perp}$ and $\phi = \psi \rest{E}$, we find
that up to isometry we have
$U = \OO_{S} \oplus E \oplus \OO_{S}$, with $\psi$ as in \eqref{E:basic.setup},
and with $\lambda \colon U \onto \OO_{S}$ the third projection.
By construction $Y_{2} \subset U \oplus U$ is the locus of points $(e,f)$ satisfying the three conditions
\eqref{E:threecond}.
The conditions $\lambda(f) = 1$ and $\lambda(e) = 0$ imply that
$e$ and $f$ are of the forms $e = (a,u,0)$ and $f = (b,v,1)$
in terms of the direct sum decomposition.
The condition $\psi(e,f) = 1$ is $a+\phi(u,v) = 1$, so we get $e = (1-\phi(u,v),u,0)$.
So $Y_{2}$ is the image of the vector bundle $E \oplus E \oplus \OO_{S}$ under the map
sending $(u,v,b) \mapsto ((1-\phi(u,v),u,0),(b,v,1))$.  So it is an $\Aff^{4s-3}$-bundle with a section.

Finally, applying Lemma \ref{L:bdl.surj.1} twice we see that giving a morphism $T \to Y_{2}$
is equivalent to giving
a pair $(g_{1},e)$ with $g_{1} \colon T \to Y_{1}$ a morphism and $e$ a section of $g_{1}^{*}h^{*}(\ker \lambda)$
such that $f^{\vee}\psi i(e) = \psi(i(e),f) = 1$, and giving $g_{1}$ is equivalent to giving $(g,f)$ with
$g  \colon T \to S$ a morphism and $f$ a section of $g^{*}U$ such that $\lambda(f)=1$.  Since in these
equivalences one has $g = hg_{1}$, we see that giving $T \to Y_{2}$ is equivalent to giving $(g,e,f)$
with $e,f$ sections of $g^{*}U$ satisfying \eqref{E:threecond}.
\end{proof}

Suppose now that we have $(E,\phi)$ and $(F,\psi)$ as in \eqref{E:basic.setup}.
Let $\Sp_{S}(F,\psi) \to S$ be the group scheme of automorphisms of $(F,\psi)$.  Its sections correspond
to endomorphisms $\alpha \colon F \to F$ such that $\alpha^{\vee}\psi \alpha = \psi$.
There are two maps $\rho_{+} \colon E \oplus \OO_{S} \to \Sp_{S}(F,\psi)$ and
$\rho_{-} \colon E \oplus \OO_{S} \to \Sp_{S}(F,\psi)$ with
\begin{align}
\label{E:unipotent}
\rho_{+}(e,s)  & =
\begin{pmatrix}
1_{\OO_{S}} & e^{\vee}\phi & s \\ 0 & 1_{E} & e \\ 0 & 0 & 1_{\OO_{S}}
\end{pmatrix}, &
\rho_{-}(e_{1},s_{1})  & =
\begin{pmatrix}
1_{\OO_{S}} & 0 & 0 \\  e_{1 }& 1_{E} & 0 \\ s_{1} & -e_{1}^{\vee}\phi & 1_{\OO_{S}}
\end{pmatrix}
\end{align}
These maps parametrize the unipotent radicals of the parabolic subgroups of $\Sp_{S}(F,\psi)$
associated with the two filtrations compatible with the splitting \eqref{E:basic.setup}.

\begin{lemma}
\label{L:Sp}
In the above situation let $\lambda = (0,0,1) \colon F = \OO_{S} \oplus E \oplus \OO_{S} \onto \OO_{S}$.
Suppose we have a morphism $\pi \colon T \to S$ and two sections $e,f$ of $\pi^{*}F$
with $\lambda(f) = 1$ and  $\lambda(e) = 0$ and $\psi(e,f) = 1$.
Then $e$ and $f$ are of the form
\begin{align}
\label{E:auv}
e & =
\begin{pmatrix}
1-\phi(u,v) \\ u \\ 0
\end{pmatrix},
&
f & =
\begin{pmatrix}
b \\ v \\ 1
\end{pmatrix}.
\end{align}
Set $\rho = \rho_{+}(v,b) \rho_{-}(u,0)$.
Then $\rho$ is a symplectic automorphism of $\pi^{*}(F,\psi)$ satisfying
\begin{align*}
\rho
\begin{pmatrix}
0 \\ 0 \\ 1
\end{pmatrix}
& = f, &
\rho
\begin{pmatrix}
1 \\ 0 \\ 0
\end{pmatrix}
& = e, &
\rho(E) & = \langle e,f \rangle^{\perp}.
\end{align*}
\end{lemma}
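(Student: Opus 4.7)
The form of $e$ and $f$ in \eqref{E:auv} is essentially forced by the three conditions \eqref{E:threecond}. Since $\lambda$ projects onto the third summand of $F = \OO \oplus E \oplus \OO$, the conditions $\lambda(f)=1$ and $\lambda(e)=0$ immediately give $f = (b,v,1)$ and $e = (a,u,0)$ for some sections $a,b$ of $\OO_{T}$ and $u,v$ of $\pi^{*}E$. A direct expansion of $\psi$ using the block matrix \eqref{E:basic.setup} yields
\[
\psi\bigl((a,u,0),(b,v,1)\bigr) = a\cdot 1 - 0\cdot b + \phi(u,v) = a + \phi(u,v),
\]
and setting this equal to $1$ gives $a = 1-\phi(u,v)$, producing exactly \eqref{E:auv}.

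Next I would verify that $\rho_{+}(v,b)$ and $\rho_{-}(u,0)$ each lie in $\Sp(\pi^{*}F,\psi)$, so that their product $\rho$ is symplectic. This is a direct calculation: the matrices in \eqref{E:unipotent} are upper/lower triangular unipotents whose logarithms lie in the symplectic Lie algebra (they are precisely the root subgroups of the parabolics stabilizing the filtrations $0 \subset \OO \subset \OO \oplus E \subset F$ and $0 \subset E \oplus \OO \subset F$). One checks $\rho_{\pm}^{\vee}\psi\rho_{\pm} = \psi$ by a straightforward block matrix computation; the alternating property of $\phi$ is what makes the off-diagonal terms cancel.

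The identities $\rho(1,0,0)^{t} = e$ and $\rho(0,0,1)^{t} = f$ follow by direct matrix multiplication. Applying $\rho_{-}(u,0)$ to $(1,0,0)^{t}$ gives $(1,u,0)^{t}$, and then applying $\rho_{+}(v,b)$ produces $(1+\phi(v,u),u,0)^{t}$; since $\phi$ is alternating, $\phi(v,u) = -\phi(u,v)$, matching the first coordinate of $e$. For $(0,0,1)^{t}$, the lower-triangular $\rho_{-}(u,0)$ fixes it, and $\rho_{+}(v,b)$ sends it to $(b,v,1)^{t} = f$.

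Finally, for the equality $\rho(E) = \langle e,f\rangle^{\perp}$: both sides are subbundles of $\pi^{*}F$ of rank $2n-2$, so it suffices to show one inclusion. Since $\rho$ is symplectic,
\[
\psi\bigl(\rho(E),e\bigr) = \psi\bigl(E,\rho^{-1}(e)\bigr) = \psi\bigl(E,(1,0,0)^{t}\bigr),
\]
and an evaluation of $\psi$ shows that any vector of the form $(0,w,0)$ pairs to zero with $(1,0,0)$ and with $(0,0,1)$. Hence $\rho(E) \subset \langle e,f\rangle^{\perp}$, and the equality follows from the rank count. The main obstacle here is really just keeping signs and transposes consistent in the block-matrix manipulations; there is no conceptual difficulty, and each step reduces to an unavoidable verification that $\rho_{\pm}$ together with \eqref{E:auv} are compatible with the symplectic form.
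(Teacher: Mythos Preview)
Your proposal is correct and follows essentially the same route as the paper's proof: derive the form of $e$ and $f$ from the three conditions, compute $\rho(1,0,0)=e$ and $\rho(0,0,1)=f$ directly, and then use that $\rho$ is symplectic to carry $E=\langle(1,0,0),(0,0,1)\rangle^{\perp}$ onto $\langle e,f\rangle^{\perp}$. The only difference is that you spell out the verification that $\rho_{\pm}\in\Sp$ and the rank-count argument for $\rho(E)=\langle e,f\rangle^{\perp}$, whereas the paper treats the first as already given by the definition of $\rho_{\pm}$ and phrases the second simply as ``$\rho$ takes $\langle(1,0,0),(0,0,1)\rangle^{\perp}=E$ isometrically onto $\langle e,f\rangle^{\perp}$.''
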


\begin{proof}
The conditions $\lambda(f) = 1$ and $\lambda(e) = 0$ imply that $e$ and $f$ can be written in the
forms $e = (a,u,0)$ and $f = (b,v,1)$.  The condition $\psi(e,f) = 1$ is $a+\phi(u,v) = 1$, so
we get $e = (1-\phi(u,v),u,0)$.  One calculates
$\rho (0,0,1) = f$ and $\rho (1,0,0) = e$,
and since $\rho$ preserves the symplectic form $\psi$,
it takes $\langle (1,0,0),(0,0,1) \rangle^{\perp} = E$
isometrically onto $\langle e,f \rangle^{\perp}$.
\end{proof}

\begin{proof}[Proof of Theorem \ref{th.Ga}]
Applying Lemma \ref{L:bdl.surj.2} with $U=F$ and $\lambda \colon F \onto \OO_{S}$ the
third projection, we get maps $Z_{2} \to Z_{1} \to S$
which are $\Aff^{2n-2}$- and $\Aff^{2n-1}$-bundles, respectively.  Moreover, since $\lambda$
has a splitting, the composition is an $\Aff^{4n-3}$-bundle.
Over $Z_{2}$
there are tautological sections $e,f$ of the pullback of $F$ with a universal property.

Over $Y$ the restriction $\lambda\rest{\shf U_{r}} \colon \shf U_{r} \to \OO_{Y}$ of the
third projection $F \onto \OO$ is surjective.  Hence Lemma \ref{L:bdl.surj.2}, applied with
$U = \shf U_{r}$ over $Y$, gives us maps
$Y_{2} \to Y_{1} \to Y$ which are $\Aff^{2r-2}$- and $\Aff^{2r-1}$-bundles, resp., with the
first one with a section,
and tautological sections $e,f$ of the pullback of $\shf U_{r}$ to $Y_{2}$
with a certain universal property.

We claim that the varieties $Y_{2}$ and $Z_{2} \times_{S} \HGr_{S}(r-1,E,\phi)$ are
characterized by equivalent universal properties and thus represent isomorphic functors
$(\Sm/S)^{\op} \to \Set$.  They are therefore isomorphic over $S$.

To give a morphism $T \to Y_{2}$ one gives $(g,U_{T},e_{T},f_{T})$ with $g \colon T \to S$ a morphism,
$U_{T} \subset g^{*}F$ a symplectic subbundle of rank $2r$, and
and $e_{T},f_{T}$ sections of $U_{T}$ satisfying the three conditions \eqref{E:threecond}.

To give a morphism $T \to Z_{2} \times_{S} \HP_{S}(r-1,E,\phi)$ one gives $(g,e_{T},f_{T},V_{T})$ with
$g \colon T \to S$ a morphism,
$e_{T},f_{T}$ sections of $g^{*}F$ satisfying the three conditions \eqref{E:threecond},
and a symplectic subbundle
$V_{T} \subset g^{*}E$ of rank $2r-2$.

Lemma \ref{L:Sp} gives us a symplectic automorphism $\rho$ of $g^{*}(F,\psi)$ with
$\rho(1,0,0) = e_{T}$ and $\rho(0,0,1) = f_{T}$ and $\rho(E) = \langle e_{T},f_{T} \rangle^{\perp}$.
It follows that the data determining morphisms from $T$ to the two varieties determine each other
using the formulas
\begin{align*}
U_{T} & = \langle e_{T},f_{T}, \rho(V_{T}) \rangle, &
V_{T} & = \rho^{-1}(U_{T}) \cap E.
\end{align*}
So $Y_{2}$ and $Z_{2} \times_{S} \HP_{S}(r-1,E,\phi)$ are isomorphic over $S$.

The last sentence of the theorem is a description of these data in the universal case
$T = Y_{2} \cong Z_{2} \times_{S} \HP_{S}(r-1,E,\phi)$.
\end{proof}

\begin{proof}[Proof of Theorem \ref{T:Ga.cohom}]
(b)
Let $\pi_{r-1} \colon \HGr_{S}(r-1,E,\phi) \to S$ be the projection.
According to the proof we have just given of Theorem \ref{th.Ga} there is a morphism
\[
s \colon \HGr_{S}(r-1,E,\phi) \to Z_{2} \times_{S} \HGr_{S}(r-1,E,\phi)
\]
corresponding to the data $(\pi_{r-1}, (1,0,0), (0,0,1), \shf U_{r-1})$.  It is a section of
the second projection.  It composition with the isomorphism and projections
\[
\HGr_{S}(r-1,E,\phi) \xrightarrow{s} Z_{2} \times_{S} \HGr_{S}(r-1,E,\phi) \cong
Y_{2} \xrightarrow{g_{2}} Y_{1} \xrightarrow{g_{1}} Y
\]
is the map classified by the symplectic subbundle
$\OO\oplus \shf U_{r-1} \oplus \OO \subset \pi_{r-1}^{*}F$, which is the
$\sigma$ of the statement of the theorem.  By Theorem \ref{th.Ga}
$g_{1}$ and $g_{2}$ are affine bundles, and $s$ is the section of an affine bundle.
So $g_{1}^{A}$, $g_{2}^{A}$, $s^{A}$ and their composition $\sigma^{A}$ are
isomorphisms in any cohomology theory $A$.

(a) For $r=1$ we have $\HGr_{S}(0,E,\phi) = S$, and $\sigma \colon S \to Y$ is a section of
$t \colon Y \to S$.  By (b) $\sigma^{A}$ is an isomorphism, and we have $1_{S}^{A} = t^{A}\sigma^{A}$,
so $t^{A}$ is an isomorphism.
\end{proof}

We conclude this section by proving Theorem \ref{A.X2i}.

\begin{lemma}
\label{L:torsor}
The fibration $Y_{1} \to Y$ of Lemma \ref{L:bdl.surj.2} is
a torsor under the unipotent group scheme $R_{u} \subset \Sp(U,\psi)$
of automorphisms of $U$ respecting the filtration
$0 \subset (\ker \lambda)^{\perp} \subset \ker \lambda \subset U$ and the trivialization
$U/\ker \lambda \cong \OO_{Y}$ and inducing the identity on
$\ker \lambda/(\ker \lambda)^{\perp}$.
\end{lemma}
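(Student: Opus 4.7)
The plan is to reduce to a local calculation: I will show that Zariski-locally on $Y$, both the unipotent group $R_{u}$ and the bundle $Y_{1}$ are identified with the same vector bundle $\ker\lambda$, and that the natural action of $R_{u}$ on $Y_{1}$ corresponds under this identification to simple translation.

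First I would verify that the $\Sp(U,\psi)$-action on $U$ restricts to a well-defined action $R_{u} \times_{Y} Y_{1} \to Y_{1}$: if $\rho \in R_{u}$ and $f \in Y_{1}$ (so $\lambda(f) = 1$), then $\lambda(\rho f) = \lambda(f)$ since $\rho$ is required to induce the identity on the trivialized quotient $U/\ker\lambda \cong \OO_{Y}$. To conclude that this action is a torsor it suffices to show that the shear map $R_{u} \times_{Y} Y_{1} \to Y_{1} \times_{Y} Y_{1}$, $(\rho, f) \mapsto (\rho f, f)$, is an isomorphism; this question is local on $Y$.

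Working Zariski-locally on $Y$ where $\lambda$ admits a splitting $f_{0}$ (which exists in any open where $Y_{1} \to Y$ has a section, by Lemma \ref{L:bdl.surj.2}), the nondegeneracy of $\psi$ produces a unique section $e_{0}$ of $(\ker\lambda)^{\perp}$ with $\psi(e_{0},f_{0}) = 1$. Setting $V = \langle e_{0}, f_{0} \rangle^{\perp}$, this gives an orthogonal decomposition $U \cong \OO \oplus V \oplus \OO$ realizing $\psi$ in the standard form of \eqref{E:basic.setup}. In this local model, block-matrix manipulation with the constraint $\rho^{\vee}\psi\rho = \psi$ forces any element of $R_{u}$ to be of the form $\rho_{+}(v,s)$ from \eqref{E:unipotent} for a unique $(v,s) \in V \oplus \OO$; a direct computation then gives $\rho_{+}(v,s)(f_{0}) = (s,v,1) = f_{0} + v + s e_{0}$. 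So the orbit of $f_{0}$ sweeps out all of $Y_{1}$ bijectively, and the shear map becomes an isomorphism locally, hence globally by equivariance.

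The main obstacle is the block-matrix analysis identifying $R_{u}$ with the $\rho_{+}$; in particular, one must show that the three conditions (preservation of the filtration, identity on $\ker\lambda/(\ker\lambda)^{\perp}$, and the trivialization of $U/\ker\lambda$) together with the symplectic condition already force the identity on the line bundle $(\ker\lambda)^{\perp}$ as well. This follows from a short argument using $\psi(\rho\ell,\rho f) = \psi(\ell,f)$ for $\ell \in (\ker\lambda)^{\perp}$ and $f$ with $\lambda(f) = 1$: expanding $\rho f = f + w$ with $w \in \ker\lambda$ and using $\rho\ell \in (\ker\lambda)^{\perp}$, one deduces $\rho\ell - \ell$ pairs trivially with all of $U$, and hence vanishes by nondegeneracy of $\psi$.
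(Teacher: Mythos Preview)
Your proposal is correct and follows essentially the same approach as the paper, which merely remarks that the claim ``is easily seen when the filtration is split with $U = F = \OO_{S} \oplus E \oplus \OO_{S}$ and $R_{u}$ is the $\rho_{+}(E \oplus \OO_{S})$ of \eqref{E:unipotent}.'' You have carefully filled in the details the paper omits, including the verification that the symplectic condition forces $\rho$ to act trivially on $(\ker\lambda)^{\perp}$; the only quibble is that ``hence globally by equivariance'' is unnecessary, since being an isomorphism is already a Zariski-local property.
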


This is easily seen when
the filtration is split with $U = F = \OO_{S} \oplus E \oplus \OO_{S}$ and $R_{u}$ is the
$\rho_{+}(E \oplus \OO_{S})$ of \eqref{E:unipotent}.


We now return to the decomposition $\HP^{n} = \bigsqcup_{i=0}^{n} X_{2i}$ of Theorem
\ref{th.basic}.

\begin{proof}[Proof of Theorem \ref{A.X2i}]
(a)
We first look at the open stratum $X_{0}$.
Let $(U,\psi)$ be the tautological rank $2$ symplectic subbundle over $X_{0} \subset \HP^{n}$.
By Lemmas \ref{L:bdl.surj.2} and \ref{L:torsor} the fibration $Y_{2} = Y_{1} \to X_{0}$ is a
torsor under
$R_{u} = \GG_{a}$, and
\[
Y_{2} = Y_{1} = \{(e,f) \mid \Aff^{2n+2} \times \Aff^{2n+2} \mid
\lambda(f) = 1, \, \lambda(e) = 0, \, \psi(e,f) = 1\},
\]
with the action of $\GG_{a}$ given by $t \cdot (e,f) = (e,f+te)$.  Writing
$f = (b_{1},\dots,b_{2n},b_{2n+1},1)$ and
$e = (a_{1},\dots,a_{2n}, 1-\sum_{j=1}^{n}(a_{2j-1}b_{2j}-a_{2j}b_{2j-1}),0)$ gives the
presentation of $X_{0}$ as a quotient of a free action of $\GG_{a}$ on $\Aff^{4n+1}$.
The argument for the other $X_{2i}$ is similar.

(b) This follows from (a) or directly from Theorem \ref{T:Ga.cohom}(a).
\end{proof}


\section{Deformation to the normal bundle}
\label{S:normal.bundle}

The localization sequence for Grassmannians \eqref{E:Grass.sequence} exists for all cohomology
theories, but it is particularly well-behaved for oriented theories.  (See Balmer-Calm\`es \cite{Balmer:2008fk} for what can happen in a non-oriented theory.)  So we should
now start discussing the class of cohomology theories for which the localization sequence for
quaternionic Grassmannians of Corollary \ref{C:localization} is well-behaved.  But before doing so
we discuss the deformation to the normal bundle construction.
Our notation follows Nenashev \cite{Nenashev:2007rm}.


The \emph{deformation space} $D(Z,X)$ for a closed embedding $Z \into X$ of regular schemes
is the complement
of the strict transform of
$Z \times 0$
in the blowup $\bl_{Z \times 0}(X \times \Aff^{1})$.
The inclusion and blowup maps compose to a map
$D(Z,X) \to X \times \Aff^{1}$, so $D(Z,X)$ has projections $p \colon D(Z,X) \to X$ and
$D(Z,X) \to \Aff^{1}$.
The fiber of $D(Z,X)$ over $0 \in \Aff^{1}$ is $N_{Z/X}$,
and its  fiber over $1 \in \Aff^{1}$ is $X$.
The strict transform of $Z \times \Aff^{1}$ is isomorphic to $Z \times \Aff^{1}$,
and the restriction of $p$ to it is the first projection $Z \times \Aff^{1} \to Z$.
We thus have maps
\begin{equation}
\label{E:def.norm.cone}
\vcenter{
\xymatrix @M=5pt @C=30pt @R=11pt {
N_{Z/X} \ar[r]^-{i_{0}} &
D(Z,X) \ar@<-3pt>[r]_-{p} &
X \ar@<-3pt>[l]_-{i_{1}},
\\
A_{Z}(N_{Z/X}) &
A_{Z \times \Aff^{1}}(D(Z,X)) \ar[r]^-{i_{1}^{A}}\ar[l]_-{i_{0}^{A}} &
A_{Z}(X).
}}
\end{equation}
The map $p^{A} \colon A(X) \to  A(D(Z,X))$ makes $i_{0}^{A}$ and $i_{1}^{A}$ into
morphisms of $A(X)$-bimodules.  Because of the blowup geometry the
$A(X)$-bimodule structure on $A_{Z}(N_{Z/X})$ factors through the restriction
$A(X) \to A(Z) \cong A(N_{Z/X})$.
When the maps $i_{0}^{A}$ and $i_{1}^{A}$ are isomorphisms, as they frequently are,
their composition
\begin{equation}
\label{E:deform}
d^{A}_{Z/X} \colon A_{Z}(N_{Z/X})   \xrightarrow{\cong} A_{Z}(X)
\end{equation}
is the \emph{deformation to the normal bundle isomorphism}.
The functoriality of the space $D(Z,X)$ makes the deformation to the normal bundle isomorphisms
functorial.
\begin{equation}
\label{diag.pullback.1}
\vcenter{
\xymatrix @M=5pt @C=40pt {
Z' = Z \times_X X' \ar[r]^-{g'} \ar[d]_-{i'} &
Z \ar[d]^-i &
N_{Z'/X'} \cong N_{Z/X}\times_{Z} Z'  \ar[r]^-{g_{N}} \ar[d] &
N_{X/Z} \ar[d]
\\
X' \ar[r]^-g
&
X
&
Z' \ar[r]^-{g'} &
Z
}}
\end{equation}

\begin{proposition}
\label{P:torindep.1}
Suppose that in \eqref{diag.pullback.1} we have a pullback square of schemes
with $i$ and $i'$ closed embeddings and that that induces a
pullback square of normal bundles.  Then in any cohomology theory $A$
for which the deformation to the normal bundle isomorphisms $d^{A}_{Z'/X'}$
and $d^{A}_{Z/X}$ exist, we have
$d^{A}_{Z'/X'} \circ g_{N}^{A} = g^{A} \circ d^{A}_{Z/X}$.
\end{proposition}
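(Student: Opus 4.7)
The plan is to promote the given Cartesian square of schemes to a morphism of deformation spaces, and then derive the identity in cohomology from the contravariant functoriality of $A$. The key construction is a morphism $\tilde{g}\colon D(Z',X') \to D(Z,X)$ over $\Aff^{1}$ that is compatible with the natural maps $i_{0}, i_{1}, p$ of \eqref{E:def.norm.cone}; once this is in hand, the proposition reduces to formal diagram chasing.

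To produce $\tilde{g}$, first note that $g \times \id_{\Aff^{1}} \colon X' \times \Aff^{1} \to X \times \Aff^{1}$ sends $Z' \times 0$ to $Z \times 0$, so by the universal property of the blow-up it lifts uniquely to a morphism $\bl_{Z' \times 0}(X' \times \Aff^{1}) \to \bl_{Z \times 0}(X \times \Aff^{1})$. The hypothesis that the normal-bundle square is Cartesian, i.e.\ $N_{Z'/X'} \cong N_{Z/X} \times_{Z} Z'$, guarantees that the exceptional divisor of the left-hand blow-up is the pullback of the exceptional divisor of the right-hand one; consequently the strict transform of $Z' \times \Aff^{1}$ is carried into the strict transform of $Z \times \Aff^{1}$. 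Removing both strict transforms defines $\tilde{g}$. Its fiber over $1 \in \Aff^{1}$ is $g \colon X' \to X$ and its fiber over $0 \in \Aff^{1}$ is $g_{N} \colon N_{Z'/X'} \to N_{Z/X}$, again by the hypothesis on normal bundles. Thus one obtains a commutative diagram of schemes with $i_{0}, i'_{0}$ on the left, $\tilde{g}$ in the middle, and $i_{1}, i'_{1}$ on the right.

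Applying $A$ with the appropriate supports and using contravariant functoriality yields the two identities $g_{N}^{A} \circ i_{0}^{A} = (i'_{0})^{A} \circ \tilde{g}^{A}$ and $g^{A} \circ i_{1}^{A} = (i'_{1})^{A} \circ \tilde{g}^{A}$. By hypothesis $i_{0}^{A}, i_{1}^{A}, (i'_{0})^{A}, (i'_{1})^{A}$ are isomorphisms, so one may rewrite the first identity as $(i'_{0})^{-A} \circ g_{N}^{A} = \tilde{g}^{A} \circ (i_{0}^{A})^{-1}$. Chaining this with the second identity gives
\[
d^{A}_{Z'/X'} \circ g_{N}^{A} = (i'_{1})^{A} \circ (i'_{0})^{-A} \circ g_{N}^{A} = (i'_{1})^{A} \circ \tilde{g}^{A} \circ (i_{0}^{A})^{-1} = g^{A} \circ i_{1}^{A} \circ (i_{0}^{A})^{-1} = g^{A} \circ d^{A}_{Z/X},
\]
as required. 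The main obstacle is the construction of $\tilde{g}$: one must verify that blowing up is compatible with the base change $g$, and it is precisely here that the assumption on normal bundles enters as the required Tor-independence condition, ensuring that the ideal sheaf of $Z' \times 0$ in $X' \times \Aff^{1}$ is the pullback of the ideal sheaf of $Z \times 0$ in $X \times \Aff^{1}$ and that this remains true after passage to the blow-up. Once $\tilde{g}$ is in hand, everything else is formal.
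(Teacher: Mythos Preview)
Your approach is exactly what the paper intends: the text simply asserts that ``the functoriality of the space $D(Z,X)$ makes the deformation to the normal bundle isomorphisms functorial'' and gives no further argument, so your expansion via the universal property of the blowup and the diagram chase with \eqref{E:def.norm.cone} is the intended proof.

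One slip to fix: what one removes from $\bl_{Z\times 0}(X\times\Aff^{1})$ to obtain $D(Z,X)$ is the strict transform of $X\times 0$, not of $Z\times\Aff^{1}$ (the latter actually sits \emph{inside} $D(Z,X)$ as the copy of $Z\times\Aff^{1}$ appearing in \eqref{E:def.norm.cone}). With that correction, the statement you need is that the lift of $g\times\id_{\Aff^{1}}$ to the blowups carries the strict transform of $X'\times 0$ into the strict transform of $X\times 0$; this is immediate over $\Aff^{1}\smallsetminus\{0\}$, and over $0$ it follows because the normal-bundle hypothesis identifies the exceptional divisors compatibly, so the complement of $N_{Z'/X'}$ in the fibre over $0$ maps to the complement of $N_{Z/X}$. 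After that your diagram chase is correct as written.
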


The main existence result is the following.

\begin{theorem}[\protect{\cite[Theorem 2.2]{Panin:2003rz}}]
\label{T:def.normal.bdl}
When $Z \into X$
is a closed embedding of smooth quasi-projective varieties over a field,
the deformation to the normal bundle isomorphism $d^{A}_{Z/X}$ exists for any cohomology theory
$A$.
\end{theorem}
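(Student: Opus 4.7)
The goal is to show that in the span \eqref{E:def.norm.cone} both pullback maps
\[
A_Z(N_{Z/X}) \xleftarrow{i_0^A} A_{Z \times \Aff^1}(D(Z,X)) \xrightarrow{i_1^A} A_Z(X)
\]
are isomorphisms; the deformation isomorphism is then defined as $d^A_{Z/X} := i_0^A \circ (i_1^A)^{-1}$.

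The plan is to treat both cases simultaneously by proving the uniform fiberwise statement: for every $k$-rational point $t \in \Aff^1$, writing $D_t := \pi^{-1}(t)$ for the fiber of the projection $\pi \colon D(Z,X) \to \Aff^1$, the restriction map $A_{Z \times \Aff^1}(D(Z,X)) \to A_Z(D_t)$ is an isomorphism. This handles both $t = 0$ (where $D_0 = N_{Z/X}$) and $t = 1$ (where $D_1 = X$) in one go, and it is clearly independent of $t$ once established. Note that $Z \times \Aff^1 \into D(Z,X)$ is horizontal with respect to $\pi$, restricting on each fiber to the copy of $Z$ sitting inside $D_t$, so the statement is internally consistent.

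The second step is to verify the fiberwise isomorphism by reduction to a local model. Since $Z \into X$ is a regular closed embedding of smooth quasi-projective varieties, Zariski-locally on $X$ the pair $(X,Z)$ admits an étale map to $(\Aff^n, \Aff^{n-r})$ with $Z$ the vanishing locus of a regular system of parameters. An explicit blowup calculation then identifies the corresponding piece of $D(Z,X)$ with $\Aff^{n+1}$ in which the map $\pi$ is projection onto one coordinate and $Z \times \Aff^1$ is a coordinate linear subspace. In this local model $D(Z,X) \to \Aff^1$ is a trivial vector-bundle projection with $Z \times \Aff^1$ horizontal, so homotopy invariance (and its supported version) gives the result immediately. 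The local computations identify both $i_0^A$ and $i_1^A$ with the canonical isomorphism $A(E) \cong A_{Z'}(E \oplus F) \cong A_{Z'}(E)$-type comparisons for affine bundles over $Z$.

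The third step is to globalize: cover $X$ by Zariski opens over which the above local trivialization is available (here the ample family of line bundles plays its usual role), use that the deformation construction $D(-,-)$ commutes with open restriction on $X$, and assemble the local isomorphisms via Mayer-Vietoris together with étale excision \ref{D:cohom}(2). The main obstacle will be this third step: tracking that the local identifications of $D(Z \cap U, U)$ with a trivial family, and therefore the local isomorphism statements, are compatible on double overlaps so that Mayer-Vietoris actually applies. This is a bookkeeping issue involving the blowup construction's functoriality and its interaction with the supports $Z \times \Aff^1$; working over a field of quasi-projective varieties makes it tractable, whereas in more general settings one is forced to impose a sixth axiom on $\mathcal V$ (as flagged in the excerpt) precisely to guarantee that $D(Z,X)$ remains in the category and that such global arguments go through.
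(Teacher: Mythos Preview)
The paper does not give its own proof of this theorem; it is cited from \cite[Theorem 2.2]{Panin:2003rz}, with the remark immediately afterward that ``the proof of Theorem \ref{T:def.normal.bdl} uses Nisnevich tubular neighborhoods which do not always exist in mixed characteristic.'' So the comparison is against that hinted method.

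Your outline has a genuine gap in Step 2. You assert that on a Zariski open $U \subset X$ carrying an \'etale map to $(\Aff^n,\Aff^{n-r})$, ``an explicit blowup calculation then identifies the corresponding piece of $D(Z,X)$ with $\Aff^{n+1}$.'' That identification holds for $D(\Aff^{n-r},\Aff^n)$, not for $D(Z\cap U,U)$. Concretely, if $Z\cap U = V(f_1,\dots,f_r)$ in $U=\Spec R$, then $D(Z\cap U,U)=\Spec R[y_1,\dots,y_r,t]/(f_i-ty_i)$; its fiber over $t=1$ is $U$ while its fiber over $t=0$ is $(Z\cap U)\times\Aff^r$, so the projection to $\Aff^1$ is not a vector-bundle projection and homotopy invariance does not apply. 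Nor can you invoke \'etale excision along $U\to\Aff^n$ to transport the affine computation: that map is not an isomorphism over $\Aff^{n-r}$, so the hypothesis of Definition \ref{D:cohom}(2) fails.

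The argument in \cite{Panin:2003rz} bypasses this by producing a \emph{Nisnevich} tubular neighborhood: an \'etale $p\colon V\to X$ which restricts to an isomorphism $p^{-1}(Z)\xrightarrow{\cong}Z$ and for which $V$ embeds as an open subscheme of $N_{Z/X}$ carrying $p^{-1}(Z)$ to the zero section. The existence of such a $V$ is the nontrivial geometric input specific to smooth varieties over a field (and is exactly what can fail in mixed characteristic). With $V$ in hand, \'etale excision applied to the induced \'etale map $D(Z,V)\to D(Z,X)$---which \emph{is} an isomorphism over $Z\times\Aff^1$---reduces to the tubular case, and then Proposition \ref{P:tubular} finishes. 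No Mayer--Vietoris patching is needed. (A minor aside: your formula $d^A_{Z/X}=i_0^A\circ(i_1^A)^{-1}$ is the inverse of the paper's convention \eqref{E:deform}.)
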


D\'evissage theorems such as Quillen's for $K$-theory or Gille's for Witt groups \cite{Gille:2007hb}
produce Thom isomorphisms $A(Z) \cong A_{Z}(X)$ directly.  The compatibility of these morphisms
with the $i_{0}^{A}$ and $i_{1}^{A}$ of \eqref{E:def.norm.cone} gives deformation to the normal
bundle isomorphisms for all closed embedding of regular schemes for these theories.

The proof of Theorem \ref{T:def.normal.bdl} uses Nisnevich tubular neighborhoods which do not
always exist in mixed characteristic.  But we only need a simple case.

A regular closed subscheme of a regular scheme $i \colon Z \into X$
has a \emph{Zariski tubular neighborhood} if there exists a open subscheme $U \subset X$
containing $Z$ and
an open embedding $U \into N_{Z/X}$ such that the composition $Z \into U \into N_{Z/X}$ is the
zero section.

\begin{proposition}
\label{P:tubular}
If $Z \into X$ has a Zariski tubular neighborhood, then the deformation to the normal bundle
isomorphism $d^{A}_{Z/X}$ exists for any cohomology theory $A$.
\end{proposition}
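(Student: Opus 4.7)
The plan is to reduce the general case to the case where $X$ equals the normal bundle $N_{Z/X}$ and $Z$ is its zero section, and in that special case to identify the deformation space explicitly as $N_{Z/X} \times \Aff^{1}$ and conclude by homotopy invariance with supports.

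To perform the reduction I would use the Zariski tubular neighborhood: an open $U \subset X$ containing $Z$ together with an open embedding $j \colon U \into N_{Z/X}$ realizing $Z$ as the zero section. Functoriality of the deformation space construction applied to the pair maps $(Z, U) \to (Z, X)$ and $(Z, U) \to (Z, N_{Z/X})$ (valid because $j$ pulls $Z$ back to $Z$, and $N_{Z/U} = N_{Z/X}$ canonically) yields open embeddings
\[
D(Z, X) \hookleftarrow D(Z, U) \hookrightarrow D(Z, N_{Z/X}),
\]
all containing $Z \times \Aff^{1}$. Zariski excision then produces isomorphisms
\[
A_{Z \times \Aff^{1}}(D(Z, X)) \cong A_{Z \times \Aff^{1}}(D(Z, U)) \cong A_{Z \times \Aff^{1}}(D(Z, N_{Z/X})),
\]
together with parallel isomorphisms $A_{Z}(X) \cong A_{Z}(U) \cong A_{Z}(N_{Z/X})$. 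A short diagram chase, using the functoriality of $i_{0}$ and $i_{1}$ in the pair $(Z, X)$, shows that these isomorphisms intertwine the maps of \eqref{E:def.norm.cone} for $D(Z, X)$ with the corresponding maps for $D(Z, N_{Z/X})$. Thus it suffices to treat the case where $X$ is a vector bundle $N \to Z$ and $Z$ is its zero section.

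In that case I would invoke the standard identification $D(Z, N) \cong N \times \Aff^{1}$ obtained from the $\GG_{m}$-scaling action on $N$: away from $t = 0$ one has $D(Z, N)\rest{\GG_{m}} = N \times \GG_{m}$ tautologically, and scaling extends this across $t = 0$ to give the full identification. Under it the projection $p \colon D(Z, N) \to N$ becomes $(v, t) \mapsto tv$, the maps $i_{0}$ and $i_{1}$ become the inclusions of the fibers $N \times \{0\}$ and $N \times \{1\}$, and the subscheme $Z \times \Aff^{1}$ corresponds to the zero section times $\Aff^{1}$. By homotopy invariance with supports the pullback $\pr_{1}^{A} \colon A_{Z}(N) \to A_{Z \times \Aff^{1}}(N \times \Aff^{1})$ is an isomorphism, and both $i_{0}^{A}$ and $i_{1}^{A}$ are left inverses to it. Hence both are isomorphisms, and their composition (through the excision isomorphisms) gives the desired $d^{A}_{Z/X}$.

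The main obstacle is the canonical identification $D(Z, N) \cong N \times \Aff^{1}$ for $Z$ a zero section of a vector bundle; this is the only step requiring actual geometric computation with the blowup, and all subsequent manipulations are formal consequences of the axioms of a cohomology theory in the sense of Definition \ref{D:cohom}. Some care is also needed to verify the naturality square transporting $i_{0}^{A}$ and $i_{1}^{A}$ through the two excision isomorphisms, in particular because the inclusion $U \into N_{Z/X}$ at time $t = 1$ is not surjective onto the fiber, which forces one to supplement it with a further Zariski excision on $A_{Z}(U) \cong A_{Z}(N_{Z/X})$.
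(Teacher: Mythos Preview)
Your proposal is correct and follows essentially the same route as the paper: reduce via Zariski excision on the deformation spaces along the open inclusions $U \subset X$ and $U \subset N_{Z/X}$, then handle the zero section of a vector bundle directly. The paper compresses the vector bundle step into the phrase ``exists and is the identity by a local calculation,'' whereas you spell out the identification $D(Z,N) \cong N \times \Aff^{1}$ and invoke homotopy invariance with supports; but the underlying argument is the same.
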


\begin{proof}
If $U \subset Y$ is an open embedding with $Z$ closed in both $U$ and $Y$, then the
isomorphism $d^{A}_{Z/U}$ exists if and only if $d^{A}_{Z/Y}$ exists because
$D(Z,U)$ is an open subscheme of $D(Z,Y)$ in which $Z \times \Aff^{1}$ is closed, so Zariski
excision provides isomorphisms between the sources and targets of the different maps
$i_{0}^{A}$ and $i_{1}^{A}$.

Hence given that for the vector bundle $N_{Z/X}$ the deformation to the normal bundle isomorphism
$d_{Z/N_{Z/X}}$ exists and is the identity by a local calculation, $d_{Z/U}$ and $d_{Z/X}$ also exist.
\end{proof}

\begin{proposition}
\label{P:N+1}
In the situation of Theorem \ref{T:normal.geom} or \eqref{E:XY}
the deformation
to the normal bundle isomorphisms $d^{A}(N^{+},\HP(F,\psi)) \colon A_{N^{+}}(\HP(F,\psi)) \to
A_{N^{+}}(\shf U_{F}\rest{N^{+}})$ exist for any cohomology theory $A$.
\end{proposition}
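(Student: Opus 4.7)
The plan is to apply Proposition~\ref{P:tubular}, so it suffices to exhibit a Zariski tubular neighborhood of $N^{+}$ in $\HGr(F)$ whose normal bundle is $\shf U_{F}\rest{N^{+}}$. All of the geometry needed is already in Theorem~\ref{T:normal.geom}: by part (a) the total normal bundle $N = N^{+} \oplus N^{-}$ of $\HGr(E)$ in $\HGr(F)$ embeds as an open subscheme of $\Gr_{S}(2r;F)$ overlapping $\HGr(F)$, so $W = N \cap \HGr(F)$ is open in $\HGr(F)$ and contains $N^{+}$.

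The crucial change of perspective is to view $N = N^{+} \oplus N^{-}$ not as a vector bundle over $\HGr(E)$ but as a vector bundle over $N^{+}$ via the first projection $\pr_{+}\colon N^{+} \oplus N^{-} \to N^{+}$. Under this new structure $N$ is the pullback $\pi_{+}^{*}N^{-}$, and the inclusion $N^{+} \hookrightarrow N$ coincides with the zero section of $\pr_{+}$. The open immersion $W \hookrightarrow N$ therefore already has exactly the shape of a Zariski tubular neighborhood -- an open immersion of a neighborhood of $N^{+}$ in $\HGr(F)$ into a vector bundle over $N^{+}$ restricting to the zero section on $N^{+}$.

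It remains only to identify this vector bundle canonically with $\shf U_{F}\rest{N^{+}}$. For this I combine parts (c) and (e) of Theorem~\ref{T:normal.geom}: part (c) gives $N^{-} \cong \shf U_{E}$ over $\HGr(E)$, while part (e) gives the isometry $\shf U_{F}\rest{N^{+}} \cong \pi_{+}^{*}\shf U_{E}$. Composing,
\[
N \cong \pi_{+}^{*}N^{-} \cong \pi_{+}^{*}\shf U_{E} \cong \shf U_{F}\rest{N^{+}}
\]
as vector bundles over $N^{+}$, with zero sections matched. Substituting this isomorphism into $W \hookrightarrow N$ produces an open immersion $W \hookrightarrow \shf U_{F}\rest{N^{+}}$ under which $N^{+} \subset W$ goes to the zero section, i.e.\ a Zariski tubular neighborhood. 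Proposition~\ref{P:tubular} then supplies the deformation to the normal bundle isomorphism for any cohomology theory $A$.

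There is no substantive obstacle here; Theorem~\ref{T:normal.geom} has been arranged precisely so that this construction is essentially free. The one point to notice is the compatibility of two a priori different descriptions of the normal bundle of $N^{+}$ in $\HGr(F)$: the one supplied by the open immersion $W \hookrightarrow N$ (namely $\pi_{+}^{*}N^{-}$) and the one coming from the transversal vanishing of the section $s_{+}$ of $\shf U_{F}$ in Theorem~\ref{T:normal.geom}(d) (namely $\shf U_{F}\rest{N^{+}}$). But this agreement is exactly the content of parts (c) and (e) of Theorem~\ref{T:normal.geom} and requires no new argument.
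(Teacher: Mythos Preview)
Your proof is correct and follows essentially the same route as the paper: both establish a Zariski tubular neighborhood of $N^{+}$ in $\HGr(F)$ by intersecting the open embedding $N = N^{+}\oplus N^{-} \hookrightarrow \Gr_{S}(2r;F)$ with $\HGr(F)$, view $N$ as a vector bundle over $N^{+}$ via the first projection, and then invoke Proposition~\ref{P:tubular}. The only cosmetic difference is in identifying the normal bundle with $\shf U_{F}\rest{N^{+}}$: the paper cites part~(d) of Theorem~\ref{T:normal.geom} (transversal vanishing of $s_{+}$), whereas you chain together parts~(c) and~(e); your final paragraph already acknowledges that these two identifications agree.
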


\begin{proof}
First, by Theorem \ref{T:normal.geom}(a)(b) $N^{+}$ is a direct summand of the normal bundle
$N = N^{+} \oplus N^{-}$ of $\HP_{S}(E,\phi) \subset \HP_{S}(F,\psi)$,
which embeds as an open subscheme $N \subset \Gr_{S}(2,F)$
overlapping the open subscheme $\HP_{S}(F,\psi)$.  It follows
that the projection map $N \onto N^{+}$ is also the structural map of the normal bundle of
$N^{+}$ in $\HP_{S}(F,\psi)$ and that $N \cap \HP_{S}(F,\psi)$ is a Zariski tubular neighborhood
of $N^{+}$ in $\HP_{S}(F,\psi)$.  Therefore by Proposition \ref{P:tubular} the deformation to the
normal bundles isomorphism $d^{B}(N^{+},\HP_{S}(F,\psi))$ exists.

Second, by Theorem \ref{T:normal.geom}(d) $N^{+}$ is the transversal intersection of a section $s_{+}$
of $\shf U_{F}$ with the zero section.  So the normal bundle of $N^{+}$ in
$\HP_{S}(F,\psi)$ is naturally isomorphic to $\shf U_{F} \rest{N^{+}}$.
\end{proof}

\section{Symplectic Thom structures}
\label{S:symp.thom}

An \emph{oriented cohomology theory} in the sense of Panin and
Smirnov \cite[Definition 3.1]{Panin:2003rz} is a ring cohomology theory such that for every vector
bundle $E \to X$ over a nonsingular variety and closed subset $Z
\subset X$ there is a distinguished {\em Thom isomorphism}
\[
\Th_Z^E \colon A_Z(X) \to A_Z(E)
\]
of $A(X)$-bimodules satisfying several axioms.
Using deformation to the normal bundle, Thom
isomorphisms induce isomorphisms $A_Z(X) \cong A_Z(Y)$ and in
particular $A(X) \cong A_{X}(Y)$
for any
closed embedding of smooth varieties $X \into Y$ over a field for oriented cohomology theories.

Some cohomology theories have Thom isomorphisms only for vector
bundles with some extra structure.
For instance Balmer's derived Witt groups
\cite{Balmer:1999if}
have Thom isomorphisms
$W^{i}(X) \to W^{i+n}_{X}(E)$ for triples $(E,L,\lambda)$ with $E$ a vector bundle of rank $n$,
$L$ a line bundle and $\lambda \colon L \otimes L \cong \det E$ an isomorphism.
This is because the isomorphisms depending on $E$ alone involve a twisted Witt group
$W^{i}(X,\det E) \cong W^{i+n}_{X}(E)$ (see for instance \cite{
Calmes:2008pi,
Gille:2007hb,
Nenashev:2007rm}),
and the $(L,\lambda)$ are required to specify an
isomorphism $W^{i}(X) \cong W^{i}(X,\det E)$.
Isometric symmetric bilinear line bundles $(L,\lambda)$ induce the same isomorphism, and
the isometry classes of possible $(L,\lambda)$ for a given $E$ are a torsor under
$H^{1}(X_{\text{\'et}}, \boldsymbol{\mu}_{2})$.
The extra structure is very reminiscent
of $\Spin$ and $\Spin^{c}$ structures in differential topology (e.g.\ \cite{Atiyah:1971zr,Atiyah:1976ly})
even to the point of having the same group $H^{1}(X,\ZZ/2)$ parametrizing the choices of
$\Spin$ structure
if $X$ is a complex projective variety.
Derived Witt groups might therefore be called \emph{an $\SL^{c}$-oriented} cohomology theory.

A cohomology theory is \emph{symplectically oriented} if to each symplectic
bundle $(E,\phi)$ over a variety $X$ and each closed subset $Z \subset X$ there is
an isomorphism $\Th^{E,\phi}_{Z} \colon A_{Z}(X) \to A_{Z}(E)$ satisfying the
properties which we will describe in detail in Definition \ref{D:symp.orient}.

Oriented cohomology theories are also symplectically oriented.
Witt groups, Witt cohomology, hermitian $K$-theory and oriented Chow groups
(also called Chow-Witt groups or hermitian $K$-cohomology)
are symplectically oriented as are symplectic
and special linear algebraic cobordism $\operatorname{MSp}^{*,*}$ and $\operatorname{MSL}^{*,*}$
defined along the lines of Voevodsky's \cite[\S 6.3]{Voevodsky:1998kx}
algebraic cobordism $\operatorname{MGL}^{*,*}$.

We will actually prove our quaternionic projective bundle theorem
using a structure which is weaker \emph{a priori} but is equivalent in the end.

\begin{definition}
\label{def.sp.thom}
A \emph{symplectic Thom structure} on a ring cohomology theory $A$ on a category of
schemes is
rule which assigns to each rank $2$ symplectic bundle $(E,\phi)$ over a
scheme $X$ in the category an $A(X)$-central element
$\Th(E,\phi) \in A_X(E)$ with the following properties:
\begin{enumerate}
\item For an isomorphism $u \colon (E,\phi) \cong (E_{1},\phi_{1})$
one has $\Th(E,\phi) = u^{A}\Th(E_{1},\phi_{1})$.

\item For a morphism $f \colon Y \to X$ with pullback map $f_{E} \colon f^{*}E \to E$
one has $f_{E}^{A}(\Th(E,\phi)) = \Th(f^{*}(E,\phi)) \in A_{Y}(f^{*}E)$.

\item For the trivial rank $2$ bundle $X \times \Aff^{2} \to X$ with symplectic structure given by
$\Hyp(\OO_{X}) = \bigl( \OO_{X}^{\oplus 2},
\bigl( \begin{smallmatrix} 0 & -1 \\ 1 & 0 \end{smallmatrix} \bigr) \bigr)$,
the map $\cup \Th(\Hyp(\OO_{X})) \colon A(X) \to A_{X}(X \times \Aff^{2})$ is an isomorphism.
\end{enumerate}

\end{definition}

Global nondegeneracy follows from the nondegeneracy condition above
via local trivializations of the symplectic bundle and a
Mayer-Vietoris argument.

\begin{proposition}
\label{P:nondeg}
Let $A$ be a ring cohomology theory with a symplectic Thom
structure.  Then for any rank $2$ symplectic bundle $(E,\phi)$ over a
scheme $X$  the map $\cup \Th(E,\phi) \colon A(X) \to A_X(E)$ is an
isomorphism.
\end{proposition}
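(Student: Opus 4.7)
The plan is to reduce the claim to the case handled by axiom (3) of Definition \ref{def.sp.thom} by a Mayer--Vietoris argument on a Zariski cover that trivializes $(E,\phi)$ symplectically.

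First I would establish symplectic local triviality: since $\phi$ is a nondegenerate alternating form on the rank $2$ bundle $E$ it induces an isomorphism $\Lambda^{2}E \cong \OO_{X}$, so $E$ has trivial determinant. Being locally free, $E$ is Zariski-locally trivial, and on any trivialization $E\rest{U} \cong \OO_{U}^{\oplus 2}$ the form $\phi$ becomes $\sm{0 & f \\ -f & 0}$ for some unit $f \in \OO(U)^{\times}$; rescaling a basis vector by $f^{-1}$ yields a symplectic isomorphism $(E,\phi)\rest{U} \cong \Hyp(\OO_{U})$. Since $X$ is quasi-compact I obtain a finite Zariski cover $X = U_{1} \cup \cdots \cup U_{n}$ with $(E,\phi)\rest{U_{i}} \cong \Hyp(\OO_{U_{i}})$.

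The argument then proceeds by induction on $n$. For the base case $n=1$, axioms (1) and (2) identify $\Th(E,\phi)$ with the pullback of $\Th(\Hyp(\OO_{X}))$ under the given isomorphism, and axiom (3) says that cupping with the latter is an isomorphism $A(X) \cong A_{X}(X \times \Aff^{2})$; transporting along the isomorphism then gives the result for $(E,\phi)$ itself. For the inductive step I would set $U = U_{1} \cup \cdots \cup U_{n-1}$ and $V = U_{n}$. The induction hypothesis applies directly to $U$, $V$, and (via the induced cover by the $n-1$ opens $U_{i} \cap V$) to $U \cap V$. The Mayer--Vietoris sequence for $X = U \cup V$ and the analogous sequence for the cover of $E$ by $E\rest{U}$ and $E\rest{V}$ are linked by the cup-product-with-$\Th(E,\phi)$ maps. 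Axiom (2) of Definition \ref{def.sp.thom} ensures that $\Th(E,\phi)$ restricts correctly on each piece so that the squares involving restriction maps commute, while the derivation property $\partial(a \cup b) = \partial a \cup b$ of Definition \ref{D:ring}(2) yields commutativity with the connecting homomorphisms. The five lemma then promotes the isomorphisms on $U$, $V$, $U \cap V$ to an isomorphism on $X$.

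I do not anticipate a serious obstacle here: the only technical verification is that the cup-product-with-Thom-class maps assemble into a genuine morphism of Mayer--Vietoris long exact sequences, and this is a formal consequence of the functoriality axiom (2) for $\Th$ together with the derivation property of $\partial$ in a ring cohomology theory.
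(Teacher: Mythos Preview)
Your proposal is correct and takes essentially the same approach as the paper, which merely states in one sentence that ``global nondegeneracy follows from the nondegeneracy condition above via local trivializations of the symplectic bundle and a Mayer-Vietoris argument.'' Your write-up is a faithful expansion of exactly this sketch: symplectic local triviality of rank $2$ bundles, induction on the size of a finite trivializing cover, and the five lemma applied to the morphism of Mayer--Vietoris sequences induced by cupping with the Thom class.
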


For $(E,\phi)$ a rank $2$ symplectic bundle over a scheme $X$,
let $e^A \colon A_X(E) \to A(E)$ be the extension of supports map, and
let $z^A \colon A(E) \to A(X)$ be the restriction to the zero section (or
to any section).  Define the {\em Borel class}\/ of $(E,\phi)$ as
\begin{equation}
\label{eq.Borel}
b(E,\phi) = - z^Ae^A(\Th(E,\phi)) \in A(X).
\end{equation}
The sign follows the convention in differential topology (for instance Milnor-Stasheff \cite{Milnor:1974if})
where one has
$b_{i}(\xi) = (-1)^{i}c_{2i}(\xi)$ for a real or quaternionic vector bundle $\xi$.
Like the symplectic Thom classes, the Borel classes are
$A(X)$-central and are functorial with respect to isomorphisms of
symplectic bundles over $X$ and with respect to pullbacks along maps
$X' \to X$.

Now suppose that $i \colon X \to Y$ is a closed embedding of codimension
$2$ of regular schemes with a normal bundle $N = N_{X/Y}$ equipped
with a symplectic form $\phi$.  We then say that $i \colon X \to Y$ has a
{\em symplectic normal bundle}\/ $(N,\phi)$.  We can compose the isomorphism
of Proposition \ref{P:nondeg} with the deformation to
the normal bundle isomorphism when the latter exists
\begin{equation}
\label{diag.transfer}
i_{A,\flat} \colon  A(X) \xrightarrow[\cong]{\cup \Th(N,\phi)}
A_{X}(N) \xrightarrow[\cong]{d_{X/Y}} A_{X}(Y).
\end{equation}
The isomorphism $i_{A,\flat}$ is a \emph{Thom isomorphism},
while its composition $i_{A,\natural} \colon A(X) \to A_X(Y) \to A(Y)$
with the extension of supports is a {\em direct image map}.
The symbols $\flat$ and $\natural$ are placeholders indicating that
the maps depend on more than $i$ and $A$, namely the symplectic form
$\phi$ and the symplectic Thom structure.  The dependence on $\phi$
is very real in certain theories.  For Witt groups, replacing $\phi$ by $a \phi$ with $a
\in k^\times$ multiplies the direct image map by the class $\langle a
\rangle \in W(k)$.

We will need several facts about the Thom isomorphisms.  For a section
of a rank $2$ symplectic bundle $(E,\phi)$ the deformation to the normal
bundle map is the identity.  Hence:

\begin{proposition}
\label{P:tr=th}
Let $z \colon X \to E$ be the zero section of a rank $2$ symplectic bundle
$(E,\phi)$ over $X$ with symplectic normal bundle $(N_{X/E},\phi) =
(E,\phi)$, then the Thom isomorphism $z_{A,\flat} \colon A(X) \to A_X(E)$
coincides with $\cup \Th(E,\phi)$.
\end{proposition}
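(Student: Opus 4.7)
The plan is to unwind the definitions. By \eqref{diag.transfer},
\[
z_{A,\flat} = d_{X/E} \circ (\cup \Th(N_{X/E},\phi)).
\]
Since $E \to X$ is a vector bundle and $z$ is its zero section, the normal bundle $N_{X/E}$ is canonically $E$, and under this identification $\Th(N_{X/E},\phi) = \Th(E,\phi)$ by property (1) of Definition \ref{def.sp.thom}. So the statement reduces to showing that $d_{X/E} \colon A_{X}(N_{X/E}) \to A_{X}(E)$ is the identity map on $A_{X}(E)$.

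To establish this I would describe the deformation space $D(X,E)$ in a Zariski neighborhood of $X \times \Aff^{1}$ explicitly. Working locally with a trivialization $E\rest{U} = U \times \Aff^{r}$ and fiber coordinates $x_{1},\dots,x_{r}$, the ideal of $X \times 0$ in $E \times \Aff^{1}$ is $(x_{1},\dots,x_{r},t)$. The ``time chart'' of $\bl_{X \times 0}(E \times \Aff^{1})$ has coordinates $(y_{1},\dots,y_{r},t)$ with $x_{i} = ty_{i}$; its complement of the exceptional divisor is contained in $D(X,E)$, it contains $X \times \Aff^{1}$, and the substitutions $y_{i} = x_{i}/t$ transform covariantly under change of trivialization of $E$. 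Gluing these charts gives a canonical open subscheme $U' \subset D(X,E)$ containing $X \times \Aff^{1}$, together with a canonical isomorphism $U' \cong E \times \Aff^{1}$ over $\Aff^{1}$. Under this isomorphism the inclusions $i_{0} \colon N_{X/E} \into D(X,E)$ and $i_{1} \colon E \into D(X,E)$ become the inclusions of $E$ over $t=0$ and $t=1$ respectively.

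By Zariski excision the restriction $A_{X \times \Aff^{1}}(D(X,E)) \to A_{X \times \Aff^{1}}(U')$ is an isomorphism, so we may replace $D(X,E)$ by $E \times \Aff^{1}$. Applying homotopy invariance to the projection $\pr_{E} \colon E \times \Aff^{1} \to E$ and the closed set $X \subset E$, the pullback $\pr_{E}^{A} \colon A_{X}(E) \to A_{X \times \Aff^{1}}(E \times \Aff^{1})$ is an isomorphism, and both $i_{0}^{A}$ and $i_{1}^{A}$ are its inverse. Hence $i_{0}^{A} = i_{1}^{A}$, so $d_{X/E} = i_{1}^{A} \circ (i_{0}^{A})^{-1}$ is the identity, and therefore $z_{A,\flat} = \cup \Th(E,\phi)$ as claimed.

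The main obstacle is the explicit blowup computation of step two: one has to check that the time chart of the blowup is disjoint from the strict transform of $E \times 0$, that it glues to a global open subscheme canonically isomorphic to $E \times \Aff^{1}$, and that the identification over $t=0$ really is the canonical identification $N_{X/E} = E$ coming from the vector bundle structure. All of this is standard but needs to be verified to justify the reduction to homotopy invariance.
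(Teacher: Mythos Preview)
Your proof is correct and follows exactly the paper's approach: the paper's entire argument is the one-line assertion that for a section of a rank $2$ symplectic bundle the deformation to the normal bundle map is the identity, and you supply the detailed local blowup computation that justifies this. (The phrase ``its complement of the exceptional divisor is contained in $D(X,E)$'' is slightly garbled---in fact the time chart \emph{equals} $D(X,E)$, since the strict transform of $E\times 0$ is exactly $V_{+}(T)$---but this only renders your excision step unnecessary rather than incorrect.)
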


Now suppose we have a pullback diagram
\begin{equation}
\label{diag.pullback}
\vcenter{
\xymatrix @M=5pt @C=50pt {
X' = X \times_Y Y' \ar[r]^-{i'} \ar[d]_-{g'} &
Y' \ar[d]^-g \\
X \ar[r]^-i &
Y
}}
\end{equation}
Proposition \ref{P:torindep.1} and the functoriality of Thom classes give
us the next lemma.

\begin{proposition}
\label{P:torindep}
If in the pullback square \eqref{diag.pullback} all the schemes are regular,
and $i$ and $i'$ are closed embeddings of codimension $2$ with
symplectic normal bundles $(N_{X/Y},\phi)$ and $(N_{X'/Y'}, g'^*\phi)
\cong (g'^* N_{X/Y}, g'^*\phi)$ and the deformation to the normal bundle isomorphisms exist,
then we have $g^A i_{A,\flat} =
i'_{A,\flat} g'^A$.
\end{proposition}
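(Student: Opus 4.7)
The plan is to decompose each direct image map $i_{A,\flat}$ and $i'_{A,\flat}$ into its two constituent isomorphisms, cupping with a symplectic Thom class followed by the deformation to the normal bundle map, and to verify commutativity of the two resulting squares separately. So I would consider the diagram
\[
\xymatrix @M=5pt @C=30pt {
A(X) \ar[r]^-{\cup \Th(N_{X/Y},\phi)} \ar[d]_-{g'^{A}}
& A_{X}(N_{X/Y}) \ar[r]^-{d^{A}_{X/Y}} \ar[d]^-{g_{N}^{A}}
& A_{X}(Y) \ar[d]^-{g^{A}}
\\
A(X') \ar[r]_-{\cup \Th(N_{X'/Y'},g'^{*}\phi)}
& A_{X'}(N_{X'/Y'}) \ar[r]_-{d^{A}_{X'/Y'}}
& A_{X'}(Y')
}
\]
where $g_{N} \colon N_{X'/Y'} \cong g'^{*}N_{X/Y} \to N_{X/Y}$ is the map induced on normal bundles by the pullback square.

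For the left-hand square, I would argue as follows. Functoriality of the symplectic Thom class with respect to the morphism $g_{N}$ (axiom (2) of Definition \ref{def.sp.thom}) gives $g_{N}^{A}\Th(N_{X/Y},\phi) = \Th(N_{X'/Y'},g'^{*}\phi)$. Combined with the fact that the pullback maps in a ring cohomology theory are ring homomorphisms respecting cup products with supports, and that the structural map $N_{X/Y} \to X$ pulls back under $g_{N}$ to $N_{X'/Y'} \to X'$ pulled back along $g'$, this yields $g_{N}^{A}(a \cup \Th(N_{X/Y},\phi)) = g'^{A}(a) \cup \Th(N_{X'/Y'},g'^{*}\phi)$ for any $a \in A(X)$, which is exactly the required commutativity. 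For the right-hand square, the pullback square of schemes is assumed to induce a pullback square of normal bundles (that is precisely the content of the hypothesis $(N_{X'/Y'},g'^{*}\phi) \cong (g'^{*}N_{X/Y},g'^{*}\phi)$), so Proposition \ref{P:torindep.1} applies directly to give $d^{A}_{X'/Y'} \circ g_{N}^{A} = g^{A} \circ d^{A}_{X/Y}$.

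Composing the two commuting squares then yields $g^{A} \circ i_{A,\flat} = i'_{A,\flat} \circ g'^{A}$, as desired. No step is really a serious obstacle here; the statement is designed to be a clean consequence of the two separate functorialities (of Thom classes and of deformation to the normal bundle) that were each axiomatized or established earlier. The only point that requires a moment's care is matching the identification $N_{X'/Y'} \cong g'^{*}N_{X/Y}$ of normal bundles coming from the pullback square with the one implicit in the symplectic form $g'^{*}\phi$ used to define $i'_{A,\flat}$, but this matching is built into the hypotheses of the proposition.
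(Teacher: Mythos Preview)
Your proof is correct and takes essentially the same approach as the paper, which simply says ``Proposition \ref{P:torindep.1} and the functoriality of Thom classes give us the next lemma.'' You have unpacked exactly that one-line argument into its two constituent squares.
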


The $A(X)$-centrality of $\Th(N,\phi)$ means that $\cup \Th(N,\phi)$
is an isomorphism of two-sided $A(X)$-modules and via $i^{A}$ also of two-sided $A(Y)$-modules.
The deformation to the normal bundle maps $\nu_{X/Y} \colon A_{X}(Y) \to A_{X}(N)$
are also isomorphisms of two-sided $A(Y)$-modules as discussed earlier.
 We therefore have the next proposition.

\begin{proposition}
\label{P:projection}
The Thom isomorphism $i_{A,\flat} \colon A(X) \to A_X(Y)$ is a two-sided
$A(Y)$-module isomorphism, and $i_{A,\flat}(1)$ is $A(Y)$-central.
Thus for $a \in A_X(Y)$ and $b \in A(Y)$ we have
\begin{gather*}
\begin{align*}
i_{A,\flat}(a \cup i^A b) & = i_{A,\flat}(a) \cup b, &
i_{A,\flat}(i^Ab \cup a) & = b \cup i_{A,\flat}(a), &&
\end{align*}
\\
i_{A,\flat}i^A(b) = i_{A,\flat}(1)  \cup b = b \cup
i_{A,\flat}(1).
\end{gather*}
\end{proposition}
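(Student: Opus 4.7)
The plan is to factor $i_{A,\flat}$ as the composition $d_{X/Y} \circ (\cup \Th(N,\phi))$ given in \eqref{diag.transfer} and verify that each factor is a two-sided $A(Y)$-module isomorphism. For the first factor, the $A(X)$-centrality of the Thom class $\Th(N,\phi) \in A_X(N)$ built into the definition of a symplectic Thom structure implies that $\cup \Th(N,\phi) \colon A(X) \to A_X(N)$ is an isomorphism of two-sided $A(X)$-modules. Pulling the $A(Y)$-module structure back along $i^A \colon A(Y) \to A(X)$ and using functoriality of cup products makes it a map of two-sided $A(Y)$-modules. For the second factor, the paragraph immediately preceding the proposition already records that the deformation to the normal bundle isomorphism $d_{X/Y} \colon A_X(N) \to A_X(Y)$ is an isomorphism of two-sided $A(Y)$-modules, essentially because $d_{X/Y} = i_1^A \circ (i_0^A)^{-1}$ is constructed from maps of $A(Y)$-bimodules via the structural map $p \colon D(X,Y) \to Y$. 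Composing gives that $i_{A,\flat}$ is a two-sided $A(Y)$-module isomorphism, which is exactly the two displayed formulas in the first two lines of the final display of the proposition.

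Having established the bimodule property, the centrality of $i_{A,\flat}(1)$ follows by applying both the left and the right $A(Y)$-linearity to $1_X \in A(X)$ with $a = 1_X$. On the one hand $i_{A,\flat}(1_X \cup i^A b) = i_{A,\flat}(1_X) \cup b$, and on the other hand $i_{A,\flat}(i^A b \cup 1_X) = b \cup i_{A,\flat}(1_X)$. Since $1_X \cup i^A b = i^A b = i^A b \cup 1_X$ in $A(X)$ by the unit axiom of a ring cohomology theory, the two right-hand sides must agree, giving $i_{A,\flat}(1_X) \cup b = b \cup i_{A,\flat}(1_X)$ for every $b \in A(Y)$. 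The third display line then also falls out: $i_{A,\flat}i^A(b) = i_{A,\flat}(1_X \cup i^A b) = i_{A,\flat}(1_X) \cup b$, and by centrality this equals $b \cup i_{A,\flat}(1_X)$.

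The only point that requires any care is the $A(Y)$-bilinearity of the deformation isomorphism $d_{X/Y}$. This is not a statement about the symplectic Thom structure but about the geometry of the deformation space $D(X,Y)$: the projection $p \colon D(X,Y) \to Y$ provides a unified $A(Y)$-module structure on $A_{X \times \Aff^1}(D(X,Y))$, $A_X(N)$ and $A_X(Y)$ for which $i_0^A$ and $i_1^A$ are $A(Y)$-bilinear, so their composite is as well. This is the main conceptual ingredient; once it is in hand, the rest of the proof is purely formal manipulation of the unit and the centrality axioms.
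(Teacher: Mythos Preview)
Your proof is correct and follows exactly the approach the paper sketches in the paragraph immediately preceding the proposition: factor $i_{A,\flat}$ as the Thom cup product (a two-sided $A(X)$-module map by centrality of $\Th(N,\phi)$, hence a two-sided $A(Y)$-module map via $i^A$) followed by the deformation isomorphism (a two-sided $A(Y)$-module map by the discussion in \S\ref{S:normal.bundle}). The paper does not give a separate formal proof beyond that paragraph, so your elaboration of the centrality argument and the final display formulas is a faithful unpacking of what the paper leaves implicit.
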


We now prove some key formulas.

\begin{proposition}
\label{P:pont.zero}
Let $(E,\phi)$ be a rank $2$ symplectic bundle over a regular scheme $Y$ with a section
$s \colon Y \to E$ meeting the zero section $z$ tranversally in
$X$.  Suppose that $A$ is a ring cohomology theory with a symplectic
Thom structure, and let $\overline{e}^A \colon A_X(Y) \to A(Y)$ be the extension of supports map.
Then we have
\begin{equation}
\label{E:p.2nd}
b(E,\phi)  = -\overline{e}^{A} s^{A} (\Th(E,\phi)),
\end{equation}
Moreover, if the inclusion $i \colon X \into Y$ has a deformation to the normal bundle isomorphism,
then for all $b \in A(Y)$ we have
\begin{equation}
\label{E:i_*i^*}
i_{A,\natural}i^A(b) = \overline e^A i_{A,\flat} i^A(b)  = - b \cup b(E,\phi).
\end{equation}
\end{proposition}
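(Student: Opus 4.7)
The plan is to prove (a) directly from naturality of extension of supports combined with homotopy invariance, and then to deduce (b) by identifying $s^{A}\Th(E,\phi)$ with $i_{A,\flat}(1)$ through a Cartesian square produced by the transversality hypothesis.

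For (a), the first observation is that $s$ and $z$ are both sections of the vector-bundle projection $\pi \colon E \to Y$, so by homotopy invariance $\pi^{A}$ is an isomorphism with two-sided inverse equal to the pullback along any section; in particular $s^{A} = z^{A}$ as maps $A(E) \to A(Y)$. Separately, naturality of the extension-of-supports maps built into Definition \ref{D:cohom} applied to the commutative square $s \circ \id_{E} = \id_{Y} \circ s$ gives the identity $s^{A} \circ e^{A} = \overline{e}^{A} \circ s^{A}$, where the leftmost $s^{A}$ pulls back the support $Y \subset E$ to $s^{-1}(Y) = X$. Chaining these with the definition \eqref{eq.Pontryagin} of the Pontryagin class yields
\[
\overline{e}^{A}s^{A}\Th(E,\phi) = s^{A}e^{A}\Th(E,\phi) = z^{A}e^{A}\Th(E,\phi) = -p(E,\phi).
\]

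For (b), the heart of the argument is to show $s^{A}\Th(E,\phi) = i_{A,\flat}(1)$ in $A_{X}(Y)$. Transversality of $s$ with the zero section produces a Cartesian square with $i \colon X \into Y$ on top and on the left, $z \colon Y \into E$ on the bottom, and $s \colon Y \to E$ on the right. The transversality isomorphism $N_{X/Y} \cong i^{*}E$ carries the symplectic form $\phi$ to the form on $N_{X/Y}$ used to define $i_{A,\flat}$, and the zero section $z$ has the trivial deformation-to-the-normal-bundle isomorphism by Proposition \ref{P:tubular} (its normal bundle is $E$ itself). Proposition \ref{P:torindep} then applies, and combining it with Proposition \ref{P:tr=th} (which identifies $\Th(E,\phi) = z_{A,\flat}(1)$) gives
\[
s^{A}\Th(E,\phi) = s^{A}z_{A,\flat}(1) = i_{A,\flat}i^{A}(1) = i_{A,\flat}(1).
\]
Applying $\overline{e}^{A}$ to both sides and invoking part (a) produces $\overline{e}^{A}i_{A,\flat}(1) = -p(E,\phi)$.

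To promote this from $b = 1$ to general $b$, I would invoke Proposition \ref{P:projection}, which rewrites $i_{A,\flat}i^{A}(b) = i_{A,\flat}(1) \cup b$. Applying $\overline{e}^{A}$, which is right $A(Y)$-linear because $e^{A}$ commutes with cup products from unsupported classes (a consequence of the functoriality of the cup product in Definition \ref{D:ring}), and using the $A(Y)$-centrality of the Pontryagin class, one obtains
\[
i_{A,\natural}i^{A}(b) = \overline{e}^{A}(i_{A,\flat}(1) \cup b) = -p(E,\phi) \cup b = -b \cup p(E,\phi),
\]
which is the desired formula. The only nontrivial verification I anticipate is checking all the hypotheses of Proposition \ref{P:torindep} for the Cartesian square above --- in particular that the canonical identification $N_{X/Y} \cong i^{*}E$ is an isometry with respect to $i^{*}\phi$, which is forced by the definition of the symplectic form on $N_{X/Y}$ but deserves an explicit mention. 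Everything else is a routine concatenation of existing compatibilities.
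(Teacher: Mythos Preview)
Your proof is correct and follows essentially the same approach as the paper: both use $s^{A}=z^{A}$ on $A(E)$ together with naturality of extension of supports for part (a), and both invoke Proposition~\ref{P:torindep} on the Cartesian square built from $i$, $z$, and $s$ (combined with Proposition~\ref{P:tr=th}) for part (b). The only cosmetic difference is that the paper treats general $b$ in one stroke by noting all maps in the diagram are two-sided $A(Y)$-module maps, whereas you isolate the case $b=1$ and then invoke Proposition~\ref{P:projection}; these are equivalent bookkeeping choices.
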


\begin{proof}
In the diagram
\[
\xymatrix @M=5pt @C=40pt {
A(Y) \ar[r]^-{\cup \Th(E,\phi)}_-{= \ z_{A,\flat}} \ar[d]^-{i^A} &
A_Y(E) \ar[d]^-{s^A} \ar[r]^-{e^A}
&
A(E) \ar[d]_-{s^{A}} \ar@/^2pc/[d] ^-{z^A}
\\
A(X) \ar[r]_-{i_{A,\flat}}
&
A_X(Y) \ar[r]^-{\overline e^A}
&
A(Y) \ar@/_1pc/[u] |-{\pi^{A}}
}
\]
the righthand rectangle commutes by functoriality.
The pullbacks along the two sections of $\pi \colon E \to Y$ are
left inverses of the same isomorphism $\pi^{A}$, so they
satisfy
$s^{A} = z^{A}$.
We get
\[
b(E,\phi) = -z^{A}e^{A}(\Th(E,\phi)) = -s^{A}e^{A}(\Th(E,\phi)) = -\overline{e}^{A}s^{A}(\Th(E,\phi)).
\]

The lefthand rectangle of the diagram commutes
using the label $z_{A,\flat}$ by Proposition \ref{P:torindep}, and the equality
$ z_{A,\flat} = \cup \Th(E,\phi)$ is Proposition
\ref{P:tr=th}.  It follows that $\overline e^A i_{A,\flat} i^A(b) =
i_{A,\natural}i^A(b)$ is the same as $\overline{e}^{A} s^{A}(b \cup \Th(E,\phi))$.
Since all the maps are two-sided $A(Y)$-modules maps, that is the same
as $b \cup \overline{e}^{A} s^{A}(\Th(E,\phi)) = - b \cup b(E,\phi)$ using
formula \eqref{E:p.2nd}.
%
\end{proof}

\begin{proposition}
\label{P:p=0}
For any ring cohomology theory with a symplectic Thom structure
a rank $2$ symplectic bundle  $(E,\phi)$ with a nowhere vanishing
section has $b(E,\phi) = 0$.
\end{proposition}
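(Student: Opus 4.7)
The plan is to exploit formula \eqref{E:p.2nd} of Proposition \ref{P:pont.zero}, applied to the given nowhere vanishing section $s$. The key observation is that since the image of $s$ misses the zero section $z(Y) \subset E$ entirely, the scheme-theoretic intersection $X = s^{-1}(z(Y))$ is empty. The empty intersection is vacuously transversal (its codimension exceeds any finite bound), so the hypotheses of Proposition \ref{P:pont.zero} are satisfied with $X = \varnothing$.

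Formula \eqref{E:p.2nd} then reads
\[
p(E,\phi) = -\overline{e}^{A} s^{A}(\Th(E,\phi)),
\]
where $s^{A}$ is the pullback $A_{Y}(E) \to A_{X}(Y) = A_{\varnothing}(Y)$. Since every cohomology theory satisfies $A_{\varnothing}(Y) = 0$ (as noted just after Definition \ref{D:cohom}), we conclude $s^{A}(\Th(E,\phi)) = 0$, and hence $p(E,\phi) = 0$.

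I do not expect any serious obstacle: the argument is essentially a single line once the framework is in place. If one preferred to avoid invoking the transversality clause of Proposition \ref{P:pont.zero} altogether, the same conclusion follows from the more primitive observation used in its proof: any two sections $s, z$ of the projection $\pi \colon E \to Y$ induce the same pullback on $A(E) = \pi^{A} A(Y)$ (both being inverse to the isomorphism $\pi^{A}$), so
\[
p(E,\phi) = -z^{A}e^{A}\Th(E,\phi) = -s^{A}e^{A}\Th(E,\phi) = -\overline{e}^{A}s^{A}\Th(E,\phi),
\]
and this vanishes because $s^{A}$ factors through the trivial group $A_{\varnothing}(Y)$.
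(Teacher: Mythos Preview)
Your proof is correct and is essentially identical to the paper's own argument: both invoke formula \eqref{E:p.2nd} with the nowhere vanishing section $s$, observe that $s^{A}(\Th(E,\phi))$ lands in $A_{\varnothing}(Y) = 0$, and conclude.
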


\begin{proof}
The nowhere vanishing section $s \colon Y \to E$ meets the zero section in $\varnothing$.
Since we have $\Th(E,\phi) \in A_{Y}(E)$, we have $s^A(\Th(E,\phi)) \in  A_{\varnothing}(Y) = 0$.
Then formula \eqref{E:p.2nd}
gives $b(E,\phi) = - \overline{e}^{A}s^{A}(\Th(E,\phi))  = 0$.
\end{proof}

\section{The quaternionic projective bundle theorem}

We return to the situation where $(V,\phi)$ is a symplectic space of
dimension $2n+2$, and $\HP^n = \Gr(2,V) \smallsetminus
\GrSp(2,V,\phi)$ is the affine scheme parametrizing $2$-dimensional
subspaces $U \subset V$ such that $\phi \rest U$ is nondegenerate.

\begin{theorem}[Quaternionic projective bundle theorem for trivial bundles]
\label{th.HPn.triv}
Let $A$ be a ring cohomology theory with a symplectic Thom structure.
Let $(\shf U,\phi \rest {\shf U})$ be the tautological rank $2$
symplectic subbundle over $\HP^n$ and $\zeta = b(\shf U, \phi
\rest{\shf U})$ its Borel class.  Then for any scheme $S$
we have $A(\HP^n \times S) = A(S)[\zeta]/(\zeta^{n+1})$.
\end{theorem}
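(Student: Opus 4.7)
The plan is to proceed by induction on $n$. For the base case $n = 0$ we have $\HP^0 \times S = S$, and $\shf U$ is the trivial rank $2$ symplectic bundle, which has a nowhere vanishing section. Proposition~\ref{P:p=0} therefore gives $\zeta = 0$, so $A(\HP^0 \times S) = A(S) = A(S)[\zeta]/(\zeta)$. For $n \geq 1$ I write the trivial symplectic bundle $(F,\psi)$ of rank $2n+2$ on $S$ in the form \eqref{E:basic.setup} with $(E,\phi)$ the trivial rank $2n$ bundle, so that $\HGr_S(1,F,\psi) = \HP^n \times S$, $\HGr_S(1,E,\phi) = \HP^{n-1} \times S$, and $\HGr_S(0,E,\phi) = S$. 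Let $N^+$ and $Y$ be as in \eqref{E:XY}. By Theorem~\ref{T:normal.geom}(c)(d)(e), $N^+$ is a vector bundle over $\HP^{n-1} \times S$ via a projection $\pi_+$, and the normal bundle of the closed embedding $i\colon N^+ \into \HP^n \times S$ is the rank $2$ symplectic bundle $\shf U_F\rest{N^+}$, which is isometric to $\pi_+^*(\shf U_E,\phi_E)$.

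Proposition~\ref{P:N+1} provides the deformation to the normal bundle isomorphism for $i$, giving a direct image $i_{A,\natural}\colon A(N^+) \to A(\HP^n \times S)$. Combining the localization sequence of Definition~\ref{D:cohom}(1) with this Thom isomorphism and with the isomorphism $\sigma^A\colon A(Y) \cong A(S)$ of Theorem~\ref{T:Ga.cohom}(a), I obtain an exact sequence
\begin{equation*}
\cdots \to A(N^+) \xrightarrow{i_{A,\natural}} A(\HP^n \times S) \xrightarrow{\beta} A(S) \to \cdots,
\end{equation*}
where $\beta = \sigma^A \circ j^A$ for the open inclusion $j\colon Y \into \HP^n \times S$. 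The section $\sigma\colon S \to Y$ of Theorem~\ref{T:Ga.cohom}(b) for $r = 1$ classifies the subbundle $\OO \oplus 0 \oplus \OO \subset F$, so $\pi \circ j \circ \sigma = \operatorname{id}_S$ for $\pi\colon \HP^n \times S \to S$ the projection, and hence $\pi^A$ is a section of $\beta$. Standard diagram chasing shows that the displayed sequence is therefore a split short exact sequence of $A(S)$-modules; invoking the homotopy invariance isomorphism $\pi_+^A\colon A(\HP^{n-1} \times S) \cong A(N^+)$ and the inductive hypothesis $A(\HP^{n-1} \times S) = A(S)[\zeta']/(\zeta'^{\,n})$ with $\zeta' = p(\shf U_E,\phi_E)$ puts it in the form
\begin{equation*}
0 \to A(S)[\zeta']/(\zeta'^{\,n}) \xrightarrow{i_{A,\natural} \circ \pi_+^A} A(\HP^n \times S) \xrightarrow{\beta} A(S) \to 0.
\end{equation*}

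To identify the generators, I use that the isometry $\shf U_F\rest{N^+} \cong \pi_+^*(\shf U_E,\phi_E)$ of Theorem~\ref{T:normal.geom}(e) and functoriality of Pontryagin classes give $i^A\zeta = \pi_+^A\zeta'$ in $A(N^+)$. Applying formula~\eqref{E:i_*i^*} of Proposition~\ref{P:pont.zero} then yields
\begin{equation*}
i_{A,\natural}\bigl(\pi_+^A\zeta'^{\,k}\bigr) = i_{A,\natural}\bigl(i^A\zeta^{k}\bigr) = -\,\zeta^{k+1}
\qquad \text{for every } k \geq 0.
\end{equation*}
Hence the image of $i_{A,\natural}\circ \pi_+^A$ is the $A(S)$-span of $\zeta,\zeta^2,\dots,\zeta^n$, and combined with the splitting class $1 = \pi^A(1)$ this produces an $A(S)$-basis $1,\zeta,\dots,\zeta^n$ of $A(\HP^n\times S)$. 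The inductive relation $\zeta'^{\,n} = 0$ gives $\zeta^{n+1} = -\,i_{A,\natural}(\pi_+^A\zeta'^{\,n}) = 0$, and since $\zeta$ is $A(S)$-central this upgrades to the ring isomorphism $A(\HP^n \times S) = A(S)[\zeta]/(\zeta^{n+1})$.

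The main obstacle I anticipate is the pair of geometric identifications used throughout: the identification of the normal bundle of $N^+ \into \HP^n\times S$ with $\shf U_F\rest{N^+}$ coming from the transversal section $s_+$ of Theorem~\ref{T:normal.geom}(d), and the pullback identity $i^A\zeta = \pi_+^A\zeta'$ derived from the isometry of Theorem~\ref{T:normal.geom}(e). Once these are in hand, the entire inductive computation reduces to a single application of the formula~\eqref{E:i_*i^*}, propagated through powers of $\zeta$ by $A(S)$-linearity of $i_{A,\natural}$ and $\pi_+^A$.
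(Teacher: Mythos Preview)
Your proof is correct and follows essentially the same route as the paper's: induction on $n$, the closed--open decomposition $\HP^n \times S = N^{+} \sqcup Y$ coming from \eqref{E:XY}, the isomorphism $A(Y) \cong A(S)$ of Theorem~\ref{T:Ga.cohom}, the Thom isomorphism for $N^{+} \into \HP^n \times S$ supplied by Proposition~\ref{P:N+1}, and the identification $i^{A}\zeta = \pi_{+}^{A}\zeta'$ from Theorem~\ref{T:normal.geom}(e) fed into formula~\eqref{E:i_*i^*}. The only cosmetic difference is that the paper reduces to $S = k$ at the outset and packages the direct image as $\tau = -e^{A} i_{A,\flat} q^{A}$, whereas you carry a general $S$ throughout and write $i_{A,\natural}\circ\pi_{+}^{A}$; these are the same map.
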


\begin{proof}
It is enough to consider the case $S = k$.

We need to show that $ (1, \zeta, \dots, \zeta^{n}) \colon
A(k)^{\oplus (n+1)} \to A(\HP^n) $ is an isomorphism of two-sided
$A(k)$-modules and that we have $\zeta^{n+1} = 0$.

We go by induction on $n$.  For $n=0$ we have $\HP^0 = k$,
and the tautological rank $2$ symplectic subbundle of the trivial symplectic
bundle of rank $2$ over $k$ is the trivial bundle.  Its Borel class
verifies $\zeta = 0$ by Proposition \ref{P:p=0}.

For $n \geq 1$ we have morphisms smooth schemes by Theorem
\ref{th.basic} or \ref{T:normal.geom}.
\begin{equation}
\label{diag.bdl.th.1}
\vcenter{
\xymatrix @M=5pt @C=50pt {
N^{+} = \overline X_2  \ar[r]^-i_-{\text{closed}}
\ar[d]_-{\text{$\Aff^2$-bundle}}^-q
&
\HP^n \ar[rd]
&
Y = X_0  \ar@{_{(}->}[l]_-{\text{open}}^-j \ar[d]^-f
\\
\HP^{n-1}
&
&
k
}}
\end{equation}
This yields a localization exact sequence and various maps of
two-sided $A(k)$-modules.
\begin{equation}
\label{diag.bdl.th.2}
\vcenter{
\xymatrix @M=5pt @C=40pt {
\cdots \ar[r]^-{\partial \,[ = \, 0]}
& A_{N^{+}}(\HP^n) \ar[r]^-{e^A}
& A(\HP^n) \ar[r]^-{j^A}
\ar[ld]^-{i^A}
& A(Y) \ar[r]^-{\partial \,[ = \, 0]}
& \cdots
\\
A(\HP^{n-1}) \ar[r]^-{q^{A}}_-{\cong}
& A(N^{+})
\ar[u]^-{i_{A,\flat}}_-\cong
&
& A(k) \ar[u]_-{t^A}^-\cong \ar[lu]^-{\cup 1}
}}
\end{equation}
The map $t^A \colon A(k) \to A(Y)$ is an isomorphism by Theorem
\ref{A.X2i} or \ref{T:Ga.cohom}, so $j^A$ is an epimorphism split by $\cup 1$, the
boundary map $\partial$ vanishes, and $(1, e^A) \colon A(k) \oplus
A_{N^{+}}(\HP^n) \arrowiso A(\HP^n)$ is an isomorphism.


The map $q^A \colon A(\HP^{n-1}) \to A(N^{+})$ is an isomorphism
because $q$ is an $\Aff^2$-bundle by Theorem \ref{th.subgrass} or \ref{T:normal.geom}.
The locus $N^{+} \subset \HP^{n}$ is the transversal intersection of the section $s_{+}$ of the
rank $2$ symplectic
bundle $(\shf U,\psi)$ and the zero section.  So it has a symplectic normal bundle
$(\shf U,\psi) \rest{N^{+}}$.  In addition the deformation to the normal bundle isomorphisms
$d(N^{+},\HP^{n})$ exist by Proposition \ref{P:N+1}. So the Thom isomorphisms
$i_{A,\flat} \colon A(N^{+}) \to A_{N^{+}}(\HP^{n})$ are defined.
Writing $\tau = - e^Ai_{A,\flat}q^A$,
we get an isomorphism of
two-sided $A(k)$-modules $(1, \tau) \colon A(k) \oplus A(\HP^{n-1})
\arrowiso A(\HP^n)$.

Write $(\shf V, \overline \phi \rest{ \shf V })$ for the tautological rank $2$
symplectic subbundle on $\HP^{n-1}$ and $\xi =
b(\shf V, \overline \phi \rest{ \shf V })$ for its Borel class.  By
induction we have an isomorphism of two-sided $A(k)$-modules $ (1,
\xi, \dots, \xi^{n-1}) \colon A(k)^{\oplus n} \arrowiso
A(\HP^{n-1}).  $ We therefore have an isomorphism
\[
(1, \tau(1), \tau(\xi), \dots, \tau(\xi^{n-1})) \colon A(k)^{\oplus
  (n+1)} \arrowiso A(\HP^n).
\]
By Theorem \ref{th.subgrass} or \ref{T:normal.geom} the two symplectic bundles $q^*
(\shf V, \overline \phi \rest{ \shf V })$ and $i^*(\shf U, \phi \rest
{\shf U})$ on $N^{+}$ are isomorphic.  By functoriality of the
Borel classes, this gives $q^A \xi = i^A \zeta$ and therefore
also $q^A (\xi^{\ell}) = i^A (\zeta ^{\ell})$.  We also have
$- e^Ai_{A,\flat}i^A(b) = \zeta \cup b$ for $b \in A(\HP^n)$ by Proposition \ref{P:pont.zero}.
This gives us
\[
\tau(\xi^{\ell}) = - e^A i_{A,\flat}q^A(\xi^{\ell}) = - e^A
i_{A,\flat} i^A (\zeta^{\ell}) = \zeta \cup \zeta^{\ell} =
\zeta^{\ell + 1}
\]
for all $\ell$.  This gives the desired isomorphism
\[
(1, \zeta, \zeta^{2}, \dots, \zeta^{n}) \colon A(k)^{\oplus
  (n+1)} \arrowiso A(\HP^n).
\]
Finally, by induction we have $\xi^{n} = 0$, which gives
$\zeta^{n+1} = \tau(\xi^{n}) = 0$.
\end{proof}

If $(E, \phi)$ is a rank $2n$ symplectic bundle over a
scheme $S$, we can define a quaternionic projective bundle
$\HP_S(E, \phi) = \Gr_S(2, E) \smallsetminus \GrSp_S(2,
E,\phi)$.  A Mayer-Vietoris argument gives the general quaternionic
projective bundle theorem.

\begin{theorem}
\label{th.HPn.twist}
\textup{(Quaternionic projective bundle theorem).}
Let $A$ be a ring cohomology theory with a symplectic Thom structure.
Let $(E, \phi)$ be a rank $2n$ symplectic bundle over a scheme
$S$, let $(\shf U,\phi \rest {\shf U})$ be the tautological
rank $2$ symplectic subbundle over the quaternionic projective bundle
$\HP_S(E, \phi)$, and let $\zeta = b(\shf U,\phi \rest{\shf U})$ be its Borel class.
Write $\pi \colon \HP_{S}(E,\phi) \to S$ for the projection.
Then for any closed subset $Z \subset X$ we have an
isomorphism of two-sided $A(S)$-modules
$(1, \zeta, \dots, \zeta^{n-1}) \colon A_{Z}(S)^{\oplus n} \arrowiso
A_{\pi^{-1}(Z)}(\HP_S(E,\phi))$,
and we have
unique classes $b_i(E,\phi) \in A(S)$ for $1 \leq i \leq n$
such that there is a relation
\[
\zeta^{n} - b_1(E,\phi) \cup \zeta^{n-1} + b_2(
E, \phi) \cup \zeta^{n-2} - \cdots +
(-1)^{n} b_{n}(E,\phi) = 0.
\]
If $(E,\phi)$ is trivial, then $b_i(E,\phi) = 0$
for $1 \leq i \leq n$.
\end{theorem}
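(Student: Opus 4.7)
The plan is to reduce the general statement to the trivial case in Theorem \ref{th.HPn.triv} by two formal five-lemma arguments—one for supports on the base, one for Mayer-Vietoris over a Zariski trivialization of $(E,\phi)$—and then to extract the Pontryagin classes by reading off coordinates in the resulting free basis.

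First, I would upgrade Theorem \ref{th.HPn.triv} to the supported form $A_W(S)^{\oplus n} \arrowiso A_{\HP^{n-1}\times W}(\HP^{n-1} \times S)$ for a closed subset $W \subset S$. The localization sequences of the pairs $(S, S \smallsetminus W)$ and $(\HP^{n-1} \times S, \HP^{n-1} \times (S \smallsetminus W))$ fit into a ladder whose vertical maps are given componentwise by $1, \zeta, \ldots, \zeta^{n-1}$; compatibility with $\partial$ comes from axiom (2) of Definition \ref{D:ring} together with the fact that $\zeta$ lives on $\HP^{n-1}$ itself and hence pulls back correctly. Two of the three columns are isomorphisms by Theorem \ref{th.HPn.triv}, so the third is too by the five-lemma.

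Second, I would patch this over a Zariski cover trivializing $(E,\phi)$. Any non-degenerate alternating form on a free module is standard up to change of basis, so every symplectic bundle is Zariski-locally isometric to the trivial one; by quasi-compactness $S$ admits a finite cover $\{U_\alpha\}$ on which $\HP_{U_\alpha}(E,\phi) \cong \HP^{n-1} \times U_\alpha$. For a two-piece cover $S = U \cup V$ the Mayer-Vietoris sequences for $S$ and for $\HP_S(E,\phi)$, taken with supports in $W$ on the base and $\pi^{-1}(W)$ on the total space, form a second ladder under $(1,\zeta,\ldots,\zeta^{n-1})$. The columns over $U$, $V$, and $U \cap V$ are isomorphisms by the first step, so the five-lemma again forces the column over $S$ to be an isomorphism; induction on the size of the cover then handles finite refinements.

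Third, extracting the classes is formal. With $W = S$ the previous step yields a direct-sum decomposition $A(\HP_S(E,\phi)) = \bigoplus_{i=0}^{n-1} A(S)\cdot \zeta^i$ of two-sided $A(S)$-modules, so $\zeta^n$ has a unique expression
\[
\zeta^n = p_1(E,\phi)\zeta^{n-1} - p_2(E,\phi)\zeta^{n-2} + \cdots - (-1)^n p_n(E,\phi),
\]
which both defines and proves the uniqueness of $p_1(E,\phi), \ldots, p_n(E,\phi) \in A(S)$. When $(E,\phi)$ is trivial, $\HP_S(E,\phi) \cong \HP^{n-1} \times S$ and Theorem \ref{th.HPn.triv} gives $\zeta^n = 0$, so all $p_i(E,\phi)$ vanish. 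The main obstacle is not any single step but verifying that the vertical arrows in both ladders are genuine chain maps, i.e.\ commute with $\partial$ and with the Mayer-Vietoris connecting morphism; this rests on Definition \ref{D:ring}(2) and the naturality of $\zeta$ under pullback, after which the rest of the argument is routine diagram chasing plus induction.
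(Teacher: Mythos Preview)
Your proposal is correct and is exactly the approach the paper takes: the paper's entire proof of Theorem \ref{th.HPn.twist} is the single sentence ``A Mayer-Vietoris argument gives the general quaternionic projective bundle theorem,'' and you have spelled out that argument in the standard way (upgrade Theorem \ref{th.HPn.triv} to supports via the localization ladder, then patch over a finite trivializing cover via the Mayer-Vietoris ladder, then read off $\zeta^{n}$ in the resulting basis). Your attention to the commutation of the $\zeta^{i}$-multiplication maps with $\partial$ is the only nontrivial point, and it is handled by Definition \ref{D:ring}(2) as you say.
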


\begin{definition}
\label{D:pontclass}
The classes $b_{i}(E,\phi)$ ($i=1,\dots,n$) of Theorem \ref{th.HPn.twist} are the \emph{Borel classes}
of $(E,\phi)$ with respect to the symplectic Thom structure of the ring cohomology theory $A$.  For
$i > n$ one sets $b_{i}(E,\phi) = 0$, and one sets $b_{0}(E,\phi) = 1$ and $b_{i}(E,\phi) = 0$ for $i < 0$.
\end{definition}

The Borel classes are universally $A(S)$-central, because they
are the components of the universally $A(S)$-central $\zeta^{n}$ in a two-sided $A(S)$-module
decomposition $A(\HP_S(E,\phi)) \cong A(S)^{\oplus n}$ which is compatible with base
change.

The Borel classes are compatible with base change.  This implies that they are $\Aff^{1}$-deformation invariant in the following sense.

\begin{proposition}
\label{P:deform.invariant}
Let $(E_{0},\phi_{0})$ and $(E_{1},\phi_{1})$ be symplectic bundles on a scheme $S$.
Suppose there exists a symplectic bundle $(E,\phi)$ on
$S \times \Aff^{1}$ with $(E, \phi) \rest{S \times \{0\} } \cong (E_{0},\phi_{0})$ and
 $(E, \phi) \rest{S \times \{1\} } \cong (E_{1},\phi_{1})$.
 Then we have $b_{i}(E_{0},\phi_{0}) = b_{i}(E_{1},\phi_{1})$ for all $i$.
\end{proposition}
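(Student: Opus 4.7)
The plan is to exploit homotopy invariance of the cohomology theory together with the base-change compatibility of Pontryagin classes asserted just before the proposition. Let $\pi \colon S \times \Aff^{1} \to S$ be the first projection, and let $i_{0}, i_{1} \colon S \to S \times \Aff^{1}$ denote the inclusions of the two sections at $0$ and $1$.

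First I would observe that $\pi \circ i_{0} = \pi \circ i_{1} = \id_{S}$, so $i_{0}^{A} \circ \pi^{A} = i_{1}^{A} \circ \pi^{A} = \id_{A(S)}$. By homotopy invariance (Definition \ref{D:cohom}(3)), $\pi^{A} \colon A(S) \to A(S \times \Aff^{1})$ is an isomorphism. Consequently $i_{0}^{A} = i_{1}^{A} = (\pi^{A})^{-1}$ as maps $A(S \times \Aff^{1}) \to A(S)$.

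Next I would apply base-change compatibility to the Pontryagin classes of $(E,\phi)$ living on $S \times \Aff^{1}$. Since $i_{0}^{*}(E,\phi) \cong (E_{0},\phi_{0})$ and $i_{1}^{*}(E,\phi) \cong (E_{1},\phi_{1})$, the compatibility of Pontryagin classes with pullback gives, for every $i$,
\[
p_{i}(E_{0},\phi_{0}) = i_{0}^{A}\bigl(p_{i}(E,\phi)\bigr), \qquad p_{i}(E_{1},\phi_{1}) = i_{1}^{A}\bigl(p_{i}(E,\phi)\bigr).
\]
Combining the two displays yields $p_{i}(E_{0},\phi_{0}) = p_{i}(E_{1},\phi_{1})$, as required.

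There is essentially no obstacle: the only nontrivial ingredient is the equality $i_{0}^{A} = i_{1}^{A}$, which is a formal consequence of homotopy invariance and the fact that both inclusions split the projection. Base-change compatibility of the $p_{i}$ is noted just before the statement and follows from the construction in Theorem \ref{th.HPn.twist} (the Pontryagin classes are the coefficients in a relation coming from a two-sided module decomposition that is itself compatible with base change).
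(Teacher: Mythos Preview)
Your argument is correct and is exactly the intended one: the paper does not give a separate proof but simply remarks that the Pontryagin classes are compatible with base change and that this implies $\Aff^{1}$-deformation invariance. You have merely spelled out the standard step that $i_{0}^{A}=i_{1}^{A}$ by homotopy invariance, which is implicit in that remark.
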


A vector bundle $ L$ has an associated \emph{hyperbolic symplectic bundle}
$\Hyp( L) = \left(  L \oplus  L \dual ,
\bigl( \begin{smallmatrix} 0&1\\-1&0 \end{smallmatrix} \bigr) \right)$.

\begin{proposition}
\label{P:subl}
Suppose that $( E, \phi)$ is a symplectic bundle over a scheme
$S$ with a sublagrangian subbundle $L \subset  E$.  Let
$( E_0, \phi_0) = ( L^\perp/ L, \overline \phi) \oplus
\Hyp( L)$.  Then we have $b_i( E,\phi) = b_i( E_0,\phi_0)$
for all $i$.
\end{proposition}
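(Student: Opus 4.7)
I will apply the $\Aff^{1}$-deformation invariance of Pontryagin classes (Proposition \ref{P:deform.invariant}): it suffices to exhibit a symplectic bundle $(\mathcal{E},\Phi)$ on $S \times \Aff^{1}$ whose restriction to $S \times \{1\}$ is $(E,\phi)$ and to $S \times \{0\}$ is $(E_{0},\phi_{0})$. Since the difference between the two bundles encodes only the extension data of the filtration $0 \subset L \subset L^{\perp} \subset E$, the natural candidate is the Rees bundle of this filtration equipped with a suitably rescaled form.

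\textbf{The construction.}
Let $t$ be the coordinate on $\Aff^{1}$. Inside $E \otimes_{\OO_{S}} \OO_{S}[t]$ I take the Rees $\OO_{S}[t]$-submodule
\[
\mathcal{E} \;:=\; L + L^{\perp}\cdot t + E\cdot t^{2}\OO_{S}[t].
\]
A filtration-adapted local splitting $E = L \oplus L^{\perp}/L \oplus E/L^{\perp}$ with bases $e_{i}, f_{j}, g_{k}$ shows that $\mathcal{E}$ is freely generated over $\OO_{S}[t]$ by the $e_{i}, f_{j}t, g_{k}t^{2}$, hence locally free of rank $\rk E$; at $t=1$ it recovers $L+L^{\perp}+E = E$, and modding by $t$ yields the associated graded $L \oplus L^{\perp}/L \oplus E/L^{\perp}$, which under the symplectic isomorphism $E/L^{\perp} \cong L^{\vee}$ is the underlying bundle of $E_{0}$. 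For the form, I extend $\phi$ $\OO_{S}[t]$-bilinearly to $E\otimes\OO_{S}[t]$; the crucial point is that since $L$ is sublagrangian one has $\phi(L,L) = 0 = \phi(L,L^{\perp})$, so the $t^{0}$- and $t^{1}$-coefficients of $\phi(x,y)$ vanish for $x,y \in \mathcal{E}$ and $\phi\rest{\mathcal{E}}$ takes values in $t^{2}\OO_{S}[t]$. I then set
\[
\Phi \;:=\; t^{-2}\phi\rest{\mathcal{E}} \colon \mathcal{E}\otimes_{\OO_{S}[t]}\mathcal{E} \to \OO_{S\times\Aff^{1}},
\]
and a direct block calculation in the local basis confirms $\Phi\rest{t=1} = \phi$ on $E$ and $\Phi\rest{t=0} = \phi_{0}$, with $L \oplus L^{\vee}$ paired hyperbolically and $L^{\perp}/L$ carrying $\overline{\phi}$.

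\textbf{Main obstacle and conclusion.}
The one step requiring genuine care is verifying that $\Phi$ is nondegenerate on \emph{all} fibers over $\Aff^{1}$, not merely at $t = 0, 1$. Writing $\phi$ locally as the block matrix
\[
\phi = \begin{pmatrix} 0 & 0 & A \\ 0 & \overline{\phi} & B \\ -A^{T} & -B^{T} & C \end{pmatrix}
\]
with $A$ the duality pairing of $L$ with $L^{\vee}$ and $B,C$ the extension data, the Gram matrix of $\Phi$ in the Rees basis has its $B$- and $C$-entries rescaled by $t$ and $t^{2}$ respectively; a block-row swap and triangularization give
\[
\det \Phi \;=\; \det(A)^{2}\det(\overline{\phi}),
\]
which is independent of $t$ and nowhere vanishing since $A$ and $\overline{\phi}$ are nondegenerate. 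Hence $(\mathcal{E},\Phi)$ is a symplectic bundle on $S \times \Aff^{1}$ with the required specializations, and Proposition \ref{P:deform.invariant} delivers $p_{i}(E,\phi) = p_{i}(E_{0},\phi_{0})$ for all $i$.
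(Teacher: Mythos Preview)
Your proof is correct and follows exactly the approach the paper sketches: the paper simply asserts the existence of a symplectic bundle over $S \times \Aff^{1}$ deforming $(E,\phi)$ to $(E_{0},\phi_{0})$ and invokes Proposition \ref{P:deform.invariant}, whereas you supply the explicit Rees construction that realizes this deformation and verify nondegeneracy fiberwise. Your added detail is a welcome fleshing-out of what the paper leaves implicit.
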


This is because there exists a symplectic bundle $( E_1, \phi_1)$
over $S \times \Aff^1$ with $( E_1, \phi_1) \rest {S \times \{t\}
} \cong ( E, \phi)$ for $t \neq 0$ and $( E_1, \phi_1) \rest{
  S \times \{ 0 \} } \cong ( E_0, \phi_0)$.

%

\begin{theorem}[Nilpotence]
\label{T:nilp.1}
Let $(E,\phi)$ be a symplectic bundle on a scheme $X$.
Then its Borel classes $b_{i}(E,\phi) \in A(X)$ are nilpotent.
\end{theorem}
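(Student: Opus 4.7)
The plan is to reduce the general rank $2n$ case to the rank $2$ case via an iterated quaternionic projective bundle construction, and to handle the rank $2$ case using Theorem \ref{th.HPn.triv}.

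First I would handle the rank $2$ case. For a rank $2$ symplectic bundle $(L,\phi)$ on a scheme in $\mathcal V$, the goal is to classify $(L,\phi)$ through the universal rank $2$ symplectic subbundle on some finite $\HP^{M-1}$. Since the base has an ample family of line bundles, $L$ is a direct summand of a trivial vector bundle $\OO^{M}$. Combined with the identification $\Hyp(L) \cong (L,\phi) \perp (L,-\phi)$ and the compatibility of $\Hyp$ with direct sums, this realizes $(L,\phi)$ as a symplectic orthogonal direct summand of a trivial symplectic bundle $(\OO^{2M},\psi_{\mathrm{std}})$. The resulting classifying map $f \colon X \to \HGr(1,M) = \HP^{M-1}$ pulls back the universal rank $2$ symplectic subbundle to $(L,\phi)$. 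By functoriality of Pontryagin classes and Theorem \ref{th.HPn.triv}, which gives $A(\HP^{M-1}) = A(k)[p]/(p^{M})$, we obtain $p(L,\phi)^{M} = 0$.

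Next, to reduce the general case to rank $2$, I would iterate the quaternionic projective bundle theorem on successive orthogonal complements of the tautological rank $2$ symplectic subbundles. This produces a morphism $\pi \colon Z \to X$ from an iterated quaternionic projective bundle $Z$ on which $\pi^{*}(E,\phi)$ splits as an orthogonal direct sum $(\shf U_{1},\phi_{1}) \perp \cdots \perp (\shf U_{n},\phi_{n})$ of rank $2$ symplectic subbundles. At each step the PBT decomposition $A(\HP_{S}(F,\psi)) \cong A(S) \oplus A(S)\zeta \oplus \cdots \oplus A(S)\zeta^{\rk-1}$ is a direct summand inclusion, so the composite $\pi^{A} \colon A(X) \to A(Z)$ is injective. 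By the rank $2$ case already established, each $y_{j} = p(\shf U_{j},\phi_{j})$ is nilpotent in $A(Z)$.

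To combine these, I would then identify $\pi^{A} p_{i}(E,\phi)$ with the elementary symmetric polynomial $e_{i}(y_{1},\dots,y_{n})$. This follows from the divisibility of Pontryagin polynomials for orthogonal symplectic summands (Lemma \ref{L:divides.2}) applied iteratively: each $P_{\shf U_{j}}(t) = t - y_{j}$ divides $P_{\pi^{*}(E,\phi)}(t)$, and on degree grounds the monic product $\prod_{j}(t - y_{j})$ must equal $P_{\pi^{*}(E,\phi)}(t)$. Comparing coefficients gives the desired identification. Since a finitely generated ideal of commuting nilpotent elements in a commutative ring is nilpotent (any monomial of sufficiently high degree in the generators must contain one generator to a power exceeding its nilpotence index), $e_{i}(y_{1},\dots,y_{n}) \in (y_{1},\dots,y_{n})$ is nilpotent in $A(Z)$, and injectivity of $\pi^{A}$ transports nilpotence back to $p_{i}(E,\phi) \in A(X)$.

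The main obstacle will be the rank $2$ classification step: showing that every rank $2$ symplectic bundle is a symplectic orthogonal direct summand of a trivial symplectic bundle of finite rank. This requires combining the standard direct-summand property for vector bundles with a symplectic refinement, using $\Hyp$ and the isomorphism $\Hyp(L) \cong (L,\phi) \perp (L,-\phi)$. A secondary technical point is the availability of Lemma \ref{L:divides.2} at this stage in the paper; if it is not yet in place, the factorization $P_{\pi^{*}(E,\phi)}(t) = \prod_{j}(t - y_{j})$ can instead be built up by hand from the iterated PBT relations on $Z$.
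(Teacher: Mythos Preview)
Your approach has two genuine gaps.

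First, in the rank $2$ step you assert that because $X$ has an ample family of line bundles, the underlying bundle $L$ is a direct summand of a trivial bundle $\OO^{M}$. This is false in general: an ample family guarantees only that coherent sheaves are quotients of sums of line bundles, not that vector bundles split off from trivial ones. On $\PP^{1}$, for instance, $\OO(1)$ is not a direct summand of any trivial bundle. The paper's Lemma~\ref{L:affine} gives what you want, but only over affine schemes; to reduce to that case you would need the Jouanolou trick, as in the proof of Theorem~\ref{T:splitting}, and both of these come \emph{after} the present theorem.

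Second, the factorization step is not justified. Knowing that each $t-y_{j}$ divides the monic degree-$n$ polynomial $P_{\pi^{*}(E,\phi)}(t)$ does not, in a general ring, imply that their product equals it: one needs the differences $y_{i}-y_{j}$ to be non-zero-divisors, which is precisely what the paper establishes via the inverse-limit argument leading to Theorem~\ref{T:splitting} (see Lemma~\ref{L:NZD}). Your ``degree grounds'' shortcut does not work, and the alternative you sketch (building the factorization ``by hand from the iterated PBT relations'') is essentially the content of the splitting principle itself.

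Both gaps can be repaired, but only by importing the full splitting-principle machinery developed later. By contrast, the paper's own proof is entirely elementary and self-contained at this point: cover $X$ by finitely many opens $U_{\alpha}$ trivializing $(E,\phi)$; each $p_{i_{\alpha}}(E,\phi)$ vanishes on $U_{\alpha}$ and hence lifts to $A_{Z_{\alpha}}(X)$ with $Z_{\alpha} = X \smallsetminus U_{\alpha}$; a product of $n$ such classes then lifts to $A_{Z_{1}\cap \cdots \cap Z_{n}}(X) = A_{\varnothing}(X) = 0$. This uses only the localization sequence, the cup product with supports, and the vanishing of Pontryagin classes for trivial bundles.
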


\begin{proof}
Recall from \S \ref{S:cohomology} that we are assuming that our schemes are quasi-compact.
So we may cover $X$ by $n$
open subsets $U_{\alpha}$ such that $(E,\phi)$ trivializes over each $U_{\alpha}$.
We show that all products
$b_{i_{1}}(E,\phi) \cup \cdots \cup b_{i_{n}}(E,\phi)$ of $n$
Borel classes of $(E,\phi)$ vanish.

For each $U_{\alpha}$ write $Z_{\alpha} = X \smallsetminus U_{\alpha}$.  The restriction
of the Borel class $b_{i_{\alpha}}(E,\phi)$ to $U_{\alpha}$ vanishes because it is a Borel
class of a symplectic bundle trivial on $U_{\alpha}$.  So $b_{i_{\alpha}}(E,\phi)$ is the image of a class
in $A_{Z_{\alpha}}(X)$ under extension of supports.  It follows that
$b_{i_{1}}(E,\phi) \cup \cdots \cup b_{i_{n}}(E,\phi)$ is the image of a class in
$A_{Z_{1}\cap \cdots \cap Z_{n}}(X) = A_{\varnothing}X = 0$ under extension of supports.
\end{proof}

\section{Asymptotic cohomology of quaternionic flag varieties}
\label{S:flag.asymp}

The direct system of trivial symplectic bundles
%
$\Hyp(\shf O) \mono \Hyp(\shf O^{\oplus 2}) \mono \Hyp(\shf O^{\oplus 3}) \mono \cdots$
%
over the base
generates a direct system of quaternionic projective spaces
$
\HP^{0} \xrightarrow{i_{0}} \HP^{1} \xrightarrow{i_{1}} \HP^{2} \to \cdots
$
and an inverse system of cohomology rings
$
\cdots \to A(\HP^{2}) \to A(\HP^{1}) \to A(\HP^{0}).
$
Each $\HP^{n}$ has a rank $2$ universal subbundle $(\shf U_{n}, \phi_{n})$,
and under the inclusion maps we have isomorphisms $i_{n-1}^{*}(\shf U_{n},\phi_{n}) \cong (\shf U_{n-1},\phi_{n-1})$.
Theorem \ref{th.HPn.triv} and the functoriality of Borel classes gives us the following theorem.

\begin{theorem}
\label{T:HP.asymp}
Let $A$ be a ring cohomology theory with a symplectic Thom structure.  Then each map $i_{n-1}^{A} \colon A(\HP^{n}) \to A(\HP^{n-1})$ in the inverse system of cohomology rings is surjective, and
we have an isomorphism $\varprojlim A(\HP^{n}) \cong A(k)[[y]]$ with the indeterminate $y$ corresponding to the element of
$\varprojlim A(\HP^{n})$ given by the system of elements $( b(\shf U_{n},\phi_{n}))_{n \in \NN}$.
\end{theorem}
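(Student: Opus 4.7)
The plan is to deduce everything directly from the quaternionic projective bundle theorem for trivial bundles (Theorem \ref{th.HPn.triv}) combined with functoriality of Pontryagin classes. By that theorem, writing $\zeta_n = p(\shf U_n, \phi_n) \in A(\HP^n)$, we have an isomorphism of two-sided $A(k)$-modules
\[
(1, \zeta_n, \zeta_n^2, \dots, \zeta_n^n) \colon A(k)^{\oplus (n+1)} \arrowiso A(\HP^n),
\]
and a ring presentation $A(\HP^n) = A(k)[\zeta_n]/(\zeta_n^{n+1})$.

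Next I would analyze the transition maps. Since $i_{n-1}^*(\shf U_n,\phi_n) \cong (\shf U_{n-1},\phi_{n-1})$ by construction of the direct system, functoriality of Pontryagin classes yields $i_{n-1}^A(\zeta_n) = \zeta_{n-1}$, hence $i_{n-1}^A(\zeta_n^j) = \zeta_{n-1}^j$ for all $j$. Consequently, under the basis decompositions above, the map $i_{n-1}^A \colon A(\HP^n) \to A(\HP^{n-1})$ sends $\zeta_n^j$ to $\zeta_{n-1}^j$ for $0 \le j \le n-1$ and $\zeta_n^n$ to $\zeta_{n-1}^n = 0$. In particular, $i_{n-1}^A$ is the surjection of free $A(k)$-modules that truncates at degree $n$, proving the first assertion.

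For the inverse limit, the resulting tower is isomorphic to the standard truncation tower $\cdots \onto A(k)[\zeta]/(\zeta^{n+1}) \onto A(k)[\zeta]/(\zeta^n) \onto \cdots$. A compatible family $(a_n)_{n \ge 0}$ corresponds via the basis $(1,\zeta_n,\dots,\zeta_n^n)$ to tuples $(c_0^{(n)}, c_1^{(n)},\dots,c_n^{(n)})$ with $c_j^{(n)} \in A(k)$, and the compatibility relation $i_{n-1}^A(a_n) = a_{n-1}$ forces $c_j^{(n)} = c_j^{(n-1)}$ for $j \le n-1$. Such a family is therefore uniquely determined by a sequence $(c_j)_{j \ge 0} \in A(k)^{\NN}$, which we encode as a formal power series $\sum_{j\ge 0} c_j y^j$. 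One checks this bijection is a ring homomorphism because the multiplication in each $A(\HP^n)$ is the truncated polynomial multiplication. By construction the element of $\varprojlim A(\HP^n)$ represented by $(p(\shf U_n,\phi_n))_n = (\zeta_n)_n$ maps to $y$, giving the stated isomorphism $\varprojlim A(\HP^n) \cong A(k)[[y]]$.

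There is essentially no obstacle here; the entire argument is a bookkeeping exercise once Theorem \ref{th.HPn.triv} is available. The only points requiring a moment of care are the compatibility of the $A(k)$-bimodule structures across the tower (immediate, since the $A(k)$-structure is induced by pullback along $\HP^n \to k$) and the verification that the truncation maps really do annihilate only the top basis element, which follows from $\zeta_{n-1}^n = 0$ in $A(\HP^{n-1})$.
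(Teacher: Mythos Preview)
Your proof is correct and follows exactly the approach the paper indicates: the paper simply states that ``Theorem \ref{th.HPn.triv} and the functoriality of Pontryagin classes gives us the following theorem'' without further detail, and your argument is precisely the natural unpacking of that sentence.
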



More generally, let $r \geq 1$.  For any $n \geq r$ we will write
$\HFlag(1^{r};n) = \HFlag({1,\dots,1};n)$ with the $1$ repeated $r$ times.
The system of trivial symplectic bundles also generates a direct system of flag bundles
\[
\HFlag(1^{r};r) \mono \HFlag(1^{r};r+1) \mono \HFlag(1^{r};r+2) \mono \cdots
\]
and an inverse system of cohomology rings
\[
\cdots \to A(\HFlag(1^{r};r+2)) \to A(\HFlag(1^{r};r+1)) \to A(\HFlag(1^{r};r)).
\]
Each $\HFlag(1^{r};n)$ has $r$ rank $2$ universal symplectic subbundles $(\shf U^{(1)}_{n},\phi^{(1)}_{n}), \dots, (\shf U^{(r)}_{n}, \phi^{(r)}_{n})$ plus a rank $2n-2r$ universal symplectic subbundle $(\shf V_{r,n}, \psi_{r,n})$ and a decomposition as an orthogonal direct sum
\begin{equation}
\label{E:univ.bdl}
\Hyp(\OO^{\oplus n}) \cong
(\shf U^{(1)}_{n},\phi^{(1)}_{n}) \perp \cdots \perp (\shf U^{(r)}_{n}, \phi^{(r)}_{n}) \perp (\shf V_{r,n}, \psi_{r,n})
\end{equation}
For the inclusion maps $j_{n} \colon \HFlag(1^{r};n) \to \HFlag(1^{r};n+1)$ we have natural isomorphisms
$j_{n}^{*}(\shf U^{(i)}_{n+1},\phi^{(i)}_{n+1}) \cong (\shf U^{(i)}_{n},\phi^{(i)}_{n})$ for $i = 1, \dots, r$.
The rest of this section will be devoted to proving the following theorem.

\begin{theorem}
\label{T:HFlag.asymp}
Let $A$ be a ring cohomology theory with a symplectic Thom structure.  Let $r \geq 1$ be an integer.  Then the maps $j_{n}^{A}\colon A(\HFlag(1^{r};n+1)) \to A(\HFlag(1^{r};n))$ are surjective, and we have
\begin{equation}
\label{E:HFlag.lim}
\varprojlim_{n \to \infty} A(\HFlag(1^{r};n)) \cong A(k) [[y_{1},\dots, y_{r}]]
\end{equation}
with the indeterminate $y_{i}$ corresponding to the element of $\varprojlim A(\HFlag(1^{r};n))$ given by the system $( b(\shf U^{(i)}_{n},\phi^{(i)}_{n}))_{n \geq r}$.
\end{theorem}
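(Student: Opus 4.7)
The plan is to induct on $r$, with the base case $r = 1$ being Theorem \ref{T:HP.asymp}. For the inductive step, the forgetful morphism $\pi_n \colon \HFlag(1^r; n) \to \HFlag(1^{r-1}; n)$ that drops the $r$-th factor of \eqref{E:univ.bdl} realizes $\HFlag(1^r; n)$ as the quaternionic projective bundle $\HP_{\HFlag(1^{r-1}; n)}(\shf V_{r-1, n}, \psi_{r-1, n})$ via the universal property, since the tautological $(\shf U^{(r)}_n,\phi^{(r)}_n)$ is a rank $2$ symplectic subbundle of $\pi_n^*\shf V_{r-1, n}$. As $\shf V_{r-1, n}$ has rank $2(n-r+1)$, Theorem \ref{th.HPn.twist} gives a free $A(\HFlag(1^{r-1}; n))$-module decomposition
\[
A(\HFlag(1^r; n)) = \bigoplus_{i=0}^{n-r} A(\HFlag(1^{r-1}; n)) \cdot y_r^{\,i},
\]
and iterating down to $\HP^{n-1}$ exhibits $A(\HFlag(1^r; n))$ as a free $A(k)$-module on the basis $\{y_1^{i_1} \cdots y_r^{i_r} : 0 \le i_j \le n-j\}$.

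Surjectivity of $j_n^A$ follows from the identifications $j_n^*\shf U^{(i)}_{n+1} \cong \shf U^{(i)}_n$, which give $j_n^A(y_i) = y_i$, combined with surjectivity on the base by induction. To construct $\Phi \colon A(k)[[y_1,\ldots,y_r]] \to \varprojlim A(\HFlag(1^r; n))$, I would apply Lemma \ref{L:divides.2} to the orthogonal summand $(\shf U^{(i)}_n, \phi^{(i)}_n) \subset \Hyp(\OO^{\oplus n})$: its Pontryagin polynomial $t - y_i$ divides the Pontryagin polynomial $t^n$ of the trivial bundle, so $y_i^n = 0$ in $A(\HFlag(1^r; n))$ for each $i$. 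Consequently any power series $f = \sum_I a_I y^I$ evaluates to a finite sum in $A(\HFlag(1^r; n))$ (only monomials with all $i_j < n$ survive), and these evaluations are compatible under $j_n^A$ (since $j_n^A$ kills any monomial with some $i_j = n$ by the same nilpotence in the target), yielding the ring homomorphism $\Phi$.

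To show $\Phi$ is an isomorphism, expand an element $(c_n)_n \in \varprojlim A(\HFlag(1^r; n))$ in the explicit $A(k)$-basis as $c_n = \sum_I a_{n, I}\, y^I$. The compatibility $j_n^A(c_{n+1}) = c_n$, combined with the QPB relation
\[
y_r^{n-r+1} = \sum_{j=1}^{n-r+1} (-1)^{j+1} p_j(\shf V_{r-1,n}, \psi_{r-1,n})\, y_r^{n-r+1-j}
\]
and its analogs at lower levels of the tower, determines how basis coefficients evolve under transition. One then argues that for each fixed $I_0$ the sequence $(a_{n, I_0})_n$ stabilizes to an element $a_{I_0} \in A(k)$, producing a preimage $\sum_{I_0} a_{I_0} y^{I_0}$ for surjectivity; injectivity follows similarly, since a minimal $I_0$ with $a_{I_0} \neq 0$ would yield a nonzero basis coefficient in $\Phi_n(f)$ for $n$ large. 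The hard part is establishing this stabilization: reducing out-of-range monomials via the QPB relation couples basis coefficients through the classes $p_j(\shf V_{r-1, n})$, whose structure, in the absence of Cartan's sum formula, is controlled only via Lemma \ref{L:divides.2}; the divisibility nevertheless should force these Pontryagin classes to lie in sufficiently high powers of the augmentation ideal of $A(\HFlag(1^{r-1}; n))$, so the induced corrections affect only basis positions of strictly greater total weight than $|I_0|$ and are pushed out of range as $n \to \infty$.
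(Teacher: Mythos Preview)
Your strategy matches the paper's almost exactly: both use the iterated quaternionic projective bundle structure to write $A(\HFlag(1^{r};n)) = A(k)[y_{1},\dots,y_{r}]/I_{r,n}$, both use Lemma~\ref{L:divides.2} to get $y_{i}^{n}=0$ (the inclusion $(y_{1}^{n},\dots,y_{r}^{n})\subset I_{r,n}$), and both recognise that the remaining issue is to control the relations from above. But you leave the crucial step as a ``should'': you need that the generators of $I_{r,n}$ lie in $(y_{1},\dots,y_{r})^{n-r+1}$, and you do not prove it. Without this, neither your coefficient-stabilisation argument for surjectivity of $\Phi$ nor your minimality argument for injectivity goes through, since reducing an out-of-range monomial via the projective bundle relation could a~priori alter low-degree basis coefficients.

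The paper fills this gap with two ideas you are missing. First, applying Lemma~\ref{L:divides.2} not once but iteratively to the chain $G_{n,1}\supset G_{n,2}\supset\cdots$ of orthogonal complements yields $P_{G_{n,i-1}}(t)=(t-z_{i})P_{G_{n,i}}(t)$ for certain classes $z_{i}\in A(\HFlag(1^{r};n))$, whence $p_{t}(G_{n,i})=\prod_{m<i}(1+z_{m}t)^{-1}$ and the relation $P_{i}$ equals the complete symmetric polynomial $h_{n-i+1}(z_{1},\dots,z_{i-1},y_{i})$. Second, the trivial decomposition $\Hyp(\OO^{\oplus n})\cong\Hyp(\OO)^{\perp r}\perp\Hyp(\OO^{\oplus n-r})$ gives a section $s\colon k\to\HFlag(1^{r};n)$ with $s^{A}(y_{i})=0$, so $\ker s^{A}=(y_{1},\dots,y_{r})$; since $s^{*}G_{n,i}$ is trivial, $s^{A}(z_{i})=0$ and hence $z_{i}\in(y_{1},\dots,y_{r})$. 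Together these give $P_{i}\in(y_{1},\dots,y_{r})^{n-i+1}$, so $I_{r,n}\subset(y_{1},\dots,y_{r})^{n-r+1}$, and the two inclusions sandwich $I_{r,n}$ between ideals cofinal with the powers of $(y_{1},\dots,y_{r})$, yielding the inverse limit directly. This section argument is the concrete mechanism behind your ``divisibility should force high powers of the augmentation ideal''.
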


The \emph{Borel polynomial} of a symplectic bundle $(E,\phi)$ of
rank $2r$ is
\[
B_{E,\phi}(t) = t^{r}-b_{1}(E,\phi)t^{r-1} + b_{2}(E,\phi)t^{r-2} - \cdots + (-1)^{r}b_{r}(E,\phi),
\]
while the \emph{total Borel class} is
\[
b_{t}(E,\phi) = 1 + b_{1}(E,\phi)t + \cdots + b_{r}(E,\phi)t^{r}.
\]
We will need the following lemma, which is a weak version of the Cartan sum formula.

\begin{lemma}
\label{L:divides.2}
Suppose the symplectic bundle $(E,\phi)$ is an orthogonal direct summand of the symplectic bundle
$(F,\psi)$ on $X$.  Then the Borel polynomial $B_{E,\phi}(t)$ divides the Borel polynomial
$B_{F,\psi}(t)$ in $A(X)[t]$.
\end{lemma}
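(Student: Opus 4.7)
The plan is to invoke the quaternionic projective bundle theorem (Theorem \ref{th.HPn.twist}) on both $(E,\phi)$ and $(F,\psi)$ and to compare the two computations via the natural morphism of quaternionic projective bundles. Write $2r = \rk E$; the case $r = 0$ is trivial, so assume $r \geq 1$. Because $(E,\phi)$ is an orthogonal summand of $(F,\psi)$, any rank $2$ symplectic subbundle of $E$ is in particular a rank $2$ symplectic subbundle of $F$, and the universal property of $\HP_{X}(F,\psi)$ recalled in \S\ref{S:basic.geom} then produces a morphism
\[
\iota \colon \HP_{X}(E,\phi) \to \HP_{X}(F,\psi)
\]
over $X$ satisfying $\iota^{*}(\shf U_{F},\psi\rest{\shf U_{F}}) \cong (\shf U_{E},\phi\rest{\shf U_{E}})$, where $\shf U_{E}$ and $\shf U_{F}$ denote the two tautological rank $2$ symplectic subbundles. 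By functoriality of Pontryagin classes this gives $\iota^{A}\zeta_{F} = \zeta_{E}$, writing $\zeta_{E} = p(\shf U_{E},\phi\rest{\shf U_{E}})$ and $\zeta_{F} = p(\shf U_{F},\psi\rest{\shf U_{F}})$.

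Next I would apply Theorem \ref{th.HPn.twist} to $(F,\psi)$ to obtain the relation $P_{F,\psi}(\zeta_{F}) = 0$ in $A(\HP_{X}(F,\psi))$, where the coefficients $p_{i}(F,\psi) \in A(X)$ are silently pulled back along the structure map $\pi_{F} \colon \HP_{X}(F,\psi) \to X$. Pulling this relation back along $\iota^{A}$ and using $\iota^{A}\pi_{F}^{A} = \pi_{E}^{A}$ gives
\[
P_{F,\psi}(\zeta_{E}) = 0 \qquad \text{in } A(\HP_{X}(E,\phi)),
\]
with the same coefficients $p_{i}(F,\psi)$ now pulled back along $\pi_{E}$.

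Finally, Theorem \ref{th.HPn.twist} applied to $(E,\phi)$ tells us that $P_{E,\phi}(t) \in A(X)[t]$ is monic of degree $r$, that $P_{E,\phi}(\zeta_{E}) = 0$, and that $1,\zeta_{E},\dots,\zeta_{E}^{r-1}$ form a two-sided $A(X)$-basis of $A(\HP_{X}(E,\phi))$. Performing polynomial long division, I write $P_{F,\psi}(t) = P_{E,\phi}(t)\,Q(t) + R(t)$ with $Q,R \in A(X)[t]$ and $\deg R < r$. Evaluating at $t = \zeta_{E}$ and combining the two vanishing relations forces $R(\zeta_{E}) = 0$, and then the basis property forces $R = 0$, so $P_{E,\phi}(t)$ divides $P_{F,\psi}(t)$ in $A(X)[t]$. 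The one point that needs a moment of care is that this long division takes place in a polynomial ring over the possibly noncommutative ring $A(X)$; this is legitimate because $P_{E,\phi}$ is monic and because every Pontryagin class coefficient appearing is $A(X)$-central by the centrality clause of Theorem \ref{th.HPn.twist}, so the entire computation happens in the commutative subring generated by these coefficients. I do not expect any genuine obstacle beyond this bookkeeping.
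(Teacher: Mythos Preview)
Your proof is correct and follows essentially the same route as the paper's: construct the embedding $\HP_{X}(E,\phi)\hookrightarrow\HP_{X}(F,\psi)$, pull back the relation $P_{F,\psi}(\zeta_{F})=0$ to obtain $P_{F,\psi}(\zeta_{E})=0$, then divide by the monic $P_{E,\phi}(t)$ and use the $A(X)$-basis $1,\zeta_{E},\dots,\zeta_{E}^{r-1}$ to kill the remainder. Your explicit remark about centrality of the Pontryagin-class coefficients to justify the division in the possibly noncommutative $A(X)[t]$ is a point the paper leaves implicit.
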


\begin{proof}
There is an embedding $i \colon \HP_{X}(E,\phi) \subset \HP_{X}(F,\psi)$
such that the tautological rank $2$ symplectic subbundle $(\shf U_{1},\psi\rest{U_{1}})$ on
$\HP_{X}(F,\psi)$
restricts to the tautological rank $2$ symplectic subbundle $(\shf U_{2},\phi\rest{U_{2}})$ of
$\HP_{X}(E,\phi)$.  Hence $i^{A} \colon A( \HP_{X}(F,\psi) ) \to A( \HP_{X}(E,\phi) )$ sends
$\zeta_{1} = b(\shf U_{1},\psi\rest{U_{1}}) \mapsto \zeta_{2} = b(\shf U_{2},\psi\rest{U_{2}})$.
So we have $B_{F,\psi}(\zeta_{2}) = 0$.  The division of $B_{F,\psi}(t)$ by the
monic polynomial $B_{E,\phi}(t)$
yields a remainder $R(t) \in A(\HP_{X}(E,\phi))[t]$ of degree at most $\frac 12 \rk E -1$ such that
$R(\zeta_{2}) = 0$.  So the remainder vanishes.
\end{proof}

\begin{proof}[Proof of Theorem \ref{T:HFlag.asymp}]

To prove the theorem we will calculate the rings $A(\HFlag(1^{r};n))$.
Each of the
relative quaternionic flag bundles is an iterated quaternionic projective bundle, so its cohomology
ring is of the form
\begin{equation}
\label{E:I.rn.1}
A(\HFlag(1^{r};n)) = A(k)[y_{1},\dots,y_{r}]/I_{r,n}
\end{equation}
The maps $A(\HFlag(1^{r};n+1) \to A(\HFlag(1^{r};n))$ are surjective because they are maps
of $A(k)$-algebras which send a set of generators onto a set of generators.

The construction of $\HFlag(1^{r};n)$ as an iterated flag bundle gives us
\begin{equation}
\label{E:I.rn}
I_{r,n} = (B_{1}(y_{1}),B_{2}(y_{1},y_{2}),\dots,B_{r}(y_{1},\dots,y_{r}))
\end{equation}
where each $B_{i}(y_{1},\dots,y_{i})$ is the Borel polynomial in the last variable $y_{i}$
of the symplectic bundle
\begin{equation}
\label{E:G.ni}
(G_{n,i},\gamma_{n,i}) =
(U_{n}^{(i)},\phi_{n}^{(i)}) \perp \cdots \perp (U_{n}^{(r)},\phi_{n}^{(r)}) \perp (\shf V_{r,n}, \psi_{r,n}).
\end{equation}
This bundle is an orthogonal direct summand of the $\Hyp(\OO^{\oplus n})$ of \eqref{E:univ.bdl},
so by Lemma \ref{L:divides.2} the polynomial $P_{i}(y_{1},\dots,y_{i})$ divides
$B_{\Hyp(\OO^{\oplus n})}(y_{i}) = y_{i}^{n}$ in
$A(k)[y_{1},\dots,y_{i}]/(B_{1},\dots,B_{i-1})$.  From this we deduce
\begin{equation}
\label{E:inclusion.1}
(y_{1}^{n},y_{2}^{n},\dots,y_{r}^{n}) \subset I_{r,n}.
\end{equation}

Each $(G_{n,i},\gamma_{n,i})$ is an orthogonal direct summand of $(G_{n,i-1},\gamma_{n,i})$,
so $B_{G_{n,i},\gamma_{n,i}}(t)$ divides $B_{G_{n,i-1},\gamma_{n,i-1}}(t)$.
Since the difference in the degrees is $1$, we have
\begin{align}
\label{E:z.i}
B_{G_{n,i-1},\gamma_{n,i-1}}(t) & = (t-z_{i})B_{G_{n,i},\gamma_{n,i}}(t),
&
z_{i} & = b_{1}(G_{n,i-1},\gamma_{n,i-1}) - b_{1}(G_{n,i},\gamma_{n,i})
\end{align}
%
We deduce
$b_{t}(G_{n,i-1},\gamma_{n,i-1}) = (1+z_{i}t)b_{t}(G_{n,i},\gamma_{n,i})$.
Since $b_{t}(G_{n,1},\gamma_{n,1}) = b_{t}(\Hyp(\OO^{\oplus n})) = 1$ we get
\[
b_{t}(G_{n,i},\gamma_{n,i}) = \frac{1}{\prod_{m = 1}^{i-1}(1+z_{m}t)}
\]
in $A(k)/(B_{1},\dots,B_{i-1})[t]$.

Write $\coeff(t^{i},f(t))$ for the coefficient of $t^{i}$ in the power series or polynomial $f(t)$.
The Borel polynomial and total Borel class are related by the formula
\[
B_{E,\phi}(y)
= \coeff\left(t^{r+1}, \frac{b_{-t}(E,\phi)}{1-yt}\right)
\]
where $b_{-t}(E,\phi)$ means that one substitutes $-t$ for $t$ in the series
$b_{t}(E,\phi)$.
Hence we have
\[
B_{i}(y_{1},\dots,y_{i}) =
 \coeff\left( t^{n-i+1}, \frac{1}{\prod_{m = 1}^{i-1}(1-z_{m}t) \cdot (1-y_{i}t)} \right).
\]

Let
$h_{i}(u_{1},\dots,u_{s})$ be the $i^{\text{th}}$ \emph{complete symmetric polynomial},
the sum of all the monomials
in $u_{1},\dots,u_{s}$ of degree $i$.  Set $h_{0}=1$.   Their generating function is
\[
H(t) = \sum_{i=0}^{\infty}h_{i}(u_{1},\dots,u_{s})t^{i} =
\frac{1}{\prod_{m=1}^{s} (1-u_{m}t)}.
\]
Thus we have
\begin{equation}
\label{E:complete}
B_{i}(y_{1},\dots,y_{i}) = h_{n-i+1}(z_{1},\dots,z_{i-1},y_{i}).
\end{equation}

We claim that the $z_{i}$ lie in the ideal $(y_{1},\dots,y_{r}) \subset A(k)[y_{1},\dots,y_{r}]$.
That is because by the universal property of $\HFlag(1^{r};n)$,
fixing a decomposition $\Hyp(\OO^{\oplus n}) = \Hyp(\OO)^{\perp r}
\perp \Hyp(\OO^{\oplus n-r})$ gives a section $s \colon k \to \HFlag(1^{r};n)$
of the structural map.  Since we have $s^{*}(\shf U_{n}^{(i)},\phi_{n}^{(i)}) = \Hyp(\OO)$,
we have $s^{A}y_{i} = b(\Hyp(\OO)) = 0$.  So we have $(y_{1},\dots,y_{r}) = \ker s^{A}$.
However, the $s^{*}(G_{n,i},\gamma_{n,i})$ are also trivial, so from \eqref{E:z.i} we have
$s^{A}z_{i}=0$, proving the claim.

It now follows from \eqref{E:complete} that we have
$B_{i}(y_{1},\dots,y_{i}) \in (y_{1},\dots,y_{i})^{n-i+1}$.  This gives us
\begin{equation}
\label{E:inclusion.2}
I_{n,r} \subset (y_{1},\dots,y_{n})^{n-r+1}
\end{equation}
The inclusions \eqref{E:inclusion.1} and \eqref{E:inclusion.2} together give
$\varprojlim A(\HFlag(1^{r};n)) = A(k)[[y_{1},\dots,y_{r}]]$.
\end{proof}

\section{The splitting principle and the sum formula}

The quaternionic flag varieties we studied in the previous section are iterated quaternionic projective
bundles over quaternionic Grassmannians
\begin{equation}
\label{E:Flag.Gr}
\HGr(r,n) \leftarrow
\HFlag(1,r-1;n) \leftarrow
\cdots \leftarrow
\HFlag(1^{r-2},2;n) \leftarrow
\HFlag(1^{r};n).
\end{equation}
The quaternionic projective bundle theorem \ref{th.HPn.twist} implies that
$A(\HFlag(1^{r};n))$ is a free module over $A(\HGr(r,n))$ for any ring cohomology theory $A$
with a symplectic Thom structure.  More precisely,
when one pulls back the universal rank $2r$ symplectic bundle $(\shf U_{n},\phi_{n})$ on $\HGr(r,n)$
to $\HFlag(1^{r};n)$, it splits into the orthogonal direct sum of the $r$ rank $2$ universal symplectic
subbundles
\begin{equation}
\label{E:r.bdl}
(\shf U^{(1)}_{n},\phi^{(1)}_{n}) \perp \cdots \perp (\shf U^{(r)}_{n}, \phi^{(r)}_{n})
\end{equation}
on $\HFlag(1^{r};n)$.  For $1 \leq i \leq r-1$ the
bundle $(\shf U^{(i)}_{n}, \phi^{(i)}_{n})$ is the pullback to $\HFlag(1^{r};n)$ of the tautological  rank $2$
symplectic subbundles of the Quaternionic Projective Bundle Theorem for the $i$-th projective bundle  of
\eqref{E:Flag.Gr}, which is an $\HP^{r-i}$-bundle.
Writing $\overline{y}_{i} = b(\shf U^{(i)}_{n}, \phi^{(i)}_{n})$ we see the following.

\begin{proposition}
\label{P:Flag.Gr}
Let $A$ be a ring cohomology theorem with a symplectic Thom structure.
The projection $q \colon \HFlag(1^{r};n) \to \HGr(r,n)$ induces a monomorphism
$q^{A} \colon A(\HGr(r,n)) \to A(\HFlag(1^{r};n))$ under which
$A(\HFlag(1^{r};n))$ is a free $A(\HGr(r,n))$-module of rank $r!$ with basis
\begin{equation}
\label{E:base}
\mathbb B_{r} =
\{ \overline{y}_{1}^{a_{1}}\overline{y}_{2}^{a_{2}} \cdots \overline{y}_{r-1}^{a_{r-1}} \mid \text{$0 \leq a_{i} \leq r-i$ for all $i$} \}.
\end{equation}
\end{proposition}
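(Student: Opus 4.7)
The plan is to exploit the factorization \eqref{E:Flag.Gr} of $q \colon \HFlag(1^{r};n) \to \HGr(r,n)$ as an iterated quaternionic projective bundle and to apply Theorem \ref{th.HPn.twist} one step at a time.

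First I would identify each arrow
\[
\HFlag(1^{k-1},r-k+1;n) \xleftarrow{q_{k}} \HFlag(1^{k},r-k;n), \qquad 1 \leq k \leq r-1,
\]
in \eqref{E:Flag.Gr} as a quaternionic projective bundle. Over $\HFlag(1^{k-1},r-k+1;n)$ there is a universal rank $2(r-k+1)$ symplectic subbundle $(\shf W_{k},\omega_{k})$, namely the last (unrefined) summand in the universal orthogonal decomposition. By the universal property of $\HFlag(1^{k},r-k;n)$ recalled in \S\ref{S:basic.geom}, giving a morphism $T \to \HFlag(1^{k},r-k;n)$ is the same as giving a morphism $g \colon T \to \HFlag(1^{k-1},r-k+1;n)$ together with a rank $2$ symplectic orthogonal direct summand of $g^{*}(\shf W_{k},\omega_{k})$. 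By the universal property \eqref{E:univ.split} of the quaternionic projective bundle, such data is the same as a lift of $g$ to $\HP_{\HFlag(1^{k-1},r-k+1;n)}(\shf W_{k},\omega_{k})$. Hence $q_{k}$ is canonically an $\HP^{r-k}$-bundle, and its tautological rank $2$ symplectic summand pulls back on $\HFlag(1^{r};n)$ to $(\shf U_{n}^{(k)},\phi_{n}^{(k)})$, so that its Pontryagin class pulls back to $\overline{y}_{k}$.

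Next I would apply Theorem \ref{th.HPn.twist} to each $q_{k}$: it makes $A(\HFlag(1^{k},r-k;n))$ a free two-sided $A(\HFlag(1^{k-1},r-k+1;n))$-module with basis $1, \overline{y}_{k}, \overline{y}_{k}^{2}, \dots, \overline{y}_{k}^{r-k}$. Composing these factorizations for $k=1,\dots,r-1$ and multiplying the one-step bases exhibits $A(\HFlag(1^{r};n))$ as a free two-sided $A(\HGr(r,n))$-module with basis $\mathbb B_{r}$ of cardinality $\prod_{k=1}^{r-1}(r-k+1) = r!$. Since $1 \in \mathbb B_{r}$ lies in the image of $q^{A}$ and is a basis element, $q^{A}$ is automatically injective, giving the required monomorphism.

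The one nontrivial point is the identification in the first step: I must check that the tautological rank $2$ summand on $\HP(\shf W_{k},\omega_{k})$ really is, after pullback to $\HFlag(1^{r};n)$, canonically isometric to the universal subbundle $(\shf U_{n}^{(k)},\phi_{n}^{(k)})$, so that the Pontryagin class introduced at the $k$-th step is $\overline{y}_{k}$ and not some other element of the polynomial ring. This is pure bookkeeping with the universal properties of the schemes $\HFlag(a_{1},\dots,a_{s};n)$, but it has to be done consistently across the chain; everything else is a formal iteration of the quaternionic projective bundle theorem.
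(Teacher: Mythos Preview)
Your proposal is correct and follows exactly the paper's approach: the paragraph preceding the proposition explains that the tower \eqref{E:Flag.Gr} exhibits $\HFlag(1^{r};n)$ as an iterated quaternionic projective bundle over $\HGr(r,n)$, with the $i$-th step an $\HP^{r-i}$-bundle whose tautological rank $2$ subbundle pulls back to $(\shf U_{n}^{(i)},\phi_{n}^{(i)})$, so that iterating Theorem \ref{th.HPn.twist} gives the basis $\mathbb B_{r}$. Your write-up is in fact more detailed than the paper's, which leaves the bookkeeping you mention (matching tautological summands along the chain) implicit.
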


Let $(E, \phi)$ be a symplectic bundle of rank $2r$ on a scheme $X$,
and let $q \colon \HFlag_{X}(E,\phi) \to X$ be the associated complete quaternionic flag bundle.  The pullback of $E$  splits as the orthogonal direct sum of the $r$ universal symplectic subbundles of rank $2$
\begin{equation}
\label{E:roots}
q^{*}(E, \phi) \cong (\shf U_{1},\psi_{1}) \perp (\shf U_{2}, \psi_{2}) \perp \cdots \perp (\shf U_{r},\psi_{r}).
\end{equation}
Write $u_{i} = b(\shf U_{i},\psi_{i}) \in A(\HFlag_{X}(E,\phi))$.  We will call the $u_{i}$ the \emph{Borel roots} of $(E,\phi)$.

\begin{theorem}[Symplectic splitting principle]
\label{T:splitting}
The map $q^{A} \colon A(X) \to A(\HFlag_{X}(E,\phi))$ is injective and makes $A(\HFlag_{X}(E,\phi))$ into a free two-sided $A(X)$-module of rank $r!$ with basis the $\mathbb B_{r}$ of \eqref{E:base}.  Moreover, the Borel classes $b_{i}(E,\phi) \in A(X)$ coincide after pullback by $q^{A}$ with the
elementary symmetric polynomials $e_{i}(u_{1},\dots,u_{r})$ in the Borel roots.
\end{theorem}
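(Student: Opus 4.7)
The plan is to handle parts (1) and (2) by iterated application of the Quaternionic Projective Bundle Theorem (Theorem \ref{th.HPn.twist}), and to prove (3) by combining Lemma \ref{L:divides.2} with a universality argument based on the asymptotic cohomology computation of Theorem \ref{T:HFlag.asymp}.

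For the module structure, I would factor $q \colon \HFlag_{X}(E,\phi) \to X$ as a tower in which stage $i$ is the quaternionic projective bundle of the rank $2(r-i+1)$ orthogonal complement of the first $i-1$ tautological rank $2$ subbundles already split off. Theorem \ref{th.HPn.twist} makes each stage a free two-sided module of rank $r-i+1$ over the previous stage with basis $1, u_{i}, u_{i}^{2}, \ldots, u_{i}^{r-i}$, where $u_i$ is the Pontryagin class of the newly split-off tautological rank $2$ summand. Composing the free structures and tracking the bases yields injectivity of $q^{A}$ and the basis $\mathbb B_{r}$ of \eqref{E:base}.

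For part (3), Lemma \ref{L:divides.2} applied to each orthogonal direct summand $(\shf U_{i}, \psi_{i}) \subset q^{*}(E,\phi)$ tells us that the linear factor $t - u_{i} = P_{\shf U_i,\psi_i}(t)$ divides $q^{*}P_{E,\phi}(t)$ in $A(\HFlag_{X}(E,\phi))[t]$ for every $i$. If the elements $u_{i} - u_{j}$ ($i\neq j$) were not zero divisors, the $r$ linear factors would be pairwise coprime, so their product would divide $q^{*}P_{E,\phi}(t)$, and monicity of equal degree would force the equality $q^{*}P_{E,\phi}(t) = \prod_{i=1}^{r}(t - u_{i})$; comparing coefficients would then yield $q^{A}p_{i}(E,\phi) = e_{i}(u_{1},\ldots,u_{r})$, which is exactly (3). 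The hard part is establishing the non-zero-divisor property of $u_{i} - u_{j}$, which is not at all apparent from the structure of $A(\HFlag_{X}(E,\phi))$ itself.

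To circumvent this obstacle I would pass to a universal case. Using the standing assumption that $X$ has an ample family of line bundles, I would realize $(E,\phi)$ as the pullback of the tautological rank $2r$ symplectic subbundle along a classifying map $f \colon X \to \HGr(r,n)$ for some sufficiently large $n$; this induces a morphism $\tilde f \colon \HFlag_{X}(E,\phi) \to \HFlag(1^{r};n)$ carrying each universal Pontryagin root $y_{i}^{(n)} = p(\shf U_{n}^{(i)},\phi_{n}^{(i)})$ to $u_{i}$. It therefore suffices to prove the polynomial identity in $A(\HFlag(1^{r};n))[t]$ for the tautological rank-$2r$ bundle on $\HGr(r,n)$. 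For this I pass to the inverse limit $\varprojlim_{n} A(\HFlag(1^{r};n)) \cong A(k)[[y_{1},\ldots,y_{r}]]$ of Theorem \ref{T:HFlag.asymp}, in which the $y_{i} - y_{j}$ are manifestly not zero divisors. The compatible system of Pontryagin polynomials of the tautological bundles yields a limit polynomial $P_{\infty}(t) \in A(k)[[y_{1},\ldots,y_{r}]][t]$ with $P_{\infty}(y_{i}) = 0$ for every $i$ by Lemma \ref{L:divides.2} applied stage by stage. Dividing out one linear factor at a time shows $\prod_{i}(t - y_{i})$ divides $P_{\infty}(t)$, and equal degree plus monicity force equality. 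Projecting back to each finite $n$ and pulling back along $\tilde f$ delivers (3) over $X$.
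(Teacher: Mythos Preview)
Your overall strategy is exactly the paper's: the free-module statement is the relative version of Proposition~\ref{P:Flag.Gr} obtained by iterating Theorem~\ref{th.HPn.twist}, and for the identification of the $p_i$ with the elementary symmetric functions you pass to the inverse limit $\varprojlim_n A(\HFlag(1^r;n))\cong A(k)[[y_1,\dots,y_r]]$ of Theorem~\ref{T:HFlag.asymp}, where the $y_i-y_j$ are nonzerodivisors, apply Lemma~\ref{L:divides.2} and the coprimality argument (the paper's Lemma~\ref{L:NZD}), and then pull back along a classifying map.

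There is one genuine gap. You assert that the ample-family hypothesis on $X$ lets you realize $(E,\phi)$ as the pullback of the tautological bundle along a map $f\colon X\to\HGr(r,n)$. Such an $f$ exists precisely when $(E,\phi)$ is an orthogonal direct summand of a \emph{trivial} symplectic bundle on $X$, equivalently when $E$ is generated by finitely many global sections; an ample family does not guarantee this (e.g.\ $\OO_{\PP^1}(-1)$). The paper closes this gap in two steps: first it treats affine $X$, where Lemma~\ref{L:affine} produces the required embedding into a trivial symplectic bundle and hence a classifying map; then for general $X$ it invokes the ample-family hypothesis to choose an affine bundle $g\colon Y\to X$ with $Y$ affine (Jouanolou--Thomason), proves the identity for $g^*(E,\phi)$ on $Y$, and transports it back via the isomorphism $g^A$. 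Inserting this reduction, your argument becomes the paper's proof.
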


The proof uses two lemmas.

\begin{lemma}
\label{L:NZD}
Let $R$ be a ring, and let $a_{1},\dots,a_{n} \in R$ be central elements such that
$a_{i}-a_{j}$ is not a zero divisor for all $i \neq j$.  If the polynomials $t-a_{1}$, $\dots$, $t-a_{n}$
all divide $h \in R[t]$, then $\prod_{i=1}^{n}(t-a_{i})$ also divides $h$.
\end{lemma}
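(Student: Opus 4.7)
The plan is to proceed by induction on $n$, with the case $n=1$ being trivial since $t-a_1$ itself is the claimed divisor. For the inductive step, note first that because each $a_i$ is central in $R$, each $t-a_i$ is central in $R[t]$, so the notion of divisibility on the left and on the right coincide, and the evaluation homomorphism $\varepsilon_a\colon R[t]\to R$, $t\mapsto a$, is a well-defined ring homomorphism for any central $a\in R$. In particular the remainder theorem applies: $(t-a) \mid h$ if and only if $h(a)=0$.

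Assume the lemma for $n-1$. Since $t-a_1,\dots,t-a_{n-1}$ all divide $h$, the inductive hypothesis gives a factorization
\[
h(t) \;=\; \prod_{i=1}^{n-1}(t-a_i)\,\cdot\, g(t)
\]
for some $g(t)\in R[t]$. Now evaluate at $t=a_n$. Since $(t-a_n)\mid h$, we have $h(a_n)=0$, hence
\[
\Bigl(\prod_{i=1}^{n-1}(a_n-a_i)\Bigr)\,g(a_n)\;=\;0.
\]
By hypothesis each factor $a_n-a_i$ is a non-zero-divisor in $R$, and a routine argument shows that the product of finitely many central non-zero-divisors is again a non-zero-divisor (if $a$ and $b$ are central non-zero-divisors and $abx=0$, then $bx=0$ and so $x=0$, and similarly on the other side). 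Therefore $g(a_n)=0$, which by the remainder theorem forces $(t-a_n)\mid g(t)$. Writing $g(t)=(t-a_n)g'(t)$ yields $h(t)=\prod_{i=1}^{n}(t-a_i)\,g'(t)$, completing the induction.

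The only point requiring any care is the noncommutativity of $R$: one must keep track of the fact that centrality of the $a_i$ both legitimizes evaluation at $a_n$ and makes the monic polynomial $\prod_{i<n}(t-a_i)$ central in $R[t]$, so that left- and right-divisibility agree. Once these centrality considerations are in place the remainder theorem and the non-zero-divisor hypothesis do all the work, and there is no serious obstacle.
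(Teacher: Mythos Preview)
Your proof is correct and follows essentially the same approach as the paper's: induction on $n$, write $h = g\prod_{i<n}(t-a_i)$, evaluate at $a_n$, and use that the product of the $a_n-a_i$ is a non-zero-divisor to conclude $g(a_n)=0$. Your added remarks on centrality and the remainder theorem in a noncommutative ring make explicit what the paper leaves implicit.
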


\begin{proof}
By induction we may assume that $h = g \prod_{i=1}^{n-1}(t-a_{i})$.  We then have
$g(a_{n}) \prod
(a_{n}-a_{i}) = 0$.  Since
the $a_{n}-a_{i}$ are not zero divisors, we have $g(a_{n}) = 0$, and so $t-a_{n}$ divides $g$.
\end{proof}

\begin{lemma}
\label{L:affine}
Let $(E,\phi)$ be a symplectic bundle on an affine scheme $X = \Spec R$.
Suppose that $E$ can be generated by $n$ global sections.  Then
we can embed
$(E,\phi)$ as a symplectic subbundle of the trivial symplectic bundle of rank $2n$.
\end{lemma}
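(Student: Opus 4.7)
The plan is to exploit that $E$, as a vector bundle on the affine scheme $X = \Spec R$, is a projective $R$-module, so the given surjection $p \colon \OO_X^{\oplus n} \onto E$ admits a splitting $s \colon E \into \OO_X^{\oplus n}$. I will embed $(E,\phi)$ into the trivial hyperbolic symplectic bundle $\Hyp(\OO_X^{\oplus n}) = \OO_X^{\oplus n} \oplus (\OO_X^{\oplus n})\dual$ by a map of the form $\iota(e) = (s(e),\alpha(e))$ for a suitable $\alpha \colon E \to (\OO_X^{\oplus n})\dual$. Any such $\iota$ is automatically a split injection onto a subbundle (with splitting provided by the first projection composed with $p$), so the only content is arranging that the restriction of the hyperbolic form $h$ to the image equals $\phi$.

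The key step is to construct an auxiliary bilinear pairing $\gamma \colon E \times E \to \OO_X$ whose antisymmetrization is $\phi$, i.e.\ satisfying $\gamma(e_1,e_2) - \gamma(e_2,e_1) = \phi(e_1,e_2)$. Such a $\gamma$ exists because the natural surjection of coherent sheaves $E\dual \otimes E\dual \onto \Lambda^2 E\dual$ on the affine scheme $X$ induces a surjection on global sections. Given $\gamma$, define $\alpha(e)(r) = -\gamma(e,p(r))$; then with the standard hyperbolic pairing $h\bigl((v_1,\beta_1),(v_2,\beta_2)\bigr) = \beta_2(v_1)-\beta_1(v_2)$ and the identity $p \circ s = \id_E$, a short direct calculation gives
\[
h(\iota(e_1),\iota(e_2)) = \alpha(e_2)(s(e_1)) - \alpha(e_1)(s(e_2)) = \gamma(e_1,e_2)-\gamma(e_2,e_1) = \phi(e_1,e_2),
\]
so $\iota$ is an isometry of $(E,\phi)$ onto a symplectic subbundle of $\Hyp(\OO_X^{\oplus n})$.

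The main obstacle---and essentially the only nontrivial ingredient---is the existence of the bilinear lift $\gamma$ of $\phi$; affineness of $X$ enters exactly here, via the exactness of global sections on coherent sheaves. This is what sidesteps the subtlety that the naive symmetrization $\tfrac{1}{2}\phi$ would require inverting $2$: the sheaf-theoretic surjectivity produces a $\gamma$ in every characteristic, after which the embedding $\iota$ is forced by the formulas above.
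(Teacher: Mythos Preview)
Your proof is correct. The paper does not actually supply a proof of this lemma: it simply remarks that the result is well known, cites Knus's book, and notes that the hypothesis $\tfrac{1}{2}\in R$ is unnecessary for alternating forms. Your argument fills in precisely what that remark gestures at---the point being that the antisymmetrization map $\gamma\mapsto \gamma-\gamma^{T}$ from bilinear forms to alternating forms is surjective even without inverting $2$ (seen locally by taking the strict upper-triangular part), and affineness promotes this to a surjection on global sections---so your proof is a natural and complete realization of the argument the paper leaves to the reference.
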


This lemma is well known (see for example \cite{Knus:1991qr}).   Hypotheses
like $\frac 12 \in R$ are not necessary for alternating forms.

\begin{proof}[Proof of Theorem \ref{T:splitting}]
The first sentence of the theorem is simply a relative version of Proposition \ref{P:Flag.Gr}.
It remains only to prove the second sentence.

We first treat the special case of the tautological rank $2r$ symplectic subbundle $(\shf U_{r,n},\phi_{r,n})$
on $\HGr(r,n)$.
Let  $b_{i} \in \varprojlim A(\HGr(r,n))$ be the element
corresponding to the inverse system of Borel classes
$\bigl( b_{i}(\shf U_{r,n},\phi_{r,n}) \bigr)_{n \geq r}$.
Recall the $y_{i} \in \varprojlim A(\HFlag(1^{r};n))$ of Theorem \ref{T:HFlag.asymp}
given by the system $(b(\shf U^{(i)}_{n},\phi^{(i)}_{n}))_{n \geq r}$.
Since $(\shf U_{n}^{(i)},\phi_{n}^{(i)})$ is an orthogonal direct summand of
the pullback to $\HFlag(1^{r};n)$ of the tautological bundle
$(\shf U_{r,n},\phi_{r,n})$ of $\HGr(r,n)$,
the Borel polynomial
$t-b(\shf U_{n}^{(i)},\phi_{n}^{(i)})$
divides the Borel polynomial
\[
B_{\shf U_{r,n},\phi_{r,n}}(t) = t^{r}-b_{1}(\shf U_{r,n}, \phi_{r,n}) t^{r-1} + \cdots +
(-1)^{r} b_{r}(\shf U_{r,n}, \phi_{r,n}).
\]
The quotient polynomial of degree $r-1$ at level $n$ restricts to the quotient at level $n-1$
because the quotients and remainders for division by monic polynomials
with central coefficients are unique.  So
the quotient polynomials also form an inverse system, and $t-y_{i}$ divides
$B(t) = t^{r}-b_{1}t^{r-1}+ \cdots + (-1)^{r}b_{r}$ in the inverse limit $A(k)[[y_{1},\dots,y_{r}]]$.
Lemma \ref{L:NZD}
applies, and we get $\prod_{i=1}^{r}(t-y_{i}) = B(t)$.  Hence the $b_{i}$ are the elementary
symmetric polynomials in the $y_{i}$.
It follows that the $b_{i}(\shf U_{r,n}, \phi_{r,n})$ are the elementary symmetric polynomials
in the $u_{i} = b(\shf U^{(i)}_{n},\phi^{(i)}_{n})$.

We next treat the case where $X$ is affine.  By Lemma \ref{L:affine} $(E,\phi)$ can be embedded
as an orthogonal direct summand of some trivial symplectic bundle.  This is classified by
a map $f \colon X \to \HGr(r,n)$.
Since $f^{A}$ and the map for the corresponding quaternionic flag bundles
pull back the Borel classes and roots of $(\shf U_{r,n},\phi_{r,n})$
to those of $(E,\phi)$, the Borel classes of $(E,\phi)$ are also the elementary symmetric
polynomials in the Borel roots.

Finally suppose $X$ is general.  Recall from \S\ref{S:cohomology} that we are assuming
that our schemes are quasi-compact with an ample family of line bundles. Therefore
there is an affine bundle $g \colon Y \to X$ with $Y$ an affine scheme.  The Borel classes of
$g^{*}(E,\phi)$ are the elementary symmetric polynomials in its Borel roots.
Then $g^{A}$ and the induced map on the cohomology of the quaternionic flag bundles are isomorphisms
and send the Borel classes and roots of $(E,\phi)$ to those of $g^{*}(E,\phi)$.
So the Borel classes of $(E,\phi)$ are the elementary symmetric polynomials in its Borel roots.
\end{proof}

\begin{theorem}[Cartan sum formula]
\label{T:Cartan}
Suppose $(F, \psi) \cong (E_{1},\phi_{1}) \perp (E_{2},\phi_{2})$ is an orthogonal direct sum of symplectic bundles over a scheme $X$.  Then for all $i$ we have
\begin{align}
\label{E:sum.formula}
b_{t}(F,\psi) & = b_{t}(E_{1},\phi_{1})b_{t}(E_{2},\phi_{2}), \\
\label{E:sum.2nd}
b_{i}(F,\psi) & = b_{i}(E_{1},\phi_{1}) + \sum_{j=1}^{i-1} b_{i-j}(E_{1},\phi_{1}) b_{j}(E_{2},\phi_{2}) +
b_{i}(E_{2},\phi_{2}).
\end{align}
\end{theorem}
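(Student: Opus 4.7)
The plan is to reduce to a common splitting variety on which both $(E_1,\phi_1)$ and $(E_2,\phi_2)$ decompose into rank-$2$ orthogonal summands, and then deduce the sum formula directly from the fact that the Pontryagin classes are elementary symmetric polynomials in the Pontryagin roots. Since \eqref{E:sum.2nd} is obtained from \eqref{E:sum.formula} by comparing coefficients of $t^i$, it suffices to prove the generating function identity \eqref{E:sum.formula}.

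Write $r_j = \tfrac{1}{2} \rk E_j$ and form the iterated quaternionic flag bundle
\[
q \colon Z = \HFlag_X(E_1,\phi_1) \times_X \HFlag_X(E_2,\phi_2) \longrightarrow X.
\]
Applying Theorem \ref{T:splitting} twice (first to $(E_1,\phi_1)$ over $X$, then to the pullback of $(E_2,\phi_2)$ over $\HFlag_X(E_1,\phi_1)$) shows that $q^A \colon A(X) \to A(Z)$ is injective, that $q^*(E_j,\phi_j)$ splits as an orthogonal direct sum of $r_j$ rank-$2$ symplectic subbundles with Pontryagin roots $u_1^{(j)}, \dots, u_{r_j}^{(j)}$, and that
\[
q^A p_i(E_j, \phi_j) = e_i\bigl(u_1^{(j)}, \dots, u_{r_j}^{(j)}\bigr), \qquad j=1,2.
\]
Because $(F,\psi) \cong (E_1,\phi_1) \perp (E_2,\phi_2)$, the pullback $q^*(F,\psi)$ also splits orthogonally as the direct sum of these $r_1+r_2$ rank-$2$ symplectic subbundles. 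This decomposition is classified by a map $Z \to \HFlag_X(F,\psi)$ under which the tautological rank-$2$ subbundles of Theorem \ref{T:splitting} pull back to our summands. Hence the Pontryagin roots of $(F,\psi)$ pull back on $Z$ to the combined family $\{u_i^{(j)}\}$, and Theorem \ref{T:splitting} for $(F,\psi)$ gives
\[
q^A p_i(F,\psi) = e_i\bigl(u_1^{(1)},\dots,u_{r_1}^{(1)}, u_1^{(2)}, \dots, u_{r_2}^{(2)}\bigr).
\]

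Passing to generating functions and using the standard identity $\sum_i e_i(v_1,\dots,v_m) t^i = \prod_k (1+v_k t)$, we obtain on $Z$ the factorisation
\[
q^A p_t(F,\psi) = \prod_{i=1}^{r_1}\bigl(1+u_i^{(1)} t\bigr) \cdot \prod_{j=1}^{r_2}\bigl(1+u_j^{(2)} t\bigr) = q^A p_t(E_1,\phi_1) \cdot q^A p_t(E_2,\phi_2).
\]
Since $q^A$ is injective, \eqref{E:sum.formula} holds on $X$, and expanding yields \eqref{E:sum.2nd}.

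The only non-formal ingredient is the identification of the Pontryagin roots of $(F,\psi)$ on $Z$ with the union of those of $(E_1,\phi_1)$ and $(E_2,\phi_2)$; this is precisely what the universal property of $\HFlag_X(F,\psi)$ and the functoriality of Pontryagin classes provide, so I do not expect a genuine obstacle — the Cartan formula is a purely formal consequence of the splitting principle already established in Theorem \ref{T:splitting}.
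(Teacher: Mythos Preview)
Your proof is correct and follows essentially the same route as the paper: form the fiber product $\HFlag_X(E_1,\phi_1)\times_X\HFlag_X(E_2,\phi_2)$, use the classifying map to $\HFlag_X(F,\psi)$ to identify the Pontryagin roots of $(F,\psi)$ with the union of those of the summands, and then invoke Theorem~\ref{T:splitting} together with injectivity of the pullback to descend the generating-function identity to $A(X)$. The paper's argument is the same, only with slightly different bookkeeping of the various pullbacks.
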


The first Borel class is additive, and the top Borel class is multiplicative.

\begin{proof}
Consider the fiber bundles
\begin{align*}
p \colon \HFlag_{X}(E_{1},\phi_{1}) \times_{X} \HFlag_{X}(E_{2},\phi_{2}) \to X, &&
q\colon \HFlag_{X}(F,\psi) \to X
\end{align*}
We have orthogonal direct sum decompositions
\begin{align}
\label{E:Cartan.decomp}
p^{*}(E_{1},\phi_{1}) & \cong \bigperp _{i=1}^{r} (\shf U_{i},\phi_{i}), &
p^{*}(E_{2},\phi_{2}) & \cong \bigperp _{j=1}^{s} (\shf U'_{j},\phi'_{j}), &
q^{*}(F,\psi) & \cong \bigperp _{\ell=1}^{r+s} (\shf V_{\ell},\psi_{\ell}),
\end{align}
By Theorem \ref{T:splitting} over $\HFlag(E_{1},\phi_{1}) \times_{X} \HFlag(E_{2},\phi_{2})$ we have
\begin{equation}
\label{E:Cartan.two}
b_{t}(E_{1},\phi_{1})  b_{t}(E_{2},\phi_{2}) =
\prod _{i=1}^{r} \bigl( 1+b(\shf U_{i}, \phi_{i}) t \bigr)
\prod _{j=1}^{s} \bigl( 1+b(\shf U'_{j}, \phi'_{j}) t \bigr)
\end{equation}
while over $\HFlag(F,\psi)$ we have
$b_{t}(F,\psi) =
\prod _{\ell=1}^{r+s} \bigl( 1+b(\shf V_{\ell}, \psi_{\ell}) t \bigr)$.
We also have the orthogonal direct sum
$p^{*}(F,\psi) =
\tbigperp _{i=1}^{r}
p^{*}
(\shf U_{i},\phi_{i})
\perp
\tbigperp _{j=1}^{s}
p^{*}
(\shf U'_{j},\phi'_{j})$.
This decomposition is classified by a unique map
\[
f \colon \HFlag(E_{1},\phi_{1}) \times_{X} \HFlag(E_{2},\phi_{2}) \to \HFlag_{X}(F,\psi)
\]
such that
\[
f^{*}(\shf V_{\ell},\psi_{\ell}) \cong
\begin{cases}
p^{*}
(\shf U_{\ell},\phi_{\ell}) & \text{for $\ell=1,\dots,r$}, \\
p^{*}
(\shf U'_{\ell-r},\phi'_{\ell-r}) & \text{for $\ell = r+1, \dots, r+s$}.
\end{cases}
\]
It follows that when we pull $b_{t}(F,\psi)$ back along $f$, we get
\[
b_{t}(F,\psi) =
\prod _{i=1}^{r} \bigl( 1+b(\shf U_{i}, \phi_{i}) t \bigr)
\prod _{j=1}^{s} \bigl( 1+b(\shf U'_{j}, \phi'_{j}) t \bigr)
= b_{t}(E_{1},\phi_{1}) b_{t}(E_{2},\phi_{2}).
\]
Equating the terms of degree $i$ in this equality of series gives formula \eqref{E:sum.2nd}.
\end{proof}

The Cartan sum formula, Proposition \ref{P:subl}, and nilpotence combine to show that the total Borel class may be defined for Grothendieck-Witt classes of symplectic bundles, giving maps
\begin{equation}
\label{E:GW}
b_{t} \colon \GW^{-}(X) \to A(X)[t]^{\times},
\end{equation}
functorial in $X$, and sending sums to products.  Note that since symplectic bundles have only finitely many nonzero Borel classes, and they are all nilpotent, the same holds for virtual differences of bundles as well.  Hence the image is in $A(X)[t]^{\times}$.  Actually the morphism sends
\[
b_{t} \colon \widetilde{\GW}^{-}(X) = \frac{\GW^{-}(X)}{\ZZ [\Hyp(\shf O)]} \to A(X)[t]^{\times},
\]
and in particular, the first Borel class is an additive map
$b_{1} \colon \widetilde {\GW}^{-}(X) \to A(X)$.

\section{Cohomology of quaternionic Grassmannians}

We recall some facts about symmetric polynomials.
They may mostly be found in Macdonald's book \cite[Chap.\ 1, \S\S 1--3]{Macdonald:1995zl}.

Let $\Lambda_{r} \subset \ZZ[y_{1},\dots,y_{r}]$ be the ring of symmetric polynomials in $r$ variables.
Let $e_{i}$ denote the $i$-th elementary symmetric polynomial, and
$h_{i}$ the $i$-th \emph{complete symmetric polynomial}, the sum of all the monomials of degree $i$.
Set $e_{0} = h_{0} = 1$.
We have $\Lambda_{r} = \ZZ[e_{1},\dots,e_{r}]$.
The generating functions are
\begin{align}
\label{E:EH}
E(t) & = \sum_{i \geq 0} e_{i}t^{i} = \prod_{j=1}^{r}(1+y_{i}t), &
H(t) & = \sum_{i \geq 0} h_{i}t^{i} = \prod_{j=1}^{r}(1-y_{i}t)^{-1}.
\end{align}
So we have
\begin{align}
\label{E:H.recur}
E(t)H(-t) & = 1, &
h_{m} + \sum_{i=1}^{r} (-1)^{r} e_{i}h_{m-i} & = 0.
\end{align}
So we also have $\Lambda_{r} = \ZZ[h_{1},\dots,h_{r}]$.
The $h_{i}$ with $i > r$ are nonzero but are dependent on $h_{1},\dots,h_{r}$.

Let $\lambda = (\lambda_{1},\lambda_{2},\dots,\lambda_{r})$ be a partition of length $l(\lambda) \leq r$.
Write $\delta = (r-1,r-2, \dots,1,0)$ and
$a_{\lambda + \delta} = \det(y_{i}^{\lambda_{j}+r-j})_{1\leq i,j \leq r}$.  Then $a_{\lambda+\delta}$ is
a skew-symmetric polynomial and therefore divisible by the Vandermonde determinant $a_{\delta}$.
The quotient $s_{\lambda} = a_{\lambda+\delta}/a_{\delta}$ is
the \emph{Schur polynomial} for $\lambda$.
It is symmetric of degree $\lvert \lambda \rvert = \sum \lambda_{i}$.
One has $s_{(1^{i})} = e_{i}$ and $s_{(i)} = h_{i}$.
The $a_{\lambda+\delta}$ with $l(\lambda) \leq r$ form a $\ZZ$-basis of the skew-symmetric polynomials
in $r$ variables, so the $s_{\lambda}$ with $l(\lambda) \leq r$ form a $\ZZ$-basis of $\Lambda_{r}$.
Denote by $\lambda'$ the partition dual to $\lambda$.  One has formulas
\begin{equation}
\label{E:schur.det}
s_{\lambda} = \det(e_{\lambda'_{i}-i+j})_{1 \leq i,j \leq m}
= \det(h_{\lambda_{i}-i+j})_{1\leq i,j, \leq r},
\end{equation}
for $m \geq l(\lambda')$ and $r \geq l(\lambda)$.

The ring of \emph{symmetric functions} $\Lambda$ in an infinite number of variables is the
inverse limit of the $\Lambda_{r}$ in the
category of graded rings.  It is a polynomial ring in infinitely many indeterminates
\[
\Lambda = \ZZ[e_{1},e_{2}, \dots] = \ZZ[h_{1},h_{2}, \dots]
\]
It has an involution $\omega$ with $\omega(e_{i}) = h_{i}$ and $\omega(h_{i}) = e_{i}$ for all $i$
and $\omega(s_{\lambda}) = s_{\lambda'}$ for all partitions $\lambda$.
The quotient map $\Lambda \onto \Lambda_{r}$ is the quotient
by the ideal $(e_{r+1},e_{r+2}, \dots)$.  The ring $\Lambda$
has as $\ZZ$-basis the Schur functions
$s_{\lambda}$ with $\lambda$ ranging over all partitions.  If, however,
$l(\lambda) = \lambda'_{1} > r$, then the first row of the first determinant of \eqref{E:schur.det} is
$(e_{\lambda'_{1}}, e_{\lambda'_{1}}+1, \dots, e_{\lambda'_{1}}+m-1)$.  All these entries
are sent to $0$ in $\Lambda_{r}$.
Set
\[
\Pi_{r,n-r} = \{ \text{partitions $\lambda$ with length $l(\lambda) = \lambda'_{1} \leq r$
and with $\lambda_{1} \leq n-r$} \}
\]
The set $\Pi_{r,n-r}$ has $\binom{n}{r}$ members.
For the ideal $I_{n-r} = (h_{n-r+1},h_{n-r+2},\dots,h_{n}) \subset  \Lambda_{r}$
the quotient map $\Lambda_{r} \onto \Lambda_{r}/I_{n-r}$ sends
$s_{\lambda} \mapsto 0$ for all $\lambda \not\in \Pi_{r,n-r}$.
The quotient $\Lambda_{r}/I_{n-r}$ is free over $\ZZ$
with basis $\{ s_{\lambda} \mid \lambda \in \Pi_{r,n-r} \}$.

Now suppose $(E,\phi)$ is a symplectic bundle of rank $2r$ over $S$.
Let $q \colon \HFlag_{S}(E,\phi) \to S$ be the associated complete quaternionic flag bundle, let
\[
q^{*}(E, \phi) \cong
(\shf U_{1},\psi_{1}) \perp (\shf U_{2}, \psi_{2}) \perp \cdots \perp (\shf U_{r},\psi_{r}).
\]
be the splitting of the pullback of $E$ as the orthogonal direct sum of the $r$ universal
symplectic subbundles of rank $2$ on $\HFlag_{S}(E,\phi)$, and let
$y_{i} = b(\shf U_{i},\psi_{i}) \in A(\HFlag_{S}(E,\phi))$ be the Borel roots of $(E,\phi)$.
For any partition $\lambda$ of length $l(\lambda) \leq r$ write
\begin{equation}
\label{E:schur.p}
s_{\lambda}(E,\phi) = s_{\lambda}(y_{1},\dots,y_{r})
= \det(b_{\lambda'_{i}-i+j}(E,\phi))_{1 \leq i,j \leq m} \in A(S)
\end{equation}
with $m \geq l(\lambda')$.

We now complete the calculation of the cohomology of  $\HFlag(1^{r};n)$
begun in \S\ref{S:flag.asymp}.

\begin{theorem}
\label{T:flag}
On $\HFlag(1^{r};n)$ let $y_{1},\dots,y_{r}$ be the first Borel classes of the $r$
tautological rank $2$ subbundles.  Then  $A(\HFlag(1^{r};n)) = A(k)[y_{1},\dots,y_{r}]/I_{r,n}$
with
\begin{equation}
\label{E:hn.1}
I_{r,n} = (h_{n}(y_{1}),h_{n-1}(y_{1},y_{2}),\dots,h_{n-r+1}(y_{1},\dots,y_{r})),
\end{equation}
and if we now write $h_{i}$ for the complete symmetric polynomial in all $r$ variables
$y_{1},\dots,y_{r}$, then we also have
\begin{equation}
\label{E:hn.2}
A(\HFlag(1^{r};n)) = A(k)[y_{1},\dots,y_{r}]/
(h_{n},h_{n-1}, \dots, h_{n-r+1}).
\end{equation}
\end{theorem}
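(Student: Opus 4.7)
The starting point is the calculation carried out in the course of proving Theorem~\ref{T:HFlag.asymp}. That calculation already established the presentation $A(\HFlag(1^{r};n)) = A(k)[y_{1},\dots,y_{r}]/I_{r,n}$ with $I_{r,n} = (P_{1},\dots,P_{r})$, where $P_{i}(y_{1},\dots,y_{i})$ denotes the Pontryagin polynomial of the symplectic bundle $(G_{n,i},\gamma_{n,i})$ of \eqref{E:G.ni} evaluated at $y_{i}$. What remains is to identify these generators with the complete symmetric polynomials appearing in \eqref{E:hn.1}, and then to deduce \eqref{E:hn.2} from \eqref{E:hn.1}.

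For the first step I would upgrade the expression $P_{i} = h_{n-i+1}(z_{1},\dots,z_{i-1},y_{i})$ used in the proof of Theorem~\ref{T:HFlag.asymp}, which only held modulo $P_{1},\dots,P_{i-1}$, to the unconditional identity $P_{i} = h_{n-i+1}(y_{1},\dots,y_{i})$. The Cartan sum formula (Theorem~\ref{T:Cartan}) applied to the orthogonal decomposition $(G_{n,i-1},\gamma_{n,i-1}) = (\shf U_{n}^{(i-1)},\phi_{n}^{(i-1)}) \perp (G_{n,i},\gamma_{n,i})$ gives $p_{t}(G_{n,i-1}) = (1+y_{i-1}t)\,p_{t}(G_{n,i})$ in $A(\HFlag(1^{r};n))[t]$, while Proposition~\ref{P:subl} forces $p_{t}(\Hyp(\OO^{\oplus n})) = 1$. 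Iteration then yields $p_{t}(G_{n,i}) = 1/\prod_{j=1}^{i-1}(1+y_{j}t)$, so $p_{-t}(G_{n,i}) = 1/\prod_{j=1}^{i-1}(1-y_{j}t)$. Plugging this into the coefficient formula for the Pontryagin polynomial and recognizing the generating function of the complete symmetric polynomials gives $P_{i} = h_{n-i+1}(y_{1},\dots,y_{i})$, which is \eqref{E:hn.1}.

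The second step is a purely algebraic equality of ideals in $A(k)[y_{1},\dots,y_{r}]$. Writing $g_{i} = h_{n-i+1}(y_{1},\dots,y_{i})$ and $H_{i} = h_{n-i+1}(y_{1},\dots,y_{r})$, the main tool is the standard recurrence
\[
h_{m}(y_{1},\dots,y_{j}) = h_{m}(y_{1},\dots,y_{j-1}) + y_{j}\,h_{m-1}(y_{1},\dots,y_{j}),
\]
which iterated from level $i$ up to $r$ expresses $H_{i} = g_{i} + \sum_{k=i+1}^{r} y_{k}\,h_{n-i}(y_{1},\dots,y_{k})$. A double induction on the pair $(j,m)$, with base case $h_{m}(y_{1}) = y_{1}^{m} \in (g_{1}) = (y_{1}^{n})$ for $m \geq n$, shows that $h_{m}(y_{1},\dots,y_{j}) \in (g_{1},\dots,g_{r})$ whenever $m \geq n-j+1$, and in particular $(H_{1},\dots,H_{r}) \subseteq (g_{1},\dots,g_{r})$. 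A symmetric downward induction on $j$, using $h_{m}(y_{1},\dots,y_{j}) = h_{m}(y_{1},\dots,y_{j+1}) - y_{j+1}\,h_{m-1}(y_{1},\dots,y_{j+1})$ with base case $j=r$ handled via the Newton relation $\sum_{\ell=0}^{r}(-1)^{\ell}e_{\ell}(y_{1},\dots,y_{r})\,h_{m-\ell}(y_{1},\dots,y_{r}) = 0$, yields the reverse inclusion and hence equality.

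The main obstacle is Step 1: the Cartan sum formula is what permits us to replace the ``mod lower relations'' reasoning of the asymptotic proof with a genuine global identity for $p_{t}(G_{n,i})$. Once that identity is in place, Step 2 is routine symmetric-function bookkeeping.
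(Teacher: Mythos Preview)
Your proof is correct and follows the same two-step path as the paper: apply the Cartan sum formula to turn \eqref{E:complete} into $P_i = h_{n-i+1}(y_1,\dots,y_i)$, then use the recurrence $h_m(y_1,\dots,y_j) = h_m(y_1,\dots,y_{j-1}) + y_j h_{m-1}(y_1,\dots,y_j)$ to equate the two ideals. Your ``unconditional identity'' phrasing is a slight overstatement---the equality still lives in $A(\HFlag(1^{i-1};n))[y_i]$, i.e.\ modulo the lower $P_j$, which is all that is needed---and your Step~2 spells out as a two-sided induction (with the Newton relation handling the base case) what the paper compresses into the single instruction ``add one variable to each generator and repeat.''
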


\begin{proof}
Using the Cartan sum formula, we see that the classes $z_{i}$ of \eqref{E:z.i} are actually
$z_{i} = y_{i}$.   Hence \eqref{E:complete} can be rewritten as
$P_{i}(y_{1},\dots,y_{i}) = h_{n-i+1}(y_{1},\dots,y_{i})$, and we get \eqref{E:hn.1}.

Since the complete symmetric polynomials are the sums of all monomials of a given degree, we
have
\[
h_{m}(y_{1},\dots,y_{i-1},y_{i}) = h_{m}(y_{1},\dots,y_{i-1}) + y_{i}h_{m-1}(y_{1},\dots,y_{i-1},y_{i}).
\]
This formula can be used to show that one can add one variable to each of the generators of
$I_{r,n}$ which have less than $r$ variables without changing the ideal.  One repeats this until
all the generators have $r$ variables.  This gives \eqref{E:hn.2}.
\end{proof}

\begin{theorem}
\label{T:Grass}
Let $(\shf U,\phi)$ be the tautological symplectic bundle of rank $2r$ on $\HGr(r,n)$, and let
$(\shf U^{\perp},\psi)$ be its orthogonal complement.  Then for any ring
cohomology theory $A$ with a symplectic Thom structure and any $S$ the map
\begin{equation}
\label{E:HGr.cohom.1}
A(S)[e_{1},\dots,e_{r}] / (h_{n-r+1},\dots,h_{n}) \xrightarrow{\cong} A(\HGr(r,n) \times S)
\end{equation}
sending $e_{i} \mapsto b_{i}(\shf U,\phi)$ for all $i$
is an isomorphism of rings, the map
\begin{equation}
\label{E:HGr.cohom.2}
A(S)^{\oplus \binom{n}{r}} \xrightarrow{(s_{\lambda}(\shf U,\phi))_{\lambda\in\Pi_{r,n-r}}}
A(\HGr(r,n) \times S)
\end{equation}
is an isomorphism of two-sided $A(S)$-modules, and
for all partitions $\lambda$ we have
\begin{equation}
\label{E:schur.complement}
s_{\lambda}(\shf U,\phi) = (-1)^{\lvert \lambda \rvert} s_{\lambda'}(\shf U^{\perp},\psi).
\end{equation}
\end{theorem}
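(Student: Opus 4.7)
The plan is to pull back from $\HGr(r,n)\times S$ to the quaternionic flag bundle $\HFlag(1^{r};n)\times S$, whose cohomology is computed explicitly by Theorem \ref{T:flag}. Abbreviate $R = A(S)[e_1,\dots,e_r]/(h_{n-r+1},\dots,h_n)$, $T = A(\HGr(r,n)\times S)$ and $U = A(\HFlag(1^{r};n)\times S)$; the relative versions of Proposition \ref{P:Flag.Gr} and Theorem \ref{T:flag} over $S$ provide an injection $\beta\colon T \hookrightarrow U$ making $U$ a free module of rank $r!$ over $T$ with basis $\mathbb{B}_r$ from \eqref{E:base}, together with a presentation $U = A(S)[y_1,\dots,y_r]/(h_{n-r+1},\dots,h_n)$. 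The symplectic splitting principle (Theorem \ref{T:splitting}) gives $\beta(p_i(\shf U,\phi)) = e_i(y_1,\dots,y_r)$, so the elements $h_j(p_1(\shf U,\phi),\dots,p_r(\shf U,\phi)) \in T$ pull back to $h_j(y_1,\dots,y_r) = 0$ in $U$ for $n-r+1 \leq j \leq n$, and injectivity of $\beta$ forces them to vanish already in $T$. This produces the desired ring homomorphism $\alpha\colon R \to T$ sending $e_i \mapsto p_i(\shf U,\phi)$.

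To show $\alpha$ is an isomorphism, I will use the classical fact that $A(S)[y_1,\dots,y_r]$ is free of rank $r!$ over $A(S)[e_1,\dots,e_r]$ with basis $\mathbb{B}_r$. Since the defining relations of $R$ lie inside the subring of elementary symmetric polynomials, passing to the quotient makes $U$ free of rank $r!$ over $R$ with the same basis $\mathbb{B}_r$, now via $\gamma := \beta\alpha$. For any $u \in U$, its unique $\mathbb{B}_r$-expansion with coefficients in $\beta(T)$ must coincide with its unique $\mathbb{B}_r$-expansion with coefficients in $\beta(\alpha(R))$; injectivity of $\beta$ then translates this into $\alpha(R) = T$ together with injectivity of $\alpha$, establishing (1).

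For (2), I invoke the fact recalled in the preamble that $\Lambda_r/(h_{n-r+1},\dots,h_n)$ is free over $\ZZ$ with basis $\{s_\lambda : \lambda \in \Pi_{r,n-r}\}$; tensoring with $A(S)$ and transporting through $\alpha$ yields the claimed isomorphism \eqref{E:HGr.cohom.2} of two-sided $A(S)$-modules (noting that the $s_\lambda(\shf U,\phi)$ are $A(S)$-central). For (3), since $(\shf U,\phi) \perp (\shf U^\perp,\psi)$ is the trivial symplectic bundle of rank $2n$, the Cartan sum formula (Theorem \ref{T:Cartan}) combined with the vanishing of Pontryagin classes of trivial bundles (Theorem \ref{th.HPn.twist}) gives $p_t(\shf U,\phi)\cdot p_t(\shf U^\perp,\psi) = 1$. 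Writing $y_i$ for Pontryagin roots of $\shf U$ and $z_j$ for those of $\shf U^\perp$, equating coefficients yields $e_k(z) = (-1)^k h_k(y)$. Substituting this identity into the two determinantal formulas of \eqref{E:schur.det} for $s_\lambda(y)$ and $s_{\lambda'}(z)$ and performing standard row-and-column sign extractions (the $(-1)^{i+j}$ contributions cancel, leaving only the row factors $(-1)^{\lambda_i}$) produces the overall factor $(-1)^{|\lambda|}$ required by \eqref{E:schur.complement}.

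The main obstacle is the isomorphism argument for $\alpha$: it depends on observing that the \emph{same} basis $\mathbb{B}_r$ simultaneously exhibits $U$ as free over both $R$ and $T$, which works only because the relations defining $R$ are already symmetric and therefore live inside $A(S)[e_1,\dots,e_r] \subset A(S)[y_1,\dots,y_r]$. Once this is in place, parts (2) and (3) reduce to bookkeeping with Schur polynomials and signs.
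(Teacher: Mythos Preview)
Your proof is correct and follows essentially the same route as the paper's: both arguments pull back to $\HFlag(1^{r};n)$, observe that its cohomology is simultaneously free of rank $r!$ with basis $\mathbb{B}_{r}$ over both $A(\HGr(r,n)\times S)$ (by Proposition~\ref{P:Flag.Gr}) and over $R$ (because the relations $h_{n-r+1},\dots,h_n$ are symmetric), and conclude that the comparison map $\alpha$ is an isomorphism; the treatment of \eqref{E:schur.complement} via $p_t(\shf U)p_t(\shf U^\perp)=1$ and the determinantal Schur identities is likewise the same, with the paper phrasing it through the involution $h_i\leftrightarrow e_i$ rather than Pontryagin roots of $\shf U^\perp$. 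The only cosmetic difference is that the paper reduces to $S=k$ at the outset, whereas you carry $S$ throughout.
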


\begin{proof}
The proof is much like the case of ordinary Grassmannians.
It is enough to consider $S = k$.
Write $A_{r,n} = A(k)[e_{1},\dots,e_{r}]/(h_{n-r+1},\dots,h_{n})$.
Since $A(\HGr(r,n)) \to A(\HFlag(1^{r};n))$ is injective, the complete symmetric polynomials
$h_{i}$ with $i > n-r$ vanish in $A(\HGr(r,n))$ as well.  We thus get the map
$\gamma \colon A_{r,n} \to A(\HGr(r,n))$ sending $e_{i} \mapsto b_{i}(\shf U,\phi)$.
By Proposition \ref{P:Flag.Gr}
\[
\mathbb B_{r} =
\{{y}_{1}^{a_{1}}{y}_{2}^{a_{2}} \cdots{y}_{r-1}^{a_{r-1}} \mid \text{$0 \leq a_{i} \leq r-i$ for all $i$} \}.
\]
is a basis of $A(\HFlag(1^{r};n))$ as a two-sided $A(\HGr(r,n))$-module, and it is also
a basis of $\ZZ[y_{1},\dots,y_{r}]$ as a free module over the ring of
symmetric polynomials $\ZZ[e_{1},\dots,e_{r}]$ and therefore also a basis
of $A(k)[y_{1},\dots,y_{r}]/(h_{n-r+1},\dots,h_{n})$ as a two-sided free
$A_{r,n}$-module.  By Theorem \ref{T:flag}
the map of free modules $A(k)[y_{1},\dots,y_{r}]/(h_{n-r+1},\dots,h_{n}) \to A(\HFlag(1^{r};n))$ is
an isomorphism.  So $\gamma \colon A_{r,n} \to A(\HGr(r,n))$
is also an isomorphism.
%

Over the quaternionic Grassmannian $\HGr(r,n)$ the orthogonal direct sum
$(\shf U,\phi) \perp (\shf U^{\perp},\psi)$ is the trivial symplectic bundle $\Hyp(\OO)^{\oplus n}$ and
has vanishing Borel classes.  So the Cartan sum formula \eqref{E:sum.formula} gives
$b_{t}(\shf U,\phi) b_{t}(\shf U^{\perp},\psi) = 1$.
For the generating series of \eqref{E:EH}
we have $\gamma(E(t)) = b_{t}(\shf U,\phi)$.
So from the identity $E(t)H(-t) = 1$ of \eqref{E:H.recur}, we see we have
$\gamma(H(-t)) = b_{t}(\shf U^{\perp},\psi)$ and thus
$(-1)^{i} \gamma(h_{i}) =  b_{i}(\shf U^{\perp},\psi)$.
The formula $s_{\lambda}(\shf U,\phi) = (-1)^{\lvert \lambda \rvert} s_{\lambda'}(\shf U^{\perp},\psi)$
now follows from \eqref{E:schur.det}.
The sign change $h_{i} \mapsto (-1)^{i}h_{i}$ comes from the
involution of $\Lambda_{r}$ sending $f \mapsto (-1)^{\deg(f)}f$ for all homogeneous symmetric
polynomials.  Hence the sign $(-1)^{\lvert \lambda \rvert}$ in front of
$s_{\lambda'}(\shf U^{\perp},\psi)$.
\end{proof}

The usual Mayer-Vietoris argument now gives the following generalization of Theorem \ref{T:Grass}.

\begin{theorem}
\label{T:Grass.bdl}
Let $(E,\phi)$ be a symplectic bundle of rank $2n$ over $S$.  Let $(\shf U,\phi\rest{\shf U})$ be
the tautological subbundle of rank $2r$ on $\HGr_{S}(r,E,\phi)$, and let
$(\shf U^{\perp}, \phi\rest{U^{\perp}})$ be its orthogonal complement.  Then for any
ring cohomology theory with a symplectic Thom structure
\begin{gather*}
A(S)^{\oplus \binom{n}{r}}
\xrightarrow[\cong]{(s_{\lambda}(\shf U,\phi\rest{\shf U}))_{\lambda \in \Pi_{r,n-r}}}
A(\HGr_{S}(r,E,\phi)), \\
A(S) ^{\oplus \binom{n}{r}}
\xrightarrow[\cong]{(s_{\lambda}(\shf U^{\perp},\phi\rest{\shf U^{\perp}}))_{\lambda \in \Pi_{n-r,r}}}
A(\HGr_{S}(r,E,\phi))
\end{gather*}
are isomorphisms of two-sided $A(S)$-modules.
\end{theorem}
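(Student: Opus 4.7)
The plan is to reduce Theorem \ref{T:Grass.bdl} to the absolute case Theorem \ref{T:Grass} by a Mayer--Vietoris argument on $S$. The key preliminary observation is that over a local ring every nondegenerate alternating form on a free module of rank $2n$ admits a symplectic basis, so $(E,\phi)$ is Zariski-locally isomorphic to the trivial hyperbolic bundle $\Hyp(\OO^{\oplus n})$. Using the quasi-compactness assumption from \S\ref{S:cohomology}, I would fix a finite open cover $S = U_{1} \cup \cdots \cup U_{N}$ on each member of which $(E,\phi)$ is symplectically trivial, and argue by induction on $N$.

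For the base case $N=1$ the bundle $(E,\phi)$ is globally isomorphic to $\Hyp(\OO^{\oplus n})$, so $\HGr_{S}(r,E,\phi) \cong \HGr(r,n) \times S$ and under this identification the tautological subbundle corresponds to the product of the tautological subbundle on $\HGr(r,n)$ with $S$. The claim is then exactly Theorem \ref{T:Grass}, since the Pontryagin classes and hence the Schur polynomials are transported accordingly.

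For the inductive step, set $U = U_{1} \cup \cdots \cup U_{N-1}$ and $V = U_{N}$, so that $S = U \cup V$ while $U \cap V$ is covered by $N-1$ opens on which $(E,\phi)$ trivializes. The localization and excision axioms in Definition \ref{D:cohom} supply a Mayer--Vietoris long exact sequence for $A$ associated with the cover $S = U \cup V$, and similarly for the preimage cover $\HGr_{S}(r,E,\phi) = \pi^{-1}(U) \cup \pi^{-1}(V)$. Because Pontryagin classes, and hence the Schur polynomials $s_{\lambda}(\shf U,\phi\rest{\shf U})$, are natural under pullback and universally $A(S)$-central, the map
\[
(s_{\lambda}(\shf U,\phi\rest{\shf U}))_{\lambda\in\Pi_{r,n-r}} \colon A(-)^{\oplus \binom{n}{r}} \to A(\HGr_{-}(r,E,\phi))
\]
descends to a morphism of Mayer--Vietoris sequences. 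The inductive hypothesis makes the corresponding morphisms over $U$, $V$ and $U \cap V$ into isomorphisms of two-sided modules, and a five-lemma argument then gives the isomorphism over $S$. The second isomorphism, involving $\shf U^{\perp}$, follows either by the same Mayer--Vietoris induction applied to the rank $2n-2r$ orthogonal complement, or formally from the first by using the identity \eqref{E:schur.complement} together with the bijection $\lambda \mapsto \lambda'$ from $\Pi_{r,n-r}$ to $\Pi_{n-r,r}$.

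The only point needing care is that the Mayer--Vietoris sequence be available in the category $\mathcal V$ and for $A$; this is guaranteed by the closure of $\mathcal V$ under quasi-compact open subschemes and fiber products over $S$, together with the localization and excision axioms. With that in place the rest is a routine diagram chase, so I do not expect a substantive obstacle beyond verifying the naturality of the Schur polynomial construction with respect to the open restrictions.
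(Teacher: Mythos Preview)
Your approach is exactly what the paper does: it states that ``the usual Mayer--Vietoris argument now gives'' Theorem~\ref{T:Grass.bdl} from Theorem~\ref{T:Grass}, and you have spelled out that argument correctly.

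One caution about your alternative derivation of the second isomorphism. The identity \eqref{E:schur.complement} is proven in Theorem~\ref{T:Grass} using $p_{t}(\shf U)\,p_{t}(\shf U^{\perp}) = 1$, which holds because $\shf U \perp \shf U^{\perp}$ is the \emph{trivial} bundle there. In the relative setting of Theorem~\ref{T:Grass.bdl} one has instead $p_{t}(\shf U)\,p_{t}(\shf U^{\perp}) = p_{t}(\pi^{*}(E,\phi))$, which need not equal $1$, so \eqref{E:schur.complement} is not available over $S$ in general. Your first option---running the same Mayer--Vietoris induction directly for the map built from $s_{\lambda}(\shf U^{\perp})$---is the one that works, since in the base case (trivial $(E,\phi)$) the identity \eqref{E:schur.complement} does hold and converts the first basis into the second up to signs.
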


Let $(\shf U_{r,n},\phi_{r,n}) \into \Hyp(\OO)^{\oplus n}$ be the universal tautological symplectic
bundle of rank $2r$ on $\HGr(r,n)$.  Let $\eta_{r,n} \colon \shf U_{r,n} \subset \Hyp(\OO)^{\oplus n}$
be the inclusion.  Consider the inclusions of symplectic subbundles
\begin{equation}
\label{E:inclusions}
\vcenter{
\xymatrix @M=5pt @C=40pt @R=10pt {
\shf U_{r,n}  \ar@{^{(}->}[r]^-{
\left(
\begin{smallmatrix}
\eta_{r,n} \\0
\end{smallmatrix}
\right)
} &
\Hyp(\OO)^{\oplus n} \oplus \Hyp(\OO) = \Hyp(\OO)^{\oplus n+1},
\\
\Hyp(\OO) \oplus \shf U_{r,n} \ar@{^{(}->}[r]^-{
\left(
\begin{smallmatrix}
1 & 0\\ 0 & \eta_{r,n}
\end{smallmatrix}
\right)
} &
\Hyp(\OO) \oplus \Hyp(\OO)^{\oplus n} = \Hyp(\OO)^{\oplus n+1}.
}}
\end{equation}
They are classified by maps $\alpha_{r,n} \colon \HGr(r,n) \to \HGr(r,n+1)$ and
$\beta_{r,n} \colon \HGr(r,n) \to \HGr(r+1,n+1)$ respectively.  Let
$\gamma_{r,n} = \beta_{r,n+1}\alpha_{r,n} = \alpha_{r+1,n+1}\beta_{r,n} \colon
\HGr(r,n) \to \HGr(r+1,n+2)$.  We have direct systems of quaternionic Grassmannians
\begin{equation}
\label{E:systems}
\begin{gathered}
\HGr(r,r) \xrightarrow{\alpha_{r,r}} \HGr(r,r+1) \xrightarrow{\alpha_{r,r+1}}
\HGr(r,r+2) \to \cdot \to \HGr(r,n) \xrightarrow{\alpha_{r,n}} \cdots
\\
\HGr(0,0) \xrightarrow{\gamma_{0,0}} \HGr(1,2) \xrightarrow{\gamma_{1,2}} \HGr(2,4) \to \cdots \to \HGr(n,2n) \xrightarrow{\gamma_{n,2n}} \cdots
\end{gathered}
\end{equation}

\begin{theorem}
\label{T:HGr.lim.cohom}

For any ring cohomology theory $A$ with a symplectic Thom structure
and for any $S$ the maps
\begin{gather*}
(\alpha_{r,n} \times 1_{S})^{A} \colon A(\HGr(r,n+1) \times S) \to A(\HGr(r,n) \times S)
\\
(\beta_{r,n} \times 1_{S})^{A} \colon A(\HGr(r+1,n+1) \times S) \to A(\HGr(r,n) \times S)
\end{gather*}
are split surjective, and we have isomorphisms
\begin{gather}
\label{E:HGr(r,infty)}
A(S) [[b_{1},\dots,b_{r}]] \xrightarrow{\cong} \varprojlim\limits_{n \to \infty} A(\HGr(r,n) \times S)
\\
\label{E:HGr(infty,2.infty)}
A(S)[[b_{1},b_{2},  b_{3}, \dots ]]
\xrightarrow{\cong}
\varprojlim
\limits_{n\to \infty}
A(\HGr(n,2n) \times S)
\end{gather}
with each variable $b_{i}$ sent to the inverse system of $i^{\text{th}}$ Borel classes
$(b_{i}(\shf U_{r,n}))_{n \geq r}$ or
$(b_{i}(\shf U_{n,2n}))_{n\in \NN}$.
\end{theorem}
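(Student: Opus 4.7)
The plan is to make both inverse systems completely explicit via Theorems \ref{T:Grass} and \ref{T:Grass.bdl}, then track the effect of the structural maps on generators, and finally analyze the inverse limits by a weighted-degree filtration argument.

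First I would pin down the maps on Pontryagin classes. By construction of $\alpha_{r,n}$ and $\beta_{r,n}$ in \eqref{E:inclusions} we have $\alpha_{r,n}^{*}\shf U_{r,n+1} \cong \shf U_{r,n}$ and $\beta_{r,n}^{*}\shf U_{r+1,n+1} \cong \Hyp(\OO) \oplus \shf U_{r,n}$. Functoriality of Pontryagin classes gives $(\alpha_{r,n}\times 1_{S})^{A} p_{i}(\shf U_{r,n+1}) = p_{i}(\shf U_{r,n})$ for all $i$. For $\beta$, the Cartan sum formula (Theorem \ref{T:Cartan}) together with $p_{t}(\Hyp(\OO)) = 1$ (Proposition \ref{P:p=0}) yields $(\beta_{r,n}\times 1_{S})^{A} p_{i}(\shf U_{r+1,n+1}) = p_{i}(\shf U_{r,n})$ for $i \leq r$ and $(\beta_{r,n}\times 1_{S})^{A} p_{r+1}(\shf U_{r+1,n+1}) = 0$. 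In particular each map is a surjection of $A(S)$-algebras because the generating Pontryagin classes of the source hit those of the target. To produce an $A(S)$-linear splitting, I would invoke the Schur polynomial bases of Theorem \ref{T:Grass.bdl}. Since $\Pi_{r,n-r} \subset \Pi_{r,n+1-r}$ and (dually) $\Pi_{n-r,r} \subset \Pi_{n-r,r+1}$, defining the section by $s_{\lambda}(\shf U_{r,n}) \mapsto s_{\lambda}(\shf U_{r,n+1})$ (resp.\ using the orthogonal complement basis for $\beta$) sends basis elements to basis elements, and by functoriality of the Schur polynomial expressions \eqref{E:schur.p} it is indeed a section.

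Next I would pass to the limit for the first system. Using Theorem \ref{T:Grass}, write $R = A(S)[p_{1},\dots,p_{r}]$ with $\deg p_{j} = j$ and $I_{n} = (h_{n-r+1},\dots,h_{n}) \subset R$, so that $A(\HGr(r,n)\times S) = R/I_{n}$ and the transition $\alpha^{A}$ is the quotient map induced by the inclusion $I_{n+1}\subseteq I_{n}$ (verified via the recursion \eqref{E:H.recur}, which rewrites $h_{n+1}$ in terms of $h_{n-r+1},\dots,h_{n}$). Let $J = (p_{1},\dots,p_{r})$. The key estimate is that each monomial of $h_{i}(p_{1},\dots,p_{r})$ has weighted degree $i$, so writing a monomial as $p_{1}^{a_{1}}\cdots p_{r}^{a_{r}}$ with $\sum j a_{j} = i$ forces $\sum a_{j} \geq \lceil i/r\rceil$. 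Hence $I_{n} \subseteq J^{\lceil (n-r+1)/r\rceil}$. Conversely $J^{N}$ contains no nonzero polynomial of weighted degree less than $N$. These two inclusions say that the $\{I_{n}\}$-topology and the $J$-adic topology on $R$ have the same Cauchy sequences, so
\[
\varprojlim_{n\to\infty} R/I_{n} \;=\; \varprojlim_{N\to\infty} R/J^{N} \;=\; A(S)[[p_{1},\dots,p_{r}]],
\]
which is \eqref{E:HGr(r,infty)}. Surjectivity of the $\alpha^{A}$ kills $\varprojlim^{1}$, so there is no lim-one obstruction.

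For the second system the argument is analogous but one grows the polynomial ring as $n$ increases. Each $\gamma_{n,2n}^{A}$ sends $p_{i} \mapsto p_{i}$ for $i \leq n$ and $p_{n+1} \mapsto 0$, and $A(\HGr(n,2n)\times S) = A(S)[p_{1},\dots,p_{n}]/(h_{n+1},\dots,h_{2n})$. The same weighted-degree estimate shows that the defining relations at level $n$ lie in arbitrarily high powers of the augmentation ideal $(p_{1},p_{2},\dots)$ as $n \to \infty$, while every fixed $p_{i}$ is hit by $p_{i}(\shf U_{n,2n})$ for all $n\geq i$. Combining these gives \eqref{E:HGr(infty,2.infty)}. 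The only mildly subtle step in the whole argument is the comparison of topologies in the first limit; once that is set up, both statements fall out uniformly and extending from $S=k$ to arbitrary $S$ requires no extra work because every ingredient (Theorems \ref{T:Grass.bdl} and \ref{T:Cartan}, functoriality of Pontryagin classes) is already stated with a base factor $S$.
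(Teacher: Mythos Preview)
Your approach is the one the paper intends: it says only that the result follows from the explicit presentation of Theorem~\ref{T:Grass} ``in the same way as for ordinary Grassmannians'', and you have supplied those details. The treatment of split surjectivity via the Schur bases of Theorem~\ref{T:Grass.bdl} is correct and clean.

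Two points in the limit computations need repair. For \eqref{E:HGr(r,infty)}, you establish $I_n \subseteq J^{\lceil (n-r+1)/r\rceil}$, but your ``conversely'' observation (that $J^N$ contains nothing of weighted degree below $N$) does not by itself yield the reverse cofinality $J^N \subseteq I_m$. What is missing is that $I_m$ contains every element of weighted degree greater than $r(m-r)$: such an element lies in the $A(S)$-span of Schur polynomials $s_\lambda$ with $|\lambda| > r(m-r)$ and $l(\lambda) \leq r$, which forces $\lambda_1 > m-r$, hence $s_\lambda \in I_m$ by the description of $\Lambda_r/(h_{m-r+1},\dots,h_m)$ recalled just before Theorem~\ref{T:Grass}. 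With this in hand, $J^N \subseteq I_m$ once $N > r(m-r)$, and the two filtrations are cofinal.

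For \eqref{E:HGr(infty,2.infty)} the augmentation-ideal claim is actually false. At level $n$ the relation $h_{n+1}$, expressed in $e_1,\dots,e_n$, contains terms of ordinary degree $2$ (for instance $e_2^2$ occurs in $h_4$ with coefficient $1$), so the relations lie in $J^2$ for every $n$ and never in higher powers; your bound $\sum a_j \geq i/r$ degenerates since $r=n$ grows with the level. With infinitely many variables of unbounded weight the $J$-adic and weighted-degree filtrations are genuinely different. The clean fix is to bypass the topology comparison and argue directly with the Schur bases: the transition map $\gamma_{n,2n}^A$ sends $s_\lambda(\shf U_{n+1,2n+2}) \mapsto s_\lambda(\shf U_{n,2n})$, which is the basis element for $\lambda \in \Pi_{n,n}$ and $0$ otherwise, so $\varprojlim_n A(\HGr(n,2n)\times S) \cong \prod_{\lambda} A(S)\cdot s_\lambda$ over all partitions. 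This product is precisely the weighted-degree completion of $A(S)[p_1,p_2,\dots]$, which is the intended meaning of $A(S)[[p_1,p_2,\dots]]$ here.
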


The theorem follows from the explicit generators and relations for the $A(\HGr(r,n) \times S)$
given in Theorem \ref{T:Grass} in the same way as for ordinary Grassmannians.

\section{Recovering Thom classes from Borel classes}
\label{S:Pont.thom}

In this section we show that a symplectic Thom structure is determined by its system of Borel
classes.  This section is joint work with Alexander Nenashev.

\begin{definition}
\label{D:Pont}
A \emph{Borel structure} on a ring cohomology theory $B$ on a category of schemes is a rule
assigning to every rank $2$ symplectic bundle $(E,\phi)$ over a scheme $S$ in the category a
central element $b(E,\phi) \in B(S)$ with the following properties:

\begin{enumerate}
\item
For $(E_{1},\phi_{1}) \cong (E_{2},\phi_{2})$ we have $b(E_{1},\phi_{1}) = b(E_{2},\phi_{2})$.

\item
For a morphism $f \colon Y \to S$ we have $f^{B}(b(E,\phi)) = b(f^{*}(E,\phi))$.

\item
For the tautological rank $2$ symplectic subbundle
$(\shf U,\phi\rest{\shf U})$ on $\HP^{1}$ the maps
\[
(1,b(\shf U,\phi\rest{\shf U})) \colon B(S) \oplus B(S) \to B(\HP^{1} \times S)
\]
are isomorphisms.

\item
For a rank $2$ symplectic space $(V,\phi)$ viewed as a trivial symplectic bundle over $k$
we have $b(V,\phi) = 0$ in $B(k)$.

\end{enumerate}
\end{definition}

The Borel classes associated to a symplectic Thom structure by formula \eqref{eq.Borel}
form a Borel structure because of the functoriality of the Thom classes, the
Quaternionic Projective Bundle Theorem \ref{th.HPn.triv}, and
Proposition \ref{P:p=0}.

\begin{theorem}
\label{T:unique}
Every Borel structure on a ring cohomology theory is the system of Borel classes of a unique
symplectic Thom structure whose classes are given by formula \eqref{E:Thom} below.
\end{theorem}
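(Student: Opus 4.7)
The plan is to construct, from any Pontryagin structure $p$ on $A$, a canonical Thom class $\Th(E,\phi) \in A_X(E)$ for every rank $2$ symplectic bundle $(E,\phi)$ on every scheme $X$, by working inside an auxiliary quaternionic projective bundle via Theorem \ref{T:normal.geom} and Proposition \ref{P:iso}. Uniqueness will then be automatic, since formula \eqref{E:Thom} will determine $\Th(E,\phi)$ entirely from $p$.

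The geometric setup is as follows. Given $(E,\phi)$ on $X$, I form the rank $4$ symplectic bundle $(F,\psi) = (\OO_X \oplus E \oplus \OO_X, \psi)$ of \eqref{E:basic.setup}, so $(F,\psi) \cong (E,\phi) \perp \Hyp(\OO_X)$. Let $\pi\colon \HP_X(F,\psi) \to X$ be the quaternionic projective bundle, $(\shf U_F,\psi\rest{\shf U_F})$ its tautological rank $2$ subbundle, and $\zeta = p(\shf U_F,\psi\rest{\shf U_F})$. Theorem \ref{T:normal.geom}(c) identifies $N^+ \cong E$ as vector bundles over $X$, and Proposition \ref{P:iso} provides a canonical isomorphism $A_{N^+}(\HP_X(F,\psi)) \cong A_X(N^-) \cong A_X(E)$.

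First I will show that $A(\HP_X(F,\psi))$ is a free two-sided $A(X)$-module on $(1,\zeta)$, by Mayer--Vietoris on a Zariski cover of $X$ that trivializes $(F,\psi)$ together with axiom (3) of Definition \ref{D:Pont}. Next, applying Theorem \ref{T:Ga.cohom}(a) to the section $\sigma\colon X \to Y$ classifying the summand $\Hyp(\OO_X) \subset F$ and using axiom (4) of Definition \ref{D:Pont} gives $\sigma^A\zeta = 0$, so $j^A\colon A(\HP_X(F,\psi)) \to A(Y)$ is split surjective with kernel $A(X)\cdot\zeta$. The localization sequence therefore collapses to a split short exact sequence, yielding a unique class $\tilde\Th \in A_{N^+}(\HP_X(F,\psi))$ with $e^A\tilde\Th = -\zeta$. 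I then define $\Th(E,\phi)$ to be its image in $A_X(E)$ under the isomorphism of Proposition \ref{P:iso}; this is formula \eqref{E:Thom}.

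It remains to verify the axioms of Definition \ref{def.sp.thom}, to recover $p$ as the associated Pontryagin structure, and to deduce uniqueness. Functoriality under isomorphisms and pullbacks is immediate from the naturality of the construction, and centrality follows from that of $\zeta$ in $A(\HP_X(F,\psi))$. Tracing $-z^A e^A\Th(E,\phi)$ through the identifications, it becomes the pullback of $-\zeta$ along the composition $X \into N^- \into \HP_X(F,\psi)$ classifying the middle summand $E \subset F$; since this pulls $\shf U_F$ back to $E$, axiom (2) of Definition \ref{D:Pont} gives $p(E,\phi)$. The main obstacle will be the nondegeneracy axiom (3) of Definition \ref{def.sp.thom}, namely that $\cup\Th(\Hyp(\OO_X))\colon A(X) \to A_X(X \times \Aff^2)$ is an isomorphism; I expect to reduce this by base change to the universal case over $\Spec k$ and deduce it from axiom (3) of Definition \ref{D:Pont} together with a direct computation on $\HP^1$. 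Finally, uniqueness is automatic: if $\Th'$ is another symplectic Thom structure with the same Pontryagin classes, then Proposition \ref{P:pont.zero} applied to $s_+$ forces the corresponding class $\tilde\Th'$ also to satisfy $e^A\tilde\Th' = -\zeta$, so $\tilde\Th' = \tilde\Th$ and hence $\Th' = \Th$.
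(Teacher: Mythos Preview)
Your construction of $\Th(E,\phi)$ is essentially the paper's (your $\tilde\Th$ is $-\theta_{E,\phi}$ in Lemma~\ref{L:pont.to.thom}), and your argument that $\sigma^{A}\zeta=0$ via axiom~(4) is the same one used in Proposition~\ref{P:pont.theta}. One remark: you call the nondegeneracy axiom~(3) of Definition~\ref{def.sp.thom} ``the main obstacle,'' but in fact it drops out of your own construction for \emph{every} rank~$2$ symplectic bundle, not just the trivial one. The map $\cup\Th(E,\phi)$ is the composite of the isomorphism $A(X)\cong A_{N^{+}}(P)$ (from the split short exact sequence you built) with the isomorphism $f^{A}\colon A_{N^{+}}(P)\cong A_{X}(E)$ of Proposition~\ref{P:iso}. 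No separate computation on $\HP^{1}$ is needed; this is how the paper handles it in Theorem~\ref{T:exist}.

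There is a genuine gap in your uniqueness argument. Applying Proposition~\ref{P:pont.zero} to $s_{+}$ gives $e^{A}\bigl(s_{+}^{A}\Th'(\shf U_{F},\psi\rest{\shf U_{F}})\bigr)=-\zeta$, so by uniqueness of the lift $\tilde\Th = s_{+}^{A}\Th'(\shf U_{F},\psi\rest{\shf U_{F}})$ in $A_{N^{+}}(P)$. But your definition of $\Th(E,\phi)$ is $f^{A}(\tilde\Th)$, so to conclude $\Th'(E,\phi)=\Th(E,\phi)$ you must show
\[
f^{A}s_{+}^{A}\,\Th'(\shf U_{F},\psi\rest{\shf U_{F}}) \;=\; \Th'(E,\phi) \quad\text{in } A_{X}(E).
\]
This is not automatic: the left side lives on $N^{-}$ and involves the Thom class of $\shf U_{F}$, while the right side is the Thom class of $E$. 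The identification requires knowing that under the isomorphism $\shf U_{F}\rest{N^{-}}\cong\pi_{-}^{*}E$ of Theorem~\ref{T:normal.geom}(e), the section $s_{+}\rest{N^{-}}$ becomes the \emph{tautological} section of $\pi_{-}^{*}E$ (Lemma~\ref{L:subtle}), and then a short argument comparing two sections of $E\oplus E\to E$ over $E$ (see the proof of Theorem~\ref{T:unique.2}). Without this geometric input, your ``hence $\Th'=\Th$'' does not follow.
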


The rest of this section is devoted to proving this theorem.
Our strategy goes as follows.
Let $(E,\phi)$ be a rank $2$ symplectic bundle over  $S$.
We will study the $\HP^{1}$ bundle associated to the rank $4$ symplectic bundle $(F,\psi)$ with
\begin{align}
\label{E:F.psi}
F & = \OO_{S} \oplus E \oplus \OO_{S}, &
\psi  = & \begin{pmatrix} 0 & 0 & 1 \\ 0 & \phi & 0 \\ -1 & 0 & 0 \end{pmatrix}.
\end{align}
The bundle $(F,\psi)$ is of the form studied in Theorem \ref{T:normal.geom},
so the $\HP^{1}$ bundle has the properties of that proposition.  Write
\[
\begin{gathered}
P  = \HP^{1}_{S}(F,\psi),
\\
N^{-} = \HP^{1}_{S}(F,\psi) \cap \Gr_{S}(2,E \oplus \OO_{S}),
\end{gathered}
\hspace{2cm}
\begin{gathered}
N^{+} = \HP^{1}_{S}(F,\psi) \cap \Gr_{S}(2,\OO_{S} \oplus E),
\\
S = N^{+} \cap N^{-} = \HP^{0}_{S}(E,\phi).
\end{gathered}
\]
Write $(U,\psi \rest{U})$ for the universal rank $2$ symplectic subbundle over $P$.
In the closed embedding $\HP^{0}_{S}(E,\phi) \subset \HP^{1}_{S}(F,\psi)$
of Theorem \ref{T:normal.geom}, the small quaternionic projective bundle is
now an $\HP^{0}$ bundle over $S$ and is therefore $S$ itself.
The restriction of $(U,\psi\rest{U})$ to this $S = \HP^{0}(E,\phi)$ is the
tautological rank $2$ symplectic subbundle of the rank $2$ symplectic bundle $(E,\phi)$.
So it is $(E,\phi)$ itself.  It now follows from Theorem \ref{T:normal.geom}
that the two summands of the normal bundle $N_{1}$ and $N_{2}$ satisfy
\begin{equation}
\label{E:XWj}
N^{+} \cong E^{\vee} \cong E
\hspace{3cm}
N^{-} \cong E^{\vee} \cong E
\end{equation}
We have the following diagram.
\begin{equation}
\label{E:*diag}
\vcenter{
\xymatrix @M=5pt @C=40pt {
**[l] E = N^{-}  \ar[r]^-{f} \ar@<3pt>[d]^-{\pi}
& P \ar[r]^-{s} \ar[dl]|-{\ h\ }
& U
\\
S \ar@<3pt>[u]^-{z} \ar@{_{(}->}[r]_-{g}
& N^{+} \ar[u]_-{i} \ar@{_{(}->}[r]_-{i}
& P \ar[u]_-{z_{U}}
}}
\end{equation}
The loci $S$, $N^{+}$, $N^{-}$, $P$ and the vector bundles $U \to P$ and $E \to S$ are as above,
the maps $i$, $f$, $g$, and the map $S\to N^{-}$ are the inclusions,
and $h$, $\pi$, and the map $N^{-} \to S$ are the projections to $S$.
We have $hf = \pi$.
The section
$s \colon P \to U$ corresponds to the vector bundle map
$s_{+} \colon \OO_{P} \to U$ of Theorem \ref{T:normal.geom}(d)
whose zero locus is $N^{+}$.  In particular, $s$ is transversal to the zero section $z_{U}$ of $U$.
The locus $N^{-}$ can be identified with the vector bundle $E$ by the isomorphism of
\eqref{E:XWj}, and $z$ and $\pi$ correspond to the zero section and structural map of the
vector bundle.
The intersections $z_{U}(P) \cap s(P) = N^{+}$ and $N^{+} \cap N^{-} = S$ are transversal.

We now prove a series of lemmas.  The first one applies to a slightly more general situation.

\begin{lemma}
\label{L:B.SS}
Let $B$ be a ring cohomology theory with a Borel structure.
Let $(G,\omega)$ be a rank $4$ symplectic bundle over a scheme $S$,
and let $(\shf U, \omega\rest{\shf U})$ be its tautological rank $2$ symplectic subbundle.
Then the map $(1,b(\shf U, \omega\rest{\shf U})) \colon B(S) \oplus B(S) \to B(\HP^{1}_{S}(G,\omega))$
is an isomorphism.
\end{lemma}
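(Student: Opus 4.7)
The plan is to reduce the statement to axiom (3) of the Pontryagin structure, which asserts the result for trivial rank-$4$ symplectic bundles, via a Mayer-Vietoris argument. Write $\mu_{G,\omega} = \bigl( 1, p(\shf U, \omega\rest{\shf U}) \bigr) \colon B(S)^{\oplus 2} \to B(\HP^{1}_{S}(G,\omega))$ for the map in question. By axioms (1) and (2) of the Pontryagin structure, $\mu_{G,\omega}$ is natural with respect to pullback along any morphism $f \colon T \to S$, since $f^{*}(\shf U,\omega\rest{\shf U})$ is the tautological subbundle of $f^{*}(G,\omega)$ on $\HP^{1}_{T}(f^{*}(G,\omega)) = \HP^{1}_{S}(G,\omega) \times_{S} T$. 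In particular, when $(G,\omega) \cong (V,\phi) \otimes \OO_{S}$ is the trivial rank-$4$ bundle, $\HP^{1}_{S}(G,\omega) = \HP^{1} \times S$ and $\mu_{G,\omega}$ is exactly the isomorphism given by axiom (3).

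Next, I would use that any rank-$4$ symplectic bundle is Zariski-locally trivial: over any local ring, a nondegenerate alternating form on a free module of rank $4$ admits a symplectic basis, and such a basis extends to a symplectic basis on a Zariski-open neighborhood. By the quasi-compactness hypothesis on $S$ from \S\ref{S:cohomology}, there is a finite open cover $S = S_{1} \cup \cdots \cup S_{n}$ on which $(G,\omega)\rest{S_{i}} \cong (V,\phi) \otimes \OO_{S_{i}}$, so that the restriction of $\mu_{G,\omega}$ to each $S_{i}$ is an isomorphism by the first paragraph.

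I would then induct on $n$. The case $n = 1$ is axiom (3). For the inductive step, set $U = S_{1} \cup \cdots \cup S_{n-1}$ and $V = S_{n}$; then $U \cap V$ is covered by the $n-1$ opens $S_{i} \cap V$ for $1 \leq i \leq n-1$, so the inductive hypothesis applies to $U$, $V$, and $U \cap V$. The Mayer-Vietoris exact sequences for $B$ attached to the cover $\{U,V\}$ of $S$ and to the induced cover of $\HP^{1}_{S}(G,\omega)$ fit into a commutative ladder via $\mu$ by the naturality established in the first paragraph, and the five lemma concludes. The main obstacle is purely bookkeeping, namely verifying that the Mayer-Vietoris ladder commutes and that symplectic bundles trivialize Zariski-locally; both are standard, and no substantive new input beyond the Pontryagin-structure axioms is required.
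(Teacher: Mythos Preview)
Your proposal is correct and follows essentially the same approach as the paper: reduce to the trivial bundle via axiom (3) of the Pontryagin structure, then globalize by a Mayer-Vietoris argument using Zariski-local triviality of symplectic bundles. The paper's proof is a two-line sketch stating exactly this, so your write-up is simply a fleshed-out version of the same argument.
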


\begin{proof}
The lemma is true for a trivial $(G,\omega)$, which has $\HP^{1}_{S}(G,\omega) = \HP^{1} \times S$,
by the axioms of a Borel structure.
The general case follows by a Mayer-Vietoris argument.
\end{proof}

\begin{lemma}
\label{L:pont.to.thom}
Let $B$ be a ring cohomology theory with a Borel structure.
In the situation of \eqref{E:F.psi}--\eqref{E:*diag} let $\overline{e}^{B} \colon B_{X}(P) \to B(P)$ be
the supports extension map.  Then there exists a unique element $\theta_{E,\phi} \in B_{X}(P)$
satisfying $\overline{e}^{B}(\theta_{E,\phi}) = b(U, \psi\rest{U})$.
Moreover, $\theta_{E,\phi}$ is  $B(S)$-central, and the map $\theta \colon B(S) \to B_{X}(P)$
sending $\alpha \mapsto h^{B}(\alpha) \cup \theta_{E,\phi}$ is an isomorphism.
\end{lemma}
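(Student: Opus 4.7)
The plan is to identify $X$ with $N^{+}$ and extract $\theta_{E,\phi}$ as the unique preimage of $p(\shf U,\psi\rest{\shf U})$ under the injection $\bar e^B\colon B_{N^+}(P)\to B(P)$ coming from the localization sequence for the pair $(P,N^+)$ with open complement $Y=P\smallsetminus N^+$. Existence and uniqueness will both follow once I identify $\ker j^B$ inside $B(P)$ and show it equals $h^B(B(S))\cup p(\shf U,\psi\rest{\shf U})$.

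First, Lemma \ref{L:B.SS} applied to the rank $4$ symplectic bundle $(F,\psi)$ gives
\[
B(P) \;=\; h^B(B(S)) \;\oplus\; h^B(B(S))\cup p(\shf U,\psi\rest{\shf U}).
\]
Theorem \ref{T:Ga.cohom}(a) identifies $B(Y)\cong B(S)$ via $t^B$ (where $t=h\circ j$ is the projection $Y\to S$), and Theorem \ref{T:Ga.cohom}(b), specialized to $r=1$, produces a natural section $\sigma\colon S\to Y$ of $t$ classifying the rank $2$ symplectic subbundle $\OO_S\oplus 0\oplus\OO_S\subset F$. Since $j\circ h = t$ on the nose, we have $j^B\circ h^B = t^B$, which is an isomorphism, so $j^B$ is split surjective and the boundary $\partial$ in
\[
B(Y)\xrightarrow{\partial} B_{N^+}(P)\xrightarrow{\bar e^B} B(P)\xrightarrow{j^B} B(Y)
\]
vanishes. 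Hence $\bar e^B$ is injective with image exactly $\ker j^B$.

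To pin down $\ker j^B$ I need $j^B(p(\shf U,\psi\rest{\shf U}))=0$. A direct computation shows that the restriction of $\psi$ to $\OO_S\oplus 0\oplus\OO_S\subset F$ is exactly $\bigl(\begin{smallmatrix}0&1\\-1&0\end{smallmatrix}\bigr)$, so $\sigma^*(\shf U,\psi\rest{\shf U})\cong\Hyp(\OO_S)$, a symplectic bundle pulled back from $k$. Axioms (2) and (4) of a Pontryagin structure then give
\[
\sigma^B\bigl(j^B p(\shf U,\psi\rest{\shf U})\bigr) \;=\; p(\sigma^*\shf U,\sigma^*\psi\rest{\shf U}) \;=\; p(\Hyp(\OO_S)) \;=\; 0,
\]
and since $\sigma^B$ inverts the isomorphism $t^B$, this forces $j^B(p(\shf U,\psi\rest{\shf U}))=0$. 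Combined with the decomposition of $B(P)$ above and the fact that $j^B\circ h^B=t^B$ is an isomorphism, we deduce $\ker j^B = h^B(B(S))\cup p(\shf U,\psi\rest{\shf U})$.

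Thus $\bar e^B$ restricts to an isomorphism onto $h^B(B(S))\cup p(\shf U,\psi\rest{\shf U})$, and the unique element $\theta_{E,\phi}:=(\bar e^B)^{-1}(p(\shf U,\psi\rest{\shf U}))$ satisfies the required relation. Its $B(S)$-centrality follows from centrality of $p(\shf U,\psi\rest{\shf U})$ (from the Pontryagin axioms together with Lemma \ref{L:B.SS}) plus the fact that $\bar e^B$ is a map of two-sided $B(P)$-modules. The claim that $\alpha\mapsto h^B(\alpha)\cup\theta_{E,\phi}$ is an isomorphism $B(S)\xrightarrow{\cong} B_{N^+}(P)$ is then the composition of the isomorphism $B(S)\xrightarrow{\cong}\ker j^B$, $\alpha\mapsto h^B(\alpha)\cup p(\shf U,\psi\rest{\shf U})$ from Lemma \ref{L:B.SS}, with $(\bar e^B)^{-1}$. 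The main obstacle is the vanishing $p(\shf U\rest Y,\psi\rest{\shf U\rest Y})=0$: without a Thom structure one cannot invoke Proposition \ref{P:p=0}, so it is essential to exhibit the explicit section $\sigma$ of Theorem \ref{T:Ga.cohom}(b) whose pullback of the universal bundle is hyperbolic, and then apply Pontryagin axiom (4) directly.
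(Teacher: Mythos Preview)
Your proof is correct and follows essentially the same strategy as the paper: use Lemma~\ref{L:B.SS} for the decomposition of $B(P)$, use Theorem~\ref{T:Ga.cohom} to identify $B(Y)\cong B(S)$ and conclude that $\partial=0$ so $\bar e^B$ is injective with image $\ker j^B$, and then verify $j^B(p(\shf U,\psi\rest{\shf U}))=0$. The one small variation is in this last step: the paper pulls back along the affine bundles $Y_2\to Y_1\to Y$ of Theorem~\ref{th.Ga}, over which $\shf U$ acquires the explicit trivializing frame $(e,f)$, whereas you pull back along the section $\sigma\colon S\to Y$ of Theorem~\ref{T:Ga.cohom}(b), over which $\shf U$ becomes $\Hyp(\OO_S)$; both routes rest on the same geometry and both invoke axiom~(4) at the end. (A tiny slip: you wrote ``$j\circ h=t$'' where you meant $h\circ j=t$, but your conclusion $j^B\circ h^B=t^B$ is the correct one.)
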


\begin{proof}
We are in the situation of Theorem \ref{th.Ga}.
This gives us two pieces of information.  First, write $Y = P \smallsetminus N^{+}$,
let $j \colon Y \into P$ be the inclusion, and
let $q = hj \colon Y \to S$ be the projection.
Theorem \ref{T:Ga.cohom}(a) then says that
$q^{B} \colon B(S) \to B(Y)$ is an isomorphism.

Second, by Theorem \ref{th.Ga} itself there are maps
\[
Y \xleftarrow{g_{1}} Y_{1} \xleftarrow[\cong]{g_{2}} Y_{2} \xrightarrow{q} \HGr_{S}(0,E,\phi) = S
\]
with $g_{1}$ an $\Aff^{1}$-bundle, $g_{2}$ an $\Aff^{0}$-bundle, and
$q$ an $\Aff^{5}$-bundle.  Moreover,
$g_{2}^{*}g_{1}^{*}(U\rest{Y})$ has two tautological sections $e,f$ over $Y_{2}$ which together give an
isometry with the trivial rank $2$ symplectic bundle
$\bigl( \OO_{S}^{\oplus 2}, \bigl(
\begin{smallmatrix}
0 & -1 \\ 1 & 0
\end{smallmatrix}
\bigr) \bigr)$.
Therefore $g_{2}^{B}g_{1}^{B}j^{B}(b(U,\psi \rest {U})) = b(g_{2}^{*}g_{1}^{*}j^{*}(U,\psi \rest {U}))$
is the pullback by $Y_{2} \to k$ of the Borel class of the trivial symplectic
bundle over $k$, which vanishes by the axioms of a Borel structure.
So we have $g_{2}^{B}g_{1}^{B}j^{B}(b(U,\psi\rest{U})) = 0$.
Moreover, since $g_{1}$ and $g_{2}$ are affine bundles,
$g_{2}^{B}g_{1}^{B}$ is an isomorphism.
So we have $j^{B}(b(U,\psi\rest{U})) = 0$.

Now consider the diagram of two-sided $B(S)$-modules.
\[
\xymatrix @M=5pt @C=40pt @R=25pt {
0 \ar[r] &
B(S) \ar[r]^-{(0,1)} \ar@{.>}[d]^-{\cong}_-{\theta} &
B(S) \oplus B(S) \ar[r]^-{(1,0)} \ar[d]_-{\left(1,b(U,\psi\rest{U}) \right) \circ h^{B}}^-{\cong} &
B(S) \ar[r] \ar[d]_-{q^{B}}^-{\cong} &
0
\\
\cdots \ar[r]_-{\partial = 0} &
B_{N^{+}}(P) \ar[r]_-{\overline{e}^{B}} &
B(P) \ar[r]_-{j^{B}} &
B(Y) \ar[r]_-{\partial = 0} &
\cdots
}
\]
The bottom row is the localization exact sequence.
The middle vertical arrow is an isomorphism by Lemma \ref{L:B.SS}, and $q^{B}$ is an isomorphism
by the discussion above.  Since we have $q^{B} = j^{B}h^{B}$, that also implies that $j^{B}$ is surjective,
and therefore $\partial$ vanishes.
The righthand square commutes because we have $j^{B}(b(U,\psi\rest{U})) = 0$.
It now follows that the diagram can be completed by a unique isomorphism $\theta$ making the lefthand
square commute.  Setting $\theta_{E,\phi} = \theta(1_{S})$, we get an element with all the desired
properties because $\theta$ is an isomorphism of two-sided $B(S)$-modules.
\end{proof}


\begin{theorem}
\label{T:exist}
Let $B$ be a ring cohomology theory with a Borel structure.
In the situation of \eqref{E:F.psi}--\eqref{E:*diag} let $\theta_{E,\phi} \in B_{X}(P)$
be the unique element
satisfying $\overline{e}^{B}(\theta_{E,\phi}) = b(U, \psi\rest{U})$ of Lemma \ref{L:pont.to.thom}.  Then the assignment
\begin{equation}
\label{E:Thom}
\Th(E,\phi) = -f^{B}(\theta_{E,\phi}) \in B_{S}(E)
\end{equation}
defines a symplectic Thom structure on $B$ whose Borel classes are the given Borel structure.
\end{theorem}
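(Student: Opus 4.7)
The plan is to verify the three axioms of Definition \ref{def.sp.thom} together with $B(S)$-centrality for the proposed class $\Th(E,\phi)=-f^B(\theta_{E,\phi})$, and then to check that the Pontryagin class computed from $\Th$ via \eqref{eq.Pontryagin} equals the given $p(E,\phi)$. The naturality axioms will come essentially for free: the whole construction $(E,\phi)\rightsquigarrow(F,\psi)\rightsquigarrow(P,N^\pm,U,f,h)$ is functorial under symplectic isomorphisms of $(E,\phi)$ and under pullback along morphisms $Y\to S$, and by the defining axioms of a Pontryagin structure the class $p(U,\psi\rest{U})$ is preserved under the corresponding pullback maps. The characterization of $\theta_{E,\phi}$ as the unique element of $B_{N^+}(P)$ lifting $p(U,\psi\rest{U})$ under $\overline{e}^B$ (Lemma \ref{L:pont.to.thom}) therefore forces $\theta_{E,\phi}$ to transport correctly, and applying $-f^B$ conveys this to $\Th(E,\phi)$. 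The $B(S)$-centrality of $\Th(E,\phi)$ is inherited from that of $\theta_{E,\phi}$, already stated in Lemma \ref{L:pont.to.thom}, since $f$ lies over $S$.

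For the nondegeneracy axiom I will prove the stronger claim that for every rank $2$ symplectic bundle $(E,\phi)$ over $S$ the map $B(S)\to B_S(E)$ sending $\alpha\mapsto \pi^B(\alpha)\cup \Th(E,\phi)$ is an isomorphism, which contains the case $(E,\phi)=\Hyp(\OO_X)$. The key point is to identify this map, up to sign, with the composite
\[
B(S) \xrightarrow[\cong]{\theta} B_{N^+}(P) \xrightarrow[\cong]{f^B} B_S(E),
\]
where $\theta$ is the isomorphism of Lemma \ref{L:pont.to.thom} and $f^B$ is the isomorphism of Proposition \ref{P:iso}. Using $h\circ f = \pi$, the compatibility of $f^B$ with cup products, and the defining identity $f^B(\theta_{E,\phi}) = -\Th(E,\phi)$, one computes
\[
f^B\bigl(h^B(\alpha)\cup \theta_{E,\phi}\bigr) = \pi^B(\alpha)\cup f^B(\theta_{E,\phi}) = -\pi^B(\alpha)\cup \Th(E,\phi),
\]
so the composite agrees up to sign with the proposed Thom map.

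Finally, to recover the Pontryagin class I would compute, using functoriality of extension of supports,
\[
-z^B e^B \Th(E,\phi) = z^B e^B f^B(\theta_{E,\phi}) = z^B f^B\, \overline{e}^B(\theta_{E,\phi}) = (f\circ z)^B\, p(U,\psi\rest{U}).
\]
The closed embedding $f\circ z\colon S\hookrightarrow P$ identifies $S$ with $N^+\cap N^- = \HGr_S(1;E,\phi) = \HP^0_S(E,\phi)$ by Theorem \ref{T:normal.geom}(b), and on this $\HP^0$-bundle the tautological rank $2$ symplectic subbundle is $(E,\phi)$ itself. Naturality of the Pontryagin structure then yields $(f\circ z)^B p(U,\psi\rest{U}) = p(E,\phi)$, as required. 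The main technical content of the argument is in the nondegeneracy paragraph, but the heavy lifting has already been done in Lemma \ref{L:pont.to.thom} and Proposition \ref{P:iso}; the remaining verifications reduce to diagram chases using the geometric identifications of Theorem \ref{T:normal.geom}.
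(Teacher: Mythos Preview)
Your proposal is correct and follows essentially the same route as the paper's proof: centrality from Lemma \ref{L:pont.to.thom} and the fact that $f$ lies over $S$; functoriality from the naturality of the geometric construction together with the uniqueness clause of Lemma \ref{L:pont.to.thom}; nondegeneracy by identifying $\cup\,\Th(E,\phi)$ (up to sign) with the composite $B(S)\xrightarrow{\theta}B_{N^{+}}(P)\xrightarrow{f^{B}}B_{S}(E)$ of the isomorphisms from Lemma \ref{L:pont.to.thom} and Proposition \ref{P:iso}; and recovery of the Pontryagin class via $z^{B}f^{B}\overline{e}^{B}(\theta_{E,\phi})=z^{B}f^{B}p(U,\psi\rest{U})=p(E,\phi)$. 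Your write-up is in fact slightly more explicit than the paper's in spelling out why naturality is forced by uniqueness and in giving the cup-product computation for the nondegeneracy step.
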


\begin{proof}
We verify the axioms of a symplectic Thom structure.

First, the $\Th(E,\phi)$ are supposed to be $B(S)$-central.
By Lemma \ref{L:B.SS} the element $\theta_{E,\phi} \in B_{N^{+}}(P)$ is $B(S)$-central, and
since $f \colon E = N^{-} \to P$ is compatible with the maps $\pi$ and $h$ to $S$, the map
$f^{B} \colon B_{N^{+}}(P) \to B_{S}(E)$ is a map of two-sided $B(S)$-modules.
So $f^{B}(\theta_{E,\phi}) = -\Th(E,\phi)$ is $B(S)$-central.

The functoriality conditions on the $\Th(E,\phi)$ follow from the functoriality of the constructions
of \eqref{E:F.psi}--\eqref{E:*diag}.

The maps $\cup \Th(E,\phi) \colon B(S) \to B_{S}(E)$ are isomorphisms
because they are compositions
\[
B(S) \xrightarrow[\cong]{-\theta} B_{N^{+}}(P) \xrightarrow[\cong]{f^{B}} B_{S}(E)
\]
of maps which are isomorphisms because of Lemma \ref{L:pont.to.thom} and Proposition \ref{P:iso}.

Hence the $\Th(E,\phi)$ define a symplectic Thom structure.

Finally, the Borel classes $-z^{B}e^{B}(\Th(E,\phi))$ defined by the symplectic Thom structure are the original ones because using the commutative diagram
\[
\xymatrix @M=5pt @C=30pt {
B_{X}(P) \ar[r]^-{\overline{e}^{B}} \ar[d]^-{f^{B}} &
B(P) \ar[d]^-{f^{B}}
\\
B_{S}(E) \ar[r]^-{e^{B}} &
B(E) \ar[r]^-{z^{B}} &
B(S)
}
\]
we see that we have
\[
-z^{B}e^{B}(\Th(E,\phi)) = z^{B}e^{B}f^{B}(\theta_{E,\phi}) =
z^{B}f^{B}\overline{e}^{B}(\theta_{E,\phi}) = z^{B}f^{B}(b(U,\psi)) = b(E,\phi)
\]
using functoriality of the Borel classes.
\end{proof}

\begin{theorem}
\label{T:unique.2}
Let $B$ be a ring cohomology theory with a Borel structure.  Suppose that the system of Borel
classes are those of some symplectic Thom structure with classes $\Th(E,\phi)$.
In the situation of \eqref{E:F.psi}--\eqref{E:*diag} let $\theta_{E,\phi} \in B_{N^{+}}(P)$
be the unique element
satisfying $\overline{e}^{B}(\theta_{E,\phi}) = b(U, \psi\rest{U})$ of Lemma \ref{L:pont.to.thom}.  Then
we have $\Th(E,\phi) = - f^{B}(\theta_{E,\phi}) \in B_{S}(E)$.
\end{theorem}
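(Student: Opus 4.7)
My plan is to pin down $\theta_{E,\phi}$ explicitly as $-s^{B}\Th(U,\psi\rest{U})$, and then to reduce $f^{B}$ of this to $-\Th(E,\phi)$ by combining Lemma \ref{L:subtle} with the functoriality axiom for symplectic Thom classes.

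First I will apply formula \eqref{E:p.2nd} of Proposition \ref{P:pont.zero} to the rank $2$ symplectic bundle $(U,\psi\rest{U})$ on $P$ together with its section $s$, whose zero locus is transversal with intersection $N^{+}$ by Theorem \ref{T:normal.geom}(d). This yields
\[
\overline{e}^{B}\bigl(-s^{B}\Th(U,\psi\rest{U})\bigr) = p(U,\psi\rest{U}).
\]
Since Lemma \ref{L:pont.to.thom} establishes the uniqueness of an element of $B_{N^{+}}(P)$ whose image under $\overline{e}^{B}$ is $p(U,\psi\rest{U})$, I may conclude
\[
\theta_{E,\phi} = -s^{B}\Th(U,\psi\rest{U}),
\]
and therefore by contravariant functoriality $-f^{B}\theta_{E,\phi} = (s\circ f)^{B}\Th(U,\psi\rest{U})$.

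The heart of the proof is now to identify this last pullback with $\Th(E,\phi)$, and for that I will use Theorem \ref{T:normal.geom}(e) and Lemma \ref{L:subtle}: the pullback $f^{*}U$ is canonically isometric to $\pi^{*}E$ over $E = N^{-}$, and under this isometry the restricted section $s\circ f$ becomes the tautological section $\tau\colon E \to \pi^{*}E = E\times_{S}E$, $e\mapsto (e,e)$. Writing $f_{U}\colon\pi^{*}E\to U$ for the bundle pullback map, so that $s\circ f = f_{U}\circ\tau$, axiom (2) of Definition \ref{def.sp.thom} gives $f_{U}^{B}\Th(U,\psi\rest{U}) = \Th(\pi^{*}E,\pi^{*}\phi)$.

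Finally, viewing $\pi\colon E\to S$ as a morphism of base schemes, the bundle pullback of $(E,\phi)$ along $\pi$ has pullback map the second projection $p_{2}\colon E\times_{S}E\to E$, so a second application of the same functoriality axiom identifies $\Th(\pi^{*}E,\pi^{*}\phi)$ with $p_{2}^{B}\Th(E,\phi)$. Since $p_{2}\circ\tau = 1_{E}$, this gives
\[
(s\circ f)^{B}\Th(U,\psi\rest{U}) = \tau^{B}p_{2}^{B}\Th(E,\phi) = \Th(E,\phi),
\]
whence $\Th(E,\phi) = -f^{B}\theta_{E,\phi}$. The only nontrivial ingredient beyond pullback manipulations is Lemma \ref{L:subtle}, which converts the geometric restriction $s\rest{N^{-}}$ into the canonical tautological section on $\pi^{*}E$; once that identification is in hand, everything else is a direct chase using only contravariance and the Thom-class functoriality axiom.
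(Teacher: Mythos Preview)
Your proof is correct and follows essentially the same route as the paper's own argument: identify $\theta_{E,\phi}=-s^{B}\Th(U,\psi\rest{U})$ via Proposition~\ref{P:pont.zero} and the uniqueness clause of Lemma~\ref{L:pont.to.thom}, then use Lemma~\ref{L:subtle} to convert $s\circ f$ into the diagonal section of $\pi^{*}E$ and finish by Thom-class functoriality. Your version is in fact slightly more streamlined: the paper introduces an auxiliary section $i_{1}\colon e\mapsto(e,0)$, proves $\Th(E,\phi)=i_{1}^{B}f_{1}^{B}\Th(U,\psi)$ by functoriality, and then invokes homotopy invariance to get $i_{1}^{B}=\Delta^{B}$, whereas you observe directly that the diagonal $\tau$ already satisfies $p_{2}\circ\tau=1_{E}$, making that detour unnecessary.
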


Thus the symplectic Thom structure inducing the Borel structure is unique.

\begin{proof}
In Proposition \ref{P:pont.zero} we saw that $b(U,\psi\rest{U})$ is the image of
$-\Th(U,\psi\rest{U})$ under the composition
\[
B_{P}(U) \xrightarrow{s^{B}} B_{X}(P) \xrightarrow{\overline{e}^{B}} B(P).
\]
Since $\theta_{E,\phi}$ is the unique element with $\overline{e}^{B}(\theta_{E,\phi}) = b(U,\psi\rest{U})$,
we have $\theta_{E,\phi} = -s^{B}(\Th(U,\psi\rest{U}))$.
To complete the proof of the lemma, we need to prove
$f^{B}s^{B}(\Th(U,\psi\rest{U})) = \Th(E,\phi)$.

Now consider the diagram.
\begin{equation*}
\vcenter{
\xymatrix @M=5pt @C=40pt {
E \ar[d]^-{\pi} \ar[r]^-{i_{1}} &
E \oplus E  \ar@<3pt>[d]^-{b_{2}} \ar[r]^-{f_{1}} &
U \ar@<3pt>[d]
\\
S  \ar[r]^-{z} &
E \ar@<3pt>[u]^-{\Delta} \ar[r]^-{f}  &
P \ar@<3pt>[u]^-{s}
}}
\end{equation*}
According to Theorem \ref{T:normal.geom}(e), the pullbacks to $N^{-}$ of the two bundles $U \to P$ and
$E \to S$ are isomorphic.  Pulling further back along the isomorphism $E \cong W$ gives
an isomorphism $j_{2}^{*}f^{*}U \cong \pi^{*}E = E \oplus E$ of bundles over $E$.
According to
Lemma \ref{L:subtle}, the section $s$ of $U$ whose zero locus is $X$ pulls back to the
tautological diagonal section $\Delta$ of $\pi^{*}E = E \oplus E$.  This gives the righthand
square.  The lefthand square is clear.  The functoriality of the Thom classes gives
$\Th(E,\phi) = i_{1}^{B} f_{1}^{B}(\Th(U,\psi)) \in B_{S}(E)$.
The first projection $b_{1} \colon E \oplus E \to E$ is an $\Aff^{2}$-bundle
with $b_{1}^{-1}(S) =  0 \oplus E$, and it has sections $i_{1}$ and $\Delta$.  It follows that the maps
$i_{1}^{B}$ and $\Delta^{B} \colon B_{0 \oplus E}(E \oplus E) = B_{E}(\pi^{*}E) \to B_{S}(E)$
are equal.  Substituting, and using also $sf = f_{1}\Delta$, we get the desired formula
\[
\Th(E,\phi) = i_{1}^{B} f_{1}^{B}(\Th(U,\psi)) = \Delta^{B}f_{1}^{B}(\Th(U,\psi)) = f^{B}s^{B}(\Th(U,\psi)) = -f^{B}(\theta_{E,\phi}).
\qedhere
\]
\end{proof}

\begin{proof}[Proof of Theorem \ref{T:unique}]
Theorem \ref{T:exist} showed that every Borel structure consists of the Borel classes associated
to the symplectic Thom structure given by formula \eqref{E:Thom}, and Theorem \ref{T:unique.2}
showed that this was the only symplectic Thom structure inducing the given Borel structure.
\end{proof}

\section{Thom classes of higher rank bundles}
\label{S:higher}

In this section we define Thom classes for higher rank symplectic bundles and prove some of
their properties.  The construction proceeds as in \S \ref{S:Pont.thom} only in higher rank and
using top Borel classes.
But now that Theorem \ref{T:unique} has been proven establishing the equivalence of
symplectic Thom structures and Borel structures, we may return to using a
ring cohomology theory $A$ with a symplectic Thom structure.
So we now have at our disposal our results on quaternionic Grassmannians.

Let $(E,\phi)$ be a symplectic bundle of rank $2r$ over $S$,
and let $(F,\psi)$ be the rank $2r+2$ symplectic bundle of \eqref{E:basic.setup} or \eqref{E:F.psi}.
Set
\begin{equation}
\label{E:PWXS.2}
\begin{gathered}
P  = \HGr_{S}(r,F,\psi),
\\
S = N^{+} \cap N^{-} = \HGr_{S}(r,E,\phi),
\end{gathered}
\hspace{7mm}
\begin{gathered}
N^{+} = \HGr_{S}(r,F,\psi) \cap \Gr_{S}(2r,\OO_{S} \oplus E),
\\
N^{-} = \HGr_{S}(r,F,\psi) \cap \Gr_{S}(2r,E \oplus \OO_{S}).
\end{gathered}
\end{equation}
Write $(U,\psi \rest{U})$ for the universal rank $2r$ symplectic subbundle over $P$.
The restriction of $(U,\psi \rest{U})$ to $S = \HGr_{S}(r,E,\phi)$ is again $(E,\phi)$.
We have the same isomorphisms $N^{+} \cong E$ and $N^{-} \cong E$
as in \eqref{E:XWj} and the same commutative diagram \eqref{E:*diag}.  The main difference
is that the relative dimensions over $S$ of the various loci and bundles are now
\begin{align*}
\dim_{S} S & = 0, &
\dim_{S} E & = \dim_{S} N^{+}  = \dim_{S} N^{-} = 2r, &
\dim_{S} P & = 4r, &
\dim_{S} U & = 6r.
\end{align*}

\begin{proposition}
\label{P:pont.theta}
Let $A$ be a ring cohomology theory with a symplectic Thom structure.
In the situation above let $\overline{e}^{A} \colon A_{N^{+}}(P) \to A(P)$ be
the supports extension map.  Then there exists a unique element $\theta_{E,\phi} \in A_{N^{+}}(P)$
satisfying $\overline{e}^{A}(\theta_{E,\phi}) = b_{r}(U, \psi\rest{U})$.
Moreover, $\theta_{E,\phi}$ is  $A(S)$-central, and the map $\theta \colon A(S) \to A_{N^{+}}(P)$
sending $\alpha \mapsto h^{A}(\alpha) \cup \theta_{E,\phi}$ is an isomorphism.
\end{proposition}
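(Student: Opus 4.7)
The plan is to combine the long exact localization sequence for $N^+ \subset P$ with open complement $Y$ with the explicit cohomology of quaternionic Grassmannians from Theorem~\ref{T:Grass.bdl} and the identification $A(Y) \cong A(\HGr_S(r-1,E,\phi))$ from Theorem~\ref{T:Ga.cohom}(b). The goal is to pin down $A_{N^+}(P)$ as a free $A(S)$-module of rank one with $\overline{e}^A$ sending a generator to $p_r(U, \psi \rest{U})$.

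First I would show that $j^A p_r(U, \psi \rest{U}) = 0$, where $j\colon Y \hookrightarrow P$ is the inclusion. By Theorem~\ref{th.Ga}, the pullback of $(U, \psi \rest{U})$ to $Y_2$ splits orthogonally as $\langle e, f\rangle \perp \rho(q^* \shf U_{r-1})$, with $\langle e, f\rangle$ a trivial $\Hyp(\OO)$ (since $\psi(e,f) = 1$) and $q^* \shf U_{r-1}$ of rank $2r-2$. The Cartan sum formula (Theorem~\ref{T:Cartan}) then yields $p_t(U, \psi \rest{U}) \rest{Y_2} = p_t(q^* \shf U_{r-1})$, and the coefficient $p_r$ vanishes because $\shf U_{r-1}$ has rank strictly less than $2r$. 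Since the composite $g_2^A g_1^A$ is an isomorphism (each factor being the pullback along an affine bundle), the vanishing descends to $A(Y)$.

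Next I would identify the composite $\sigma^A j^A \colon A(P) \to A(\HGr_S(r-1,E,\phi))$ on the Schur-polynomial basis. Theorem~\ref{T:Grass.bdl} applied with $n = r+1$ gives $A(S)$-bases $\{1, p_1(\shf U_F), \ldots, p_r(\shf U_F)\}$ for $A(P)$ and $\{1, p_1(\shf U_{r-1}), \ldots, p_{r-1}(\shf U_{r-1})\}$ for $A(\HGr_S(r-1,E,\phi))$, both indexed by the corresponding $\Pi_{\bullet, 1}$. Since $\sigma$ classifies the symplectic subbundle $\OO \oplus \shf U_{r-1} \oplus \OO \cong \Hyp(\OO) \perp \shf U_{r-1}$ of $F$, the Cartan sum formula shows that $(j\sigma)^A p_i(\shf U_F) = p_i(\shf U_{r-1})$ for $i < r$ and $(j\sigma)^A p_r(\shf U_F) = 0$. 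Hence $\sigma^A j^A$ is split surjective with kernel the free rank-one $A(S)$-submodule generated by $p_r(\shf U_F)$, and therefore both boundary maps in the localization sequence vanish.

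The sequence then collapses to a short exact sequence with $\overline{e}^A$ injective and image $A(S) \cdot p_r(\shf U_F)$, yielding existence and uniqueness of $\theta_{E,\phi}$ together with the rank-one identification of $A_{N^+}(P)$. Centrality of $\theta_{E,\phi}$ will follow from the $A(P)$-centrality of the Pontryagin class $p_r(U, \psi \rest{U})$ and the injectivity of $\overline{e}^A$ as a map of two-sided $A(P)$-modules, and the map $\theta$ is then automatically an isomorphism since $\theta_{E,\phi}$ is a free generator. The main obstacle I foresee is verifying cleanly on the Schur basis that $(j\sigma)^A$ has exactly the stated form; this is where the Cartan sum formula for the decomposition $\Hyp(\OO) \perp \shf U_{r-1}$ and the rank comparison $2r - 2 < 2r$ play the decisive roles.
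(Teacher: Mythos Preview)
Your proposal is correct and follows essentially the same route as the paper: identify the $A(S)$-bases of $A(P)$ and $A(\HGr_S(r-1,E,\phi))$ via Theorem~\ref{T:Grass.bdl}, compute $\sigma^A j^A$ on these bases using the Cartan sum formula for $\Hyp(\OO)\perp\shf U_{r-1}$, deduce that $j^A$ is split surjective so $\partial=0$ and $\overline{e}^A$ is injective with image $A(S)\cdot p_r(U,\psi\rest{U})$, and conclude. Your opening paragraph (the $Y_2$ argument for $j^A p_r=0$) is correct but redundant once you have the Cartan sum formula available---that argument is what the paper uses in the rank-$2$ precursor Lemma~\ref{L:pont.to.thom}, where Cartan is not yet proven, but here the direct computation of $\sigma^A j^A$ already yields $j^A p_r=0$.
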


\begin{proof}
We claim we have an isomorphism of two-sided $A(S)$-modules
\[
(1,b_{1}(U,\psi\rest{U}), \dots, b_{r}(U,\psi\rest{U}) ) \colon
A(S)^{\oplus r+1} \xrightarrow{\cong} A(P).
\]
This is because Theorem \ref{T:Grass.bdl} establishes that $A(P)$ is a free two-sided
$A(S)$-module with basis $s_{\lambda}(\shf U,\psi\rest{U})$ for
$\lambda \in \Pi_{r,1} = \{ (1^{i}) \mid 0 \leq i \leq r \}$.  But those particular
Schur polynomials are $1,e_{1},\dots,e_{r}$,
so the characteristic classes are $1,b_{1},\dots,b_{r}$.

Similarly, if we write $(\shf U_{r-1},\phi\rest{\shf U_{r-1}})$ for the tautological
rank $2r-2$ symplectic subbundle on $\HGr_{S}(r-1,E,\phi)$, we have an isomorphism of
two-sided $A(S)$-modules
\[
(1,b_{1}(\shf U_{r-1},\phi\rest{\shf U_{r-1}}), \dots, b_{r-1}(\shf U_{r-1},\phi\rest{\shf U_{r-1}}) ) \colon
A(S)^{\oplus r} \xrightarrow{\cong} A(\HGr_{S}(r-1,E,\phi)).
\]

Let $Y = P \smallsetminus N^{+}$ and let $j \colon Y \into P$ be the inclusion.
By Theorem \ref{T:Ga.cohom}(b) there is a map
$\sigma \colon A(\HGr_{S}(r-1,E,\phi)) \to Y \subset P$ classifying the rank $2r$
symplectic subbundle $\OO \oplus \shf U_{r-1} \oplus \OO$
of the pullback of $F$, and the pullback map
\[
\sigma^{A} \colon A(Y) \xrightarrow{\cong} A(\HGr_{S}(r-1,E,\phi))
\]
is an isomorphism.  Composing with the restriction gives a map
$\sigma^{A}j^{A} \colon A(P) \to A(Y) \cong A(\HGr_{S}(r-1,E,\phi))$
sending $1 \mapsto 1$ and (in simplified notation)
\[
b_{i}(U) \mapsto b_{i}(\shf O \oplus \shf U_{r-1} \oplus \shf O) =
\begin{cases}
b_{i}(\shf U_{r-1}) & \text{for $i = 1, \dots, r-1$}, \\
0 & \text{for $i=r$},
\end{cases}
\]
the equalities coming from the Cartan sum formula.
Therefore in the localization sequence
\[
\cdots \xrightarrow{\partial} A_{N^{+}}(P) \xrightarrow{\overline{e}^{A}}
A(P) \xrightarrow{j^{A}} A(Y) \xrightarrow{\partial} \cdots
\]
the map $j^{A}$ is split surjective, $\partial$ vanishes, and $\overline{e}^{A}$ is split injective
with image the free direct summand $A(S) \cdot b_{r}(U,\psi\rest{U})$.
So there exists indeed a unique element $\theta_{E,\phi} \in A_{X}(P)$
satisfying $\overline{e}^{A}(\theta_{E,\phi}) = b_{r}(U, \psi\rest{U})$.
Moreover, $\theta_{E,\phi}$ is $A(P)$-central like the Borel class and therefore also
$A(S)$-central, and the map $\theta \colon A(S) \to A_{X}(P)$
sending $\alpha \mapsto h^{A}(\alpha) \cup \theta_{E,\phi}$ is indeed an isomorphism.
\end{proof}

\begin{theorem}
\label{T:exist.2}
Let $A$ be a ring cohomology theory with a symplectic Thom structure.
For a symplectic bundle $(E,\phi)$ of rank $2r$ on $S$, let $P$, $N^{+}$, $N^{-}$, $f$, etc.,
be as in \eqref{E:PWXS.2} and \eqref{E:*diag}, and let $\theta_{E,\phi} \in A_{N^{+}}(P)$
be the unique element
satisfying $\overline{e}^{A}(\theta_{E,\phi}) = b(U, \psi\rest{U})$ of Proposition \ref{P:pont.theta}.
Then the assignment
\begin{equation}
\label{E:Thom.2}
\Th(E,\phi) = (-1)^{r} f^{A}(\theta_{E,\phi}) \in A_{S}(E)
\end{equation}
gives a system of classes with the following properties\textup{:}
\begin{enumerate}
\item
Each $\Th(E,\phi) \in A_{S}(E)$ is $A(S)$-central.
\item For an isomorphism $\gamma \colon (E,\phi) \cong (E_{1},\phi_{1})$
we have $\Th(E,\phi) = \gamma^{A}\Th(E_{1},\phi_{1})$.

\item For $u \colon T \to S$, writing $u_{E} \colon u^{*}E \to E$ for the pullback,
we have $u_{E}^{A}(\Th(E,\phi)) = \Th(u^{*}(E,\phi)) \in A_{T}(u^{*}E)$.

\item The maps $\cup \Th(E,\phi) \colon A(S) \to A_{S}(E)$ are isomorphisms.

\item We have $\Th \bigl( (E_{1},\phi_{1}) \perp (E_{2},\phi_{2}) \bigr)
= q_{1}^{A}\Th(E_{1},\phi_{1}) \cup q_{2}^{A}\Th(E_{2},\phi_{2})$,
where $q_{1},q_{2}$ are the projections from $E_{1} \oplus E_{2}$
onto its factors.
\end{enumerate}
Moreover, for $e^{A} \colon A_{S}(E) \to A(E)$ the extension of supports map, and $z^{A} \colon A(E) \to A(S)$
the restriction to the zero section, we have
\begin{equation}
\label{E:top.p}
b_{r}(E,\phi) = (-1)^{r} z^{A}e^{A}(\Th(E,\phi)),
\end{equation}
while for $(E,\phi)$ of rank $2$ the class $\Th(E,\phi)$ just defined is the same as the
class in the symplectic Thom structure.
\end{theorem}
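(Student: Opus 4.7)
The plan is to verify properties (1)--(7) by building on the characterisation of $\theta_{E,\phi}$ from Proposition~\ref{P:pont.theta} and the isomorphism $f^A\colon A_{N^{+}}(P) \xrightarrow{\cong} A_S(E)$ from Proposition~\ref{P:iso}, closely mirroring the rank 2 arguments of Theorems~\ref{T:exist} and~\ref{T:unique.2}. Centrality (1) follows because $\theta_{E,\phi}$ is $A(S)$-central by Proposition~\ref{P:pont.theta} and $f$ is a morphism over $S$, so $f^A$ preserves the $A(S)$-bimodule structure. Functoriality (2)--(3) is inherited from each constituent of the construction---the bundle $(F,\psi)$, the Grassmannian $P$, the loci $N^{\pm}$, the tautological subbundle $(U,\psi\rest U)$ with its top Pontryagin class, the uniqueness of $\theta_{E,\phi}$, and the inclusion $f$---each of which is functorial in $(E,\phi)$. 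The Thom isomorphism assertion (4) is immediate: $\cup\Th(E,\phi)$ factors as the composite
\[
A(S) \xrightarrow[\cong]{(-1)^{r}\theta} A_{N^{+}}(P) \xrightarrow[\cong]{f^{A}} A_{S}(E)
\]
of the isomorphisms supplied by Propositions~\ref{P:pont.theta} and~\ref{P:iso}.

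For (6), I would chase the naturality square
\[
\xymatrix @M=5pt @C=30pt {
A_{N^{+}}(P) \ar[d]_-{f^{A}} \ar[r]^-{\overline e^{A}} & A(P) \ar[d]^-{f^{A}} \\
A_{S}(E) \ar[r]_-{e^{A}} & A(E)
}
\]
and then apply $z^{A}\colon A(E) \to A(S)$. This yields $(-1)^{r} z^{A}e^{A}\Th(E,\phi) = z^{A}f^{A}\overline e^{A}(\theta_{E,\phi}) = z^{A}f^{A}p_{r}(U,\psi\rest U)$. By Theorem~\ref{T:normal.geom}(e) the pullback of $(U,\psi\rest U)$ along $f\colon N^{-}=E \to P$ is $\pi^{*}(E,\phi)$ with $\pi\colon E \to S$ the bundle projection, so functoriality of Pontryagin classes gives $z^{A}f^{A}p_{r}(U,\psi\rest U) = z^{A}\pi^{A}p_{r}(E,\phi) = p_{r}(E,\phi)$ since $\pi z = 1_{S}$. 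For (7), specialising \eqref{E:Thom.2} to $r=1$ gives $\Th(E,\phi) = -f^{A}(\theta_{E,\phi})$, which is precisely the formula already established in Theorem~\ref{T:unique.2} for the rank 2 Thom class of the given symplectic Thom structure.

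The hard part will be the multiplicativity (5). Setting $\alpha := q_{1}^{A}\Th(E_{1},\phi_{1}) \cup q_{2}^{A}\Th(E_{2},\phi_{2})$, both $\alpha$ and $\Th(E_{1}\perp E_{2})$ lie in $A_{S}(E_{1}\oplus E_{2})$, which by (4) is a free two-sided $A(S)$-module of rank $1$ on the generator $\Th(E_{1}\perp E_{2})$; hence $\alpha = u\cdot\Th(E_{1}\perp E_{2})$ for a unique $u \in A(S)$. Applying $z^{A}e^{A}$ and combining (6) with the multiplicativity of top Pontryagin classes (from the Cartan sum formula, Theorem~\ref{T:Cartan}) yields the relation $u \cdot p_{r}(E_{1}\perp E_{2}) = p_{r}(E_{1}\perp E_{2})$; this alone does not force $u = 1$, since $p_{r}$ is nilpotent (Theorem~\ref{T:nilp.1}) and hence potentially a zero divisor. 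To pin down $u = 1$, I would reduce to a universal model: after Zariski localisation on $S$, Lemma~\ref{L:affine} embeds each $(E_{i},\phi_{i})$ as an orthogonal summand of a trivial symplectic bundle, so the pair is classified by a map into a product of quaternionic Grassmannians $\HGr_{k}(r_{1},2N_{1}) \times_{k} \HGr_{k}(r_{2},2N_{2})$, and $u$ pulls back from the analogous element there. In the limit $N_{1},N_{2} \to \infty$, the iterated quaternionic projective bundle structure makes the cohomology amenable to direct computation in terms of independent Pontryagin roots (cf.\ Theorem~\ref{T:HGr.lim.cohom}), in which the universal class $p_{r}(E_{1}\perp E_{2}) = p_{r_{1}}(E_{1})p_{r_{2}}(E_{2})$ is not a zero divisor, forcing $u = 1$. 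An alternative approach pulls back to the complete quaternionic flag bundle $\HFlag_{S}(E_{1}\oplus E_{2})$, whose structure map is cohomologically injective by the Symplectic Splitting Principle (Theorem~\ref{T:splitting}), and reduces iteratively to sums of rank 2 summands, where the comparison with the original symplectic Thom structure via (7) makes the identification of both sides tractable.
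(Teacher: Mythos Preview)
Your treatment of (1)--(4), formula~\eqref{E:top.p}, and the rank~$2$ compatibility matches the paper's: both simply point back to the rank~$2$ arguments of Theorems~\ref{T:exist} and~\ref{T:unique.2}, and your diagram chase for~\eqref{E:top.p} is the one used there.

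For~(5) your main approach and the paper's diverge. You work downstairs in $A_S(E_1\oplus E_2)$: write $\alpha = u\cup\Th(E_1\perp E_2)$, obtain $(u-1)\cdot p_r = 0$, and then escape the zero-divisor problem by passing to a universal product of Grassmannians and to the inverse limit, where $p_{r_1}p_{r_2}$ becomes a monomial in a power-series ring and hence a non-zero-divisor. This is viable, but it needs the Jouanolou device to make $S$ affine (as in the proof of Theorem~\ref{T:splitting}), a product version of the inverse-limit computation in Theorem~\ref{T:HGr.lim.cohom}, and the observation that $u$ is natural in $(E_1,E_2)$ so that the various $u$'s assemble into a compatible system whose limit being $1$ forces each finite-level $u$ to be $1$. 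The paper instead stays upstairs with the $\theta$-classes: it introduces the flag bundle $F_{r_1,r_2} = \HFlag_S\bigl(r_1,r_2;(E_1,\phi_1)\perp(E_2,\phi_2)\perp\Hyp(\OO)\bigr)$ with projections $\rho_i$ to auxiliary Grassmannians $G_{r_i}$ and $\rho$ to $P_{r_1+r_2}$, pulls the individual $\theta$-classes up to $A_{\rho^{-1}(N^+)}(F_{r_1,r_2})$, and notes that both $\rho_1^A(\overline\theta_{E_1,\phi_1})\cup\rho_2^A(\overline\theta_{E_2,\phi_2})$ and $\rho^A(\theta_{E_1\oplus E_2,\phi_1\oplus\phi_2})$ have extension of supports equal to $p_{r_1}(U_1)p_{r_2}(U_2) = p_{r_1+r_2}(U)$. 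Because $\rho$ is a quaternionic Grassmannian bundle, the injectivity of $\overline e^A$ from Proposition~\ref{P:pont.theta} lifts to $F_{r_1,r_2}$, so the two classes coincide; a suitably constructed section $\gamma$ of $\rho$ over $N^-$ then transports this equality to the desired Thom-class identity. The paper thus bypasses the zero-divisor issue entirely by comparing classes \emph{before} applying $f^A$, in a group where extension of supports is already injective; your route trades that geometric setup for an asymptotic argument.

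Your alternative via the splitting principle has a gap as stated: pulling back to $\HFlag_S(E_1\oplus E_2)$ splits the sum but not the individual summands, and even over $\HFlag_S(E_1)\times_S\HFlag_S(E_2)$ you are reduced to proving~(5) for an orthogonal sum of rank~$2$ bundles, which is not yet known and is no easier than the general case.
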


The classes $\Th(E,\phi)$ are called the \emph{symplectic Thom classes}.

\begin{proof}[First part of the proof of Theorem \ref{T:exist.2}]
The proof of (1)--(4) and \eqref{E:top.p}
is identical to the proof of Theorem \ref{T:exist}.  The coincidence of the two
symplectic Thom classes for rank $2$ bundles was proven in Theorem \ref{T:unique.2}.
We will prove (5) at the end of this section.
\end{proof}

We may now generalize the direct image maps of \eqref{diag.transfer}.
When $i \colon X \to Y$ is a closed embedding  of smooth varieties of even codimension
$2r$ whose normal bundle $N = N_{X/Y}$ is equipped
with a specified symplectic form $\phi$, we say that $i \colon X \to Y$ has a
{\em symplectic normal bundle}\/ $(N,\phi)$.  We can compose the isomorphism
of Theorem \ref{T:exist.2}(4) with the deformation to
the normal bundle isomorphisms when they exist
and extension supports maps gives \emph{direct image maps}.
\begin{equation*}
i_{A,\flat} \colon  A(X) \xrightarrow[\cong]{\cup \Th(N,\phi)}
A_{X}(N) \xrightarrow[\cong]{d_{X/Y}} A_{X}(Y),
\hspace{20mm}
i_{A,\natural} \colon A(X) \xrightarrow{i_{A,\flat}} A_X(Y) \to A(Y)
\end{equation*}

The analogues of Propositions  \ref{P:tr=th}, \ref{P:torindep}, \ref{P:projection} and \ref{P:pont.zero}
hold for these more general direct image maps except that for a symplectic bundle $(E,\phi)$ of rank $2r$
equations \eqref{E:p.2nd} and \eqref{E:i_*i^*} should read
\begin{gather}
\label{E:p.2nd.r}
b_{r}(E,\phi)  = (-1)^{r}\overline{e}^{A} s^{A} \Th(E,\phi),
\\
\label{E:i_*i^*.r}
i_{A,\natural}i^A(b) = \overline e^A i_{A,\flat} i^A(b)  = (-1)^{r} b \cup b_{r}(E,\phi).
\end{gather}

Now let $(E,\phi)$ be symplectic bundle of rank $2n-2$ on $S$, and let
$(F,\psi) = (E,\phi) \perp \Hyp(\OO_{S})$ be as in \eqref{E:basic.setup}.
Let $N^{+} = \HGr_{S}(r,F,\psi) \cap \Gr(2r,\OO_{S}\oplus E)$ and
$Y = \HGr_{S}(r,F,\psi) \smallsetminus N^{+}$.
Let $u \colon N^{+} \to \HGr_{S}(r,E,\phi)$ be the bundle map of Theorem \ref{T:normal.geom}, and
let $i \colon N^{+} \into \HGr_{S}(r,F,\psi)$ be the inclusion.  Let
$\sigma \colon \HGr_{S}(r-1,E,\phi) \to Y \into \HGr_{S}(r,F,\psi)$ be the map of Theorem
\ref{T:Ga.cohom}.  Let $\tau = e^{A}q^{A}i_{A,\flat}$.
\begin{equation}
\label{E:Grass.synth}
\vcenter{
\xymatrix @M=5pt @C=12pt {
\cdots \ar[r]^-{\partial = 0}
& A_{N^{+}}(\HGr_{S}(r,F,\psi)) \ar[r]^-{e^{A}}
& A(\HGr_{S}(r,F,\psi)) \ar[r] \ar@{-->}[dr]_-{\sigma^{A}}
& A(Y) \ar[r]^-{\partial = 0} \ar[d]_-{\cong}
& \cdots
\\
& A(\HGr_{S}(r,E,\phi)) \ar[u]_-{\cong}^-{i_{A,\flat}q^{A}} \ar@{-->}[ru]^-{\tau}
& & A(\HGr_{S}(r-1,E,\phi))
}}
\end{equation}
Write $\shf U_{r}$ for the tautological bundles
on $\HGr_{S}(r,E,\phi)$, write $\shf U_{r-1}$ for the one on $\HGr_{S}(r-1,E,\phi)$,
and $\shf V_{r}$ for the one on $\HGr_{S}(r,F,\psi)$.  Write  $s_{\lambda}(\shf U_{r})$ for the Schur
polynomials in the Borel classes as in \eqref{E:schur.p}.

\begin{theorem}
\label{T:grass.synth}
The maps $\tau$ and $\sigma$ act on the $A(S)$-bases of the cohomology of the
quaternionic Grassmannian bundles by
\begin{align*}
\tau \colon & s_{\lambda}(\shf U_{r}) \mapsto (-1)^{r}b_{r}(\shf V_{r})s_{\lambda}(\shf V_{r}) =
(-1)^{r} s_{
\lambda+(1^{r})
}(\shf V_{r})
\qquad \text{for $\lambda \in \Pi_{r,n-1-r}$}
\\
\sigma \colon & s_{\lambda}(\shf V_{r}) \mapsto
\begin{cases}
s_{\lambda}(\shf U_{r-1}) & \text{for $\lambda \in \Pi_{r-1,n-r}$}, \\
0 & \text{for $\lambda \in \Pi_{r,n-r} \smallsetminus \Pi_{r-1,n-r}$}.
\end{cases}
\end{align*}
\end{theorem}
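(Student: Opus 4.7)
The plan is to compute both maps on the $A(S)$-basis of Schur classes, reducing to the Cartan sum formula and the rank $2r$ generalization \eqref{E:i_*i^*.r} of the formula for direct images along a transversal zero locus.

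For $\sigma$, the map classifies the rank $2r$ symplectic subbundle $\OO\oplus\shf U_{r-1}\oplus\OO$ of the pullback of $F$ to $\HGr_{S}(r-1,E,\phi)$, which with its inherited form is isometric to $\Hyp(\OO)\perp(\shf U_{r-1},\phi\rest{\shf U_{r-1}})$. Since $\Hyp(\OO)$ has a nowhere vanishing section, Proposition \ref{P:p=0} forces $p_{t}(\Hyp(\OO))=1$, and the Cartan sum formula (Theorem \ref{T:Cartan}) then gives $\sigma^{A}p_{i}(\shf V_{r})=p_{i}(\shf U_{r-1})$ for every $i$ (with $p_{i}(\shf U_{r-1})=0$ for $i\geq r$, since $\shf U_{r-1}$ has rank $2r-2$). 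Substituting into the determinantal formula \eqref{E:schur.det} yields $\sigma^{A}s_{\lambda}(\shf V_{r})=s_{\lambda}(\shf U_{r-1})$. For $\lambda\in\Pi_{r-1,n-r}$ this is a basis element by Theorem \ref{T:Grass.bdl}; for $\lambda\in\Pi_{r,n-r}\smallsetminus\Pi_{r-1,n-r}$ one has $\lambda'_{1}=l(\lambda)=r$, so the $(1,j)$-entry of the determinant is $p_{r+j-1}(\shf U_{r-1})=0$ for every $j\geq 1$, forcing $s_{\lambda}(\shf U_{r-1})=0$.

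For $\tau=e^{A}\,i_{A,\flat}\,u^{A}$, I invoke two parts of Theorem \ref{T:normal.geom}. Part (e) provides an isometry $u^{*}(\shf U_{r},\phi\rest{\shf U_{r}})\cong i^{*}(\shf V_{r},\psi\rest{\shf V_{r}})$, so functoriality of Pontryagin and Schur classes gives $u^{A}s_{\lambda}(\shf U_{r})=i^{A}s_{\lambda}(\shf V_{r})$. Part (d) identifies $N^{+}\subset\HGr_{S}(r,F,\psi)$ as the transversal zero locus of a section of the rank $2r$ symplectic bundle $(\shf V_{r},\psi\rest{\shf V_{r}})$, so the rank $2r$ generalization \eqref{E:i_*i^*.r} applied with $b=s_{\lambda}(\shf V_{r})$ reads $e^{A}\,i_{A,\flat}\,i^{A}s_{\lambda}(\shf V_{r})=(-1)^{r}s_{\lambda}(\shf V_{r})\cup p_{r}(\shf V_{r})$. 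Combining these two displays yields $\tau(s_{\lambda}(\shf U_{r}))=(-1)^{r}p_{r}(\shf V_{r})\cup s_{\lambda}(\shf V_{r})$.

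The final identification $p_{r}(\shf V_{r})\cdot s_{\lambda}(\shf V_{r})=s_{\lambda+(1^{r})}(\shf V_{r})$ is the classical Pieri identity for multiplication by $e_{r}=y_{1}\cdots y_{r}$ in the Pontryagin roots: one has $a_{\lambda+(1^{r})+\delta}=(y_{1}\cdots y_{r})\,a_{\lambda+\delta}$ by factoring $y_{i}$ out of the $i$-th row of $\det(y_{i}^{\lambda_{j}+r-j})$, and division by $a_{\delta}$ gives $e_{r}\cdot s_{\lambda}=s_{\lambda+(1^{r})}$. No serious obstacle arises; all the needed machinery --- functoriality of Pontryagin classes, the Cartan formula, the generalized formula \eqref{E:i_*i^*.r}, and the universal descriptions of $\sigma$ and $u$ from Theorems \ref{T:Ga.cohom} and \ref{T:normal.geom} --- is already in place.
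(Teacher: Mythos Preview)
Your proof is correct and follows essentially the same approach as the paper's own proof: for $\sigma$ you use the Cartan sum formula to identify the pulled-back Pontryagin classes and then read off the Schur classes from the Jacobi--Trudi determinant, and for $\tau$ you combine the isometry $u^{*}\shf U_{r}\cong i^{*}\shf V_{r}$ with the push-pull formula \eqref{E:i_*i^*.r}. The only cosmetic difference is that the paper proves the identity $e_{r}\,s_{\lambda}=s_{\lambda+(1^{r})}$ by expanding the Jacobi--Trudi determinant along its first row (which in $\Lambda_{r}$ reads $(e_{r},0,\dots,0)$), whereas you obtain it from the alternant formula; likewise, for the vanishing of $s_{\lambda}(\shf U_{r-1})$ when $l(\lambda)=r$ the paper factors out $p_{r}(\shf U_{r-1})=0$ while you observe directly that the entire first row of the determinant vanishes.
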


\begin{proof}
For a partition $\lambda$ of length $l(\lambda) \leq r$, one has equality $l(\lambda) = r$ if and only if
the dual partition is of the form $\lambda' = (r, \lambda_{2}',\dots,\lambda_{m}')$.  When that occurs
formula \eqref{E:schur.det} gives
\[
s_{\lambda} =  \begin{vmatrix}
e_{r} & 0 & \cdots & 0 \\
e_{\lambda_{2}'-1} & e_{\lambda_{2}'} & \cdots & e_{\lambda_{2}'+m-2} \\
\vdots & \vdots & \ddots & \vdots \\
e_{\lambda_{m}'-m+1} & e_{\lambda_{m}'-m+2} & \cdots & e_{\lambda_{m}'}
\end{vmatrix}
= e_{r}s_{\lambda-(1^{r})}.
\]
in $\Lambda_{r}$.
Therefore, keeping in mind that the pullbacks of the tautological subbundles along the projection
$q \colon N^{+} \to \HGr_{S}(r,E,\phi)$ and the inclusion $i \colon N^{+} \to \HGr_{S}(r,F,\psi)$
are isometric, we have
\[
\tau(s_{\lambda}(\shf U_{r})) = e^{A}i_{A,\flat}q^{A}(s_{\lambda}(\shf U_{r})) =
e^{A}i_{A,\flat}i^{A}(s_{\lambda}(\shf V_{r}))
= (-1)^{r}b_{r}(\shf V_{r})s_{\lambda}(\shf V_{r}) = (-1)^{r}s_{\lambda+(1^{r})}(\shf V_{r}),
\]
using \eqref{E:i_*i^*.r}.

The map $\sigma$ classifies the rank $2r$ symplectic bundle
$\OO \oplus \shf U_{r-1} \oplus \OO$
on $\HGr_{S}(r{-}1,E,\phi)$,  so we have
\[
\sigma^{A}(s_{\lambda}(\shf V_{r})) = s_{\lambda}(\OO \oplus \shf U_{r-1} \oplus \OO)
= s_{\lambda}(\shf U_{r-1})
\]
because the Borel classes of $\shf U_{r-1}$ and $\OO \oplus \shf U_{r-1} \oplus \OO$ are equal by
the Cartan sum formula.
For the $\lambda \in \Pi_{r-1,n-r}$ this is one of the elements of the
basis of $A(\HGr_{S}(r-1,E,\phi)$ as a two-sided $A(S)$-module.
For $\lambda \in \Pi_{r,n-r} \smallsetminus \Pi_{r-1,n-r}$, i.e.\ those with $l(\lambda) = r$, we
have $s_{\lambda}(\shf U_{r-1}) = b_{r}(\shf U_{r-1}) s_{\lambda-(1^{r})}(\shf U_{r-1}) = 0$,
since the tautological
symplectic subbundle on $\HGr_{S}(r-1,E,\phi)$ is of rank $2r-2$ and has $b_{r}(\shf U_{r-1}) = 0$.
\end{proof}

\begin{proof}[End of the proof of Theorem \ref{T:exist.2}]
We now prove property (5) of the theorem.
Suppose $\rk E_{1} = 2r_{1}$ and $\rk E_{2} = 2r_{2}$.  Let
\begin{align*}
P_{r_{1}+r_{2}} & = \HGr_{S} \bigl( r_{1}+r_{2}, (E_{1},\phi_{1}) \perp (E_{2}, \phi_{2}) \perp
\Hyp(\OO_{S}) \bigr),
\\
P_{r_{1}} & = \HGr_{S} \bigl( r_{1}, (E_{1},\phi_{1}) \perp \Hyp(\OO_{S}) \bigr),
\\
P_{r_{2}} & = \HGr_{S} \bigl( r_{2},  (E_{2}, \phi_{2}) \perp \Hyp(\OO_{S}) \bigr),
\\
F_{r_{1},r_{2}} & = \HFlag_{S} \bigl( r_{1},r_{2}; (E_{1},\phi_{1}) \perp (E_{2}, \phi_{2}) \perp
\Hyp(\OO_{S}) \bigr),
\\
G_{r_{1}} & = \HGr_{S} \bigl( r_{1}, (E_{1},\phi_{1}) \perp (E_{2}, \phi_{2}) \perp
\Hyp(\OO_{S}) \bigr),
\\
G_{r_{2}} & = \HGr_{S} \bigl( r_{2}, (E_{1},\phi_{1}) \perp (E_{2}, \phi_{2}) \perp
\Hyp(\OO_{S}) \bigr)
\end{align*}
We have maps
\[
\xymatrix @M=5pt @C=30pt {
P_{r_{1}} \ar[r]^-{t_{1}}
& F_{r_{1},r_{2}} \ar[d]^-{\rho} \ar[r]^-{\rho_{1}} \ar[rd]^-{\rho_{2}}
& G_{r_{1}}
\\
P_{r_{2}} \ar[ru]^-{t_{2}}
& P_{r_{1}+r_{2}}
& G_{r_{2}}
}
\]
defined as follows.
Over $F_{r_{1},r_{2}}$ there are orthogonal tautological symplectic subbundles $U_{1}$, $U_{2}$
of ranks $2r_{1}$ and $2r_{2}$ respectively.  The maps $\rho_{i}$ are classified by the $U_{i}$ and
$\rho$ by $U_{1} \oplus U_{2}$.  All three projections are quaternionic Grassmannian bundles.

Over $P_{r_{1}}$ there is a tautological rank $2r_{1}$ subbundle
$\shf V_{1} \subset E_{1} \oplus \Hyp(\OO)$.  The map $t_{1}$ is classified by the pair of orthogonal
symplectic subbundles $\shf V_{1} \perp E_{2} \subset E_{1} \oplus E_{2} \oplus \Hyp(\OO)$.  The map
$t_{2}$ is defined analogously, reversing the roles of $E_{1}$ and $E_{2}$.

In $P_{r_{1}}$,  $P_{r_{2}}$ and $P_{r_{1}+r_{2}}$ there are the loci and maps of \eqref{E:*diag}
\begin{gather*}
\vcenter{
\xymatrix @M=5pt @C=35pt {
**[l] E_{1} = N_{1}^{-}  \ar[r]^-{f_{1}} \ar@<3pt>[d]^-{\pi_{1}}
& P_{r_{1}} \ar[r]^-{s_{1}} \ar[dl]|-{\ h_{1}\ }
& U_{1}
\\
S_{1} \ar@<3pt>[u]^-{z_{1}} \ar@{_{(}->}[r]_-{g_{1}}
& N_{1}^{+} \ar[u]_-{i_{1}} \ar@{_{(}->}[r]_-{i_{1}}
& P_{r_{1}} \ar[u]_-{z_{U_{1}}}
}}
\hspace{5mm}
\vcenter{
\xymatrix @M=5pt @C=35pt {
**[l] E_{2} = N_{2}^{-}  \ar[r]^-{f_{2}} \ar@<3pt>[d]^-{\pi_{2}}
& P_{r_{2}} \ar[r]^-{s_{2}} \ar[dl]|-{\ h_{2}\ }
& U_{2}
\\
S_{2} \ar@<3pt>[u]^-{z_{2}} \ar@{_{(}->}[r]_-{g_{2}}
& N_{2}^{+} \ar[u]_-{i_{2}} \ar@{_{(}->}[r]_-{i_{2}}
& P_{r_{2}} \ar[u]_-{z_{U_{2}}}
}}
\\
\vcenter{
\xymatrix @M=5pt @C=30pt {
**[l] E_{1}\oplus E_{2} = N^{-}  \ar[r]^-{f} \ar@<3pt>[d]^-{\pi}
& P_{r_{1}+r_{2}} \ar[r]^-{s} \ar[dl]|-{\ h\ }
& U
\\
S \ar@<3pt>[u]^-{z} \ar@{_{(}->}[r]_-{g}
& N^{+} \ar[u]_-{i} \ar@{_{(}->}[r]_-{i}
& P_{r_{1}+r_{2}} \ar[u]_-{z_{U}}
}}
\end{gather*}
Writing $\mu_{1} \colon \OO_{E_{1}} \to \pi_{1}^{*}E_{1}$ and
$\mu_{2} \colon \OO_{E_{2}} \to \pi_{2}^{*}E_{2}$ for
the tautological sections.
The compositions
\begin{align*}
\gamma_{1} \colon E_{1} = N^{-}_{1} \xrightarrow{f_{1}} P_{r_{1}}
\xrightarrow{t_{1}} F_{r_{1},r_{2}}
&&
\gamma_{2} \colon E_{2} = N^{-}_{2} \xrightarrow{f_{2}} P_{r_{2}}
\xrightarrow{t_{2}} F_{r_{1},r_{2}}
\end{align*}
are the maps classified by the orthogonal pairs of subbundles which are the images of
\begin{gather*}
\pi_{1}^{*}E_{1}  \xrightarrow{
\left(\begin{smallmatrix}
0  \\ 1  \\ 0  \\ \mu_{1}^{\vee} \phi_{1}
\end{smallmatrix}\right)
}
\OO_{E_{1}} \oplus \pi_{1}^{*} E_{1} \oplus \pi_{1}^{*}E_{2} \oplus \OO_{E_{1}}
\xleftarrow{
\left(\begin{smallmatrix}
0 \\ 0 \\ 1 \\  0
\end{smallmatrix}\right)
}
\pi_{1}^{*}E_{2}
\\
\pi_{2}^{*}E_{1}  \xrightarrow{
\left(\begin{smallmatrix}
0  \\ 1  \\ 0  \\ 0
\end{smallmatrix}\right)
}
\OO_{E_{2}} \oplus \pi_{2}^{*} E_{1} \oplus \pi_{2}^{*}E_{2} \oplus \OO_{E_{2}}
\xleftarrow{
\left(\begin{smallmatrix}
0 \\ 0 \\ 1 \\  \mu_{2}^{\vee} \phi_{2}
\end{smallmatrix}\right)
}
\pi_{2}^{*}E_{2}
\end{gather*}
The restriction of $\rho \colon F_{r_{1},r_{2}} \to P_{r_{1}+r_{2}}$ to the inverse image of
$E_{1} \oplus E_{2} = N^{-}$ has a section
$\gamma \colon N^{-} \to \rho^{-1}(N^{-}) \subset F_{r_{1},r_{2}}$
classified by the orthogonal pair of subbundles
\[
\pi^{*}E_{1}  \xrightarrow{
\left(\begin{smallmatrix}
0  \\ 1  \\ 0  \\ \mu_{1}^{\vee} \phi_{1}
\end{smallmatrix}\right)
}
\OO_{E_{1} \oplus E_{2}} \oplus \pi^{*} E_{1} \oplus \pi^{*}E_{2} \oplus \OO_{E_{1} \oplus E_{2}}
\xleftarrow{
\left(\begin{smallmatrix}
0 \\ 0 \\ 1 \\  \mu_{2}^{\vee} \phi_{2}
\end{smallmatrix}\right)
}
\pi^{*}E_{2}
\]
The restriction of $\gamma \colon E_{1} \oplus E_{2} \to F_{r_{1},r_{2}}$ to each factor $E_{i}$
thus coincides with $\gamma_{i} \colon E_{i} \to F_{r_{1},r_{2}}$.

In $G_{r_{1}}$ there is the locus
$\overline{N}^{+}_{1}  = G_{r_{1}} \cap \Gr_{S}(2r_{1},\OO_{S} \oplus E_{1} \oplus E_{2})$
and there is an analogous locus $\overline{N}^{+}_{2} \subset G_{r_{2}}$.  We have
$\rho_{1}^{-1}(\overline{N}^{+}_{1}) \cap \rho_{2}^{-1}(\overline{N}^{+}_{2}) = \rho^{-1}(N^{+})$.

Call $U_{1}$ all the tautological bundles of rank $2r_{1}$, call $U_{2}$ all the tautological bundles of
rank $2r_{2}$, and call $U$ all the tautological bundles of rank $2r_{1} + 2r_{2}$.

By Proposition \ref{P:pont.theta} and Theorem \ref{T:grass.synth} the extension of supports maps
$A_{\overline{N}^{+}_{i}}(G_{r_{i}}) \to A(G_{r_{i}})$ and $A_{N^{+}_{i}}(P_{r_{i}}) \to A(P_{r_{i}})$ and
$A_{N^{+}}(P_{r_{1}+r_{2}}) \to A(P_{r_{1}+r_{2}})$ are all split injective.
Since the $\rho_{i} \colon F_{r_{1},r_{2}} \to G_{r_{i}}$ and $\rho \colon F_{r_{1},r_{2}}$
are quaternionic Grassmannian bundles,
we may pull back along the $\rho_{i}$ and $\rho$ and see that the
extension of supports maps $A_{\rho_{i}^{-1}(\overline{N}^{+}_{i})}(F_{r_{1},r_{2}}) \to A(F_{r_{1},r_{2}})$
and $A_{\rho^{-1}(N^{+})}(F_{r_{1},r_{2}})$ are also split injective.

In the notation of \eqref{E:Grass.synth} let
\[
\overline{\theta}_{E_{1},\phi_{1}} =
(-1)^{r} i_{A,\flat}q^{A}(1_{\overline{N}^{+}_{1} \cap \overline{N}^{-}_{1}})
\in A_{\overline{N}^{+}_{1}}(G_{r_{1}}).
\]
Then by Theorem \ref{T:grass.synth} and the functoriality of the Borel classes
\begin{align*}
\overline{\theta}_{E_{1},\phi_{1}} \in A_{\overline{N}^{+}_{1}}(G_{r_{1}}), &&
\rho_{1}^{A}(\overline{\theta}_{E_{1},\phi_{1}}) \in
A_{\rho_{1}^{-1}(\overline{N}^{+}_{1})}(F_{r_{1},r_{2}}),
&&
t_{1}^{A}\rho_{1}^{A}(\overline{\theta}_{E_{1},\phi_{1}}) \in A_{N^{+}_{1}}(P_{r_{1}}),
\end{align*}
are the unique classes in their respective cohomology groups whose images under the extension of
supports maps are the Borel classes $p_{r_{1}}(U_{1},\phi_{1})$.  Therefore
$t_{1}^{A}\rho_{1}^{A}(\overline{\theta}_{E_{1},\phi_{1}})$ is the $\theta_{E_{1},\phi_{1}}$ of
Theorem \ref{T:unique.2}, and
$\Th(E_{1},\phi_{1}) = \gamma_{1}^{A} \rho_{1}^{A}(\overline{\theta}_{E_{1},\phi_{1}})
\in A_{S}(E_{1})$, while $q_{1}^{A}\Th(E_{1},\phi_{1}) = \gamma^{A} \rho_{1}^{A}(\overline{\theta}_{E_{1},\phi_{1}})
\in A_{E_{2}}(E_{1} \oplus E_{2})$.

There are analogous classes
\begin{align*}
\overline{\theta}_{E_{2},\phi_{2}} \in A_{\overline{N}^{+}_{2}}(G_{r_{2}}), &&
\rho_{2}^{A}(\overline{\theta}_{E_{2},\phi_{2}}) \in
A_{\rho_{2}^{-1}(\overline{N}^{+}_{2})}(F_{r_{1},r_{2}}),
\end{align*}
such that $\rho_{2}^{A}(\overline{\theta}_{E_{2},\phi_{2}})$ is the unique class whose extension of
supports is $p_{r_{2}}(U_{2},\phi_{2}) \in A(F_{r_{1},r_{2}})$ and
$\gamma^{A}\rho_{2}^{A}(\overline{\theta}_{E_{2},\phi_{2}}) = q_{2}^{A}\Th(E_{2},\phi_{2})$.

Similarly there are classes
\begin{align*}
\theta_{E_{1}\oplus E_{2}, \phi_{1} \oplus \phi_{2}} \in A_{N^{+}}(P_{r_{1}+r_{2}}), &&
\rho^{A}(\theta_{E_{1}\oplus E_{2},\phi_{1} \oplus \phi_{2}}) \in A_{\rho^{-1}(N^{+})}(F_{r_{1},r_{2}}),
\end{align*}
which are the unique classes whose images under the extension of supports maps are the
Borel classes $p_{r_{1}+r_{2}}(U)$.  We have
\[
\Th(E_{1}\oplus E_{2}, \phi_{1} \oplus \phi_{2}) =
f^{A}\theta_{E_{1}\oplus E_{2}, \phi_{1} \oplus \phi_{2}} =
\gamma^{A}\rho^{A}\theta_{E_{1}\oplus E_{2}, \phi_{1} \oplus \phi_{2}}.
\]

Now
\[
\rho_{1}^{A}(\overline{\theta}_{E_{1},\phi_{1}}) \rho_{2}^{A}(\overline{\theta}_{E_{2},\phi_{2}})
\in A_{\rho_{1}^{-1}(\overline{N}^{+}_{1}) \cap \rho_{2}^{-1}(\overline{N}^{+}_{2})}(F_{r_{1},r_{2}}) =
A_{\rho^{-1}(N^{+})}(F_{r_{1},r_{2}})
\]
is a class whose image under the extension of supports map is
$b_{r_{1}}(U_{1},\phi_{1}) b_{r_{2}}(U_{2},\phi_{2}) = b_{r_{1}+r_{2}}(U,\phi)$.
(Top Borel classes multiply.)  Since $\rho^{A}(\theta_{E_{1}\oplus E_{2},\phi_{1} \oplus \phi_{2}})$
was the unique class with that property we have
\[
\rho_{1}^{A}(\overline{\theta}_{E_{1},\phi_{1}}) \rho_{2}^{A}(\overline{\theta}_{E_{2},\phi_{2}})
= \rho^{A}(\theta_{E_{1}\oplus E_{2},\phi_{1} \oplus \phi_{2}}).
\]
Applying $\gamma^{A}$ now gives
\[
q_{1}^{A}(\Th(E_{1},\phi_{1})) \, q_{2}^{A}(\Th(E_{2},\phi_{2})) =
\Th(E_{1}\oplus E_{2}, \phi_{1} \oplus \phi_{2}).
\qedhere
\]
\end{proof}

\section{Symplectic orientations}
\label{S:orientations}

A ring cohomology theory can be symplectically oriented by any of five structures
satisfying different axioms.  We have already seen two of them: a symplectic Thom structure
(Definition \ref{def.sp.thom})
and a Borel structure (Definition \ref{D:Pont}).  Here are the other three
(cf.\ \cite[Definitions 3.26, 3.32, 3.1]{Panin:2003rz}).

\begin{definition}
\label{D:Pont.classes}
A \emph{Borel classes theory} on a ring cohomology theory $A$
on a category of schemes is a system of assignments to every
symplectic bundle $(E,\phi)$ over every scheme $S$ in the category
of elements $b_{i}(E,\phi) \in A(S)$ for all
$i \geq 1$ satisfying
\begin{enumerate}
\item
For $(E_{1},\phi_{1}) \cong (E_{2},\phi_{2})$ we have $b_{i}(E_{1},\phi_{1}) = b_{i}(E_{2},\phi_{2})$
for all $i$.

\item
For a morphism $f \colon Y \to S$ we have $f^{A}(b_{i}(E,\phi)) = b_{i}(f^{*}(E,\phi))$ for all $i$.

\item
For the tautological rank $2$ symplectic subbundle
$(\shf U,\phi\rest{\shf U})$ on $\HP^{1}$ the maps
\[
(1,b_{1}(\shf U,\phi\rest{\shf U})) \colon A(S) \oplus A(S) \to A(\HP^{1} \times S)
\]
are isomorphisms for all $S$.

\item
For a rank $2$ symplectic space $(V,\phi)$ viewed as a trivial symplectic bundle over
$k$ we have $b_{1}(V,\phi) = 0$ in $A(k)$.

\item
For an orthogonal direct sum of symplectic bundles $(E,\phi) \cong (E_{1},\phi_{1}) \perp (E_{2},\phi_{2})$
we have $b_{i}(E,\phi) = b_{i}(E_{1},\phi_{1}) + \sum_{j=1}^{i-1} b_{i-j}(E_{1},\phi_{1}) b_{j}(E_{2},\phi_{2})
+ b_{i}(E_{2},\phi_{2})$ for all $i$.

\item
For $(E,\phi)$ of rank $2r$ we have $b_{i}(E,\phi) = 0$ for $i > r$.
\end{enumerate}
\end{definition}

One may also set $b_{0}(E,\phi) = 1$ and even $b_{i}(E,\phi) = 0$ for $i < 0$.

\begin{definition}
\label{D:thom.class}
A \emph{symplectic Thom classes theory} on a ring cohomology theory $A$ on a category of schemes
is a system of assignments
to every
symplectic bundle $(E,\phi)$ over every scheme $X$ in the category
of an element $\Th(E,\phi) \in A_{X}(E)$
satisfying conditions (1)--(5) of Theorem \ref{T:exist.2}.
\end{definition}

\begin{definition}
\label{D:symp.orient}
A \emph{symplectic orientation} on a ring cohomology theory $A$ on a category of schemes
is a system of assignments to every
symplectic bundle $(E,\phi)$ over every scheme $X$ in the category and every
closed subset $Z \subset X$ with $X \smallsetminus Z$ in the category of an
isomorphism $\Th^{E,\phi}_{Z} \colon A_{Z}(X) \to A_{Z}(E)$ with the following properties.
\begin{enumerate}
\item
Let $\pi \colon E \to X$ be the structure map, and let $\pi^{A} \colon A_{Z}(X) \to A_{\pi^{-1}(Z)}(E)$ be
the pullback.
Then for all $a \in A(X)$ and $b \in A_{Z}(X)$ one has
\begin{align*}
\Th^{E,\phi}_{Z}(a \cup b) & = \Th^{E,\phi}_{X}(a) \cup \pi^{A}b, &
\Th^{E,\phi}_{Z}(b \cup a) = \pi^{A}b \cup \Th^{E,\phi}_{X}(a).
\end{align*}

\item
For every isometry of symplectic bundles $\phi \colon (E,\phi) \to (F,\psi)$ the following diagram
commutes
\[
\xymatrix @M=5pt @C=30pt {
A_{Z}(X) \ar[r]_-{\cong}^-{\Th^{F,\psi}_{Z}} \ar@{=}[d]_-{1}
& A_{Z}(F) \ar[d]_-{\cong}^-{\phi^{A}}
\\
A_{Z}(X) \ar[r]_-{\cong}^-{\Th^{E,\phi}_{Z}}
& A_{Z}(E).
}
\]

\item
For every morphism $f \colon X' \to X$ with $Z' \subset X'$ closed and $f^{-1}(Z) \subset Z'$, then for
$(E',\phi') = f^{*}(E,\phi)$ and $g \colon E' \to E$ the pullback of $f$ along $\pi \colon E \to X$,
the following diagram commutes
\[
\xymatrix @M=5pt @C=30pt {
A_{Z}(X) \ar[r]_-{\cong}^-{\Th^{E,\phi}_{Z}} \ar[d]_-{f^{A}}
& A_{Z}(E) \ar[d]^-{g^{A}}
\\
A_{Z'}(X') \ar[r]_-{\cong}^-{\Th^{E',\phi'}_{Z'}}
& A_{Z'}(E').
}
\]

\item
For every pair of sympectic bundles $(E_{1},\phi_{1})$ and $(E_{2},\phi_{2})$ over a scheme $X$,
with structural maps $b_{i} \colon E_{i} \to X$, the following diagram
commutes
\[
\xymatrix @M=5pt @C=50pt {
A_{Z}(X) \ar[r]_-{\cong}^-{\Th^{E_{1},\phi_{1}}_{Z}} \ar[d]^-{\cong}_-{\Th^{E_{2},\phi_{2}}_{Z}}
& A_{Z}(E_{1}) \ar[d]^-{\Th_{Z}^{b_{1}^{*}E_{2},b_{1}^{*}\phi_{2}}}_-{\cong}
\\
A_{Z}(E_{2}) \ar[r]^-{\Th_{Z}^{b_{2}^{*}E_{1},b_{2}^{*}\phi_{1}}}_-{\cong}
& A_{Z}(E_{1}\oplus E_{2}).
}
\]

\end{enumerate}
\end{definition}

Property (1) of Definition \ref{D:symp.orient} implies that the $\Th^{E,\phi}_{Z}$ are $A(X)$-bimodule
maps, but at least formally it is somewhat stronger (\cite[Lemma 3.33]{Panin:2003rz} did not seem
obvious to the second author with the formally weaker property).

\begin{theorem}
\label{T:bijections}
Let $A$ be a ring cohomology theory.

\subthm
There are inverse bijections
\[
\{\text{symplectic Thom structures on $A$}\} \longleftrightarrow \{\text{Borel structures on $A$}\}
\]
given by the formulas \eqref{eq.Borel} and \eqref{E:Thom}.

\subthm
There are inverse bijections
\[
\{\text{Borel class structures on $A$}\} \longleftrightarrow \{\text{Borel structures on $A$}\}
\]
given by forgetfulness and the map which assigns to a Borel structure the Borel classes
assigned by Definition \ref{D:pontclass} to the associated symplectic Thom structure.

\subthm
There are inverse bijections
\[
\{\text{symplectic Thom class structures on $A$}\} \longleftrightarrow
\{\text{symplectic Thom structures on $A$}\}
\]
given by forgetfulness and Theorem \ref{T:exist.2}.

\subthm
There are inverse bijections
\[
\{\text{symplectic Thom class structures on $A$}\} \longleftrightarrow
\{\text{symplectic orientations on $A$}\}
\]
given by the formulas $\Th^{E,\phi}_{Z}(b) = \pi^{A}b \cup \Th(E,\phi)$ and
$\Th(E,\phi) = \Th^{E,\phi}_{X}(1_{X})$.
\end{theorem}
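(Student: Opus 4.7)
The plan is to establish each of the four bijections by producing explicit inverse maps and reducing the axiom checks to results from \S\S \ref{S:Pont.thom}--\ref{S:higher}. Part (a) is essentially Theorem \ref{T:unique}: the forward map $\Th \mapsto p$ of \eqref{eq.Pontryagin} lands in Pontryagin structures because functoriality is built in, the nondegeneracy axiom is Theorem \ref{th.HPn.triv} specialized to $\HP^{1}$, and vanishing on trivial rank $2$ bundles is Proposition \ref{P:p=0}; Theorem \ref{T:exist} produces the inverse via \eqref{E:Thom}, and Theorem \ref{T:unique.2} shows this inverse is the unique lift.

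For (b), forgetting all but $p_{1}$ on rank $2$ bundles reduces a Pontryagin classes theory to a Pontryagin structure. Conversely, a Pontryagin structure determines a symplectic Thom structure by (a), and then the Quaternionic Projective Bundle Theorem \ref{th.HPn.twist} together with Definition \ref{D:pontclass} defines Pontryagin classes for all symplectic bundles. Axioms (1)--(4) and (6) of Definition \ref{D:Pont.classes} hold by construction, while axiom (5) is the Cartan Sum Formula (Theorem \ref{T:Cartan}). To check the round trip, I note that for a rank $2$ bundle the Projective Bundle Theorem forces $\zeta = p_{1}(E,\phi)$ while $\HP_{S}(E,\phi) = S$ and $\zeta = p(E,\phi)$, so the two Pontryagin-class assignments agree on rank $2$ bundles.

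For (c), forgetting the higher-rank Thom classes is the easy direction. Theorem \ref{T:exist.2} provides the inverse, constructing $\Th(E,\phi) = (-1)^{r} f^{A}(\theta_{E,\phi})$ from the unique lift of $p_{r}(U,\psi\rest{U})$ supplied by Proposition \ref{P:pont.theta} and verifying axioms (1)--(5) of Definition \ref{D:thom.class}. Uniqueness of the inverse follows from formula \eqref{E:p.2nd.r}: for any purported symplectic Thom class theory $\Th'$, the element $(-1)^{r} s^{A} \Th'(U,\psi\rest{U})$ is a lift of $p_{r}(U,\psi\rest{U})$ and hence equals $\theta_{E,\phi}$, so that a base change through the geometry of \eqref{E:*diag} identifies $\Th'(E,\phi)$ with the formula \eqref{E:Thom.2}.

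For (d), one map sends a Thom classes theory to the symplectic orientation $\Th^{E,\phi}_{Z}(b) = \pi^{A} b \cup \Th(E,\phi)$, while the inverse sends an orientation to $\Th(E,\phi) = \Th^{E,\phi}_{X}(1_{X})$; mutual inverseness at $Z = X$ is immediate, and axioms (1)--(4) of Definition \ref{D:symp.orient} follow from the corresponding axioms of Definition \ref{D:thom.class} together with functoriality of cup product. The main obstacle is showing that cup product with $\Th(E,\phi)$ yields an isomorphism $A_{Z}(X) \to A_{Z}(E)$ for every closed $Z \subset X$, not merely $Z = X$. This I expect to handle by a five-lemma comparison of the localization long exact sequences for $Z \subset X$ and $\pi^{-1}Z \subset E$: the case $Z = X$ and the open-complement case are Theorem \ref{T:exist.2}(4), compatibility with the boundary maps $\partial$ is Definition \ref{D:ring}(2), and the remaining squares commute by the projection formula of Proposition \ref{P:projection}.
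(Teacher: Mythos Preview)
Your treatment of (a) matches the paper, and your (d) is more detailed than the paper's own ``left to the reader''; the five-lemma argument you outline is exactly how one should upgrade the isomorphism from $Z=X$ to general $Z$.

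The gap is in (b) and (c). In each you check only one of the two round trips. In (b) you verify that Pontryagin structure $\to$ Pontryagin classes theory $\to$ Pontryagin structure is the identity, but you never argue that the forgetful map is injective, i.e.\ that two Pontryagin classes theories with the same $p_{1}$ on rank~$2$ bundles must agree on all $p_{i}$ for all bundles. This is not automatic: the axioms of Definition~\ref{D:Pont.classes} do not visibly pin down $p_{i}(E,\phi)$ from rank~$2$ data without further work. The paper supplies this via the splitting principle (Theorem~\ref{T:splitting}) together with the Cartan sum formula axiom (5): pulling back to $\HFlag_{X}(E,\phi)$ forces $p_{i}(E,\phi)$ to be the $i$-th elementary symmetric polynomial in the rank~$2$ classes, and $q^{A}$ is injective.

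In (c) you attempt to prove injectivity of forgetfulness directly by invoking \eqref{E:p.2nd.r} for an \emph{arbitrary} symplectic Thom classes theory $\Th'$. But \eqref{E:p.2nd.r} and \eqref{E:top.p} are established in the paper only for the specific Thom classes constructed in Theorem~\ref{T:exist.2}; there is no a~priori reason why $(-1)^{r} z^{A} e^{A} \Th'(E,\phi)$ should equal the $p_{r}(E,\phi)$ defined via Definition~\ref{D:pontclass} from the rank~$2$ restriction of $\Th'$. Establishing that identity for general $\Th'$ essentially requires the splitting principle anyway (pull back to the flag bundle, use multiplicativity axiom (5) to factor $\Th'$, then apply the rank~$2$ case and Theorem~\ref{T:splitting}). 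The paper's route is the same in spirit but more direct: use splitting plus multiplicativity to show that any two Thom classes theories with the same rank~$2$ restriction agree, bypassing the $\theta_{E,\phi}$ geometry entirely for this step.
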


\begin{proof}
(a) This is Theorem \ref{T:unique}.

(b) The right-to-left map is well-defined because the Borel classes associated to a
symplectic Thom structure satisfy the functoriality axioms (1)(2) by simple arguments, axiom
(3) concerning $\HP^{1}$ by Theorem \ref{th.HPn.triv}, the triviality axiom (4)
by Proposition \ref{P:p=0}, the Cartan sum formula (5) by
Theorem \ref{T:Cartan}, and the dimension axiom (6) holds by definition.
The right-to-left-to-right roundtrip assigns to a given Borel structure the Borel structure
associated to the associated Thom structure.  This is the identity map because of Theorem
\ref{T:exist}.  The left-to-right map is injective because of the splitting principle (Theorem \ref{T:splitting})
and the Cartan sum formula (cf.\ \cite[Theorem 3.27]{Panin:2003rz}).

(c) The right-to-left map is well-defined by Theorem \ref{T:exist.2}, and
the right-to-left-to-right roundtrip is the identity by the last phrase in that theorem.
The left-to-right map is injective because of the splitting principle and the multiplicativity formula (5)
for the symplectic Thom classes (cf.\ \cite[Lemma 3.34]{Panin:2003rz}).

(d) Left to the reader.
\end{proof}

\section{More on the cohomology of the open stratum}
\label{S:original}

This is a version of our original proof of Theorem \ref{A.X2i}(b).  We use the following geometry,
with $(V,\phi)$ a symplectic
space of dimension $2n+2$ over a field and $\GrSp(2,V,\phi) \subset \Gr(2,V)$
the closed subvariety parametrizing totally isotropic subspaces.

\begin{theorem}
\label{2nd}
The open stratum $X_{0} \subset \HP^{n}$ is a dense open subvariety of an open
subvariety $Y_{0} \subset \Gr(2,V)$ with complement $Z_{0} =
Y_{0} \smallsetminus X_{0} = Y_{0} \cap \GrSp(2,V,\phi)$, and there is an
open subvariety $Y'_{0} \subset Y_{0}$ containing $Z_{0}$
for
which there exists a diagram in which the arrows have the stated
properties and the horizontal arrows commute with each other and with
all upward  arrows and with the two solid downward arrows.
\begin{equation}
\label{A2n.bdl}
\vcenter{
\xymatrix @M=5pt @C=60pt @R=30pt {
  Z_{0} \ar[r]_-{\text{\rm closed}}
  \ar@<4pt>[d]^-{\text{\scriptsize\txt{\rm restriction of the \\ \rm $\Aff^{2n}$-bundle}}}
  \ar@<4pt>@/^1pc/[rr]^-{\text{\rm closed}} &
  Y'_{0} \ar@{^{(}->}@<-2pt>[r]_-{\text{\rm open}} \ar@<4pt>[d]^-{\text{\rm
      $\Aff^{2n}$-bundle}} &
  Y_{0} \ar@{.>}@<2pt>[d]^-{\text{\scriptsize\txt{\rm different \\ \rm
      $\Aff^{2n}$-bundle}}} &
  X_{0} = Y_{0} \smallsetminus Z_{0}
  \ar@{_{(}->}@<2pt>[l]_-{\text{\rm open}}
  \\
  \PP^{2n-1} \ar[r]^-{\text{\rm closed}}
  \ar@<4pt>[u]
  \ar@<-4pt>@/_1pc/[rr]_-{\text{\rm hyperplane}} &
  \PP^{2n} \smallsetminus 0
  \ar@{^{(}->}@<-2pt>[r]^-{\text{\rm open}}
  \ar@<4pt>[u]
  &
  \PP^{2n} \ar@<4pt>[u]
  &
  \Aff^{2n} \ar@{_{(}->}@<2pt>[l]_-{\text{\rm open}}
  \ar@<4pt>[u]
}}
\end{equation}
\end{theorem}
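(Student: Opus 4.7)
The plan is to construct the open subvarieties $Y_{0}, Y'_{0} \subset \Gr(2,V)$ and the affine bundle morphisms of the diagram explicitly by starting from a naive bundle $Y_{0} \to \PP(E_{1}^{\perp})$ and twisting it by the isotropy section. I set $Y_{0} = \{U \in \Gr(2,V) : U \not\subset E_{1}^{\perp}\}$ and $Z_{0} = Y_{0} \cap \GrSp(2,V,\phi)$, so $X_{0} = Y_{0} \setminus Z_{0}$ is the open stratum, and $Y'_{0} = \{U \in Y_{0} : E_{1} \not\subset U\}$. The containment $Z_{0} \subset Y'_{0}$ is immediate: if $U \supset E_{1}$ and $U \not\subset E_{1}^{\perp}$, then some $v \in U$ satisfies $\phi(e_{1},v) \neq 0$, so $\phi|_{U}$ is nondegenerate. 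Fixing a splitting $V = E_{1}^{\perp} \oplus \langle f_{1}\rangle$ presents every $U \in Y_{0}$ uniquely as $U = L \oplus \langle f_{1}+w\rangle$ with $L = U \cap E_{1}^{\perp}$ and $w \in E_{1}^{\perp}$ defined modulo $L$, so the naive projection $\pi_{1} : Y_{0} \to \PP(E_{1}^{\perp}) = \PP^{2n}$, $U \mapsto L$, is the total space of a rank-$2n$ quotient vector bundle on $\PP^{2n}$.

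The dotted bundle $\pi' : Y_{0} \to \PP^{2n}$ is not $\pi_{1}$, because under $\pi_{1}$ the locus $X_{0}$ maps onto all of $\PP^{2n}$ rather than onto an affine chart. To correct this, I use the isotropy section $s := \phi(l, f_{1}+w) \in \Gamma(Y_{0}, \pi_{1}^{*}\mathcal{O}_{\PP^{2n}}(1))$ (well-defined because it scales linearly in the representative $l$ of $L$); its zero locus on $Y_{0}$ is exactly $Z_{0}$ and its restriction to the special fiber $\pi_{1}^{-1}([E_{1}])$ is the constant $1$. Combining $s$ with the $2n$ pullbacks $\pi_{1}^{*}\lambda_{1},\dots,\pi_{1}^{*}\lambda_{2n}$ of a basis of the hyperplane $(E_{1}^{\perp}/E_{1})^{\vee} \subset (E_{1}^{\perp})^{\vee}$ yields $2n+1$ sections of $\pi_{1}^{*}\mathcal{O}(1)$ with no common zero (the $\pi_{1}^{*}\lambda_{i}$ vanish simultaneously only over $[E_{1}]$, where $s = 1$), defining a morphism $\pi' := [s : \pi_{1}^{*}\lambda_{1} : \cdots : \pi_{1}^{*}\lambda_{2n}] : Y_{0} \to \PP^{2n}$. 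By construction $(\pi')^{-1}(\{s=0\}) = Z_{0}$, $(\pi')^{-1}(\Aff^{2n}) = X_{0}$ with $\Aff^{2n} := \PP^{2n} \setminus \{s=0\}$, and $(\pi')^{-1}(\PP^{2n} \setminus 0) = Y'_{0}$ with $0 := [1:0:\cdots:0]$. The assignment $\sigma(p) := \langle l(p), f_{1}\rangle$ (for any affine-coordinate representative $l(p)$ of $p$) gives a section of $\pi'$, exhibiting $Y_{0}$ as the total space of a rank-$2n$ vector bundle on $\PP^{2n}$ — namely the normal bundle of $\sigma$ — so $\pi'$ is a Zariski-locally trivial $\Aff^{2n}$-bundle.

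The two solid bundles of the diagram arise as restrictions of $\pi'$: $Y'_{0} \to \PP^{2n} \setminus 0$ and $Z_{0} \to \PP^{2n-1} := \{s=0\}$ are obtained by restricting $\pi'$ to the appropriate open and closed subvarieties of the base $\PP^{2n}$. The horizontal arrows commute with the upward section arrows and with the two solid downward bundle maps by construction. The main obstacle is the verification that $\pi'$ is a genuine $\Aff^{2n}$-bundle and not merely a morphism with affine-space fibers — the existence of the global section $\sigma$ combined with the smoothness of $\pi'$ (which reduces to freeness of the $\GG_{a}$-action on the fibers rescaling the representative $l$) identifies $Y_{0}$ with the total space of the normal bundle $N_{\sigma/Y_{0}}$ on $\PP^{2n}$, providing the required Zariski-local triviality.
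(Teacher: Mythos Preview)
Your setup of $Y_{0}$, $Y'_{0}$, $Z_{0}$ and your naive projection $\pi_{1}$ agree with the paper's, and your twisted map $\pi' = [s:\pi_{1}^{*}\lambda_{1}:\cdots:\pi_{1}^{*}\lambda_{2n}]$ is an elegant construction.  But the claim that $\pi'$ is an $\Aff^{2n}$-bundle over \emph{all} of $\PP^{2n}$ is false, and your justification for it is not a valid argument.

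Concretely: you have $(\pi')^{-1}(\Aff^{2n}) = (\pi')^{-1}\{s\neq 0\} = X_{0}$.  If $\pi'|_{X_{0}}\colon X_{0}\to\Aff^{2n}$ were a Zariski-locally trivial $\Aff^{2n}$-bundle, it would be an affine morphism with affine target, forcing $X_{0}$ to be affine; moreover your global section $\sigma$ would trivialise it, giving $X_{0}\cong\Aff^{4n}$.  Both conclusions contradict Theorem~\ref{th.basic}(a), which says $X_{0}$ is \emph{not} affine for $n\geq 1$.  The slogan ``section $+$ smooth with $\Aff^{2n}$-fibres $\Rightarrow$ normal bundle'' is simply not a theorem; it fails here precisely because the line $\langle l\rangle = \langle c\,e_{1}+l_{1}\rangle$ along which you quotient depends on the fibre coordinate $w$ through $c = a_{0}-\phi(l_{1},f_{1})-\phi(l_{1},w)$.

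The error stems from a misreading of the statement: the theorem only asks that the horizontal arrows commute with the two \emph{solid} downward arrows; the dotted arrow $Y_{0}\to\PP^{2n}$ is explicitly exempted and need not restrict to the solid one on $Y'_{0}$.  So there is no reason to reject $\pi_{1}$ as the dotted arrow.  In fact $\pi_{1}$ \emph{is} a genuine rank-$2n$ vector bundle (the tautological quotient on $\PP(E_{1}^{\perp})$, as you yourself observe), and that is exactly what the paper uses --- obtained there via the Bia\l{}ynicki--Birula map for a $\GG_{m}$-action.  Your $\pi'|_{Y'_{0}}\colon Y'_{0}\to\PP^{2n}\smallsetminus 0$ \emph{is} a Zariski-locally trivial $\Aff^{2n}$-bundle: on each chart $\{a_{i}\neq 0\}$ with $i\geq 1$ one can normalise $\lambda_{i}(l)=1$, take $w$ in the transversal hyperplane $\{\lambda_{i}=0\}\subset E_{1}^{\perp}$, and solve uniquely for $c$, giving an explicit trivialisation.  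Restricting to $\{a_{0}=0\}=\PP^{2n-1}$ yields the solid arrow $Z_{0}\to\PP^{2n-1}$.  Your section $\sigma$ (whose image is the locus $\{\langle l,f_{1}\rangle\}$) supplies the upward arrows uniformly.

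So the fix is minimal: keep $\pi_{1}$ as the dotted arrow and $\pi'|_{Y'_{0}}$ as the solid one, and drop the false global claim about $\pi'$.  With that correction your argument is a clean alternative to the paper's, which constructs the solid bundle via a second Bia\l{}ynicki--Birula decomposition (for a symplectic $\GG_{m}$-action) and then splits the resulting rank-$(2n{+}1)$ bundle over $\PP^{2n-1}$ as $N_{1}\oplus L$; your isotropy-section trick bypasses the BB machinery entirely.
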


\begin{proof}
In $(V,\phi)$ we have
a $1$-dimensional subspace $E = \langle e \rangle$, a vector $f$ with $\phi(e,f) = 1$, and
the $\phi$-nondegenerate subspace $F = \langle e,f \rangle^{\perp}$.  We have $E^{\perp} = E \oplus F$.
We set
\begin{align*}
X_{0} & = \{ U \subset V \mid \text{$U \not\subset E^{\perp}$ and $\phi\rest{U}$ is nondegenerate}\},
\\
Y_{0}  & = \{ U \subset V \mid U
  \not\subset E^\perp \}, \\
Z_{0} & = Y_{0} \smallsetminus X_{0} = Y_{0} \cap \GrSp(2,V,\phi),
\\
Y'_{0}  & = \{ U \subset V \mid \text{$U
  \not\subset E^\perp$ and $U \cap E = \{0\}$} \}.
\end{align*}
Clearly $Y_{0}$ is an open subvariety of the Grassmannian, and
$X_{0}$ and $Y'_{0}$ are open in $Y_{0}$.   Moreover, any
$U$ in $Y_{0} \smallsetminus Y'_{0}$ is of the form $U = \langle u,v
\rangle$ with $0 \neq u \in E$ and $v \not\in E^\perp$.  We therefore
 have (up to nonzero multiples) $u
= e$ and $v = f + v_0$ with $v_0
\in E^\perp$.  We then have $\phi(u,v) =1$, so $\phi\rest{U}$ is
nondegenerate.
So we have $Y_{0} \smallsetminus Y'_{0} \subset X_{0}$ and $Z_{0} \subset Y'_{0}$.

For the vector bundle structures, we use Bia\l{}ynicki-Birula's theory
of cell decompositions induced by $\GG_m$-actions.
Consider the
one-parameter subgroup $\lambda_1 \colon \GG_m \into \GL(V)$
given by
%
\begin{align*}
\lambda_1(t) \cdot v & = \begin{cases}
v & \text{for $v \in E^{\perp} = \langle e \rangle \oplus F$}, \\
t^{-1}v & \text{for $v \in \langle f \rangle$}.
\end{cases}
\end{align*}
The fixed-point locus of the induced action on $\Gr(2,V)$ has
two components
\begin{align*}
\Gamma_{0} &  = \Gr(2,E^{\perp}), &
\Gamma_{1} & = \{ \langle v,f \rangle \mid v \in E^{\perp} \} \cong \PP(E^{\perp}) = \PP^{2n}
\end{align*}
For each component there
is a stratum
$\Upsilon_{i} = \{ x \in \Gr(2,V) \mid \lim_{t\to 0} \lambda_1(t)
x \in \Gamma_{i} \}$.
The assignment $x \mapsto \lim_{t\to 0} \lambda_1(t)x$ gives maps
$\pi_1 \colon \Upsilon_{i} \to \Gamma_{i}$ which send any $U \in
\Gr(2,V)$ to its associated graded space with respect to the
filtration on $U$ induced by
$0 \subset E^{\perp} \subset V$.  The maps $\pi_1
\colon \Upsilon_{i} \to \Gamma_{i}$ are vector bundles identifiable with
the normal bundles $N_{\Gamma_{i} / \Upsilon_{i}}$.  In this case we have
\begin{align*}
\Upsilon_{0} &  = \Gr(2,E^{\perp}) = \Gamma_{0}, &
\Upsilon_{1} & = Y_{0}.
\end{align*}
The fiber of $\pi_{1} \colon Y_{0} \to \PP^{2n}$ over $\langle v,f \rangle \in \PP^{2n}$
consists of points corresponding to spaces $\langle v, u + f \rangle$ with $u \in E^{\perp}$.  These
points are parametrized by the classes $\overline u \in E^{\perp}/\langle v \rangle$.
So $Y_{0}$ is isomorphic to the
tautological rank $2n$ quotient bundle  $\shf Q$
on $\PP^{2n}$.

Now $\lambda_1$ does not preserve the symplectic form $\phi$ because
it does not act on $e$ and $f$ with opposite weights.  So
to study the part of $Y_{0}$ in $\GrSp(2,V,\phi)$ we replace it by
$\lambda_2 \colon \GG_m \into \Sp(2,V,\phi)$ acting linearly on $V$ with
\begin{align*}
\lambda_2(t) \cdot v & =
\begin{cases}
tv & \text{for $v \in \langle e \rangle$}, \\
v & \text{for $v \in F$}, \\
t^{-1}v & \text{for $v \in \langle f \rangle$}.
\end{cases}
\end{align*}
The induced action of $\lambda_2$ on $\Gr(2,V)$ comes with four fixed-point
loci
\begin{align*}
\Gamma'_{0} & = \{ \langle e,v \rangle \mid v \in F \}, &
\Gamma'_{1} &  = \Gr(2,F), &
\Gamma'_{2} & = \{ \langle e,f \rangle \}, &
\Gamma'_{3} & = \{ \langle u,f \rangle \mid u \in F \}  = \PP^{2n-1}
\end{align*}
and vector bundles $\pi_2 \colon \Upsilon'_{i} \to
\Gamma'_{i}$ much as before.  This time, however, we have $Y_{0} =
\Upsilon'_{2} \cup \Upsilon'_{3}$ and $Y'_{0} =
\Upsilon'_{3}$.  We have $Y_{0} \smallsetminus
Y'_{0} = \Upsilon'_{2} = \{ \langle e,v+f \rangle \mid v \in F \} \cong \Aff^{2n}$, and we may see that it intersects
$\Gamma_{1} = \PP^{2n}$ in the unique point $\langle e,f \rangle =
\Gamma'_{2}$.  We will call this point $0 \in \PP^{2n}$.

Now the Bia\l{}ynicki-Birula vector bundle $Y'_{0} = \Upsilon'_{3}
\to \Gamma'_{3} = \PP^{2n-1}$ is
of rank $2n+1$.  The base, the fixed point set $\PP^{2n-1} = \{
\langle u, f \rangle \mid u \in \PP(F) \}$, is entirely within
$\GrSp(2,V,\phi)$.  Since the one-parameter subgroup respects
$\GrSp(2,V,\phi)$, the locus $Z_{0}$ is also a bundle over
$\PP^{2n-1}$, indeed a subbundle of $Y'_{0}$ of rank $2n$.

As a bundle $Y'_{0}$ is isomorphic to $N_{\PP^{2n-1}/Y'_{0}}$. At
any point $U_{\overline u} = \langle u , f \rangle \in
\PP^{2n-1}$, the fiber of the normal bundle is canonically
isomorphic to the quotient of the subspace of $\phi \in \Hom_k( U_{\overline
  u}, V/U_{\overline u}) = T_{U_{\overline u}} \Gr(2,V)$ corresponding
to first order deformations $\langle u + \phi(u) \varepsilon, f
+ \phi(f) \varepsilon \rangle$ which lie within $Y'_{0}$
modulo those lying within $\PP^{2n-1}$.  These first-order
deformations are thus the $\langle u + \alpha e
\varepsilon, f + (v + \beta e)\varepsilon \rangle$ with  $\overline {v} \in F/\langle u \rangle$.  Thus $Y'_{0}$ is isomorphic to a vector bundle $N \to
\PP^{2n-1}$ which we can split as $N = N_0 \oplus L$ with
\begin{align*}
N_0 & \xrightarrow[\cong]{(\overline{v},\beta)} \shf Q \oplus \OO, &
L & \xrightarrow[\cong]{\alpha} \OO(1),
\end{align*}
The subbundle of $N$ corresponding to $Z_{0}$ corresponds to the
first-order deformations satisfying
\[
\phi(u + \alpha e\varepsilon , f + (v
+ \beta e) \varepsilon) = (\alpha + \phi(u,v)) \varepsilon = 0.
\]
So it is defined by the equation $\alpha = - \phi(u,v)$.  This bundle is thus the
graph of a vector bundle map $\psi \colon N_0 \to L$ which fiberwise is
$(\overline v, \beta) \mapsto -\phi(u,v)$ (with $u$ a function of
the base).  Let $N_1 \subset N$ be the graph of $\psi$.  We still
have a direct sum $N = N_1 \oplus L$.  We now have a quotient map $N
\onto N/N_1 \cong L$ which makes $Y'_{0} \cong N$ into a rank $2n$
vector bundle over $L$, with $Z_{0} \cong N_1$ the subbundle lying over the
zero section $\PP^{2n-1}$ of $L$.  Finally the subbundle $L$
consists of the points $\langle \alpha e + u, f \rangle$
with $\overline u \in \PP^{2n-1}$, which is $\PP^{2n}
\smallsetminus 0$.
\end{proof}

\begin{proof}[Second proof of Theorem \ref{A.X2i}\parens{b}]
  By Theorem \ref{th.subgrass} it is enough to treat the open stratum $X_{0}$.
  By Theorem \ref{2nd} $X_{0} \subset Y_{0}$ is an open subvariety
  with complement $Z_{0}$, and there is a closed embedding of pairs
  $(\PP^{2n}, \Aff^{2n}) \to (Y_{0},X_{0})$.  This yields a
  morphism of localization long exact sequences
\begin{equation}
\label{long.exact}
\vcenter{
  \xymatrix @M=5pt @C=14pt {
    A_{Z_{0}}(Y_{0}) \ar[r]
    \ar[d]
    &
    A(Y_{0}) \ar[r]
    \ar[d]_-{\cong}
    &
    A(X_{0}) \ar[r]^-{\partial} \ar[d]_-{i^A}
    &
    A_{Z_{0}}(Y_{0}) \ar[d] \ar[r]
    &
    A(Y_{0}) \ar[d]_-{\cong}
    \\
    A_{\PP^{2n-1}}(\PP^{2n}) \ar[r]
    &
    A(\PP^{2n}) \ar[r]
    &
    A(\Aff^{2n}) \ar[r]^-{\partial}
    &
    A_{\PP^{2n-1}}(\PP^{2n}) \ar[r]
    &
    A(\PP^{2n})
  }
}
\end{equation}
The map $A(Y_{0}) \to A(\PP^{2n})$ is an isomorphism because
$Y_{0} \to \PP^{2n}$ is a vector bundle.
By excision $A_{Z_{0}}(Y_{0}) \to A_{\PP^{2n-1}}(\PP^{2n})$ is
isomorphic to $A_{Z_{0}}(Y'_{0}) \to A_{\PP^{2n-1}}(\PP^{2n}
\smallsetminus 0)$, which again is an isomorphism because of the
vector bundles in Theorem \ref{2nd} and strong homotopy invariance.
The map labeled $i^A$ is therefore also an isomorphism by the five lemma.
Since $i^A t^A \colon A(k) \to A(\Aff^{2n})$ is an isomorphism by
homotopy invariance, it follows that $t^A$ is an isomorphism.
\end{proof}

%

\end{document}